\theoremstyle{plain}
\newtheorem{Proposition}{Proposition}
\newtheorem{Lemma}{Lemma}
\newtheorem{Theorem}{Theorem}
\newtheorem{Corollary}{Corollary}
\theoremstyle{remark}             
\newtheorem{Remark}{Remark}
\newtheorem{Assumption}{Assumption}
\newtheorem{Example}{Example}
\newtheorem{Definition}{Definition}
\newtheorem{Conjecture}{Conjecture}
\newcommand{\proper}{\mathsf}
\newcommand{\pE}{\proper{E}}
\newcommand{\mv}[1]{{\boldsymbol{\mathrm{#1}}}}
\newcommand{\<}{\langle}
\renewcommand{\>}{\rangle}
\renewcommand{\H}{a}
\newcommand{\sol}{G}
\DeclareMathOperator{\Cov}{Cov}
\DeclareMathOperator{\Var}{Var}
\DeclarePairedDelimiter\ceil{\lceil}{\rceil}
\DeclarePairedDelimiter\floor{\lfloor}{\rfloor}
\begin{document}
\begin{frontmatter}
\title{Markov properties of Gaussian random fields on compact metric graphs}

\begin{aug}
\author[A]{\fnms{David} \snm{Bolin}\ead[label=e1,mark]{david.bolin@kaust.edu.sa}} 
\author[A]{\fnms{Alexandre B.} \snm{Simas}\ead[label=e2]{alexandre.simas@kaust.edu.sa}} 
\author[B]{\fnms{Jonas} \snm{Wallin}\ead[label=e3]{jonas.wallin@stat.lu.se}}
\runauthor{David Bolin, Alexandre Simas and Jonas Wallin }
	\address[A]{Statistics Program, Computer, Electrical and Mathematical Sciences and Engineering Division, King Abdullah 
	University of Science and Technology (KAUST), 
	\printead{e1}, \printead{e2}}
\address[B]{Department of Statistics, Lund University, \printead{e3}} 
\end{aug}

\begin{abstract}
	There has recently been much interest in Gaussian fields on linear networks and, more generally, on compact metric graphs. One proposed strategy for defining such fields on a metric graph $\Gamma$ is through a covariance function that is isotropic in a metric on the graph. Another is through a fractional-order differential equation $L^{\nicefrac{\alpha}{2}} (\tau u) = \mathcal{W}$ on $\Gamma$, where $L = \kappa^2 - \nabla(\H\nabla)$ for (sufficiently nice) functions $\kappa, \H$, and $\mathcal{W}$ is Gaussian white noise. We study Markov properties of these two types of fields. First, we show that no Gaussian random fields exist on general metric graphs that are both isotropic and Markov. Then, we show that the second type of fields, the generalized Whittle--Mat\'ern fields, are Markov if $\alpha\in\mathbb{N}$, and conversely, if $\H$ and $\kappa$ are constant and $u$ is Markov, then $\alpha\in\mathbb{N}$. Further, if $\alpha\in\mathbb{N}$, a generalized Whittle--Mat\'ern field $u$ is Markov of order $\alpha$, which means that the field $u$ in one region $S\subset\Gamma$ is conditionally independent of $u$ in $\Gamma\setminus S$ given the values of $u$ and its $\alpha-1$ derivatives on $\partial S$. Finally, we provide two results as consequences of the theory developed: first we prove that the Markov property implies an explicit characterization of $u$ on a fixed edge $e$, revealing that the conditional distribution of $u$ on $e$ given the values at the two vertices connected to $e$ is independent of the geometry of $\Gamma$; second, we show that the solution 
	to $L^{\nicefrac{1}{2}}(\tau u) = \mathcal{W}$ on $\Gamma$
	can obtained by conditioning independent generalized Whittle--Mat\'ern processes on the edges, with $\alpha=1$ and Neumann boundary conditions, on being continuous at the vertices.
\end{abstract}

	\begin{keyword}[class=MSC]
	\kwd[Primary ]{60G60} 
	\kwd[; secondary ]{} 
	\kwd{60G15} 
	\kwd{60H15} 
\end{keyword}

\begin{keyword}
	\kwd{Gaussian processes, networks, quantum graphs, stochastic partial differential equations}
\end{keyword}

\end{frontmatter}

\section{Introduction}


Markov processes and random fields are essential models in many areas of probability and statistics. 
For example, in spatial statistics, Markov properties are regularly used to reduce computational costs for inference \cite{rue2005gaussian, lindgren2022spde}, and Markov properties play key roles in the theory of Gaussian free fields \cite{wendellin, char_GFF} and Schramm--Loewner evolutions \cite{schramm, lawler}.  
The standard definition of a Markov process can be given by saying that the `past' and `future' of the process are conditionally independent given the `present'. The extension of this definition to random fields dates to McKean \cite{McKean1963}, who proposed conditioning on an arbitrarily small neighborhood of the `present' instead of simply conditioning on the present, which allowed McKean to prove a conjecture by L\'evy \cite{Levy_Conjecture}, demonstrating that the so-called L\'evy Brownian motion is a Markov random field (MRF) if the dimension is odd. Following McKean's work, there was considerable research on Markov properties of random fields in the 1970s and 1980s. In particular, Loren Pitt extended the work by McKean in a seminal paper \cite{Pitt1971}, and Kunsch \cite{kunsch} and Rozanov \citep{Rozanov1977, Rozanov1982Markov} further extended Pitt's work. See also \cite{Mandrekar1976} and \cite{Pitt_Robeva} for a more recent extension.

In this work, we are interested in characterizing Markov properties of Gaussian random fields (GRFs) on compact metric graphs. 
A compact metric graph $\Gamma$ consists of a set of finitely many vertices ${\mathcal{V}=\{v_i\}}$ and a 
set of finitely many edges $\mathcal{E}=\{e_j\}$ connecting the vertices. Each edge $e$ is a curve with finite length $l_e<\infty$ parameterized by arc length, which connects a pair of vertices. A location $s\in \Gamma$ is a position on an edge and can be represented as a pair $(t,e)$, where $t\in[0,l_e]$. 
The graph is equipped with the geodesic metric, $d(\cdot,\cdot)$, which for any two points in $\Gamma$ is defined as the length of the shortest path in $\Gamma$ connecting the two. We assume that $\Gamma$ is connected, so that $d(\cdot,\cdot)$ is well defined.

One important special case of compact metric graphs is linear networks, obtained if each edge is a straight line. 
A second special case are metric trees, obtained if the graph has no loops. A final commonly considered special case is graphs with Euclidean edges \cite{anderes2020isotropic}, for which there is at most one edge between each pair of vertices, there are no edges which connect a vertex with itself, and the distance between two vertices connected by an edge is equal to the length of the edge connecting them.

Because of several statistical applications to data on traffic and river networks  \cite{CressieRiver, Hoef2006, isaak2017scalable, mcgill2021spatiotemporal, borovitskiy2021matern}, much research has recently been devoted to defining random fields on compact metric graphs and their various special cases \cite{anderes2020isotropic, BSW2022, moller2022lgcp, bolinetal_fem_graph}. 
The Gaussian free field has also been introduced on metric graphs (also known as cable systems) in \cite{lupu} and has been recently studied in \cite{ding_gff,drewitz2022cluster,cai2024one}.
Despite this interest, no work has been devoted to Markov properties of GRFs on general compact metric graphs.

Before considering Markov properties, let us review two central approaches for defining GRFs on metric graphs which will be considered. The first, proposed by \cite{anderes2020isotropic} is to define isotropic Gaussian process on graphs through isotropic covariance functions. A Gaussian process $u$ on $\Gamma$ is said to be isotropic if its covariance function is $\Cov(u(s_1), u(s_2)) = \rho(s_1,s_2)= r\circ d(s_1,s_2) = r(d(s_1,s_2))$, where $r : \mathbb{R} \rightarrow \mathbb{R}$ is a univariate function and $d(\cdot, \cdot)$ is a metric on the graph. The difficulty with this definition is to find a functions $r$ so that $r\circ d$ is positive definite on $\Gamma$. \cite{anderes2020isotropic} showed that it is difficult to find valid functions when using $d(\cdot,\cdot)$ as the geodesic metric, whereas there are more valid choices if $d(\cdot,\cdot)$ is chosen as the resistance metric (see Appendix~\ref{app:isotropic_apdx}) and $\Gamma$ has Euclidean edges.
 
An alternative construction by \cite{BSW2022} introduced Gaussian Whittle--Mat\'ern fields on compact metric graphs as solutions to the fractional-order equation
\begin{equation}\label{eq:Matern_spde}
	(\kappa^2 - \Delta_\Gamma)^{\alpha/2} (\tau u) = \mathcal{W}, \qquad \text{on $\Gamma$},
\end{equation}
where  $\mathcal{W}$ is Gaussian white noise, $\Delta_\Gamma$ is the so-called Kirchhoff Laplacian, $\kappa>0$ determines the practical correlation range (i.e., the smallest distance $r$ such that the correlation between $u(s_1)$ and $u(s_2)$ is $0.13$ if $d(s_1,s_2) = r$) and $\tau>0$ marginal variances of $u$, and $\alpha>\nicefrac 12$ determines the $L_2(\Omega)$ regularity of $u$. The precise definition, including the definitions of $\mathcal{W}$ and $\Delta_\Gamma$, is given in Section~\ref{sec:GWM}.
These fields are important since they provide a natural extension of GRFs with a Mat\'ern covariance \cite{matern60} to the metric graph setting, because Gaussian Mat\'ern fields on $\mathbb{R}^d$ can be obtained as a solution to a stochastic partial differential equation $(\kappa^2 - \Delta)^{\alpha/2} u = \mathcal{W}$ on $\mathbb{R}^d$, where $\Delta$ is the Laplacian and $\mathcal{W}$ is Gaussian white noise. The Whittle--Mat\'ern fields are, in fact, as far as we know the only known class of GRFs that are valid on general metric graphs and which can be made differentiable depending on the choice of the parameter $\alpha$ \cite{BSW2022} and they importantly do not require the metric graph to have Euclidean edges to be well-defined.
These fields can be generalized by considering a more general differential operator in \eqref{eq:Matern_spde}. Specifically, 
\cite{bolinetal_fem_graph} introduced the generalized Whittle--Mat\'ern fields by the fractional-order equation
\begin{equation}\label{eq:Matern_spde_general}
	(\kappa^2 - \nabla(\H\nabla))^{\alpha/2} (\tau u) = \mathcal{W}, \qquad \text{on $\Gamma$},
\end{equation}
where $\nabla$ denotes the differential operator that acts as the first order derivative operator on each edge  
from the graph (and will be formally introduced later), and $\kappa$ and $\H$ are (sufficiently nice) functions controlling the local practical correlation ranges and marginal variances of $u$, respectively. The fact that these models have precise control over the local properties is useful for applications on metric graphs for the same reasons as for their counterparts on manifolds and subsets of $\mathbb{R}^d$.
Specifically, Generalized Whittle--Mat\'ern fields have previously been considered on both manifolds \cite{lindgren11} and compact subsets of $\mathbb{R}^d$ \cite{fuglstad2015exploring,lindgren11,Hildeman2020}, obtained as solutions to \eqref{eq:Matern_spde_general} posed on those domains instead of $\Gamma$. These models have proven to be important non-stationary extensions of the Whittle--Mat\'ern fields, where the functions $\kappa$ and $\H$ can be chosen to mimic non-stationary behavior in data for statistical applications \cite{fuglstad2015does,fuglstad2015exploring,lindgren11,Hildeman2020}, which also should be a useful feature on metric graphs.

The challenge with studying Markov properties of GRFs on metric graphs is that the complicated geometry of the space in combination with differentiable processes such as those in \cite{BSW2022, bolinetal_fem_graph} translates to an irregular behavior of the $\sigma-$algebras that define the Markov property. To explain this further, let us first introduce Markov properties in more detail.
For  $S\subset \Gamma$, $\partial S$ denotes the topological boundary, and $\overline{S}$ the closure, of $S$. For $\varepsilon>0$, we define  $S_\varepsilon = \{s\in \Gamma: \exists z\in S \text{ with }d(s, z)<\varepsilon\}$ as the $\varepsilon$-neighborhood of $S$. We consider a complete probability space $(\Omega, \mathcal{F},\mathbb{P})$ and let $\mathbb{E}(Z) = \int_\Omega Z(\omega) d\mathbb{P}(\omega)$ denote the expectation of a real-valued random variable $Z$. 
For sub-$\sigma$-algebras $\mathcal{A},\mathcal{B}$ and $\mathcal{C}$ of $\mathcal{F}$, we say that $\mathcal{A}$ and $\mathcal{B}$ are conditionally independent 
given $\mathcal{C}$ if 
$\mathbb{P}(A\cap B | \mathcal{C}) = \mathbb{P}(A|\mathcal{C}) \mathbb{P}(B|\mathcal{C})$
for every $A\in\mathcal{A}$ and $B\in\mathcal{B}$,
where $\mathbb{P}(F | \mathcal{C}) = \mathbb{E}(1_F| \mathcal{C})$ is the conditional
probability of a set $F\in\mathcal{F}$ given $\mathcal{C}$.
In this case, we say that $\mathcal{C}$ splits $\mathcal{A}$ and $\mathcal{B}$.
Now, for a stochastic process $u$ on $\Gamma$ and, for each Borel measurable set $S\subset\Gamma$, we define the $\sigma$-algebras $\mathcal{F}^u(S) = \sigma(u(s): s\in S)$ and $\mathcal{F}^u_+(S) = \bigcap_{\varepsilon > 0}\mathcal{F}^u(S_\varepsilon)$. We now present the definition of MRFs on general compact metric graphs. The definition can be seen as a version for metric graphs of the definition of MRFs on subsets of $\mathbb{R}^d$ given in \cite{kunsch}.

\begin{Definition}\label{def:MarkovPropertyField}
	A random field $u$ on a compact metric graph $\Gamma$ is an MRF (of any possible order) if for every open set $S$, $\mathcal{F}^u_+(\partial S)$ splits $\mathcal{F}^u_+(\overline{S})$ and $\mathcal{F}^u_+(\Gamma \setminus S)$.
\end{Definition} 

Typically, we want to characterize the splitting $\sigma$-algebra $\mathcal{F}^u_+(\partial S)$ to use this definition, because this contains the information that needs to be conditioned on to make the field on $\bar{S}$ and $\Gamma\setminus S$ independent. 
 It is in particular important to characterize it for differentiable random fields on $\Gamma$, where higher-order Markov properties are defined as follows. 

\begin{Definition}\label{def:MarkovPropertyFieldOrderP}
	A random field $u$ on a compact metric graph $\Gamma$ is Markov of order $p$ if it has $p-1$ weak derivatives in the mean-squared sense (see Definition~\ref{def:RandomFieldOrderP}) and
	$\sigma(u(s), u'(s),\ldots, u^{(p-1)}(s): s\in\partial S)$ splits $\mathcal{F}_+^u(\overline{S})$ and $\mathcal{F}_+^u(\Gamma \setminus S)$ for every open set $S$.
\end{Definition}

Observe that every MRF of order $p$ is, in particular, an MRF in the sense of Definition~\ref{def:MarkovPropertyField}.
To characterize $\mathcal{F}^u_+(\partial S)$, it is useful to consider the $\sigma$-algebras generated by $u$ on the `inner' and `outer' boundaries of $S$: 
$
\mathcal{F}^u_{+,I}(\partial S) = \bigcap_{\varepsilon>0} \mathcal{F}^u(S\cap (\partial S)_\varepsilon)$ and
 ${\mathcal{F}^u_{+,O}(\partial S) = \bigcap_{\varepsilon>0} \mathcal{F}^u((\Gamma\setminus S)\cap (\partial S)_\varepsilon)}$, respectively. 

\begin{Remark}\label{rem:inner_outer_example}
	To make the distinction between the inner boundary and the ``full'' boundary clear, consider the example of a metric graph $\Gamma$ which is the union of the edges $e_1 = [0,1]$ and $e_2 = [0,2]$, with vertices $v_1 = (0,e_1)$, $v_2 = (1,e_1) = (0,e_2)$ and $v_3 = (2,e_2)$, so that $v_2$ is a common vertex for both $e_1$ and $e_2$. Then, the ``full'' boundary of $e_1$ consists of $\{(0,e_1), (1,e_1), (0,e_2)\}$, whereas the inner boundary of $e_1$ consists of $\{(0,e_1),(1,e_1)\}$ and the outer boundary consists of $\{(0,e_2)\}$.
\end{Remark}

If the inner and outer boundaries coincide for any set $S$, the boundary $\partial S$ is said to be regular, and if it holds for all sets $S\subset\Gamma$, the field is regular \cite{Rozanov1982Markov,Rozanov1977}. Regularity simplifies the analysis, and this was assumed by Pitt throughout \cite{Pitt1971}. 
The issue that we alluded to above is that differentiable fields on metric graphs often are nonregular. Note that the definition of MRF given in \cite{Pitt1971}, albeit similar to Definition \ref{def:MarkovPropertyField}, requires the field to be regular. For this reason we chose to follow \cite{kunsch} in Definition \ref{def:MarkovPropertyField} rather than \cite{Pitt1971}. One should also note that Definition \ref{def:MarkovPropertyField} is also similar to the ones introduced in \cite{Mandrekar1976,Rozanov1982Markov}.

In the first part of this work, Section~\ref{sec:isotropic}, we focus on isotropic GRFs on compact metric graphs, as introduced by \cite{anderes2020isotropic}. We show that there are no GRFs on general metric graphs that are both isotropic (in either the geodesic or the resistance metric) and Markov. Thus, one must choose between the two properties when determining the model for a particular application. 

Because there are no isotropic GRFs that are Markov on general metric graphs, we consider in the second part of the work the generalized Whittle--Mat\'ern fields of \citep{bolinetal_fem_graph}.
In Sections \ref{sec:GWM} and \ref{sec:proofsGWM}, we show that, similarly to the Whittle--Mat\'ern fields, the parameter ${\alpha>\nicefrac12}$ determines the $L_2(\Omega)$ regularity of the generalized Whittle--Mat\'ern fields, and we prove that under suitable assumptions on $\kappa$ and $\H$, these fields are $k$ times weakly differentiable in the $L_2(\Omega)$ sense for $k\leq \floor{\alpha}-1$. 
Our main result in this part of the work is that the generalized Whittle--Mat\'ern fields with suitable assumptions on $\kappa$ and $\H$ are Markov if and only if $\alpha \in\mathbb{N}$. Further, if $\alpha\in\mathbb{N}$, the field is Markov of order $\alpha$.
As particular cases, we obtain that the Whittle--Mat\'ern fields are Markov if and only if $\alpha\in\mathbb{N}$.

The final part of the manuscript provides two important consequences of the Markov property. 
Section~\ref{sec:edge} demonstrates that it implies an explicit characterization of the generalized Whittle--Mat\'ern fields on a fixed edge, as a sum of a GRF that is zero at the end points of the edge and a low-rank process that only depends on the process and its derivatives at the end points of the edge. This finding reveals that the conditional distribution of the process on an edge given the vertex values is independent of the graph geometry. In the same direction, in Section~\ref{sec:condrepr}, we obtain a conditional representation for the solution to \eqref{eq:Matern_spde_general} with $\alpha=1$. More precisely, we start with a family of independent generalized Whittle--Mat\'ern processes on the edges, where for each edge $e$ the corresponding process is a solution to $(\kappa_e^2 - \nicefrac{d}{dt}(a_e\nicefrac{d}{dt}))^{1/2} (\tau u)= \mathcal{W}$ on $e$, where $\kappa_e^2$ and $a_e$ are the restrictions of $\kappa^2$ and $a$ to $e$, respectively. Then, we show that if we condition these processes on being continuous at the vertices of $\Gamma$, the resulting random field is a solution \eqref{eq:Matern_spde_general}. The two representations in Section~\ref{sec:edge} and Section~\ref{sec:condrepr}
serve as critical first steps towards deriving efficient inference methods for these processes, which will be investigated further in future work. 
Finally, extensions and future work are discussed in Section~\ref{sec:discussion}, and auxiliary definitions and results are presented in four appendices.

\section{Notation and preliminaries}\label{sec:Markov}

If $u$ is an MRF, it is immediate that the $\sigma$-algebra $\mathcal{F}^u_+(\partial S)$ splits $\mathcal{F}^u(S)$ and $\mathcal{F}^u(\Gamma\setminus\overline{S})$ for every open set $S\subset\Gamma$. 
Moreover, the following fundamental characterization holds in terms of the $\sigma$-algebras generated by $u$ on the inner boundary of the sets. 

\begin{Proposition}\label{prp:markovinnerboundary}
	Let $\Gamma$ be a compact metric graph and let $u$ be a random field on $\Gamma$. Then, the statement that $u$ is an MRF is equivalent to any of the following statements:
	\begin{enumerate}[label= \roman{enumi}.]
		\item $\mathcal{F}^u_{+,I}(\partial S)$ splits $\mathcal{F}^u(S)$ and $\mathcal{F}^u_+(\Gamma\setminus S)$ for any open set $S$ whose boundary consists of finitely many points;\label{prp:markovinnerboundary1}
		\item $\mathbb{E}(f|\mathcal{F}^u(S)) = \mathbb{E}(f| \mathcal{F}^u_{+,I}(\partial S))$ for any open set $S$ whose boundary consists of finitely many points and for any bounded $\mathcal{F}^u_+(\Gamma\setminus S)$-measurable function $f$.\label{prp:markovinnerboundary2}
	\end{enumerate}        
\end{Proposition}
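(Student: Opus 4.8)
The plan is to split the argument into the soft equivalence of the two displayed statements and the substantive equivalence of either of them with the MRF property. The only external inputs I need are the elementary remark recorded just before the statement (that an MRF has $\mathcal{F}^u_+(\partial S)$ splitting $\mathcal{F}^u(S)$ and $\mathcal{F}^u(\Gamma\setminus\overline{S})$ for every open $S$), the semi-graphoid axioms for the splitting relation (symmetry, decomposition, weak union, contraction), and reverse-martingale convergence, $\mathbb{E}(f\mid\mathcal{G}_\varepsilon)\to\mathbb{E}(f\mid\bigcap_\varepsilon\mathcal{G}_\varepsilon)$ along decreasing families $\mathcal{G}_\varepsilon$. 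Two inclusions will be used repeatedly: $\mathcal{F}^u_{+,I}(\partial S)\subseteq\mathcal{F}^u(S)$ (each generator $S\cap(\partial S)_\varepsilon$ lies in $S$) and $\mathcal{F}^u_{+,O}(\partial S)\subseteq\mathcal{F}^u_+(\Gamma\setminus S)$ (each generator $(\Gamma\setminus S)\cap(\partial S)_\varepsilon\subseteq\Gamma\setminus S\subseteq(\Gamma\setminus S)_\delta$ for all $\delta$).

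For statement (i) $\Leftrightarrow$ statement (ii), I would first prove the general fact that, whenever $\mathcal{C}\subseteq\mathcal{A}$, the algebra $\mathcal{C}$ splits $\mathcal{A}$ and $\mathcal{B}$ if and only if $\mathbb{E}(f\mid\mathcal{A})=\mathbb{E}(f\mid\mathcal{C})$ for every bounded $\mathcal{B}$-measurable $f$; each direction is a one-line tower-property computation, using $\mathcal{C}\subseteq\mathcal{A}$ to pull $\mathbb{E}(\,\cdot\mid\mathcal{C})$ out of the relevant expectations. Applying this with $\mathcal{A}=\mathcal{F}^u(S)$, $\mathcal{B}=\mathcal{F}^u_+(\Gamma\setminus S)$ and $\mathcal{C}=\mathcal{F}^u_{+,I}(\partial S)\subseteq\mathcal{F}^u(S)$ gives the equivalence at once.

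For MRF $\Leftrightarrow$ statement (i), I treat the two implications separately. To prove MRF $\Rightarrow$ (i) for an open $S$ with finite boundary, I would apply the recorded remark to the eroded open sets $S^\varepsilon=\{s\in S:d(s,\Gamma\setminus S)>\varepsilon\}$, whose boundaries are finite and approach $\partial S$ from inside $S$, and then let $\varepsilon\downarrow0$: there $\mathcal{F}^u(S^\varepsilon)$ increases to $\mathcal{F}^u(S)$, $\mathcal{F}^u_+(\Gamma\setminus S^\varepsilon)$ decreases to $\mathcal{F}^u_+(\Gamma\setminus S)$, and $\mathcal{F}^u_+(\partial S^\varepsilon)$ should be shown to decrease to $\mathcal{F}^u_{+,I}(\partial S)$; feeding these into the level-$\varepsilon$ splitting and passing to the limit in the conditional-expectation form (ii) yields (i). For the converse (i) $\Rightarrow$ MRF, I first fix an open $S$ with finite boundary: from (i), weak union applied with the sub-algebra $\mathcal{F}^u_{+,O}(\partial S)\subseteq\mathcal{F}^u_+(\Gamma\setminus S)$ shows that $\mathcal{F}^u_{+,I}(\partial S)\vee\mathcal{F}^u_{+,O}(\partial S)$ splits $\mathcal{F}^u(S)$ and $\mathcal{F}^u_+(\Gamma\setminus S)$; I then invoke the germ decomposition $\mathcal{F}^u_+(\partial S)=\mathcal{F}^u_{+,I}(\partial S)\vee\mathcal{F}^u_{+,O}(\partial S)$ (valid for finite boundary) to replace the conditioning algebra by $\mathcal{F}^u_+(\partial S)$, and the inclusion $\mathcal{F}^u_+(\overline{S})\subseteq\mathcal{F}^u(S)\vee\mathcal{F}^u_+(\partial S)$ together with contraction and decomposition to enlarge the left algebra from $\mathcal{F}^u(S)$ to $\mathcal{F}^u_+(\overline{S})$, which is exactly the MRF splitting for $S$. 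Finally, for a general open $S$ I would approximate edge-wise: on each edge (an interval) $S$ is a countable union of open subintervals, so there is an increasing sequence of finite-boundary open sets $S_n\uparrow S$ with $\mathcal{F}^u(S_n)\uparrow\mathcal{F}^u(S)$, and I would transfer the finite-boundary conclusion to $S$ in the limit.

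The hard part throughout is measure-theoretic rather than combinatorial: neither the countable intersection defining $\mathcal{F}_+$ commutes with the join $\vee$ of $\sigma$-algebras, nor is the splitting relation automatically preserved when the conditioning algebra is replaced by an intersection. Both the germ decomposition $\mathcal{F}^u_+(\partial S)=\mathcal{F}^u_{+,I}(\partial S)\vee\mathcal{F}^u_{+,O}(\partial S)$ and the identification $\mathcal{F}^u_+(\partial S^\varepsilon)\downarrow\mathcal{F}^u_{+,I}(\partial S)$ rest on exactly these interchanges. I expect the finiteness of $\partial S$ to be the structural fact that makes them valid, since it localizes the analysis to finitely many star-shaped neighborhoods in which $S$ and $\Gamma\setminus S$ occupy entire edge-germs, while reverse-martingale convergence is the analytic tool that turns the level-$\varepsilon$ splittings into the limiting one. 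The second delicate point is the passage to arbitrary open $S$, where $\partial S$ may be infinite and $\partial S_n$ keeps acquiring new points, so that controlling $\mathcal{F}^u_+(\partial S)$ under the approximation (likely by sandwiching $S$ between inner and outer finite-boundary approximations) is where I would expect to spend the most care.
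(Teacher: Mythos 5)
Your overall architecture mirrors the paper's: the equivalence of (i) and (ii) via the elementary fact that for $\mathcal{C}\subseteq\mathcal{A}$ splitting is the same as $\mathbb{E}(f\mid\mathcal{A})=\mathbb{E}(f\mid\mathcal{C})$ is exactly Proposition~\ref{prp:equivcondindep}, and your erosion argument for the direction MRF $\Rightarrow$ (i) (apply the MRF splitting to $S^\varepsilon$, then pass to the limit by reverse-martingale convergence) is essentially the $S(\varepsilon)$ construction in the proof of Theorem~\ref{thm:mandrekar_markov_split}. However, your direction (i) $\Rightarrow$ MRF contains a genuine gap: it rests on the germ decomposition $\mathcal{F}^u_+(\partial S)=\mathcal{F}^u_{+,I}(\partial S)\vee\mathcal{F}^u_{+,O}(\partial S)$, which requires interchanging the decreasing intersection over $\varepsilon$ with the join of $\sigma$-algebras. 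While $\mathcal{F}^u((\partial S)_\varepsilon)=\mathcal{F}^u(S\cap(\partial S)_\varepsilon)\vee\mathcal{F}^u((\Gamma\setminus S)\cap(\partial S)_\varepsilon)$ holds at each fixed $\varepsilon$, the intersection of joins only contains the join of the intersections, and equality is precisely the kind of germ-field identity that fails for general random fields. You flag this yourself and say you ``expect'' finiteness of $\partial S$ to make it valid, but no argument is given, and finiteness of $\partial S$ alone does not obviously force it (the obstruction is probabilistic, not topological). As written, the proof does not close.

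The paper avoids this identity entirely. From (i) it first enlarges the conditioning algebra at each fixed level $\varepsilon$: by Proposition~\ref{prp:propertiescondindep}.\ref{prp:propertiescondindep2}, $\mathcal{F}^u(S\cap(\partial S)_\varepsilon)$ splits $\mathcal{F}^u(S)$ and $\mathcal{F}^u(\Gamma\setminus S)$ (since it sits between $\mathcal{F}^u_{+,I}(\partial S)$ and $\mathcal{F}^u(S)$), and then, using the sandwich $\mathcal{F}^u(S\cap(\partial S)_\varepsilon)\subset\mathcal{F}^u((\partial S)_\varepsilon)\subset\mathcal{F}^u(S\cap(\partial S)_\varepsilon)\vee\mathcal{F}^u(\Gamma\setminus S)$, the same proposition upgrades the splitting algebra to $\mathcal{F}^u((\partial S)_\varepsilon)$. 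Only after these fixed-$\varepsilon$ enlargements does it let $\varepsilon\downarrow 0$ and apply martingale convergence to reach $\mathcal{F}^u_+(\partial S)$, so no interchange of $\bigcap_\varepsilon$ with $\vee$ is ever needed. If you replace your weak-union-plus-germ-decomposition step with this sandwich argument, your proof goes through. A secondary, smaller issue is the passage from finite-boundary open sets to arbitrary open sets in Definition~\ref{def:MarkovPropertyField}; your proposed inner approximation $S_n\uparrow S$ is in the right spirit but is also only sketched, and you should be aware that the paper handles the whole reduction through the Mandrekar-type Theorem~\ref{thm:mandrekar_markov_split} rather than proving it ad hoc.
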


The proof uses 
characterizations and properties of conditional independence provided in Appendix~\ref{app:aux_secMarkov}, and is therefore provided there. 
The result shows that, for an MRF $u$, the inner boundary splits an open set $S$ and its (closed) complementary set $\Gamma\setminus S$. This has the remarkable consequence that the restriction of an MRF on a metric graph to an edge is conditionally independent of all other edges, given the information on the inner boundary (i.e., the two endpoints) of the edge. 

\begin{Remark}
	 Consider $\Gamma$ given in Remark \ref{rem:inner_outer_example} and that $u$ is a random field on $\Gamma$ such that $u|e_1$ and $u|e_2$ are continuous. It is clear that if $u$ is continuous on $\Gamma$, we have that ${u(1,e_1) = u(0,e_2) = u(v_2)}$. However if $u$ is not continuous, $u(1,e_1)$ and $u(0,e_2)$ may have different values and, thus, to condition on the inner boundary is different than to condition on the ``full'' boundary. Therefore, it is important to address the differences as in some cases, such as in the edge representation that will be introduced in Section~\ref{sec:edge}, it is natural to consider conditionals with respect to the inner boundary.
\end{Remark}

We let $L_2(\Omega)$ denote the Hilbert space of all (equivalence classes of) square integrable real-valued random variables. For a GRF $u$ on $\Gamma$ and a Borel set $S\subset \Gamma$, we define the Gaussian space ${H(S) = \overline{\textrm{span}(u(s): s\in S)}}$,
where  the closure is taken with respect to the $L_2(\Omega)$-norm. The elements of this space are Gaussian random variables because  each element in $\textrm{span}(u(w): w\in S)$ is Gaussian. Thus, the elements of $H(S)$ are $L_2(\Omega)$-limits of Gaussian variables and, hence, Gaussian \cite[e.g. see][Theorem 1.4.2, p.39]{ashtopics}. 
By defining $H_+(S) = \bigcap_{\varepsilon>0} H(S_\varepsilon)$, we have the following alternative representation of the Markov property in terms of Gaussian spaces (see also \citep[Lemma 1.3]{kunsch}).
In the representation, we use the notation $H_1\oplus H_2 =  \{h_1 + h_2: h_1\in H_1, h_2\in H_2\}$ for the direct sum of two Hilbert spaces $H_1,H_2 \subset H$ with $H_1\cap H_2 = \{0\}$. Similarly, if $H_1\subset H_2$, we write $H_2\ominus H_1$ to denote the orthogonal complement of $H_1$ in $H_2$. 
\begin{Proposition}\label{prp:charmarkovspaces}
	Let $\Gamma$ be a metric graph and let $u$ be a GRF on $\Gamma$. Then, the fact that $u$ is an MRF is equivalent to any of the following statements: For any open set $S$, 
	\begin{enumerate}[label= \roman{enumi}.]
		\item the orthogonal projection of $H_+(\overline{S})$ on $H_+(\Gamma\setminus S)$ is $H_+(\partial S)$;\label{prp:charmarkovspaces1}
		\item $H(\Gamma) = H_+(\overline{S})\oplus(H_+(\Gamma\setminus S)\ominus H_+(\partial S)) = H_+(\Gamma\setminus S)\oplus (H_+(\overline{S})\ominus H_+(\partial S))$. \label{prp:charmarkovspaces2}
	\end{enumerate}
\end{Proposition}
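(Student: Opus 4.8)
The plan is to translate the probabilistic splitting condition of Definition~\ref{def:MarkovPropertyField} into an orthogonal-projection identity in the Gaussian space $H(\Gamma)$, and then to verify the purely geometric equivalence between that identity and the subspace decompositions. Throughout I would write $H_A = H_+(\overline{S})$, $H_B = H_+(\Gamma\setminus S)$, and $H_C = H_+(\partial S)$, and let $P_{H_B}$ denote the orthogonal projection of $H(\Gamma)$ onto $H_B$. Since $S$ is open, $\partial S\subseteq\overline{S}$ and $\partial S\subseteq\Gamma\setminus S$, so the monotonicity of $A\mapsto H_+(A)$ gives $H_C\subseteq H_A\cap H_B$; this is what lets $H_C$ serve as the splitting space inside both $H_A$ and $H_B$. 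I would record two facts, both of which belong in Appendix~\ref{app:aux_secMarkov}: (a) for a GRF one has $\mathcal{F}^u_+(A) = \sigma(H_+(A))$, so the $\varepsilon$-regularized $\sigma$-algebras are exactly the Gaussian $\sigma$-algebras generated by the spaces $H_+(A)$; and (b) for centered jointly Gaussian families, $\sigma(M)$ and $\sigma(N)$ are conditionally independent given $\sigma(L)$, with $L\subseteq M\cap N$ closed, if and only if the residuals after projecting onto $L$ are orthogonal, i.e. $\langle m - P_L m,\, n - P_L n\rangle = 0$ for all $m\in M$, $n\in N$. Using (a), the statement that $\mathcal{F}^u_+(\partial S)$ splits $\mathcal{F}^u_+(\overline{S})$ and $\mathcal{F}^u_+(\Gamma\setminus S)$ becomes exactly conditional independence of $\sigma(H_A)$ and $\sigma(H_B)$ given $\sigma(H_C)$.

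Next I would establish the equivalence of the Markov property with \ref{prp:charmarkovspaces1}. Applying (b) with $L = H_C$ and simplifying using that $P_{H_C}$ is self-adjoint and idempotent, the residual orthogonality reduces to $\langle h_A, h_B\rangle = \langle P_{H_C}h_A, P_{H_C}h_B\rangle$ for all $h_A\in H_A$, $h_B\in H_B$; and since $H_C\subseteq H_B$, this is in turn equivalent to $P_{H_B}h_A = P_{H_C}h_A$ for every $h_A\in H_A$. Splitting $h_A = P_{H_C}h_A + (h_A - P_{H_C}h_A)$ shows this holds if and only if $P_{H_B}(H_A\ominus H_C) = \{0\}$, and because $H_C\subseteq H_A\cap H_B$ one then checks that as sets $P_{H_B}(H_A) = P_{H_C}(H_A) = H_C$, which is precisely \ref{prp:charmarkovspaces1}. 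The two short verifications, that $P_{H_B}(H_A)=H_C$ as a set is equivalent to $P_{H_B}(H_A\ominus H_C)=\{0\}$, I would include explicitly, as they are where the inclusion $H_C\subseteq H_A\cap H_B$ is used.

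Finally I would prove the equivalence of \ref{prp:charmarkovspaces1} and \ref{prp:charmarkovspaces2} by Hilbert-space geometry, reading $\oplus$ as the orthogonal direct sum, as the notation $\ominus$ for the orthogonal complement suggests. Set $K_A = H_A\ominus H_C$ and $K_B = H_B\ominus H_C$; since each is orthogonal to $H_C$, statement \ref{prp:charmarkovspaces1} is equivalent to $K_A\perp K_B$. I would also note $\overline{H_A + H_B} = H(\Gamma)$, since $\overline{S}\cup(\Gamma\setminus S) = \Gamma$ forces $H(\overline{S}) + H(\Gamma\setminus S)$ to be dense in $H(\Gamma)$, while $H(\overline{S})\subseteq H_A$ and $H(\Gamma\setminus S)\subseteq H_B$. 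If \ref{prp:charmarkovspaces1} holds, then $H_C, K_A, K_B$ are mutually orthogonal, their orthogonal sum is closed, and it contains the dense set $H_A + H_B$; hence $H(\Gamma) = H_C\oplus K_A\oplus K_B$, which, regrouped, yields both decompositions in \ref{prp:charmarkovspaces2}. Conversely, the first orthogonal decomposition in \ref{prp:charmarkovspaces2} forces $H_A\perp K_B$, hence $K_A\perp K_B$, which is \ref{prp:charmarkovspaces1}.

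The step I expect to be most delicate is fact (a), namely commuting the intersection over $\varepsilon$-neighborhoods with the passage from Gaussian spaces to $\sigma$-algebras, that is $\bigcap_{\varepsilon>0}\sigma(H(A_\varepsilon)) = \sigma(\bigcap_{\varepsilon>0} H(A_\varepsilon))$. The inclusion $\supseteq$ is immediate, whereas $\subseteq$ relies on the convergence of the conditional expectations (orthogonal projections) $P_{H(A_\varepsilon)}$ along the decreasing family together with a reverse-martingale argument specific to Gaussian $\sigma$-algebras. This interchangeability is exactly the regularity that makes the $H_+$ and $\mathcal{F}^u_+$ objects compatible, and it is the reason I would defer the supporting lemmas to Appendix~\ref{app:aux_secMarkov} and keep the body of the proof at the level of the projection computation above.
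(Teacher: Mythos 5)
Your proof is correct, and it reaches the projection identity by a genuinely different route than the paper. The paper's argument runs through Pitt's characterization of the \emph{minimal splitting $\sigma$-algebra} of $\mathcal{F}^u_+(\overline{S})$ and $\mathcal{F}^u_+(\Gamma\setminus S)$ as $\sigma(\mathbb{E}(u(s)\mid\mathcal{F}^u_+(\Gamma\setminus S)):s\in\overline{S})$, identifies that object with $\sigma(P_{H_+(\Gamma\setminus S)}H_+(\overline{S}))$, and then compares it with $\mathcal{F}^u_+(\partial S)$; you instead invoke the residual-orthogonality criterion for conditional independence of Gaussian $\sigma$-algebras and reduce directly to $P_{H_B}h_A=P_{H_C}h_A$ on $H_A$. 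Both routes share the two genuinely nontrivial ingredients — the identification $\mathcal{F}^u_+(A)=\sigma(H_+(A))$ (Mandrekar's lemma, which you correctly flag as the delicate interchange of $\bigcap_\varepsilon$ with $\sigma(\cdot)$) and the identification of conditional expectation with orthogonal projection for Gaussian spaces — so the difference is in which classical lemma converts conditional independence into geometry. Your version buys a self-contained Hilbert-space computation that does not need the notion of a minimal splitting $\sigma$-algebra, at the cost of having to justify that residual orthogonality of the linear spaces upgrades to full conditional independence of the generated $\sigma$-algebras (a standard Gaussian fact, but one you should cite or prove, since the paper's Appendix~\ref{app:aux_secMarkov} does not currently contain it). You also supply a complete argument for the equivalence of \ref{prp:charmarkovspaces1} and \ref{prp:charmarkovspaces2}, which the paper dismisses as immediate; note only that you should state explicitly that you read $\oplus$ in \ref{prp:charmarkovspaces2} as an \emph{orthogonal} direct sum, since the paper's displayed definition of $\oplus$ requires only $H_1\cap H_2=\{0\}$, and the implication \ref{prp:charmarkovspaces2}$\Rightarrow$\ref{prp:charmarkovspaces1} genuinely uses orthogonality of the summands.
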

\begin{proof}
The equivalence between i and ii is immediate. We now prove the equivalence between the Markov property and i.
Given a GRF $u$ on $\Gamma$, 
the minimal splitting $\sigma$-algebra of $\mathcal{F}^u_+(\overline{S})$ and $\mathcal{F}^u_+(\Gamma\setminus S)$, where $S$ is an open set, is 
$
\sigma(\mathbb{E}(u(s)|\mathcal{F}^u_+(\Gamma\setminus S)): s\in \overline{S})
$ by \cite[Lemma 2.3]{Pitt1971}. 
Linear combinations of $\mathcal{F}^u(S)$-measurable functions are $\mathcal{F}^u(S)$-measurable, and if $v_n$ is a sequence of $\mathcal{F}^u(S)$-measurable functions that converges to some function $v$ in $L_2(\Omega)$, then there exists a subsequence $(v_{n_j})$ such that $(v_{n_j})$ converges almost surely to $v$, showing that  $v$ is $\mathcal{F}^u(S)$-measurable. 
Therefore, every element of $H(S)$ is $\mathcal{F}^u(S)$-measurable, and it follows that, for every $S\subset \Gamma$, ${\mathcal{F}^u(S) = \sigma(H(S))}$.
We can now apply \cite[Lemma 3.3]{Mandrekar1976} to obtain that $\mathcal{F}_+^u(\overline{S}) = \sigma(H_+(\overline{S}))$ for every $S\subset\Gamma$. Now, observe that, by combining the previous identity with Gaussianity we have that conditional expectation with respect to $\mathcal{F}_+^u(\overline{S})$ is simply the orthogonal projection onto the space $H_+(\overline{S})$, see, e.g., \cite[Theorem 9.1]{janson_gaussian}. Therefore, since the minimal splitting $\sigma$-algebra is $\sigma(\mathbb{E}(u(s)|\mathcal{F}^u_+(\Gamma\setminus S)): s\in \overline{S})$,  the minimal splitting $\sigma$-algebra is generated by the projections of $H_+(\overline{S})$ onto $H_+(\Gamma\setminus S)$. We are now in a position to conclude the proof. Let $u$ satisfy the Markov property, then the minimal splitting $\sigma$-algebra is contained in $\mathcal{F}_+^u(\partial S)$, so that the projection of $H_+(\overline{S})$ onto $H_+(\Gamma\setminus S)$ is contained in $H_+(\partial S)$, however, we also have that $H_+(\partial S) \subset H_+(\overline{S})\cap H_+(\Gamma\setminus S)$. Therefore, the projection of $H_+(\Gamma\setminus S)$ onto $H_+(\Gamma\setminus S)$ is $H_+(\partial S)$. Conversely, if the projection of $H_+(\Gamma\setminus S)$ onto $H_+(\Gamma\setminus S)$ is $H_+(\partial S)$, we have that the minimal splitting $\sigma$-algebra is $\sigma(H_+(\partial S)) = \mathcal{F}_+(\partial S)$, which implies the Markov property.
\end{proof}
Finally, one can also characterize the Markov property through the Cameron--Martin space associated with the random field $u$. If $u$ has a continuous covariance function ${\rho:\Gamma\times\Gamma\to\mathbb{R}}$, we define the Cameron--Martin space $\mathcal{H}(S)$, for $S\subset\Gamma$, as
$
\mathcal{H}(S) = \{h(s) = \mathbb{E}(u(s)v): s\in \Gamma\hbox{ and } v\in H(S)\},
$
with inner product $\langle h_1, h_2\rangle_{\mathcal{H}} = \mathbb{E}(v_1 v_2),$
where $h_j(s) = \mathbb{E}(u(s)v_j)$, $s\in\Gamma$, and $v_j \in H(S)$, where $j=1,2$.  
Then $\mathcal{H}(S)$ is isometrically isomorphic to $H(S)$.  For the following definition, recall that the support of a function $h:\Gamma\to\mathbb{R}$ is defined as $\textrm{supp}\,h = \overline{\{s\in\Gamma: h(s)\neq 0\}}$.
\begin{Definition}\label{def:localCMspaces}
	Let $\Gamma$ be a compact metric graph and let $H$ be an inner product space of functions defined on $\Gamma$, endowed with the inner product
	$\langle \cdot, \cdot\rangle_{H}$. We say that $H$ is local if:
	\begin{enumerate}[label= \roman{enumi}.]
		\item If $h_1,h_2\in H$ and $\textrm{supp}\, h_1 \cap \textrm{supp}\, h_2 = \emptyset$, then $\langle h_1,h_2\rangle_{H} = 0;$\label{def:localCMspaces1}
		\item If $h \in H$ is such that $h = h_1+h_2$, where $\textrm{supp}\, h_1\cap \textrm{supp}\,h_2 = \emptyset$, then $h_1, h_2 \in H$.\label{def:localCMspaces2}
	\end{enumerate}
\end{Definition}

Next, for a Borel set $S\subset\Gamma$, we define 
$\mathcal{H}_+(S) = \bigcap_{\varepsilon > 0} \mathcal{H}(S_\varepsilon)$,
and let $\Phi:H(\Gamma)\to \mathcal{H}(\Gamma)$ be the isometrically isomorphic map $\Phi(v)(\cdot) = \mathbb{E}(u(\cdot)v)$. Then, 
$$
H_+(S) = \bigcap_{\varepsilon>0} H(S_\varepsilon) = \bigcap_{\varepsilon>0}\Phi^{-1}(\mathcal{H}(S_\varepsilon)) = \Phi^{-1}\left(\mathcal{H}_+(S)\right),
$$
so that $\mathcal{H}_+(S) = \{h(s) = \mathbb{E}(u(s)u): s\in \Gamma\hbox{ and } u\in H_+(S)\}.$
Thus, we can rewrite Proposition \ref{prp:charmarkovspaces}.\ref{prp:charmarkovspaces1} as that  
the orthogonal projection of $\mathcal{H}_+(\overline{S})$ on $\mathcal{H}_+(\Gamma\setminus S)$ is $\mathcal{H}_+(\partial S)$
for any open set $S$. Similarly, 
Proposition~\ref{prp:charmarkovspaces}.\ref{prp:charmarkovspaces2} translates to that 
$$
\mathcal{H}(\Gamma) = \mathcal{H}_+(\overline{S})\oplus(\mathcal{H}_+(S^c)\ominus \mathcal{H}_+(\partial S)) = \mathcal{H}_+(S^c)\oplus (\mathcal{H}_+(\overline{S})\ominus \mathcal{H}_+(\partial S)),
$$
for any open set $S$.
Finally, we have the following crucial result, which follows from  \cite[Theorem~5.1]{kunsch}. 

\begin{Theorem}\label{thm:MarkovLocalCM}
	Let $u$ be a GRF defined on a compact metric graph $\Gamma$, with Cameron--Martin space $\mathcal{H}(\Gamma)$. Then, $u$ is a GMRF if, and only if, $\mathcal{H}(\Gamma)$ is local.
\end{Theorem}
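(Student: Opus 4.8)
The plan is to deduce the equivalence from the abstract characterization of the Markov property for Gaussian fields in terms of the reproducing-kernel (Cameron--Martin) space due to Künsch, so that the real work is to check that our setting fits that framework and that the notion of locality in Definition~\ref{def:localCMspaces} coincides with the one used there. Concretely, I would first invoke the Cameron--Martin reformulation of Proposition~\ref{prp:charmarkovspaces} recorded above: $u$ is a GMRF if and only if, for every open set $S$, the orthogonal projection of $\mathcal{H}_+(\overline S)$ onto $\mathcal{H}_+(\Gamma\setminus S)$ equals $\mathcal{H}_+(\partial S)$. The goal is then to show that this splitting property holds for all open $S$ precisely when $\mathcal{H}(\Gamma)$ satisfies conditions i and ii of Definition~\ref{def:localCMspaces}, which is the content of \cite[Theorem~5.1]{kunsch}.

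For the direction that locality implies the Markov property, I would fix an open $S$ and, working with the spaces $\mathcal{H}(S_\varepsilon)$ before taking $\varepsilon\downarrow 0$, use condition ii of locality to split an element $h\in\mathcal{H}_+(\overline S)$ into a piece supported in $S$ at positive distance from $\partial S$ and a remainder supported near $\partial S$. By condition i the interior piece is orthogonal to every element of $\mathcal{H}_+(\Gamma\setminus S)$, so the projection of $h$ onto $\mathcal{H}_+(\Gamma\setminus S)$ is carried by the near-boundary remainder, which lands in $\mathcal{H}_+(\partial S)$. Passing to the $\bigcap_{\varepsilon>0}$ limits that define the $\mathcal{H}_+$ spaces then identifies the projection with $\mathcal{H}_+(\partial S)$, giving the characterization in Proposition~\ref{prp:charmarkovspaces}.

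For the converse, I would exploit that on a compact metric graph two disjoint closed sets are at strictly positive distance. Given $h_1,h_2\in\mathcal{H}(\Gamma)$ with $\mathrm{supp}\,h_1\cap\mathrm{supp}\,h_2=\emptyset$, this lets me choose an open set $S$ with $\mathrm{supp}\,h_1\subset S$, $\mathrm{supp}\,h_2\subset\Gamma\setminus\overline S$, and $\partial S$ lying in the gap between the two supports; then $h_1\in\mathcal{H}_+(\overline S)$ is orthogonal to $\mathcal{H}_+(\partial S)$ while $h_2\in\mathcal{H}_+(\Gamma\setminus S)$, and the Markov splitting forces $\langle h_1,h_2\rangle_{\mathcal{H}}=0$, which is condition i. Condition ii would follow in the same spirit: for $h=h_1+h_2$ with disjoint supports, the separating set around the gap together with the direct-sum form of Proposition~\ref{prp:charmarkovspaces} extracts each summand as a legitimate element of $\mathcal{H}(\Gamma)$.

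The main obstacle I anticipate is boundary bookkeeping rather than any deep idea: because differentiable fields on graphs are generally nonregular, one must keep the full boundary $\partial S$ distinct from the inner and outer boundaries and justify interchanging the support decompositions with the intersections $\bigcap_{\varepsilon>0}$ defining $\mathcal{H}_+$. Verifying the identification $\mathcal{H}_+(\partial S)=\mathcal{H}_+(\overline S)\cap\mathcal{H}_+(\Gamma\setminus S)$ and that the separating sets can be taken with boundary consisting of finitely many points (so that Proposition~\ref{prp:markovinnerboundary} applies) is where care is needed; once these topological facts are secured, the statement is exactly the metric-graph instance of \cite[Theorem~5.1]{kunsch}.
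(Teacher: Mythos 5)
Your proposal takes essentially the same route as the paper: the paper gives no independent argument for this theorem and simply derives it from \cite[Theorem~5.1]{kunsch}, which is exactly the reduction you propose, and your remarks about verifying the metric-graph setting and the boundary bookkeeping are consistent with that. (One small caution: in your forward-direction sketch, condition ii of Definition~\ref{def:localCMspaces} does not by itself \emph{produce} a decomposition of $h$ into a piece away from $\partial S$ and a near-boundary remainder—it only guarantees that the summands of an already-given disjointly supported decomposition lie in $\mathcal{H}(\Gamma)$—but since both you and the paper ultimately defer to K\"unsch's theorem for that argument, this does not affect the correctness of the proposal.)
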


\begin{Remark}
	We have introduce three different characterizations of Markov properties of GRFs. The first  involves $\sigma$-algebras and is useful for intuition as it relates to conditional independence and creates a link to the standard Markov property in stochastic processes with a one-dimensional parameter.
	The second characterization involves Gaussian spaces and, even though it is not as intuitive as the first, it is more useful for our proofs as it translates Markov properties into the language of functional analysis and linear algebra. 
	However, the elements in these spaces are still random variables and in this sense we have the third characterization that involves Cameron-Martin spaces 
	that consists of deterministic variables and for which case the results can be interpreted in the language of partial differential equations and functional analysis. Therefore, depending on the problem we will be switching to the most convenient characterization with respect to the result we want to prove.
\end{Remark}

\section{Isotropic random fields}\label{sec:isotropic}

This section explores the Markov properties of isotropic GRFs on compact metric graphs. We focus on processes that exhibit isotropy with respect to either the geodesic metric $d(\cdot,\cdot)$ or the resistance metric $d_R(\cdot,\cdot)$. The resistance metric extends the classical concept of resistance distance from combinatorial graphs to metric graphs, as introduced by \cite{anderes2020isotropic}. A formal definition of the resistance metric, along with key relationships to the geodesic metric, is provided in Appendix~\ref{app:isotropic_apdx}, together with the definition of metric graphs with Euclidean edges, as well as Euclidean cycles.

\begin{Theorem}\label{thm:markovCircle}
	Let $\Gamma$ be a circle or a Euclidean cycle with length $\ell$, and suppose that $X(\cdot)$ is a GRF on $\Gamma$ that is isotropic in the geodesic metric. Then $X(\cdot)$ is Markov of order $1$ if and only if its covariance function ${\rho(s,s') = r_1(d(s,s'))}$ is the Green's function of the operator $Q = \tau^2(\kappa^2 - \Delta_\Gamma)$ for some  $\tau,\kappa>0$ and some function $r_{1,\ell}(\cdot)$.
\end{Theorem}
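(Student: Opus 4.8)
The plan is to treat the circle and the Euclidean cycle together: since a Euclidean cycle of length $\ell$ is isometric as a metric space to the circle $\Gamma=\mathbb{R}/\ell\mathbb{Z}$, and the Kirchhoff Laplacian reduces at the degree-two vertices to the ordinary second-derivative operator, it suffices to argue on the circle, where I parametrize points by arc length and write $d(s,s')=\min(|t-t'|,\ell-|t-t'|)$. I abbreviate the radial profile by $c:=r_1$ and the stationary covariance by $\tilde c(t)=c(d(0,t))$, whose Fourier coefficients are nonnegative and summable since $X$ is mean-square continuous. For the easy implication I start from the Green's function of $Q=\tau^2(\kappa^2-\Delta_\Gamma)$, which a direct solve of $(\kappa^2-\partial_t^2)G=\delta$ with periodic boundary conditions identifies with $\tilde c(t)=\cosh(\kappa(\ell/2-d(0,t)))/(2\tau^2\kappa\sinh(\kappa\ell/2))$; this is the admissible $r_{1,\ell}$. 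Its Cameron--Martin space is the form domain $H^1(\Gamma)$ of $Q$ with inner product $\tau^2\int_\Gamma(\kappa^2 hg+h'g')\,\mathrm{d}t$, which is local in the sense of Definition~\ref{def:localCMspaces} because disjoint supports kill the integrand and a function splitting along disjoint supports keeps each summand in $H^1(\Gamma)$. Theorem~\ref{thm:MarkovLocalCM} then yields that $X$ is a GMRF, and the order is exactly one because $H^1$-functions carry a point trace but no derivative trace, so $\mathcal{H}_+(\{p\})$ is spanned by $G(\cdot,p)$ and the splitting of Proposition~\ref{prp:charmarkovspaces} is realized by the values $X(s)$, $s\in\partial S$.

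For the converse I assume $X$ isotropic and Markov of order $1$ and extract a functional equation for $c$ from Definition~\ref{def:MarkovPropertyFieldOrderP} with $p=1$. Applying it to the short open arc $S=(-a,a)$, whose boundary is $\{-a,a\}$, an interior point $t_2=0$ and an exterior point $t_4$ must satisfy $\Cov(X(t_2),X(t_4)\mid X(-a),X(a))=0$. Restricting to configurations in which every pairwise geodesic distance is attained without wrapping around the circle and evaluating the resulting $2\times2$ conditional covariance gives
\[
c(a+b)\,(c(0)+c(2a))=c(a)\,(c(2a+b)+c(b))
\]
on an open range of $(a,b)$. Reading this as a three-term linear relation among the translates $c(\cdot),c(\cdot+a),c(\cdot+2a)$ with shift-dependent coefficient $\lambda_a=c(a)/(c(0)+c(2a))$, the standard fact that functions whose translates satisfy such relations are exponential polynomials, together with the forced identity $\lambda_a=1/(2\cosh(\kappa a))$, shows that $c$ solves a constant-coefficient equation $c''=\kappa^2 c$ on $(0,\ell/2)$ for a single real constant $\kappa^2$.

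The main obstacle, where the topology of the circle genuinely enters, is to promote this local ODE to the global Green's-function statement: on $(0,\ell/2)$ the equation also admits $\sinh(\kappa(\ell/2-d))$ and, when $\kappa^2<0$, the oscillatory $\cos(\omega(\ell/2-d))$, corresponding respectively to a spurious kink of $\tilde c$ at the antipode and to a non-positive-definite field. To rule these out I apply the screening identity a second time to configurations in which $t_4$ lies near the antipode of $t_2$, so that $d(t_2,t_4)$ and $d(\pm a,t_4)$ are attained the long way around the circle. These straddling relations are genuinely new constraints --- one checks, for instance, that the periodic exponential $e^{-\kappa d}$ satisfies the unwrapped equation but violates them --- and they force the matching condition $c'(\ell/2)=0$, i.e.\ the absence of a kink at the antipode. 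Positive definiteness, read off from the signs of the Fourier coefficients of $\tilde c$, then excludes $\kappa^2<0$ and the degenerate deterministic case $\kappa^2=0$, leaving $\kappa^2>0$ and $c(d)\propto\cosh(\kappa(\ell/2-d))$. Finally, $\tilde c$ is now smooth on $\Gamma\setminus\{0\}$ with its only kink at the origin, so computing $(\kappa^2-\Delta_\Gamma)\tilde c$ in the distributional sense produces a single point mass $-2c'(0^+)\delta_0$; this identifies $\tilde c$ as a positive multiple of the Green's function of $\tau^2(\kappa^2-\Delta_\Gamma)$ with $\tau^{-2}=-2c'(0^+)$, which completes the characterization.
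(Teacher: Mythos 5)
Your proof takes a genuinely different route from the paper's: the paper disposes of the circle case in one line by citing Pitt \cite{Pitt1971} and then observes that a Euclidean cycle is isometric to a circle with degree-two vertices added, whereas you give a self-contained argument (forward direction via locality of the $H^1$ Cameron--Martin space and Theorem~\ref{thm:MarkovLocalCM}, converse via a screening functional equation). The reduction to the circle and the forward direction are sound and, if anything, more informative than the citation. However, the converse has a genuine gap.

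The problem is that your converse only ever uses the screening identity at configurations where the $2\times 2$ conditioning matrix $\Sigma=\bigl(\begin{smallmatrix}c(0)&c(2a)\\ c(2a)&c(0)\end{smallmatrix}\bigr)$ is nonsingular, and then relies on (i) the resulting ODE, (ii) the straddling identities forcing $c'(\ell/2)=0$, and (iii) positive definiteness to exclude $\kappa^2\le 0$. None of these excludes the covariance $\tilde c(t)=\cos(2\pi k t/\ell)$ with $k$ even (the case $\kappa^2=-\omega^2$, $\omega=2\pi k/\ell$): it is positive definite (a single nonnegative Fourier mode), it is a smooth periodic function of $t-t'$ so every screening identity --- unwrapped or straddling --- holds identically wherever $\Sigma$ is invertible (the field $X(t)=A\cos\omega t+B\sin\omega t$ has rank two, so two generic observations determine it and all conditional covariances vanish), and it satisfies $c'(\ell/2)=0$. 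So your claim that ``positive definiteness \ldots excludes $\kappa^2<0$'' is false as stated, and the oscillatory branch survives every constraint you impose. This field is in fact \emph{not} Markov of order $1$, but the failure is only visible at the degenerate arcs $S=(-a,a)$ with $\omega a\in\pi\mathbb{Z}$, where $X(-a)=X(a)$ almost surely, conditioning on the boundary pins down only $A$, and $\Cov(X(t_2),X(t_4)\mid A)=\sin(\omega t_2)\sin(\omega t_4)\neq 0$ for generic interior $t_2$ and exterior $t_4$. To close the converse you must explicitly invoke Definition~\ref{def:MarkovPropertyFieldOrderP} at these singular configurations (or otherwise rule out finite-rank isotropic fields at the outset). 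A smaller inaccuracy in the same spirit: the case $\kappa^2=0$, $c(d)=\alpha-\beta d$, is not ``deterministic'' --- it is a legitimate positive definite isotropic covariance for suitable $\alpha,\beta$ --- and it is excluded by the antipodal matching condition $c'(\ell/2)=0$ from your straddling identities, not by positive definiteness.
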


\begin{proof}
	The result for the case when $\Gamma$ is a circle is proved in  \cite{Pitt1971}. Clearly, one can add vertices of degree 2 to the circle without changing the geodesic distance. Further, any Euclidean cycle with $n$ vertices and total edge length $\ell$ is isomorphic to a circle of perimeter $\ell$ with $n$ vertices. Therefore,  the case when $\Gamma$ is a Euclidean cycle directly follows from the circle case.  
\end{proof}
The precision operator of a Whittle--Mat\'ern field defined  in \eqref{eq:Matern_spde} with $\alpha=1$ is $Q = \tau^2(\kappa^2 - \Delta_\Gamma)$. Thus, it is the only GRF on the circle that is isotropic in the geodesic metric and Markov of order 1. It is easy to calculate that if the circle has perimeter $\ell$, then  the covariance function of this Whittle--Mat\'ern field is given by $r(s,s') = r_{1,\ell}(d(s,s'))$, where
\begin{equation}\label{eq:formula_pitt}
	r_{1,\ell}(h) = \frac{\cosh(\kappa(h-\ell/2))}{2\kappa\tau^2\sinh(\kappa \ell/2)}.
\end{equation}

The same result holds for the resistance metric:

\begin{Corollary}\label{cor:circle}
	Let $X(t)$ be a GRF with covariance function $r(d_R(s,s'))$ on a Euclidean cycle $\Gamma$, that is isotropic in the resistance metric. Then, if $X(\cdot)$ is Markov of order 1, $X(\cdot)$ is a Whittle--Mat\'ern field with $\alpha=1$. That is, $r(d_R(s,s')) = r_{1,\ell}(d(s,s'))$.
\end{Corollary}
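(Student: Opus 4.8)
The plan is to reduce the resistance-metric statement to the geodesic-metric result already established in Theorem~\ref{thm:markovCircle}. The crucial observation is that, on a Euclidean cycle, the resistance metric is a deterministic function of the geodesic metric, so any field that is isotropic in one metric is automatically isotropic in the other, and the conclusion then follows by a direct invocation of Theorem~\ref{thm:markovCircle}.

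First I would make the relationship between the two metrics explicit. On a cycle of total length $\ell$, two points at geodesic distance $h = d(s,s') \in [0,\ell/2]$ split the cycle into two arcs, of lengths $h$ and $\ell - h$, which act as two resistors in parallel. Hence the effective resistance, i.e. the resistance distance, is
$$
d_R(s,s') = \frac{h(\ell - h)}{\ell} = g(d(s,s')), \qquad g(h) := \frac{h(\ell-h)}{\ell}.
$$
This identity (which rests on the definition of the resistance metric recalled in Appendix~\ref{app:isotropic_apdx}) shows that $d_R$ is a single-valued function of $d$ on $\Gamma$. Here $g$ is strictly increasing on $[0,\ell/2]$, since $g'(h) = (\ell - 2h)/\ell > 0$ there, so it is in fact a bijection of $[0,\ell/2]$ onto $[0,\ell/4]$; injectivity, however, is not needed for the argument, only that $d_R$ is a function of $d$.

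Next I would transfer isotropy from the resistance metric to the geodesic metric. Since $X$ is isotropic in $d_R$, we have $\Cov(X(s),X(s')) = r(d_R(s,s')) = r(g(d(s,s')))$. Setting $\tilde r := r \circ g$, this reads $\Cov(X(s),X(s')) = \tilde r(d(s,s'))$, so $X$ is isotropic in the geodesic metric with radial function $\tilde r$. Because $X$ is assumed to be Markov of order $1$, Theorem~\ref{thm:markovCircle} applies and forces $\tilde r(d(s,s')) = r_{1,\ell}(d(s,s'))$, the Green's function of $Q = \tau^2(\kappa^2 - \Delta_\Gamma)$ given in \eqref{eq:formula_pitt}. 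This is precisely the covariance of the Whittle--Mat\'ern field with $\alpha = 1$, yielding the claimed identity $r(d_R(s,s')) = r_{1,\ell}(d(s,s'))$.

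The only genuine work lies in the first step: verifying the parallel-resistor computation $d_R = g(d)$ on the cycle. Everything after that is a one-line substitution followed by the invocation of Theorem~\ref{thm:markovCircle}, and no new analysis of the Markov property is required, since it has already been characterized in the geodesic case. I would therefore expect the main subtlety to be purely bookkeeping, namely ensuring that the geodesic distance on the cycle is consistently taken as the shorter arc length so that it lies in $[0,\ell/2]$ and the formula for $g$ is unambiguous.
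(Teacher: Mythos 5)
Your proposal is correct and follows essentially the same route as the paper: express $d_R$ on the cycle as a function of the geodesic distance (the identity $d_R(s,s') = d(s,s') - \ell^{-1}d(s,s')^2$, which is exactly your $h(\ell-h)/\ell$ and is Proposition~\ref{prp:anderes_resistance_geodesic} in Appendix~\ref{app:isotropic_apdx}), deduce isotropy in the geodesic metric for the composed radial function, and invoke Theorem~\ref{thm:markovCircle}. The only cosmetic difference is that you rederive the metric relation via the parallel-resistor computation, whereas the paper simply cites the appendix proposition.
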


\begin{proof}
	Suppose that the Euclidean cycle has a total edge length $\ell>0$. 
	By Proposition \ref{prp:anderes_resistance_geodesic} in Appendix~\ref{app:isotropic_apdx}, the resistance metric on the Euclidean cycle satisfies $d_R(s,s') = d(s,s') - \ell^{-1}d(s,s')^2$. Thus, 
	\begin{equation*}
	r(d_R(s,s')) = r(d(s,s') - \ell^{-1}d(s,s')^2) := \hat{r}(d(s,s'))
	\end{equation*}
	is also isotropic in the geodesic metric. By Theorem~\ref{thm:markovCircle}, we have that $\hat{r}(d(s,s')) = r_{1,\ell}(d(s,s'))$ and the result follows.
\end{proof}

Corollary \ref{cor:circle} shows that, depending on the structure of the metric graph, GRFs can be both isotropic and Markov of order 1. A particular case where this occurs is when the metric graph is a tree, where the resistance metric coincides with the geodesic metric, see Proposition \ref{prp:anderes_resistance_geodesic} in Appendix~\ref{app:isotropic_apdx}. Additionally, \cite{moller2022lgcp} established that a GRF with an isotropic exponential covariance function is Markov on trees. However, this is not always the case. Specifically, we will prove that there exists a class of metric graphs with Euclidean edges for which no Gaussian Markov random field can be both isotropic and Markov, showing that, in general, compact metric graphs cannot exhibit both isotropy and Markovianity simultaneously. To formalize this, consider the following assumption. For the definitions of 1-sums and $k$-step 1-sum of metric graphs, see Definitions \ref{def:1sum_spaces} and \ref{def:kstep_1sum} in Appendix~\ref{app:isotropic_apdx}.

\begin{Assumption}\label{assump:iso_markov}
	Let $S$ be a Euclidean cycle, and let $k_1, k_2 \in \mathbb{N}$. Consider a collection of $k_1 + k_2 - 2$ metric graphs $\Gamma_1, \ldots, \Gamma_{k_1+k_2-2}$ with Euclidean edges, where $\Gamma_i \cap \Gamma_j = \emptyset$ for $i \neq j$. Finally, let $T$ be either a Euclidean cycle of length different from that of $S$, or an edge. Then, the metric graph $\Gamma$ is the $(k_1+k_2)$-step 1-sum of $S, \Gamma_1, \ldots, \Gamma_{k_1-1}, T, \Gamma_{k_1}, \ldots, \Gamma_{k_1+k_2-2}$.
\end{Assumption}

Examples of metric graphs satisfying Assumption \ref{assump:iso_markov} are 1-sums between a Euclidean cycle and a graph with at least one vertex of degree 1, as well as 1-sums between two Euclidean cycles of different lengths. Additional examples are shown in Figure~\ref{fig:graphs_nonEuclidean}. We are now ready to prove the main theorem of this section.

\begin{figure}
	\begin{center}
		\begin{tikzpicture}[scale=0.5] 
			\draw (0,pi/2) circle (pi/2);
			\draw (0,-pi/3) circle (pi/3);
			\draw (-2*pi/3,-pi/3) circle (pi/3);
			\filldraw [black] (0,0) circle (4pt);
			\filldraw [gray] (0,pi) circle (4pt);
			\filldraw [gray] (-pi,-pi/3) circle (4pt);
			\filldraw [gray] (-2*pi/3,0) circle (4pt);
			\filldraw [gray] (pi/2,pi/2) circle (4pt);
			\filldraw [gray] (-pi/2,pi/2) circle (4pt);
			\filldraw [gray] (0,-2*pi/3) circle (4pt);
			\filldraw [black] (-pi/3,-pi/3) circle (4pt);
			\filldraw [gray] (pi/3,-pi/3) circle (4pt);
	
		\draw (5.5,0) circle (pi);
	\filldraw [gray] (5.5-pi,0) circle (4pt);
	\filldraw [black] (5.5+pi,0) circle (4pt);
	\filldraw [gray] (5.5,pi) circle (4pt);
	\filldraw [gray] (5.5,-pi) circle (4pt);
	\filldraw [gray] (5.5,0) circle (4pt);
	\draw (5.5,0) -- (5.5+pi,0)  {};
	
	\draw (10,2.5) -- (15,2.5)  {};
	\draw (10,2.5) -- (11.25,1.25)  {};
	\draw (13.75,1.25) -- (15,2.5)  {};
	\draw (11.25,1.25) -- (13.75,1.25) {};
	\draw (11.25,1.25) -- (10,-2.5)  {};
	\draw (10, -2.5) -- (12.5,-2.5) {};
	\draw (12.5,-2.5) -- (12.5,-1.25) {};
	\draw (12.5,-1.25) -- (15,-2.5) {};
	\draw (13.75,1.25) -- (15,-1.25) {};
	\filldraw [gray] (10,2.5) circle (4pt);
	\filldraw [gray] (10,-2.5) circle (4pt);
	\filldraw [gray] (12.5,-1.25) circle (4pt);
	\filldraw [gray] (15,2.5) circle (4pt);
	\filldraw [gray] (15,-1.25) circle (4pt);
	\filldraw [gray] (15,-2.5) circle (4pt);
	\filldraw [gray] (12.5,-2.5) circle (4pt);
	\filldraw [black] (11.25,1.25) circle (4pt);
	\filldraw [black] (13.75,1.25) circle (4pt);
	
	\draw (18,pi/3) circle (pi/3);
	\draw (18,0) -- (18,-1.5)  {};
	\draw (18,0) -- (16.5,-1)  {};
	\draw (18,0) -- (19,-1)  {};
	\draw (18,-1.5) -- (17,-2.5)  {};
	\draw (18,-1.5) -- (18.5,-2.5)  {};
	\filldraw [black] (18,0) circle (4pt);
	\filldraw [gray] (18,-1.5) circle (4pt);
	\filldraw [gray] (16.5,-1) circle (4pt);
	\filldraw [gray] (19,-1) circle (4pt);
	\filldraw [gray] (17,-2.5) circle (4pt);
	\filldraw [gray] (18.5,-2.5) circle (4pt);
	\filldraw [gray] (19,-1) circle (4pt);
	\end{tikzpicture}
\end{center}
\caption{Examples of metric graphs satisfying Assumption \ref{assump:iso_markov}. The black vertices indicate intersection points of the 1-sums.}
\label{fig:graphs_nonEuclidean}
\end{figure}

\begin{Theorem}\label{thm:markov_resistance}
	Let $\Gamma$ be a metric graph satisfying Assumption \ref{assump:iso_markov}. Suppose $X(\cdot)$ is a GRF on $\Gamma$ with an isotropic covariance function $\rho(s,t) = r(\widetilde{d}(s,t))$, where $r(\cdot)$ is a continuous function and $\widetilde{d}(\cdot,\cdot)$ denotes either the resistance metric or the geodesic metric. Then $X$ is not Markov of order 1.
\end{Theorem}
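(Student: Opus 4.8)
The plan is to exploit the 1-sum structure in Assumption \ref{assump:iso_markov}: the graph $\Gamma$ contains the Euclidean cycle $S$ (of length $\ell$) and a second distinguished block $T$ (either a Euclidean cycle of length $\ell'\neq\ell$, or an edge), and each of these blocks is attached to the rest of $\Gamma$ through a single cut vertex. The strategy is to show that if $X$ were both isotropic and Markov of order $1$ on $\Gamma$, then its restrictions to $S$ and to $T$ would each be forced, by Theorem \ref{thm:markovCircle} and Corollary \ref{cor:circle}, to be Whittle--Mat\'ern fields with $\alpha=1$, and that the single isotropic profile $r$ cannot simultaneously reproduce the two incompatible covariances dictated by the different lengths. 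From this I would derive a contradiction.

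First I would establish that the restriction $X|_B$ to a block $B\in\{S,T\}$ is itself isotropic on $B$ with the \emph{same} radial profile $r$. The point is that $B$ meets the remainder of $\Gamma$ only at a cut vertex $v$, so every path between two points of $B$ that leaves $B$ must return through $v$; hence the geodesic distance in $\Gamma$ between points of $B$ equals the intrinsic geodesic distance of $B$. For the resistance metric the same conclusion follows from the cut-vertex additivity of effective resistance (a network hung off a single vertex carries no current between two other points of $B$), together with Proposition \ref{prp:anderes_resistance_geodesic}, which on a Euclidean cycle gives $d_R = d-\ell^{-1}d^2$. In either metric, $\rho(s,t)=r(\widetilde{d}_B(s,t))$ for $s,t\in B$.

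Next I would transfer the Markov property. For an open arc $S'\subset B$ whose closure avoids the cut vertex and whose boundary consists of two interior points, $S'$ is open in $\Gamma$ and its small $\Gamma$-neighborhoods coincide with its neighborhoods inside $B$; since the order-$1$ splitting $\sigma$-algebra $\sigma(X(s):s\in\partial S')$ splits $\mathcal{F}^X_+(\overline{S'})$ and $\mathcal{F}^X_+(\Gamma\setminus S')$ on $\Gamma$, it also splits the sub-$\sigma$-algebras generated by $X$ on $B$ alone. Reducing to sets with finite boundary via Proposition \ref{prp:markovinnerboundary}, this yields that $X|_B$ is Markov of order $1$ on $B$. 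Applying Theorem \ref{thm:markovCircle} to the cycle $S$ (and Corollary \ref{cor:circle} in the resistance case) gives, for $h$ in the range of distances on $S$, that $r$ agrees there with $a_S\cosh(\kappa_S(h-\ell/2))$ for parameters $(\kappa_S,\tau_S)$, i.e.\ $r(h)=r_{1,\ell}(h)$; likewise for $T$.

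Finally I would derive the contradiction from the rigidity of these analytic forms. In the geodesic case $r$ equals $a_S\cosh(\kappa_S(h-\ell/2))$ on $[0,\ell/2]$ and $a_T\cosh(\kappa_T(h-\ell'/2))$ on $[0,\ell'/2]$; agreement on the overlapping interval forces equality of the two real-analytic functions, and linear independence of $e^{\pm\kappa h}$ then forces $\kappa_S=\kappa_T$ and $\ell=\ell'$, contradicting $\ell\neq\ell'$. For $T$ an edge, the restriction is a stationary Gauss--Markov process, hence has covariance $b\,e^{-\kappa_T h}$, which cannot match a $\cosh$-profile with its nonzero growing-exponential component. In the resistance case I would substitute $d_R=d-\ell^{-1}d^2$ into $r_{1,\ell}$ and observe that, because $\cosh$ is even, the apparent square-root singularity cancels and $r$ extends to an entire function of $d_R$; I would then compare the two entire extensions coming from $S$ and $T$ through their zero sets, noting that an edge block yields a zero-free exponential while a cycle yields real zeros whose spacing encodes the length, again forcing $\ell=\ell'$ and giving the contradiction. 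I expect the Markov-restriction step to be the main obstacle, since it requires carefully matching the topology (open sets, neighborhoods, inner boundaries) of the block $B$ with that of $\Gamma$ near the cut vertex; the resistance-metric bookkeeping in the final step is the secondary technical point.
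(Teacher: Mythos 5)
Your proposal is correct and follows essentially the same route as the paper: reduce via the $1$-sum structure to the restrictions of $X$ to the cycle $S$ and the block $T$ (on which both the chosen metric and the isotropy/Markov properties restrict), invoke Theorem~\ref{thm:markovCircle} and Corollary~\ref{cor:circle} (and, for an edge block, the stationary Gauss--Markov characterization giving an exponential covariance) to pin down the single profile $r$ in two incompatible ways, and contradict $\ell\neq\ell'$. The only cosmetic difference is that in the resistance case you verify the final incompatibility by analytic continuation and zero-set comparison, whereas the paper (Propositions~\ref{prp:aux_prop_cycle_res_geo} and~\ref{prp:1sum_euclidean_edge_iso}) simply exhibits a distance value at which the two forced expressions disagree.
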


\begin{proof}
	Let $S$ and $T$ be as in Assumption \ref{assump:iso_markov}, and let $X_S$ (resp. $X_T$) denote the random field with covariance $r(d(\cdot, \cdot))$ (or $r(d_R(\cdot, \cdot))$ for the resistance metric) on the metric graph $S$ (resp. $T$). By the $k$-step 1-sum assumption, the geodesic (resp. resistance) metric on $\Gamma$, when restricted to $S$ (resp. $T$), coincides with the geodesic (resp. resistance) metric on $S$ (resp. $T$). Thus, $X_S$ and $X|_S$ (resp. $X_T$ and $X|_T$) share the same finite-dimensional distributions. Consequently, it suffices to prove the result for $X_{\Gamma_0}$, where $\Gamma_0$ is the 1-sum of a Euclidean cycle $S$ and $T$, with $T$ being either a Euclidean cycle or an edge.

	Let us start with the case in which $\Gamma_0$ is the 1-sum of the Euclidean cycles $S_1$ and $S_2$, with $S_1$ and $S_2$ having lengths $\ell_1\neq \ell_2$, respectively. Let $v$ be be the intersecting vertex from $S_1$ and $S_2$, that is, $\{v\} = S_1\cap S_2$. To this end, let $X_{\Gamma_0}$ be isotropic with respect to the resistance distance and assume, by contradiction, that $X$ is also Markov of order 1. Let  $d_R(\cdot,\cdot)$ denote the resistance metric on $\Gamma_0$, and $d_{R,S_j}(\cdot,\cdot)$ denote the resistance metric in $S_j,j=1,2$. Observe that since $\Gamma_0$ is the 1-sum between $S_1$ and $S_2$, the restriction of $d_R(\cdot,\cdot)$ to $S_j\times S_j$ coincides with $d_{R,S_j}(\cdot,\cdot)$, for $j=1,2$. Now, by Corollary \ref{cor:circle}, we have that for every $s_1,s_1'\in S_1$ and $s_2,s_2'\in S_2$, we have $r(d_R(s_1,s_1')) = r_{1,\ell_1}(d(s_1,s_1'))$, and $r(d_R(s_2,s_2')) = r_{1,\ell_2}(d(s_2,s_2'))$. Now, we have that for any $s_1\in S_1$ and $s_2\in S_2$ such that $d_R(v,s_1) = d_R(v,s_2)$. Then, $r_{1,\ell_1}(d(s_1,v)) = r(d_R(s_1,v)) = r(d_R(s_2,v)) = r_{1,\ell_2}(d(s_2,v))$. However, by Proposition \ref{prp:aux_prop_cycle_res_geo} in Appendix~\ref{app:isotropic_apdx}, this is a contradiction, since it says we can find $s_1\in S_1$ and $s_2\in S_2$ such that $d_R(s_1,v)=d_R(s_2,v)$ but $r_{1,\ell_1}(d(s_1,v)) \neq r_{1,\ell_2}(d(s_2,v))$. 
	
	Now, let $X$ be isotropic with respect to the geodesic distance and Markov of order 1. For any $s_1\in S_1$ and any $s_2\in S_2$ such that $d(v,s_1)=d(v,s_2)$, we have that Corollary \ref{cor:circle} and isotropy imply $r_{1,\ell_1}(d(s_1,v)) = r(d(s_1,v)) = r(d(s_2,v)) = r_{1,\ell_2}(d(s_2,v))$. By \eqref{eq:formula_pitt} this is a contradiction, since $\ell_1\neq \ell_2$. 

	The result for the case when $\Gamma_0$ is a 1-sum of a Euclidean cycle and an edge follows from Proposition~\ref{prp:1sum_euclidean_edge_iso} in Appendix \ref{app:isotropic_apdx}.
\end{proof}

Thus, GRFs that are both Markov and isotropic exist on simple graphs such as trees and circles, but not on more general metric graphs. Another limitation, which we have found to have a significant impact when using real data, is that isotropic models require metric graphs with Euclidean edges. This motivates our examination of generalized Whittle--Mat\'ern fields in the next section. These fields are generally not isotropic with respect to either the resistance metric or the geodesic metric. However, as we will show, they satisfy Markov properties and are well-defined on general compact metric graphs.

\section{Generalized Whittle--Mat\'ern fields}\label{sec:GWM}

This section investigates Markov properties of the generalized Whittle--Mat\'ern fields defined by the stochastic differential equation \eqref{eq:Matern_spde_general}. Section~\ref{sec:GWMintro}  introduces the fields in more detail. 
Section~\ref{sec:GWMmarkov} and Section~\ref{sec:GWMmarkov_alpha} provide the main results regarding the Markov properties of the fields. 
Section~\ref{sec:GWMnonregular} and Section~\ref{sec:CM} provide results regarding the regularity and the Cameron--Martin spaces of the fields, which are important in Section~\ref{sec:edge}. To simplify the presentation, we defer the proofs of all main results in this section to Section~\ref{sec:proofsGWM}, while auxiliary results are provided in Appendix~\ref{app:aux}.

\subsection{Definitions and preliminaries}\label{sec:GWMintro}
First, we introduce some additional notation. For two Hilbert spaces $E$ and $F$, $E\subset F$, the continuous embedding $E\hookrightarrow F$ exists if the inclusion map from $E$ to $F$ is continuous (i.e. a $C>0$ exists such that, for every $f\in E$, ${\|f\|_F \leq C \|f\|_E}$). Further, we write $E\cong F$ if 
${E} \hookrightarrow {F} \hookrightarrow {E}$. 

From now on, we let $\Gamma$ denote an arbitrary compact metric graph. 
For a function $f$ on $\Gamma$, $f_e = f|_e$ denotes the restriction of the function to the edge, and we let $f_e(t)$ for $t\in[0,\ell_e]$ and $f_e(s)$ denote the value of $f(s)$ with $s=(t, e)$.
For $e\in\mathcal{E}$, we let $L_2(e)$ denote the standard $L_2$ space for $[0,\ell_e]$, equipped with the Lebesgue measure, and for $k\in\mathbb{N}$, $H^k(e)$ denotes the standard Sobolev space of order $k$ on $[0,\ell_e]$. 
We define $L_2(\Gamma)$ as the direct sum 
$L_2(\Gamma) = \bigoplus_{e \in \mathcal{E}} L_2(e)$, with norm ${\|f\|_{L_2(\Gamma)}^2 =  \sum_{e\in\mathcal{E}}\|f_e\|_{L_2(e)}^2}$. Similarly, for $k\in\mathbb{N}$, we define $\widetilde{H}^k(\Gamma) = \bigoplus_{e\in \mathcal{E}} H^k(e)$, which is endowed with the Hilbertian norm $\|f\|_{\widetilde{H}^k(\Gamma)}^2 = \sum_{e\in\mathcal{E}} \|f_e\|_{H^k(e)}^2 < \infty$.
We let $C(\Gamma)$ denote the set of continuous functions on $\Gamma$ and, given $S\subset \Gamma$, $C_c(S)$ is the set of continuous functions with support compactly contained in the interior of $S$. Further, for an edge $e\in\mathcal{E}$, we let $C_c^\infty(e)$ denote the set of infinitely differentiable functions with support compactly contained in the interior of $e$.
Finally, we define the Sobolev space $H^1(\Gamma) = C(\Gamma)\cap \widetilde{H}^1(\Gamma)$ as the space of all continuous functions on $\Gamma$ such that $\|f\|_{H^1(\Gamma)} = \|f\|_{\widetilde{H}^1(\Gamma)}  < \infty$. For $u\in H^1(\Gamma)$, the weak derivative $u'\in L_2(\Gamma)$ is defined as the function whose restriction to any edge $e$ coincides almost everywhere with the weak derivative of $u|_e$, which is well defined because ${u|_e\in H^1(e)}$. Finally, for $s>0$, $s\in\mathbb{R}$, let $\widetilde{H}^s(\Gamma) = (\widetilde{H}^{\lfloor\alpha\rfloor}(\Gamma), \widetilde{H}^{\lceil\alpha\rceil}(\Gamma))_{s-\lfloor s\rfloor}$, where $\lfloor\cdot\rfloor$ denotes the floor operator, $\lceil\cdot\rceil$ denotes the ceiling operator, $\widetilde{H}^0(\Gamma) := L_2(\Gamma)$, and given two Hilbert spaces $H_0,H_1$, $(H_0,H_1)_s$ denotes the real interpolation between the spaces $H_0$ and $H_1$ of order $s$ using the $K$ method, see e.g., \cite[Appendix A]{BSW2022} or \cite{chandlerwildeetal} for further details.

We now briefly define and recall some properties of
 the generalized Whittle--Mat\'ern fields introduced in \cite{bolinetal_fem_graph}.
 First, we introduce the differential operator that
 will be used to define the field. To that end, we let $u$
 be a sufficiently smooth function on $\Gamma$, and let $L$ act on $u$
 at a point $t$ on $e\in\mathcal{E}$ as follows:
 \begin{equation}\label{eq:operatorL}
	(Lu_e)(t) = -\frac{d}{dx_e}\left(\H_e(t) \frac{d}{dx_e} u_e(t)\right) + \kappa_e^2(t)u_e(t),
\end{equation}
where $\frac{d}{dx_e}$ is the standard derivative operator on the edge $e$ when it is identified with the interval $[0, l_e]$, $\kappa$ and $\H$ are sufficiently nice functions. Observe that $Lu = \nabla(a\nabla) u$, where $\nabla : \widetilde{H}^1(\Gamma) \to L_2(\Gamma)$ is the global first order derivative operator that acts locally on each edge $e\in \mathcal{E}$ as $\nabla u(x)= du_e(x)/dx_e =u_e'(x)=Du_e(x)$,for $u\in\widetilde{H}^1(\Gamma)$ and $x\in e$. The simplest assumption on $\kappa$ and $\H$ is as follows:

\begin{Assumption}\label{assump:simplest_assumption}
	The function $\kappa:\Gamma\to\mathbb{R}$ is bounded (i.e. $\kappa\in L^\infty(\Gamma)$) and satisfies
		$\textrm{ess inf}_{s\in\Gamma} \kappa(s) \geq \kappa_0 > 0$,
	for some  $\kappa_0>0$, and $\H:\Gamma\to\mathbb{R}$ is a positive Lipschitz function. As $\Gamma$ is compact, the minimum of $\H$ is achieved for some $s_0\in\Gamma$. Thus, with $\H_{0} = \H(s_0)$, we have that 
		$\H(s) \geq \H_{0} > 0$ for all $s\in\Gamma$.
\end{Assumption}

We augment $L$ with the 
vertex conditions $\big\{\mbox{$u\in C(\Gamma)$ and $\forall v\in\mathcal{V}: \sum_{e\in\mathcal{E}_v} \partial_e u(v) = 0$} \big\}$, where $\mathcal{E}_v$ denotes the set of edges incident to $v\in\mathcal{V}$. Therefore $L$ has the domain
$$D(L) = \big\{u\in\widetilde{H}^2(\Gamma): \mbox{$u$ is continuous on $\Gamma$ and $\forall v\in\mathcal{V}: \sum_{e\in\mathcal{E}_v} \partial_e u(v) = 0$} \big\},$$
where $\partial_e u(v)$ is the directional derivative of $u$ on the edge $e$ in the directional away from the vertex $v$, that is, if $e = [0,l_e]$, then $\partial_e u(0) = du_e/dx_e(0)$, and $\partial_e u(l_e) = -du_e/dx_e(l_e)$ and $\mathcal{E}_v$ denotes the set of edges connected to the vertex $v$.

\begin{Remark}\label{rem:laplacian}
	The operator $\kappa^2 - \Delta_\Gamma$ in \eqref{eq:Matern_spde} is the special case of $L$ with $\H \equiv 1$ and $\kappa>0$ constant. Equivalently, the Kirchhoff Laplacian  ${\Delta_\Gamma : \mathcal{D}(\Delta_\Gamma) = \mathcal{D}(L) \subset L_2(\Gamma) \rightarrow L_2(\Gamma)}$ is defined 
	by ${\Delta_\Gamma := \oplus_{e\in \mathcal{E}}\Delta_e}$, where $\Delta_e u_e(x) = u_e''(x)$ with $u_e\in H^2(e)$. 
\end{Remark}
According to \cite[Theorem 2.5]{bolinetal_fem_graph}, $L:D(L)\to L_2(\Gamma)$
is a positive-definite and self-adjoint operator with a compact inverse. 
A complete orthonormal system of eigenvectors $\{e_k\}_{k\in \mathbb{N}}$ on $L_2(\Gamma)$, and a corresponding set of 
  eigenvalues $\{\lambda_k\}_{k\in \mathbb{N}}$, exists for the operator $L$. This allows us to study fractional powers of $L$ in the spectral sense.
Thus, for $s\geq 0$, the space $(\dot{H}^s_L(\Gamma),
\|\cdot\|_{\dot{H}_{L}^s(\Gamma)}$) is defined as follows:
\begin{eqnarray}\label{Hdotspace}
	\dot{H}_L^s(\Gamma)=\Bigl\{u\in L_2(\Gamma):\|u\|_{\dot{H}_L^s(\Gamma)}:=\Bigl(\sum_{k\in \mathbb{N}}\lambda_{k}^{s}(u,e_k)^{2}_{L_2(\Gamma)}\Bigr)^{\nicefrac12}<\infty\Bigr\}.
\end{eqnarray}
For $\alpha>0$ and for $\phi\in D(L^{\alpha/2}) =\dot{H}^{\alpha}_L(\Gamma)$, 
we define the action of the operator $L^{\alpha/2}$ on $\phi$ as 
$
{L^{\alpha/2} \phi = \sum_{k\in \mathbb{N}_0}\lambda_{k}^{\alpha/2}(u,e_k)_{L_2(\Gamma)} e_k}.
$
We obtain $\dot{H}^0_L(\Gamma) \cong L_2(\Gamma)$, and let $\dot{H}^{-s}_L(\Gamma)$ denote the dual space of $\dot{H}^s_L(\Gamma)$, whose norm is defined as follows:
\begin{eqnarray}
	\|g\|_{\dot{H}_L^{-s}(\Gamma)}
	=\sup_{\phi\in \dot{H}_L^s(\Gamma)\setminus \{0\} } \frac{\langle g,\phi \rangle }{\|\phi\|_{\dot{H}_L^s(\Gamma)}}
	=\Bigl(\sum_{k\in{ \mathbb{N}}}\lambda_{k}^{-s}\langle g,e_k\rangle^2\Bigr)^{\nicefrac12},
\end{eqnarray}
where $\langle\cdot,\cdot\rangle$ denotes the pairing between $\dot{H}^{-s}_L(\Gamma)$ and $\dot{H}^{s}_L(\Gamma).$

\begin{Remark}\label{rmk:isometry}
For $0\leq \alpha \leq \beta$, 
$\dot{H}^{\beta}_L(\Gamma) \hookrightarrow \dot{H}^{\alpha}_L(\Gamma) \hookrightarrow L_2(\Gamma) \hookrightarrow \dot{H}^{-\alpha}_L(\Gamma) \hookrightarrow \dot{H}^{-\beta}_L(\Gamma)$. By \cite[][Lemma 2.1]{BKK2020}, $L^\beta$ has a unique bounded extension to an isometric isomorphism $L^\beta : \dot{H}^{s}_L(\Gamma) \rightarrow \dot{H}_L^{s-2\beta}(\Gamma)$ for $s\in\mathbb{R}$.
\end{Remark}
We can now define the generalized Whittle--Mat\'ern field
with exponent $\alpha>1/2$ as the solution of the following fractional
stochastic differential equation:
\begin{align}\label{eq:genWM}
	L^{\alpha/2} u=\mathcal{W}\quad\text{on } \Gamma,
\end{align}
where $\mathcal{W}$ represents Gaussian white noise defined on $L_2(\Gamma)$ with respect to $(\Omega, \mathcal{F},\mathbb{P})$. Specifically, $\mathcal{W}$ can be represented as a family of centered Gaussian variables $\{\mathcal{W}(h) : h\in L_2(\Gamma)\}$ satisfying ${\forall h,g\in L_2(\Gamma)}$, $\pE[\mathcal{W}(h)\mathcal{W}(g)] = (h,g)_{L_2(\Gamma)}$.
From \cite[Proposition 3.2]{bolinetal_fem_graph}, if $\alpha > \nicefrac12$ and Assumption \ref{assump:simplest_assumption} holds, then
\eqref{eq:genWM} has a unique solution
$u \in L_2(\Omega, L_2(\Gamma))$. That is, $u$ is an $L_2(\Gamma)$-valued random variable with a finite second moment. Furthermore, recall that from \cite{bolinetal_fem_graph}, the covariance operator of $u$ is $L^{-\alpha}$.

\begin{Remark}\label{rem:CMgeneralizedWM}
    If Assumption \ref{assump:simplest_assumption} holds, $\alpha>\nicefrac12$, and $u$ is the solution \eqref{eq:genWM}, then 
    it follows from the Karhunen--Lo\`eve expansion of $u$ \cite[Proposition 4.7]{bolinetal_fem_graph} that
    the Cameron--Martin space of $u$ is $\dot{H}_L^\alpha(\Gamma).$
\end{Remark}

Now, let us observe that even though the generalized Whittle--Mat\'ern fields obtained as solutions to \eqref{eq:genWM} are, in general, not isotropic random fields, they enjoy an invariance property. To this end, let 
$$\mathcal{S}_{\alpha,L}(\Gamma)  = \{T:L_2(\Gamma)\to L_2(\Gamma): T \hbox{ is unitary and } TL^{-\beta} = L^{-\beta} T\}$$
be the symmetry group of $L^{-\alpha/2}$ in $\Gamma$. Then, for every $T\in \mathcal{S}_{\alpha,L}(\Gamma)$,  $Tu$ has covariance operator 
$$TL^{-\alpha}T^* = T L^{-\alpha/2} L^{-\alpha/2} T^{-1} = L^{-\alpha/2} T L^{-\alpha/2} T^{-1} = L^{-\alpha} TT^{-1} = L^{-\alpha}.$$
Thus, since centered Gaussian distributions are determined by their covariance operators, $u$ and $Tu$ have the same distribution, so the distribution of generalized Whittle--Mat\'ern fields are invariant with respect to the symmetry group of $L^{-\alpha/2}$.

\subsection{Markov properties of generalized Whittle--Mat\'ern fields}\label{sec:GWMmarkov}

As mentioned in the introduction, an issue with the differentiable generalized Whittle--Mat\'ern fields (which we will formally prove in Section~\ref{sec:GWMnonregular}) is that they are nonregular. Not being able to assume regularity complicates the analysis of Markov properties as we cannot follow the approach of \cite{Pitt1971}.
However, the key to deriving Markov properties 
is to characterize the $\dot{H}_L^{\alpha}(\Gamma)$ spaces for $\alpha\in\mathbb{N}$. For this, we need to recall some notations for function spaces defined on intervals in $\mathbb{R}$. For $k\in\mathbb{N}$, and $I\subset \mathbb{R}$ an interval (recall that we identify edges as intervals) we let $C^k(I)$ denote the space of functions with $k$ continuous derivatives in $I$, and $C^0(I)$ the space of continuous functions in $I$. We also let $C^{k,s}(I)$, where $k\in\mathbb{N}$ and $s\in (0,1]$, denote the space of functions with $k$ continuous derivatives in $I$ and whose $k$th derivative is $s$-H\"older continuous, and we recall that functions that are $1$-H\"older continuous are Lipschitz functions.
For this, we extend Assumption \ref{assump:simplest_assumption} for higher values of $\alpha$ and then obtain a characterization of the spaces $\dot{H}^\alpha_L(\Gamma)$.

\begin{Assumption}\label{assump:basic_assump2}
	Let Assumption \ref{assump:simplest_assumption} hold. In addition, if $\alpha\geq 3$, for every edge ${e\in\mathcal{E}}$,  $\kappa_e\in C^{\floor{\alpha}-3,1}(e)$ and $\H_e\in C^{\floor{\alpha}-2,1}(e)$. That is, $\kappa_e\in C^{\floor{\alpha}-3}(e), \H_e\in C^{\floor{\alpha}-2}(e)$ and $\kappa_e^{(\floor{\alpha}-3)}(\cdot),\H_e^{(\floor{\alpha}-1)}(\cdot)$ are Lipschitz. Here, $\lfloor \zeta\rfloor$ is the largest integer $k$ such that $k\leq \zeta$. 
\end{Assumption}
%

\begin{Proposition}\label{prp:CharGen_CM_natural_alpha}
	Let Assumption \ref{assump:basic_assump2} hold and $\alpha\in\mathbb{N}$. If $\alpha = 1$, $\dot{H}^\alpha_L(\Gamma) \cong \widetilde{H}^\alpha(\Gamma) \cap C(\Gamma)$ and if $\alpha\geq 2$,
		\begin{equation*}\label{eq:HdotAlphaNatural}
			\dot{H}^\alpha_L(\Gamma) \cong \left\{f\in \widetilde{H}^\alpha(\Gamma): L^{\left\lfloor \nicefrac{(\alpha-1)}{2}\right\rfloor} f \in C(\Gamma), \,\forall m = 0,\ldots, \left\lfloor\nicefrac{(\alpha-2)}{2} \right\rfloor, L^{m} f\in\dot{H}^2_L(\Gamma) \right\}.
		\end{equation*}
		Furthermore, the norms $\|\cdot\|_{\dot{H}_L^\alpha(\Gamma)}$ and $\|\cdot\|_{\widetilde{H}^\alpha(\Gamma)}$ are equivalent on $\dot{H}^\alpha_L(\Gamma)$.
\end{Proposition}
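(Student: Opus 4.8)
The plan is to argue by (strong) induction on $\alpha\in\mathbb{N}$, reducing the claim for $\alpha$ to that for $\alpha-2$ through the isometric isomorphism $L:\dot H^\alpha_L(\Gamma)\to\dot H^{\alpha-2}_L(\Gamma)$ of Remark~\ref{rmk:isometry}. First I would settle the two base cases. For $\alpha=1$, note that $\dot H^1_L(\Gamma)=D(L^{1/2})$ is the form domain of $L$; since $Le_k=\lambda_k e_k$ we have $\|u\|_{\dot H^1_L(\Gamma)}^2=\sum_k\lambda_k(u,e_k)_{L_2(\Gamma)}^2=\int_\Gamma(\H\,|u'|^2+\kappa^2 u^2)\,\md s$, which by the two-sided bounds on $\H$ and $\kappa$ in Assumption~\ref{assump:simplest_assumption} is equivalent to $\|u\|_{\widetilde H^1(\Gamma)}^2$, and the form domain of the Kirchhoff operator is exactly $\widetilde H^1(\Gamma)\cap C(\Gamma)$ (continuity is the essential vertex condition, the flux/Kirchhoff condition being natural). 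For $\alpha=2$ the statement $\dot H^2_L(\Gamma)=D(L)$ is the definition; the content is the inclusion $D(L)\subset\widetilde H^2(\Gamma)$ with equivalent norms, which I would get from one step of edgewise elliptic regularity: for $f\in D(L)$ one has $g:=Lf\in L_2(\Gamma)$ and $f\in H^1(\Gamma)$, so on each edge $(\H_e f_e')'=\kappa_e^2 f_e-g_e\in L_2(e)$, whence $\H_e f_e'\in H^1(e)$ and, dividing by the Lipschitz, uniformly positive $\H_e$, $f_e'\in H^1(e)$, i.e. $f\in\widetilde H^2(\Gamma)$.

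For the inductive step with $\alpha\geq 3$, since $f\in\dot H^\alpha_L(\Gamma)\subset\dot H^2_L(\Gamma)=D(L)$ the classical action of $L$ agrees with its spectral extension, so the isomorphism gives the clean recursion
\[
\dot H^\alpha_L(\Gamma)=\{f\in D(L):Lf\in\dot H^{\alpha-2}_L(\Gamma)\}.
\]
The observation that keeps the regularity bootstrap to a single round is that, by the embedding $\dot H^\alpha_L(\Gamma)\hookrightarrow\dot H^{\alpha-2}_L(\Gamma)$ and the induction hypothesis, any such $f$ already lies in $\widetilde H^{\alpha-2}(\Gamma)$, and $g:=Lf\in\dot H^{\alpha-2}_L(\Gamma)\subset\widetilde H^{\alpha-2}(\Gamma)$. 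Then the edgewise identity $(\H_e f_e')'=\kappa_e^2 f_e-g_e$ has right-hand side in $H^{\alpha-2}(e)$ --- here is where the smoothness of $\kappa,\H$ in Assumption~\ref{assump:basic_assump2} enters, guaranteeing that $\kappa_e^2$ multiplies $H^{\alpha-2}(e)$ into itself and that $\H_e^{-1}$ multiplies $H^{\alpha-1}(e)$ into itself --- so $\H_e f_e'\in H^{\alpha-1}(e)$ and hence $f_e\in H^\alpha(e)$, i.e. $f\in\widetilde H^\alpha(\Gamma)$.

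It then remains to reassemble the abstract membership $Lf\in\dot H^{\alpha-2}_L(\Gamma)$ into the stated list. Applying the induction hypothesis to $g=Lf$ and using $L^m g=L^{m+1}f$, the condition $g\in\dot H^{\alpha-2}_L(\Gamma)$ becomes $L^{m+1}f\in\dot H^2_L(\Gamma)$ for $m=0,\ldots,\floor{(\alpha-4)/2}$ together with $L^{\floor{(\alpha-3)/2}+1}f\in C(\Gamma)$ (for $\alpha=3$ this reads simply $Lf\in\widetilde H^1(\Gamma)\cap C(\Gamma)$). Combining these with $f\in D(L)$, i.e. $L^0 f\in\dot H^2_L(\Gamma)$, and using the index identity $\floor{(\alpha-3)/2}+1=\floor{(\alpha-1)/2}$, collapses precisely to the requirements $L^m f\in\dot H^2_L(\Gamma)$ for $m=0,\ldots,\floor{(\alpha-2)/2}$ and $L^{\floor{(\alpha-1)/2}}f\in C(\Gamma)$, which is the asserted characterization. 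I would carry this out separately for even and odd $\alpha$ to confirm that in the even case the top continuity condition is subsumed by $L^{\floor{(\alpha-2)/2}}f\in\dot H^2_L(\Gamma)\subset C(\Gamma)$, while in the odd case it genuinely is the form-domain condition $L^{(\alpha-1)/2}f\in\dot H^1_L(\Gamma)\subset C(\Gamma)$.

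For the norm equivalence I would observe that the explicitly described subspace is closed in $\widetilde H^\alpha(\Gamma)$ (all of the conditions are closed), hence a Hilbert space, and that the identity map onto $\dot H^\alpha_L(\Gamma)$ is a continuous bijection: the a priori estimates accumulated along the bootstrap give $\|f\|_{\widetilde H^\alpha(\Gamma)}\lesssim\|f\|_{\dot H^\alpha_L(\Gamma)}$, while boundedness of $L^{\alpha/2}$ on the explicit space gives the reverse inequality, so the bounded inverse theorem yields the equivalence. The main obstacle I anticipate is the elliptic bootstrap together with the bookkeeping of Assumption~\ref{assump:basic_assump2}: one must verify that the prescribed H\"older/Lipschitz regularity of $\kappa$ and $\H$ is exactly what is needed for $\kappa_e^2$ and $\H_e^{-1}$ to act as multipliers at the relevant Sobolev order, so that each inductive step raises edgewise regularity by a full two orders, and that the abstract condition $Lf\in\dot H^{\alpha-2}_L(\Gamma)$ unpacks into the correct floor indices for the continuity and Kirchhoff conditions in both parities of $\alpha$.
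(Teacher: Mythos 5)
Your proposal is correct and follows essentially the same route as the paper: induction on $\alpha$ in steps of two via the isometric isomorphism $L:\dot{H}^\alpha_L(\Gamma)\to\dot{H}^{\alpha-2}_L(\Gamma)$, an edgewise elliptic bootstrap identical in substance to the paper's Lemmas~\ref{lem:LHtildeAlphaHtildeAlphaPlus2} and \ref{lem:LHtildeM} (with the same multiplier role for Assumption~\ref{assump:basic_assump2}), and the same floor-index bookkeeping, with the base cases being the content of Theorem~\ref{thm:characterization}. The only divergence is cosmetic: for the inequality $\|f\|_{\widetilde{H}^\alpha(\Gamma)}\lesssim\|f\|_{\dot{H}^\alpha_L(\Gamma)}$ you track constants through the bootstrap, whereas the paper gets it softly from the closed graph theorem; both are valid, and your direct estimate makes the final appeal to the bounded inverse theorem redundant.
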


Proposition \ref{prp:CharGen_CM_natural_alpha} shows that, under Assumption \ref{assump:basic_assump2}, the spaces $\dot{H}^\alpha_L(\Gamma)$ can be identified as $\widetilde{H}^\alpha(\Gamma)$ with some vertex conditions (that act as constraints on $\widetilde{H}^\alpha(\Gamma)$). In particular, we obtain the following result:

\begin{Proposition}\label{prp:Hdot3Local}
	If $\alpha\in\mathbb{N}$ and Assumption \ref{assump:basic_assump2} holds, then the space $\dot{H}_L^{\alpha}(\Gamma)$ is local.
\end{Proposition}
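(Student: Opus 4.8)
The plan is to prove locality of $\dot{H}_L^\alpha(\Gamma)$ by verifying the two defining conditions of Definition \ref{def:localCMspaces} directly, using the characterization of $\dot{H}_L^\alpha(\Gamma)$ as a subspace of $\widetilde{H}^\alpha(\Gamma)$ given by Proposition \ref{prp:CharGen_CM_natural_alpha}. The essential point is that the inner product on $\dot{H}_L^\alpha(\Gamma)$, being equivalent to the $\widetilde{H}^\alpha(\Gamma)$ norm, is \emph{edgewise} and moreover built from integrals of products of derivatives; such integrals vanish when the two functions have disjoint supports, which is exactly what locality demands.

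First I would fix a convenient inner product on $\dot{H}_L^\alpha(\Gamma)$. Since the covariance operator of $u$ is $L^{-\alpha}$ and the Cameron--Martin space is $\dot{H}_L^\alpha(\Gamma)$ (Remark \ref{rem:CMgeneralizedWM}), the natural inner product is $\langle f, g\rangle_{\dot{H}_L^\alpha(\Gamma)} = (L^{\alpha/2} f, L^{\alpha/2} g)_{L_2(\Gamma)}$. For $\alpha\in\mathbb{N}$ this can be rewritten, via integration by parts and the vertex conditions encoded in $D(L)$, as a sum over edges of integrals of the form $\int_e (\text{differential expression in } f)(\text{same in } g)\md x$, i.e. as a bilinear form with a purely local (edgewise, pointwise-in-derivatives) integrand. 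To verify condition \ref{def:localCMspaces1}, I would take $h_1, h_2\in\dot{H}_L^\alpha(\Gamma)$ with $\textrm{supp}\,h_1\cap\textrm{supp}\,h_2=\emptyset$. On each edge $e$ the integrand of $\langle h_1, h_2\rangle_{\dot{H}_L^\alpha(\Gamma)}$ is supported in $\textrm{supp}\,h_1\cap\textrm{supp}\,h_2\cap e=\emptyset$ (all derivatives of $h_j$ vanish off $\textrm{supp}\,h_j$), so every edge integral is zero and hence $\langle h_1, h_2\rangle_{\dot{H}_L^\alpha(\Gamma)}=0$.

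For condition \ref{def:localCMspaces2}, suppose $h\in\dot{H}_L^\alpha(\Gamma)$ splits as $h=h_1+h_2$ with $\textrm{supp}\,h_1\cap\textrm{supp}\,h_2=\emptyset$. The disjointness of the closed supports gives a positive separation, so near every point each $h_j$ coincides either with $h$ or with $0$; in particular each $h_j$ inherits the edgewise $\widetilde{H}^\alpha$ regularity of $h$, and I must check that each $h_j$ satisfies the vertex conditions characterizing $\dot{H}_L^\alpha(\Gamma)$ in Proposition \ref{prp:CharGen_CM_natural_alpha}. The key observation is that at any vertex $v$, since $\textrm{supp}\,h_1$ and $\textrm{supp}\,h_2$ are disjoint closed sets, at most one of them meets a neighborhood of $v$; on that neighborhood the relevant $h_j$ equals $h$ (the other being identically zero there), so all continuity and Kirchhoff-type conditions that $h$ satisfies at $v$ are satisfied by each $h_j$ as well (trivially for the one vanishing near $v$). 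Thus $h_1, h_2\in\dot{H}_L^\alpha(\Gamma)$.

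I expect the main obstacle to be the careful bookkeeping in condition \ref{def:localCMspaces2}, namely confirming that the vertex conditions in the characterization of $\dot{H}_L^\alpha(\Gamma)$ (continuity of $L^{\lfloor(\alpha-1)/2\rfloor} f$ and the membership conditions $L^m f\in\dot{H}_L^2(\Gamma)$, which themselves encode continuity and Kirchhoff conditions on the iterated operator) are genuinely local at each vertex and therefore transfer to each summand. Once one establishes the separation-of-supports fact that at most one summand is nonzero in a neighborhood of any given vertex, the transfer is immediate, but stating this cleanly for the nested/iterated conditions appearing for large $\alpha$ requires some care. Everything else is a routine consequence of the edgewise, integral form of the inner product and the norm equivalence asserted in Proposition \ref{prp:CharGen_CM_natural_alpha}.
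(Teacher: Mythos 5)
Your proposal is correct and follows essentially the same route as the paper: condition \ref{def:localCMspaces1} via the edgewise integral form of the $\dot{H}_L^\alpha(\Gamma)$ inner product (which the paper isolates as Lemma~\ref{lem:dotHklocal}, precisely the integration-by-parts identity you describe), and condition \ref{def:localCMspaces2} via locality of the edge Sobolev spaces together with the observation that the vertex constraints of Proposition~\ref{prp:CharGen_CM_natural_alpha} transfer to each summand because disjoint closed supports force at most one summand to be nonzero near any vertex. No substantive differences.
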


We can now state our first main result, the Markov property of the generalized Whittle--Mat\'ern fields.

\begin{Theorem}\label{thm:MarkovGenWM}
	Let $u$ be a generalized Whittle--Mat\'ern field obtained as the solution to \eqref{eq:Matern_spde_general},
	with $\alpha > 1/2$. 
	\begin{enumerate}[label= \roman{enumi}.]
		\item Under Assumption \ref{assump:basic_assump2}, if $\alpha\in\mathbb{N}$, then $u$ is a GMRF. \label{thm:MarkovGenWM2}
		\item Let Assumption \ref{assump:simplest_assumption} hold and, for every edge $e\in\mathcal{E}$, $\kappa_e, \H_e \in C^\infty(e)$. If $\alpha\in\mathbb{N}$, then $u$ is a GMRF. Conversely, if $\kappa$ and $a$ are constant functions, and $u$ is a GMRF, then $\alpha\in\mathbb{N}$. \label{thm:MarkovGenWM3}
	\end{enumerate}
\end{Theorem}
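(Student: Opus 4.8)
The plan is to treat part i and the forward implication of part ii as direct consequences of the machinery already assembled, reserving the genuine work for the converse in part ii. For part i, I would simply chain three facts: by Remark \ref{rem:CMgeneralizedWM} the Cameron--Martin space of $u$ equals $\dot{H}^\alpha_L(\Gamma)$; by Proposition \ref{prp:Hdot3Local}, under Assumption \ref{assump:basic_assump2} with $\alpha\in\mathbb{N}$ this space is local; and by Theorem \ref{thm:MarkovLocalCM} a GRF is a GMRF precisely when its Cameron--Martin space is local. Composing these yields that $u$ is a GMRF. For the forward implication of part ii, I would observe that if $\kappa_e,a_e\in C^\infty(e)$ for every edge $e$ and Assumption \ref{assump:simplest_assumption} holds, then the regularity demands of Assumption \ref{assump:basic_assump2} (namely $\kappa_e\in C^{\lfloor\alpha\rfloor-3,1}(e)$ and $a_e\in C^{\lfloor\alpha\rfloor-2,1}(e)$ when $\alpha\geq 3$) are automatically met for every $\alpha$, so part i applies verbatim.

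The substance is the converse: assuming $\kappa$ and $a$ constant and $u$ a GMRF, I must show $\alpha\in\mathbb{N}$. I would argue by contraposition and, via Theorem \ref{thm:MarkovLocalCM}, reduce the implication ``$\alpha\notin\mathbb{N}\Rightarrow u$ not a GMRF'' to showing that $\dot{H}^\alpha_L(\Gamma)$ fails to be local. I would test the first locality condition of Definition \ref{def:localCMspaces} on a single edge $e$: choose two bump functions $h_1,h_2\in C_c^\infty(e)$ supported in the interior of $e$ (hence away from all vertices) with disjoint supports, and examine the Cameron--Martin inner product, which for such smooth interior-supported functions equals $\langle h_1,h_2\rangle_{\dot{H}^\alpha_L}=(L^\alpha h_1,h_2)_{L_2(\Gamma)}$ by the isometry of Remark \ref{rmk:isometry}. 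Locality would force this to vanish for every such disjoint pair, i.e.\ it would force $\operatorname{supp}(L^\alpha h_1)\subset\operatorname{supp} h_1$ for all interior bumps $h_1$; equivalently, $L^\alpha$ would have to act as a local operator on the edge interior.

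Here the constant coefficients enter decisively. On the interior of $e$ the operator $L=\kappa^2-a\,d^2/dx^2$ is constant-coefficient with Fourier symbol $m(\xi)=\kappa^2+a\xi^2$, so $L^\alpha$ acts microlocally as convolution with symbol $m(\xi)^\alpha=(\kappa^2+a\xi^2)^\alpha$. This symbol is a polynomial in $\xi$ exactly when $\alpha\in\mathbb{N}$; for $\alpha\notin\mathbb{N}$ it is not, so the associated convolution kernel (a Mat\'ern-type precision kernel, the inverse Fourier transform of $m(\xi)^\alpha$) does not reduce to a finite combination of derivatives of the Dirac mass but carries a genuinely nonvanishing off-diagonal part. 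Consequently $L^\alpha h_1$ has a nonzero tail outside $\operatorname{supp} h_1$, and one may place $h_2$ where this tail is nonzero so that $(L^\alpha h_1,h_2)_{L_2(\Gamma)}\neq 0$. This contradicts locality, and hence the Markov property, forcing $\alpha\in\mathbb{N}$.

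I expect the main obstacle to be making this ``microlocal'' reduction rigorous: $L^\alpha$ is defined spectrally through the global eigenbasis of $L$ subject to the Kirchhoff vertex conditions, whereas the argument compares it with the free constant-coefficient fractional operator on $\mathbb{R}$. I would control the discrepancy by showing that, restricted to functions supported in the open interior of $e$, $L^\alpha$ differs from the whole-line operator only by a smoothing (smooth-kernel) operator, so that the off-diagonal analyticity of the Mat\'ern kernel is inherited and cannot be cancelled; since a real-analytic off-diagonal kernel vanishing on an open set must vanish identically, the nonlocality genuinely persists on the edge. Establishing this comparison, rather than the symbol computation itself, is the delicate step.
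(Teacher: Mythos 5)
Your treatment of part i and of the forward implication of part ii is correct and is exactly the paper's argument: Remark \ref{rem:CMgeneralizedWM} identifies the Cameron--Martin space with $\dot{H}^\alpha_L(\Gamma)$, Proposition \ref{prp:Hdot3Local} gives locality for $\alpha\in\mathbb{N}$, and Theorem \ref{thm:MarkovLocalCM} converts locality into the GMRF property; the observation that $C^\infty(e)$ coefficients satisfy Assumption \ref{assump:basic_assump2} for every $\alpha$ is also how the paper disposes of the direct part of ii.

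For the converse, your high-level idea (a non-integer power of a second-order operator cannot act locally on interior test functions) is the right one, but your route differs from the paper's and, as written, has a genuine gap at precisely the step you flag as delicate. You propose to show that on functions supported in the interior of an edge, $L^\alpha$ equals the whole-line constant-coefficient operator with symbol $(\kappa^2+a\xi^2)^\alpha$ plus a smoothing correction, and then to conclude that ``the off-diagonal analyticity of the Mat\'ern kernel is inherited and cannot be cancelled.'' This does not follow: a smoothing operator has a smooth but not necessarily real-analytic off-diagonal kernel, and it is itself generically nonlocal, so adding it destroys the analyticity you want to invoke and could a priori cancel the Mat\'ern tail on the open set where you place $h_2$. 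To close the argument you would need either analyticity of the vertex correction (which requires tracking the Balakrishnan/resolvent representation of the correction terms explicitly) or a quantitative estimate, e.g.\ exploiting that the free kernel blows up like a negative power of $|x-y|$ near the diagonal while the vertex correction stays bounded at fixed distance from the vertices, so that placing $h_1,h_2$ disjoint but sufficiently close forces $(L^\alpha h_1,h_2)_{L_2}\neq 0$. Neither is supplied. The paper avoids kernel analysis altogether: it assumes locality, invokes Peetre's theorem (Lemma \ref{lem:local_coeff_Peetre}) to conclude that $L^\alpha$ must then be a \emph{finite-order} differential operator with smooth coefficients on $\bigoplus_e C^\infty_c(e)$, and contrasts this with an explicit infinite expansion of $L^\alpha$ obtained from the Balakrishnan formula applied to locally polynomial test functions (the class $D_{x,e}$, on which the resolvent Neumann series terminates); matching coefficients forces $\int_0^\infty \lambda^{-\beta}(\kappa^2+\lambda)^{-(m-\lceil\alpha\rceil+1)}\bigl((\kappa^2+\lambda)^{-1}-\kappa^{-2}\bigr)^{\lceil\alpha\rceil}\,d\lambda=0$ for large $m$, which is impossible since the integrand has fixed sign. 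That algebraic contradiction is what your sketch would need to replace, and it is not yet there.
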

\begin{Remark}
The assumptions of Theorem \ref{thm:MarkovGenWM}.\ref{thm:MarkovGenWM3} are satisfied if $\kappa$ and $\H$ are constants, which means that the Whittle--Mat\'ern fields are GMRFs if and only if $\alpha\in\mathbb{N}$. Observe that typical examples of coefficients that satisfy assumption \ref{thm:MarkovGenWM2} from Theorem \ref{thm:MarkovGenWM} but not assumption \ref{thm:MarkovGenWM3} are functions whose restriction to edges are piecewise smooth functions. For example, for $\alpha=1$, continuous functions whose restrictions to edges are piecewise linear functions are Lipschitz functions that do not satisfy assumption~\ref{thm:MarkovGenWM3}.
\end{Remark}

\subsection[Order alpha Markov Properties]{Order $\alpha$ Markov properties}\label{sec:GWMmarkov_alpha}
We now  refine Theorem~\ref{thm:MarkovGenWM} by characterizing the boundary $\sigma$-algebra $\mathcal{F}_+^u(\partial S)$. We aim to demonstrate that, if $\alpha\in\mathbb{N}$, the generalized Whittle--Mat\'ern field is Markov of order $\alpha$. For this, we first define weak mean-square differentiability. 
Suppose $u$ is a GRF with corresponding Gaussian space $H(\Gamma)$,  $e$ is an edge, and $v:e \to H(\Gamma)$ a function. Then, $v$ is weakly differentiable at $s = (t,e)\in\Gamma$ in the $L_2(\Omega)$ sense if  $v'(s)\in H(\Gamma)$ exists such that, for every $w\in H(\Gamma)$ and every sequence $s_n\to s$, where $s_n\neq s$, $\mathbb{E}(w(v(s_n)-v(s))/(s_n-s)) \to \mathbb{E}(wv'(s))$. 
We define higher-order weak derivatives in the $L_2(\Omega)$ sense inductively: for $k\geq 2$, $v$ has a $k$th order weak
derivative at $s = (t,e) \in \Gamma$ if $v_e^{(k-1)}(\tilde{t})$ exists for every $\tilde{t}\in [0,\ell_e]$, and it is weakly differentiable at $t$. 
Finally, a function $v: e \to H(\Gamma)$ is weakly continuous in the $L_2(\Omega)$ sense if, for every $w\in H(\Gamma)$, the function $s\mapsto \mathbb{E}(w v(s))$ is continuous.

\begin{Definition}\label{def:RandomFieldOrderP}
	Let $u$ be a GRF on $\Gamma$ that has weak derivatives, in the 
	$L_2(\Omega)$ sense, of orders $1,\ldots, p$ for $p\in\mathbb{N}$.
	Also assume that $u_e^{(j)}$, for $j=0,\ldots,p$ and $e\in\mathcal{E}$, is weakly continuous in the 
	$L_2(\Omega)$ sense.
	Then the weak derivatives are well-defined for each $s\in\Gamma$ and we say that $u$ is a differentiable GRF of order $p$. A GRF that is continuous in $L_2(\Omega)$ is said to be a differentiable GRF of order $0$.
\end{Definition} 

To simplify the  notation, we define $\mathcal{E}_s$ for $s\in\Gamma$ as the set of edges incident to $s$ and note that if $s$ is an interior point of $e$, then the only edge incident to $s$ is $e$. Further, for a set $S\subset\Gamma$, $\mathcal{E}_S$ denotes the set of edges with a nonempty intersection with the interior of the set $S$.  We also introduce
\begin{align*}
	H_\alpha(\partial S) &= \textrm{span}\{u_e(s), u_e'(s),\ldots, u_e^{(\alpha-1)}(s):
	s\in \partial S, e\in \mathcal{E}_s \},\\
	\mathcal{F}^u_{\alpha}(\partial S) &= \sigma(u_e(s), u_e'(s),\ldots, u_e^{(\alpha-1)}(s):
	s\in \partial S, e\in \mathcal{E}_s).
\end{align*}   
First, we show that  the weak derivatives in the $L_2(\Omega)$ sense 
	only depend on the boundary data.

\begin{Proposition}\label{prp:boundaryweakderiv}
	If $u$ is a differentiable GRF of order $p$, where $p\geq 0$, then
	given any open set $S$ whose boundary is given by finitely many points,
	we obtain, for each $s\in\partial S$ and each $e\in \mathcal{E}_s$, that 
	$u_e(s), u_e'(s), \ldots, u_e^{(p)}(s) \in H_+(\partial S).$
\end{Proposition}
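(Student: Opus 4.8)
The plan is to fix a boundary point $s\in\partial S$ and an incident edge $e\in\mathcal{E}_s$ (writing $t_s$ for the arc-length coordinate of $s$ on $e$), and to show that for every $\varepsilon>0$ all of $u_e(s),\ldots,u_e^{(p)}(s)$ lie in $H((\partial S)_\varepsilon)$; intersecting over $\varepsilon>0$ then yields membership in $H_+(\partial S)=\bigcap_{\varepsilon>0}H((\partial S)_\varepsilon)$. The geometric observation that makes this work is that, since $s\in\partial S$, the set $J_\varepsilon$ of points of $e$ lying within geodesic distance $\varepsilon$ of $s$ is entirely contained in $(\partial S)_\varepsilon$, so that $H(J_\varepsilon)\subset H((\partial S)_\varepsilon)$. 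Thus it suffices to work inside the single closed Gaussian subspace $H(J_\varepsilon)$ at each fixed $\varepsilon$.

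The core of the argument is an induction on the derivative order, but with a hypothesis strengthened from ``the derivative at $s$'' to ``the derivative at every nearby point'': I would prove, for each $j=0,\ldots,p$, the statement $(\star_j)$ that $u_e^{(j)}(\tilde t)\in H(J_\varepsilon)$ for every $\tilde t$ with $(\tilde t,e)\in J_\varepsilon$. The base case $(\star_0)$ is immediate, since each $u_e(\tilde t)=u((\tilde t,e))$ is one of the generators of $H(J_\varepsilon)$. For the inductive step I would fix such a $\tilde t_0$ and express $u_e^{(j+1)}(\tilde t_0)$ as the $L_2(\Omega)$ limit of the difference quotients $(u_e^{(j)}(\tilde t_n)-u_e^{(j)}(\tilde t_0))/(\tilde t_n-\tilde t_0)$ along a sequence $\tilde t_n\to\tilde t_0$ with $(\tilde t_n,e)\in J_\varepsilon$, which is possible because $J_\varepsilon$ is a one- or two-sided neighborhood of $\tilde t_0$ in $e$. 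By $(\star_j)$ each such difference quotient is a scalar combination of two elements of the linear space $H(J_\varepsilon)$ and hence lies in $H(J_\varepsilon)$.

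The one genuinely non-formal ingredient is passing the limit into the subspace. Here I would invoke that the convergence in Definition~\ref{def:RandomFieldOrderP} is precisely weak convergence in the Hilbert space $H(\Gamma)$, i.e.\ for every $w\in H(\Gamma)$ one has $\mathbb{E}(w\,(u_e^{(j)}(\tilde t_n)-u_e^{(j)}(\tilde t_0))/(\tilde t_n-\tilde t_0))\to\mathbb{E}(w\,u_e^{(j+1)}(\tilde t_0))$, together with the standard fact that a norm-closed linear subspace of a Hilbert space is weakly closed. Consequently the weak limit $u_e^{(j+1)}(\tilde t_0)$ remains in $H(J_\varepsilon)$, closing the induction. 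Evaluating $(\star_j)$ at $\tilde t_0=t_s$ (so $(\tilde t_0,e)=s\in J_\varepsilon$) for each $j=0,\ldots,p$ gives $u_e^{(j)}(s)\in H(J_\varepsilon)\subset H((\partial S)_\varepsilon)$, and letting $\varepsilon\downarrow 0$ finishes the proof.

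I expect the main obstacle to be exactly the bookkeeping in the strengthened induction. The naive hypothesis ``$u_e^{(j)}(s)\in H_+(\partial S)$'' is too weak to differentiate once more, since forming $u_e^{(j+1)}(s)$ requires the lower-order derivatives $u_e^{(j)}(\tilde t_n)$ at the auxiliary points $\tilde t_n\neq s$, not merely at $s$ itself. Carrying the membership uniformly over the whole neighborhood $J_\varepsilon$, at fixed $\varepsilon$, is precisely what keeps the weak-closedness step available at every stage of the induction.
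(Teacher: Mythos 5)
Your proof is correct and follows essentially the same route as the paper's: difference quotients lie in $H((\partial S)_\varepsilon)$ for small enough increments, closed subspaces of $L_2(\Omega)$ are weakly closed (the paper shows this via $H(S)^{\perp\perp}=H(S)$ where you cite the standard fact), and one intersects over $\varepsilon>0$. Your strengthened induction hypothesis $(\star_j)$, carrying membership over the whole neighborhood $J_\varepsilon$ rather than only at $s$, is precisely the bookkeeping the paper compresses into ``the results for the higher-order derivatives can be proved in the same manner, along with an inductive argument.''
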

Second, under suitable assumptions on $\kappa$ and $\H$, a generalized Whittle--Mat\'ern field $u$ with $\alpha\geq 1$ is a differentiable GRF of order $\lfloor\alpha\rfloor-1$.

\begin{Proposition}\label{prp:GenWMDiffAlpha}
	Let $u$ be a generalized Whittle--Mat\'ern field on $\Gamma$ obtained as the solution to \eqref{eq:Matern_spde_general} under Assumption \ref{assump:basic_assump2} with $\alpha>1/2$. 
	Then, $u$ is a differentiable GRF of order $\lfloor \alpha\rfloor-1$.
Further,  for $f\in \widetilde{H}^2(\Gamma)$, $e\in\mathcal{E}$ and $t\in [0,\ell_e]$, define $L_e f_e(t) = \kappa_e^2 f_e(t) - \nabla \H_e(t)\nabla f_e(t)$. Let $w\in H(\Gamma)$, and define $h(s) = \mathbb{E}(wu(s))$ for $s\in\Gamma$. Then, for $e\in\mathcal{E}, t\in [0,\ell_e],$
\begin{align*} 
	L_e^k h_e(t) &= \mathbb{E}(w L_e^k u_e(t))   &&\mbox{for $0 \leq k \leq \lfloor \nicefrac{\alpha}{2}\rfloor$,} \\
	\partial_e L_e^k h_e(t) &= \mathbb{E}(w \partial_e L_e^k u_e(t))  &&\mbox{for $0 \leq k \leq \lfloor \nicefrac{\alpha}{2}\rfloor-1$},
	\end{align*}
	where the derivatives of $u_e(\cdot)$ are weak derivatives in the $L_2(\Omega)$ sense.
\end{Proposition}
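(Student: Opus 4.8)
The plan is to transfer every statement about the random field $u$ into a deterministic regularity statement about its Cameron--Martin space, using the isometric isomorphism $\Phi:H(\Gamma)\to\mathcal{H}(\Gamma)$, $\Phi(w)(\cdot)=\mathbb{E}(u(\cdot)w)$, together with the identification $\mathcal{H}(\Gamma)=\dot{H}^\alpha_L(\Gamma)$ from Remark~\ref{rem:CMgeneralizedWM}. For fixed $w\in H(\Gamma)$ the function $h=\Phi(w)$ is precisely the one in the statement, so all claimed identities reduce to: (a) certain linear functionals of $w$ are bounded on $H(\Gamma)$, so that Riesz representation produces elements of $H(\Gamma)$ that serve as the weak derivatives and as $L_e^k u_e$; and (b) these elements satisfy the difference-quotient and weak-continuity requirements of Definition~\ref{def:RandomFieldOrderP}. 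The whole argument thus rests on the regularity of $\dot{H}^\alpha_L(\Gamma)$, since $\|\Phi w\|_{\mathcal{H}}=\|w\|_{H(\Gamma)}$ converts each boundedness question on $H(\Gamma)$ into one on $\dot{H}^\alpha_L(\Gamma)$.

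First I would establish differentiability of order $\lfloor\alpha\rfloor-1$. The key input is the embedding $\dot{H}^\alpha_L(\Gamma)\hookrightarrow\widetilde{H}^\alpha(\Gamma)$ (for $\alpha\in\mathbb{N}$ this is Proposition~\ref{prp:CharGen_CM_natural_alpha}, and for general $\alpha$ it follows by interpolation), combined with the one-dimensional Sobolev embedding $H^\alpha(e)\hookrightarrow C^{\lfloor\alpha\rfloor-1}(e)$, valid since $\alpha>(\lfloor\alpha\rfloor-1)+\tfrac{1}{2}$. Consequently, for $0\le j\le\lfloor\alpha\rfloor-1$, $e\in\mathcal{E}$ and $s=(t,e)$, the functional $w\mapsto \partial_t^j(\Phi w)_e(t)$ is bounded on $H(\Gamma)$, and Riesz representation yields a unique $u_e^{(j)}(s)\in H(\Gamma)$ with $\mathbb{E}(w\,u_e^{(j)}(s))=\partial_t^j h_e(t)$ for all $w$. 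Taking $s_n\to s$ and using that each $h=\Phi(w)$ is classically $C^{\lfloor\alpha\rfloor-1}$ on $e$, the difference quotients $\mathbb{E}(w(u_e^{(j)}(s_n)-u_e^{(j)}(s))/(s_n-s))$ equal $(\partial_t^j h_e(t_n)-\partial_t^j h_e(t))/(t_n-t)$ and converge to $\partial_t^{j+1}h_e(t)=\mathbb{E}(w\,u_e^{(j+1)}(s))$, which verifies weak differentiability; weak continuity of $s\mapsto u_e^{(j)}(s)$ follows in the same way from continuity of $\partial_t^j h_e$. This shows $u$ is a differentiable GRF of order $\lfloor\alpha\rfloor-1$.

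For the commutation relations I would use that $L^k:\dot{H}^\alpha_L(\Gamma)\to\dot{H}^{\alpha-2k}_L(\Gamma)$ is bounded (Remark~\ref{rmk:isometry}) and that, thanks to the coefficient regularity in Assumption~\ref{assump:basic_assump2} and the characterization of the spaces $\dot{H}^s_L(\Gamma)$, the spectral power $L^k h$ agrees edgewise with the classical differential expression $L_e^k h_e$. For $0\le k\le\lfloor\alpha/2\rfloor$ we have $2k\le\alpha$, so $w\mapsto \partial_t^{2k}(\Phi w)_e$, and more generally $w\mapsto L_e^k(\Phi w)_e$, is bounded from $H(\Gamma)$ into $L_2(e)$; Riesz representation then produces $L_2(e)$-valued weak $L_2(\Omega)$ derivatives of $u_e$ up to order $2k$ and, assembling them, an element $L_e^k u_e\in L_2(\Omega;L_2(e))$ with $\mathbb{E}(w\,L_e^k u_e)=L_e^k h_e$, giving the first family of identities. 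When $k\le\lfloor\alpha/2\rfloor-1$ one has $\alpha-2k\ge 2$ and $\partial_e L_e^k$ has order $2k+1$ with $\alpha-(2k+1)\ge 1$, so $L_e^k h_e$ and $\partial_e L_e^k h_e$ are continuous and the identities, including the second family, hold pointwise in $t$.

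The main obstacle is the borderline top order. When $\alpha$ is even and $k=\lfloor\alpha/2\rfloor$, one has $L^k h\in\dot{H}^0_L(\Gamma)\cong L_2(\Gamma)$, so $L_e^k h_e$ is only an $L_2(e)$ function and the corresponding identity must be read in $L_2(e)$ rather than pointwise; likewise the $(\lfloor\alpha\rfloor-1)$th weak derivative and the top-order $L_e^k u_e$ sit at the edge of the Sobolev embedding, where only $L_2(e)$-valued (not pointwise) weak derivatives are available. The delicate points are therefore (i) verifying that the Riesz-defined objects genuinely coincide with the iterated weak $L_2(\Omega)$ derivatives of $u_e$, i.e.\ that taking weak derivatives commutes with applying the deterministic operator $L_e^k$, which I would handle by a density/continuity argument reconciling the $L_2(e)$-valued and pointwise derivatives where both exist; and (ii) carefully tracking, via Assumption~\ref{assump:basic_assump2}, that the coefficient regularity is exactly enough to keep $L_e^k$ mapping $H^\alpha(e)$ into $H^{\alpha-2k}(e)$, so that the edgewise identification $L^k h|_e=L_e^k h_e$ is licit for every $k\le\lfloor\alpha/2\rfloor$.
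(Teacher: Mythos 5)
Your argument is correct and follows essentially the same route as the paper, whose own proof is little more than a citation of \cite[Lemma~4 and Corollary~5]{BSW2022} together with Proposition~\ref{prp:CharGen_CM_natural_alpha}: the identification $\mathcal{H}(\Gamma)\cong\dot{H}^\alpha_L(\Gamma)\hookrightarrow\widetilde{H}^\alpha(\Gamma)$, the one-dimensional Sobolev embedding, Riesz representation to realize the weak $L_2(\Omega)$ derivatives, and the product-rule expansion of $L_e^k$ under the coefficient regularity of Assumption~\ref{assump:basic_assump2}. You in fact supply more detail than the paper does, including an honest treatment of the borderline case $2k=\alpha$ for even $\alpha$, where the identity indeed only holds in $L_2(e)$.
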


As a corollary to this result, $u$ satisfies (in the weak $L_2(\Omega)$ sense) the same boundary conditions as the functions in $\dot{H}^\alpha_L(\Gamma)$.

\begin{Corollary}\label{cor:BoundaryConditionsGen_WM}
	Let $u$ be a generalized Whittle--Mat\'ern field on $\Gamma$ obtained as the solution to \eqref{eq:Matern_spde_general} under Assumption \ref{assump:basic_assump2} with $\alpha>1/2$. 
	 Fix any $k \in \{0,\ldots, \ceil{\alpha - \nicefrac{1}{2}}  -1\}$. 
	If $k$ is odd, then $\sum_{e\in\mathcal{E}_v} \partial_{e} L_e^{\nicefrac{(k-1)}{2}} u(v) = 0$ for each $v\in\mathcal{V}$. 
	If $k$ is even, then $L_e^{\nicefrac{k}{2}} u_e(v) = L_{e'}^{\nicefrac{k}{2}}u(v)$
	for each $v\in\mathcal{V}$ and each pair $e,e'\in\mathcal{E}_v$,  where $L_e$ is as in Proposition \ref{prp:GenWMDiffAlpha} and the derivatives are weak in the $L_2(\Omega)$ sense.
\end{Corollary}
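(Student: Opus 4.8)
The plan is to push the deterministic vertex conditions obeyed by the Cameron--Martin space onto the field $u$, using the commutation identities of Proposition~\ref{prp:GenWMDiffAlpha} together with Gaussianity. Fix an arbitrary $w\in H(\Gamma)$ and set $h(s)=\mathbb{E}(w u(s))$. By Remark~\ref{rem:CMgeneralizedWM} the Cameron--Martin space of $u$ is $\dot{H}_L^\alpha(\Gamma)$, so $h\in\dot{H}_L^\alpha(\Gamma)$, and hence $h$ satisfies exactly the vertex conditions that characterize this space. For $\alpha\in\mathbb{N}$ these are read off from Proposition~\ref{prp:CharGen_CM_natural_alpha}: writing $k=2m$ (resp.\ $k=2m+1$), the condition $L^{m}h\in C(\Gamma)$ (resp.\ $L^{m}h\in \dot{H}^2_L(\Gamma)=D(L)$) forces continuity of $L^m h$ across each vertex (resp.\ the Kirchhoff relation $\sum_{e\in\mathcal{E}_v}\partial_e L_e^m h(v)=0$). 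For general $\alpha>1/2$ the same family of conditions, now for all orders $k<\alpha-\tfrac12$, is obtained from the interpolation description of $\dot{H}_L^\alpha(\Gamma)$ together with the embeddings $\dot{H}_L^{\alpha-k}(\Gamma)\hookrightarrow C(\Gamma)$ (valid for $\alpha-k>\tfrac12$) and the corresponding trace statement for first derivatives.

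The core of the argument is then the transfer from $h$ to $u$. Take first $k=2m$ even. Applying Proposition~\ref{prp:GenWMDiffAlpha} on each edge incident to a vertex $v$ gives $L_e^{m}h_e(v)=\mathbb{E}(w\,L_e^{m}u_e(v))$, so continuity of $L^m h$ yields $\mathbb{E}\bigl(w\,(L_e^{m}u_e(v)-L_{e'}^{m}u(v))\bigr)=0$ for every $w\in H(\Gamma)$ and every pair $e,e'\in\mathcal{E}_v$. The random variable $X:=L_e^{m}u_e(v)-L_{e'}^{m}u(v)$ is a finite linear combination, with deterministic coefficients built from $\kappa$ and $\H$, of the mean-square weak derivatives of $u$ at $v$, hence $X\in H(\Gamma)$; choosing $w=X$ forces $\mathbb{E}(X^2)=0$, that is $L_e^{m}u_e(v)=L_{e'}^{m}u(v)$ almost surely. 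For $k=2m+1$ odd the argument is identical, now using the derivative identity $\partial_e L_e^{m}h_e(v)=\mathbb{E}(w\,\partial_e L_e^{m}u(v))$ of Proposition~\ref{prp:GenWMDiffAlpha}: summing over $e\in\mathcal{E}_v$ and invoking $\sum_{e}\partial_e L_e^{m}h(v)=0$ gives $\mathbb{E}\bigl(w\sum_{e\in\mathcal{E}_v}\partial_e L_e^{m}u(v)\bigr)=0$ for all $w$, and taking $w=\sum_{e\in\mathcal{E}_v}\partial_e L_e^{m}u(v)\in H(\Gamma)$ gives the claimed Kirchhoff relation almost surely.

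The main obstacle is the index bookkeeping needed to guarantee that, for every $k$ in the stated range $\{0,\dots,\lceil\alpha-\tfrac12\rceil-1\}$, both ingredients are simultaneously available: the order-$k$ vertex condition on $h$ and the matching commutation identity of Proposition~\ref{prp:GenWMDiffAlpha}, which holds for the value-type identities when $m\le\lfloor\alpha/2\rfloor$ and for the derivative-type identities only when $m\le\lfloor\alpha/2\rfloor-1$. One must split into the even and odd cases and check that $k<\alpha-\tfrac12$ places the corresponding $m$ in the admissible range; the derivative case is tighter by one power of $L$, so the odd conditions are the delicate ones. A secondary point is that, for non-integer $\alpha$ (in particular when the fractional part exceeds $\tfrac12$ and the top-order condition is of Kirchhoff type), both the relevant commutation identity and the well-definedness of $\partial_e L_e^{m}u(v)$ as an element of $H(\Gamma)$ sit at the boundary of what Proposition~\ref{prp:GenWMDiffAlpha} supplies, so this borderline order is most safely obtained by a direct density or interpolation argument at the level of $\dot{H}_L^\alpha(\Gamma)$ rather than by the transfer above.
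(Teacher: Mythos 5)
Your proposal is correct and follows essentially the same route as the paper: the paper likewise reads off the vertex conditions satisfied by every $h(\cdot)=\mathbb{E}(wu(\cdot))\in\dot{H}^\alpha_L(\Gamma)$ from Proposition~\ref{prp:CharGen_CM_natural_alpha}, and then transfers them to $u$ via the commutation identities of Proposition~\ref{prp:GenWMDiffAlpha} together with the arguments of \cite[Proposition 11]{BSW2022}, which is precisely the duality step you spell out (testing against all $w\in H(\Gamma)$ and then against the deficit variable itself). Your explicit index bookkeeping and the caveat about the borderline non-integer orders only make explicit what the paper leaves implicit.
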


Finally, we can characterize the boundary spaces $H_+(\partial S)$ of generalized Whittle--Mat\'ern fields. 

\begin{Theorem}\label{thm:boundarySpcsGenWM}
	Let $u$ be a generalized Whittle--Mat\'ern field on $\Gamma$ obtained as the solution to \eqref{eq:Matern_spde_general} under Assumption \ref{assump:basic_assump2} with $\alpha\in\mathbb{N}$. 
	Let, also, $S\subset\Gamma$ be an open set whose
	boundary consists of finitely many points. Then, $H_+(\partial S) = H_\alpha(\partial S)$ and
	$\mathcal{F}^u_+(\partial S) = \mathcal{F}^u_{\alpha}(\partial S).$
\end{Theorem}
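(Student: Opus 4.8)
The plan is to prove the two equalities $H_+(\partial S) = H_\alpha(\partial S)$ and $\mathcal{F}^u_+(\partial S) = \mathcal{F}^u_\alpha(\partial S)$ together, since the second follows from the first once we know that $\mathcal{F}^u_+(\partial S) = \sigma(H_+(\partial S))$ and $\mathcal{F}^u_\alpha(\partial S) = \sigma(H_\alpha(\partial S))$; the latter is essentially the definition of $\mathcal{F}^u_\alpha$, and the former is exactly the type of identity established in the proof of Proposition \ref{prp:charmarkovspaces} via \cite[Lemma 3.3]{Mandrekar1976}. So the heart of the matter is the Hilbert-space identity $H_+(\partial S) = H_\alpha(\partial S)$, and I would argue it by proving the two inclusions separately.

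The inclusion $H_\alpha(\partial S) \subseteq H_+(\partial S)$ is the easier direction and is exactly Proposition \ref{prp:boundaryweakderiv} combined with Proposition \ref{prp:GenWMDiffAlpha}. Indeed, by Proposition \ref{prp:GenWMDiffAlpha}, $u$ is a differentiable GRF of order $\lfloor\alpha\rfloor - 1 = \alpha - 1$ (using $\alpha\in\mathbb{N}$), so the derivatives $u_e(s), u_e'(s), \ldots, u_e^{(\alpha-1)}(s)$ are well-defined elements of $H(\Gamma)$ for each $s\in\partial S$ and each $e\in\mathcal{E}_s$. Proposition \ref{prp:boundaryweakderiv}, applied with $p = \alpha - 1$, then places each of these generators inside $H_+(\partial S)$. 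Since $H_+(\partial S)$ is a closed subspace, it contains the closed span of these generators, which is precisely $H_\alpha(\partial S)$ (I should note $H_\alpha$ is defined as a span, so I would take its closure or observe the span is finite-dimensional because $\partial S$ is a finite set of points and each vertex has finitely many incident edges, hence the span is already closed).

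The reverse inclusion $H_+(\partial S) \subseteq H_\alpha(\partial S)$ is the main obstacle. Here I would pass to the Cameron--Martin picture via the isometry $\Phi$ from Section \ref{sec:Markov}, so that it suffices to show $\mathcal{H}_+(\partial S) \subseteq \Phi(H_\alpha(\partial S))$. An element $h\in\mathcal{H}_+(\partial S) = \bigcap_{\varepsilon>0}\mathcal{H}((\partial S)_\varepsilon)$ lies in every $\mathcal{H}((\partial S)_\varepsilon)$, and since $\mathcal{H}(\Gamma)\cong\dot{H}^\alpha_L(\Gamma)$ (Remark \ref{rem:CMgeneralizedWM}), Proposition \ref{prp:CharGen_CM_natural_alpha} identifies these functions with elements of $\widetilde{H}^\alpha(\Gamma)$ satisfying the vertex conditions. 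The key point is that a function $h$ in the Cameron--Martin space whose ``source'' is supported arbitrarily close to the finite set $\partial S$ must, away from $\partial S$, solve the homogeneous equation $L^\alpha h = 0$ on each component of $\Gamma\setminus\partial S$; such a function is determined on each edge by finitely many boundary data, namely the values $h_e(s), h_e'(s), \ldots, h_e^{(\alpha-1)}(s)$ at the points $s\in\partial S$. This is where I expect the real work: making precise that intersecting over shrinking neighborhoods forces the localization to the $\alpha$ jets at $\partial S$, using the characterization of $\dot{H}^\alpha_L(\Gamma)$ and an argument that the only degrees of freedom surviving the intersection are the boundary derivatives up to order $\alpha-1$. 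Concretely, I would show that the map sending $h\in\mathcal{H}_+(\partial S)$ to its collection of boundary jets $(h_e^{(j)}(s))_{s\in\partial S, e\in\mathcal{E}_s, 0\le j\le\alpha-1}$ is injective, because any $h$ with vanishing boundary jets solves $L^\alpha h = 0$ with zero Cauchy data on each edge and hence vanishes identically; combined with the already-established reverse inclusion dimension count, this forces $\mathcal{H}_+(\partial S) = \Phi(H_\alpha(\partial S))$. Translating back through $\Phi^{-1}$ yields $H_+(\partial S) = H_\alpha(\partial S)$, and the $\sigma$-algebra identity follows immediately.
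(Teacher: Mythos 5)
Your overall architecture is sound and the easy inclusion $H_\alpha(\partial S)\subseteq H_+(\partial S)$ is handled exactly as in the paper, via Propositions \ref{prp:GenWMDiffAlpha} and \ref{prp:boundaryweakderiv}. For the hard inclusion you take a genuinely different route: you want to show that every $h\in\mathcal{H}_+(\partial S)$ solves $L^\alpha h=0$ on $\Gamma\setminus\partial S$ and then conclude by uniqueness for the resulting boundary value problem, whereas the paper instead takes $v\in H_+(\partial S)\ominus H_\alpha(\partial S)$, shows the associated Cameron--Martin function has vanishing jets at $\partial S$, approximates it via Lemma \ref{lem:sobTraceZero} by functions compactly supported in $S$ and in $\Gamma\setminus\overline{S}$, and kills it by orthogonality. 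Note also that your closing ``dimension count'' is unnecessary: injectivity of the jet map on $\mathcal{H}_+(\partial S)$ is, after the identification $h_e^{(k)}(s)=\mathbb{E}(v\,u_e^{(k)}(s))$, literally the statement $H_+(\partial S)\ominus H_\alpha(\partial S)=\{0\}$, which together with the easy inclusion already gives equality.

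There are, however, two genuine gaps. First, ``zero Cauchy data on each edge and hence vanishes identically'' is wrong as stated: $L^\alpha$ is an operator of order $2\alpha$ on each edge, so Cauchy data at a point consists of derivatives up to order $2\alpha-1$, while your jets only go up to order $\alpha-1$. Vanishing of half the Cauchy data does not force a solution of the homogeneous equation to vanish; uniqueness must instead come from the well-posedness of the two-point (or multi-point, on components of $\Gamma\setminus\partial S$ containing interior vertices) boundary value problem, e.g.\ via the energy identity $0=\int_U h\,L^\alpha h$, whose boundary terms vanish precisely because the jets up to order $\alpha-1$ do, together with positive definiteness of $L$. Second, the localization fact you lean on --- that $h\in\mathcal{H}_+(\partial S)$ implies $L^\alpha h=0$ on $\Gamma\setminus\partial S$ --- is Proposition \ref{prp:ChatHPlusS_genWM}, which in the paper is proved \emph{after} and \emph{using} Theorem \ref{thm:boundarySpcsGenWM} (through Corollary \ref{cor:MarkovOrderpCameronMartin} and Proposition \ref{thm:CharOrthComplCM_genWM}), so you cannot cite it here without circularity. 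A direct argument is available (test $h$ against extensions of functions in $C_c(\Gamma\setminus\overline{(\partial S)_\varepsilon})\cap\dot{H}^{2\alpha}_L$ and use that their preimages under $\Phi$ are orthogonal to $H((\partial S)_\varepsilon)$), but you acknowledge rather than supply it, and it is where the real work of your version of the proof lives.
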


In conclusion, the generalized Whittle--Mat\'ern fields with $\alpha\in\mathbb{N}$ and suitable assumptions on $\kappa$ and $\H$ are GMRFs of order $\alpha$.
Theorem \ref{thm:boundarySpcsGenWM} in combination with Definition~\ref{def:MarkovPropertyField} and Proposition~\ref{prp:charmarkovspaces} also allow us to restate the Markov property for generalized Whittle--Mat\'ern fields more precisely:

\begin{Corollary}\label{cor:MarkovOrderp}
	Let $u$ be a generalized Whittle--Mat\'ern field on $\Gamma$ obtained as the solution to \eqref{eq:Matern_spde_general} under Assumption \ref{assump:basic_assump2} with $\alpha\in\mathbb{N}$. Then, for any open set $S\subset\Gamma$ such that $\partial S$ consists
	of finitely many points:
	\begin{enumerate}[label= \roman{enumi}.]
		\item $\mathcal{F}^u_{\alpha}(\partial S)$ splits $\mathcal{F}^u_+(\overline{S})$ and $\mathcal{F}_+^u(\Gamma\setminus S)$;
		\item the orthogonal projection of $H_+(\overline{S})$ on $H_+(\Gamma\setminus S)$ is $H_\alpha(\partial S)$;
		\item $H(\Gamma) = H_+(\overline{S})\oplus(H_+(S^c)\ominus H_\alpha(\partial S)) = H_+(S^c)\oplus (H_+(\overline{S})\ominus H_\alpha(\partial S)).$
	\end{enumerate}
\end{Corollary}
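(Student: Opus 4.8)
The plan is to assemble the corollary directly from the three ingredients already in hand: the Markov property of $u$, the abstract $\sigma$-algebra/Gaussian-space characterizations of that property, and the explicit identification of the boundary objects for these fields. No new analysis is needed; the content is a bookkeeping consequence of Theorem~\ref{thm:MarkovGenWM}, Proposition~\ref{prp:charmarkovspaces}, and Theorem~\ref{thm:boundarySpcsGenWM}.

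First I would invoke Theorem~\ref{thm:MarkovGenWM}.\ref{thm:MarkovGenWM2}, which under Assumption~\ref{assump:basic_assump2} with $\alpha\in\mathbb{N}$ guarantees that $u$ is a GMRF, i.e., that $u$ satisfies Definition~\ref{def:MarkovPropertyField}. In particular, for every open set $S$ the $\sigma$-algebra $\mathcal{F}^u_+(\partial S)$ splits $\mathcal{F}^u_+(\overline{S})$ and $\mathcal{F}^u_+(\Gamma\setminus S)$. Restricting attention to open sets $S$ whose boundary consists of finitely many points, I would then apply Theorem~\ref{thm:boundarySpcsGenWM}, which yields $\mathcal{F}^u_+(\partial S) = \mathcal{F}^u_{\alpha}(\partial S)$ for such $S$; hence the splitting of $\mathcal{F}^u_+(\overline{S})$ and $\mathcal{F}^u_+(\Gamma\setminus S)$ is realized by $\mathcal{F}^u_{\alpha}(\partial S)$, which is exactly part~i.

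For parts~ii and~iii I would pass from $\sigma$-algebras to Gaussian spaces. Since $u$ is a GRF satisfying the Markov property, Proposition~\ref{prp:charmarkovspaces} applies and gives, for every open set $S$, both that the orthogonal projection of $H_+(\overline{S})$ onto $H_+(\Gamma\setminus S)$ equals $H_+(\partial S)$ and the two direct-sum decompositions $H(\Gamma) = H_+(\overline{S})\oplus(H_+(\Gamma\setminus S)\ominus H_+(\partial S)) = H_+(\Gamma\setminus S)\oplus(H_+(\overline{S})\ominus H_+(\partial S))$. Specializing again to open $S$ with finitely many boundary points and substituting the identity $H_+(\partial S) = H_\alpha(\partial S)$ from Theorem~\ref{thm:boundarySpcsGenWM}, together with $S^c = \Gamma\setminus S$, yields parts~ii and~iii verbatim.

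There is no substantive obstacle remaining at this stage, since the heavy lifting has been done upstream. The only point requiring care is to keep the class of sets consistent: the abstract characterizations in Proposition~\ref{prp:charmarkovspaces} hold for all open $S$, whereas the key identifications in Theorem~\ref{thm:boundarySpcsGenWM} require $\partial S$ to consist of finitely many points, so one must restrict to that class---precisely the hypothesis of the corollary---before performing the substitutions $H_+(\partial S)=H_\alpha(\partial S)$ and $\mathcal{F}^u_+(\partial S)=\mathcal{F}^u_{\alpha}(\partial S)$.
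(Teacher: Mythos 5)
Your proposal is correct and follows exactly the route the paper intends: the paper gives no separate proof but states the corollary as an immediate combination of Theorem~\ref{thm:MarkovGenWM} (via Definition~\ref{def:MarkovPropertyField}), Proposition~\ref{prp:charmarkovspaces}, and the identifications $\mathcal{F}^u_+(\partial S)=\mathcal{F}^u_{\alpha}(\partial S)$ and $H_+(\partial S)=H_\alpha(\partial S)$ from Theorem~\ref{thm:boundarySpcsGenWM}. Your remark about restricting to open sets whose boundary consists of finitely many points is exactly the right point of care.
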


\subsection{Nonregularity of generalized Whittle--Mat\'ern fields}\label{sec:GWMnonregular}
For a differentiable random field of order $p$, $p\in\mathbb{N}$, satisfying Kirchhoff vertex conditions on a metric graph, we expect the inner and outer boundary spaces at the vertices to be different whenever ${\alpha>1}$, and the degree of the vertex is greater than two. We now show that this is, indeed, the case for generalized Whittle--Mat\'ern fields.
First, we study the field on the inner and outer boundaries of an open set ${S\subset\Gamma}$. To this end, we define
	\begin{align*}
		H_{\alpha,I}(\partial S) &= \textrm{span}\{u_e(s), u_e'(s),\ldots, u_e^{(\alpha-1)}(s):
s\in \partial S, e\in \mathcal{E}_S \}, \\
	\mathcal{F}_{\alpha,I}^u(\partial S) &= \sigma(u_e(s), u_e'(s),\ldots, u_e^{(\alpha-1)}(s):
	s\in \partial S, e\in \mathcal{E}_S),
	\end{align*} 
	and introduce
\begin{equation}\label{eq:defSpaceInner}
	H_{+,I}(\partial S) = \bigcap_{\varepsilon>0} H(S\cap (\partial S)_\varepsilon), \quad
	H_{+,O}(\partial S) = \bigcap_{\varepsilon>0} H((\Gamma\setminus \overline{S})\cap (\partial S)_\varepsilon).
\end{equation}

We have the following characterization of these spaces and as a corollary we obtain the regularity result for the generalized Whittle--Mat\'ern fields.

\begin{Proposition}\label{thm:charInnerOuterEdge}
		Let $u$ be a generalized Whittle--Mat\'ern field on $\Gamma$ obtained as the solution to \eqref{eq:Matern_spde_general} under Assumption \ref{assump:basic_assump2} with $\alpha\in\mathbb{N}$.  If $S\subset\Gamma$ is an open set whose
	boundary $\partial S$ consists of finitely many points, then
	$H_{+,I}(\partial S) = H_{\alpha,I}(\partial S)$
	and
	$$
	H_{+,O}(\partial S) = \textrm{span}\{u_{\widetilde{e}}(s), u_{\widetilde{e}}'(s),\ldots, u_{\widetilde{e}}^{(\alpha-1)}(s):
	s\in \partial S, \widetilde{e}\not\in \mathcal{E}_S \}.
	$$
\end{Proposition}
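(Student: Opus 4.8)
The plan is to prove the two identities separately and to deduce the outer one from the inner one by symmetry: applying the inner characterization to the open set $S' = \Gamma\setminus\overline{S}$, whose boundary coincides with $\partial S$ and whose interior-meeting edges $\mathcal{E}_{S'}$ are exactly the edges $\widetilde{e}\notin\mathcal{E}_S$, turns $H_{+,I}(\partial S') = H_{\alpha,I}(\partial S')$ into the asserted formula for $H_{+,O}(\partial S)$. So the whole proof reduces to establishing $H_{+,I}(\partial S) = H_{\alpha,I}(\partial S)$. Since $\partial S$ is finite, for small $\varepsilon$ the sets $(\partial S)_\varepsilon$ split into disjoint balls around the boundary points, so I may localize the argument to a single $s\in\partial S$ and to the germs of $u$ on the half-edges at $s$; there the incident edges split into those meeting the interior of $S$ (the \emph{inner} edges, $\mathcal{E}_s\cap\mathcal{E}_S$) and the \emph{outer} edges.

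For the inclusion $H_{\alpha,I}(\partial S)\subseteq H_{+,I}(\partial S)$ I would prove a one-sided version of Proposition \ref{prp:boundaryweakderiv}. By Proposition \ref{prp:GenWMDiffAlpha} the field is a differentiable GRF of order $\lfloor\alpha\rfloor-1=\alpha-1$, so for $s\in\partial S$ and an inner edge $e$ each $u_e^{(j)}(s)$, $0\le j\le\alpha-1$, is the weak $L_2(\Omega)$ limit of finite differences of $u_e^{(j-1)}$ along $e$ formed at points approaching $s$ strictly from inside $S$. Such finite differences lie in $H(S\cap(\partial S)_\varepsilon)$ for all small $\varepsilon$; since weak $L_2(\Omega)$ convergence is exactly weak convergence in the Gaussian Hilbert space and closed subspaces are weakly closed, the limit stays in each $H(S\cap(\partial S)_\varepsilon)$, and intersecting over $\varepsilon$ gives $u_e^{(j)}(s)\in H_{+,I}(\partial S)$. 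An induction on $j$, using weak continuity of the lower derivatives to keep the interpolation points inside the inner collar, makes this rigorous and yields the inclusion.

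For the reverse inclusion I would first note that $S\cap(\partial S)_\varepsilon\subseteq(\partial S)_\varepsilon$ gives $H_{+,I}(\partial S)\subseteq H_+(\partial S)$, which by Theorem \ref{thm:boundarySpcsGenWM} equals $H_\alpha(\partial S)$, while $H_{+,I}(\partial S)\subseteq H(\overline{S})=H(S)$ (boundary points are negligible and $u$ is continuous). Hence it suffices to prove $H(S)\cap H_\alpha(\partial S)=H_{\alpha,I}(\partial S)$. Here I pass to the Cameron--Martin side through the isometry $\Phi(v)=\mathbb{E}(u(\cdot)v)$, using the reproducing identity $\mathbb{E}(u_e^{(j)}(s)\,v)=\big(\Phi v\big)_e^{(j)}(s)$ and the duality $\Phi(H(S)^{\perp})=\{g\in\dot{H}^\alpha_L(\Gamma): g|_S=0\}$. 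Given $v\in H(S)\cap H_\alpha(\partial S)$, write $v=\sum_{s,e,j}c_{s,e,j}u_e^{(j)}(s)$ and $h=\Phi(v)$. For any $g\in\dot{H}^\alpha_L(\Gamma)$ vanishing on $S$ one gets $0=\mathbb{E}(v\,\Phi^{-1}(g))=\langle h,g\rangle_{\mathcal{H}}=\sum_{s,e,j}c_{s,e,j}\,g_e^{(j)}(s)$; since $g$ vanishes on the inner half-edges its inner jets vanish, leaving $\sum_{s,\,\widetilde{e}\notin\mathcal{E}_S,\,j}c_{s,\widetilde{e},j}\,g_{\widetilde{e}}^{(j)}(s)=0$ for all such $g$. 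Using the local structure of $\dot{H}^\alpha_L(\Gamma)$ (Proposition \ref{prp:Hdot3Local}) together with Proposition \ref{prp:CharGen_CM_natural_alpha}, I can construct, for each outer edge, elements $g$ supported away from $S$ realizing every outer jet admissible under the vertex conditions; testing against these forces the outer coefficients $c_{s,\widetilde{e},j}$ to vanish modulo exactly the linear relations that the variables $u_{\widetilde{e}}^{(j)}(s)$ already satisfy (Corollary \ref{cor:BoundaryConditionsGen_WM}). Thus the purely outer part of $v$ is the zero element and $v\in H_{\alpha,I}(\partial S)$.

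I expect the reverse inclusion to be the main obstacle. The Markov decomposition of Corollary \ref{cor:MarkovOrderp} and Theorem \ref{thm:boundarySpcsGenWM} only control the two-sided space $H_+(\partial S)$, in which the outer directional derivatives genuinely live because $(\partial S)_\varepsilon$ contains outer collars; they give no separation between the strictly-inner collar and the two-sided one. The delicate point is therefore to certify that an outer directional derivative cannot be synthesized as an $L_2(\Omega)$-limit of functionals of the field on the inner side, and the cleanest route I see is the covariance/locality computation above, whose technical heart is constructing enough outer-supported members of $\dot{H}^\alpha_L(\Gamma)$ with prescribed jets and tracking the vertex conditions once the inner jet is set to zero. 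A separate minor point to handle is a boundary point lying in the interior of an edge: there $u$ is two-sided differentiable, the jet is single valued, and the inner and outer spaces share it, consistent with that edge belonging to $\mathcal{E}_S$.
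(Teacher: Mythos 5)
Your overall strategy is sound and, for the easy inclusion, coincides with the paper's: your ``one-sided version of Proposition \ref{prp:boundaryweakderiv}'' is exactly the paper's Lemma \ref{lem:InnerDerivativeBelongSpace}, and the containment $H_{+,I}(\partial S)\subseteq H_+(\partial S)=H_\alpha(\partial S)$ via Theorem \ref{thm:boundarySpcsGenWM} is also how the paper proceeds. You diverge in two places. First, the paper does not use your symmetry reduction $S\mapsto\Gamma\setminus\overline{S}$; Lemma \ref{lem:InnerDerivativeBelongSpace} handles the inner and outer half-edges simultaneously. Second, and more substantially, the published proof simply asserts the identity $H_{+,I}(\partial S)\cap H_\alpha(\partial S)=H_{\alpha,I}(\partial S)$ and its outer analogue, whereas your Cameron--Martin duality argument (testing a representation $v=\sum c_{s,e,j}u_e^{(j)}(s)$ against $g\in\dot H^\alpha_L(\Gamma)$ with $g|_S=0$) actually justifies it; this is essentially the ``alternative proof mimicking Theorem \ref{thm:boundarySpcsGenWM} with the help of Proposition \ref{thm:CharOrthComplCM_genWM}'' that the paper only mentions in a remark, so your route is in this respect more complete than the printed one. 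The ``technical heart'' you flag does close: the outer jets realizable by admissible $g$ form $V\cap(\{0\}\times\mathbb{R}^{n_O})$, where $V$ is the vertex-condition subspace of full jets, and the finite-dimensional identity $\bigl(V\cap(\{0\}\times\mathbb{R}^{n_O})\bigr)^\perp=V^\perp+(\{0\}\times\mathbb{R}^{n_O})^\perp$ shows that any annihilating outer functional differs from an inner functional by an element of $V^\perp$, which kills the jet of $u$ almost surely by Corollary \ref{cor:BoundaryConditionsGen_WM}; hence the outer part of $v$ is rewritten as an inner combination, as you claim.

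Two caveats. Your symmetry reduction needs the observation that points of $\partial S\setminus\partial(\Gamma\setminus\overline{S})$ are not adherent to $\Gamma\setminus\overline{S}$ and so contribute nothing for small $\varepsilon$; more seriously, an edge can meet both $S$ and $\Gamma\setminus\overline{S}$, in which case $\mathcal{E}_{\Gamma\setminus\overline{S}}$ is not the complement of $\mathcal{E}_S$ and the reduction does not literally reproduce the stated formula for $H_{+,O}(\partial S)$ (this straddling-edge issue already afflicts the proposition as stated; your closing remark about interior boundary points is the right fix but must be invoked inside, not after, the symmetry step). Also, the realizability of all admissible outer jets by elements of $\dot H^\alpha_L(\Gamma)$ vanishing on $S$ should be checked against Proposition \ref{prp:CharGen_CM_natural_alpha}: for $\alpha\geq 3$ the vertex conditions are written in terms of $L_e^{k/2}$ and $\partial_e L_e^{(k-1)/2}$ rather than raw derivatives, so the (invertible, since $a$ is bounded below) change of basis between these and the jet coordinates has to be carried through. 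Neither caveat invalidates the argument.
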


\begin{Corollary}\label{cor:alpha1regularboundary}
Let $u$ be a generalized Whittle--Mat\'ern field on $\Gamma$ obtained as the solution to \eqref{eq:Matern_spde_general} under Assumption \ref{assump:basic_assump2} with $\alpha\in\mathbb{N}$. Then $H_{+,O}(\partial e) = H_+(\partial e)$ for any edge $e$ of $\Gamma$. 
	Further, if $\alpha=1$, then 
	$H_{+,I}(\partial e) = H_{+,O}(\partial e) = H_{+}(\partial e) = \textrm{span}\{u(s):s\in\partial e\}.$
	Finally, if $\alpha>1$ and at least one of the vertices in $\partial e$ has degree greater
	than 2, then
	$H_{+,I}(\partial e) \subsetneqq H_{+,O}(\partial e).$
\end{Corollary}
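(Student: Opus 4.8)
The plan is to reduce everything to a local analysis at the endpoints of $e$ and then feed in the vertex conditions satisfied by $u$. Applying Proposition~\ref{thm:charInnerOuterEdge} with $S$ the interior of $e$ (so that $\mathcal{E}_S=\{e\}$ and $\partial S=\partial e$ is the set of endpoints of $e$) gives $H_{+,I}(\partial e)=\textrm{span}\{u_e^{(j)}(s):s\in\partial e,\ 0\le j\le\alpha-1\}$ and $H_{+,O}(\partial e)=\textrm{span}\{u_{\widetilde e}^{(j)}(s):s\in\partial e,\ \widetilde e\in\mathcal{E}_s\setminus\{e\},\ 0\le j\le\alpha-1\}$, while Theorem~\ref{thm:boundarySpcsGenWM} identifies $H_+(\partial e)=H_\alpha(\partial e)$ with the span of the jets of \emph{all} edges incident to the endpoints of $e$. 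The inclusions $H_{+,I}(\partial e)\subseteq H_+(\partial e)$ and $H_{+,O}(\partial e)\subseteq H_+(\partial e)$ are then immediate, and the whole statement becomes a question about the jets of $u$ at a single endpoint $v$.

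For the first assertion I would show that, at each endpoint $v$ of degree at least two, the jet $(u_e(v),\dots,u_e^{(\alpha-1)}(v))$ along $e$ lies in the span of the jets along the remaining edges incident to $v$; this gives $H_+(\partial e)\subseteq H_{+,O}(\partial e)$, hence equality. The tool is Corollary~\ref{cor:BoundaryConditionsGen_WM}. The key reformulation is that, because $\H_e(v)>0$ and $\kappa_e,\H_e$ are smooth enough under Assumption~\ref{assump:basic_assump2}, the map sending $(u_e(v),\dots,u_e^{(\alpha-1)}(v))$ to the ``$L$-jet'' consisting of $L_e^m u_e(v)$ for $0\le m\le\lfloor(\alpha-1)/2\rfloor$ and $\partial_e L_e^m u_e(v)$ for $0\le m\le\lfloor(\alpha-2)/2\rfloor$ is triangular with nonzero diagonal, hence a linear isomorphism, so the two families span the same subspace of $H(\Gamma)$. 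The even-order conditions of Corollary~\ref{cor:BoundaryConditionsGen_WM} then read $L_e^m u_e(v)=L_{e'}^m u_{e'}(v)$ for any other incident edge $e'$, and the odd-order conditions read $\partial_e L_e^m u_e(v)=-\sum_{e''\in\mathcal{E}_v\setminus\{e\}}\partial_{e''}L_{e''}^m u_{e''}(v)$, so every $L$-jet component of $e$ at $v$ is a linear combination of jets along the other incident edges. The delicate point, which I regard as the main obstacle, is checking that the range $0\le k\le\alpha-1$ of the conditions in Corollary~\ref{cor:BoundaryConditionsGen_WM} matches \emph{exactly} the list of $L$-jet components needed to recover the full jet up to order $\alpha-1$; a degree-one endpoint, where the Kirchhoff condition collapses to the Neumann condition $\partial_e L_e^m u_e(v)=0$, has to be treated separately.

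When $\alpha=1$ the jets reduce to the single values $u_e(s)$, and the $k=0$ continuity condition forces $u_e(v)=u_{e'}(v)=u(v)$ for all edges incident to $v$. Consequently $H_{+,I}(\partial e)$, $H_{+,O}(\partial e)$ and $H_+(\partial e)$ all collapse to $\textrm{span}\{u(s):s\in\partial e\}$, which is the second assertion.

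For the last assertion, take $\alpha>1$ and an endpoint $v$ of degree $d\ge3$; I would establish the strict inclusion by a dimension count performed in the Cameron--Martin space. By Remark~\ref{rem:CMgeneralizedWM} and Proposition~\ref{prp:CharGen_CM_natural_alpha}, $\mathcal{H}(\Gamma)=\dot{H}^\alpha_L(\Gamma)$ consists of the functions in $\widetilde{H}^\alpha(\Gamma)$ satisfying precisely the vertex conditions of Corollary~\ref{cor:BoundaryConditionsGen_WM}; transporting through the isometry $\Phi$, the admissible jets at $v$ are exactly those satisfying these conditions, and each such jet is attained. The $\lceil\alpha/2\rceil$ even-order conditions impose $d-1$ continuity constraints apiece and the $\lfloor\alpha/2\rfloor$ odd-order conditions impose one Kirchhoff constraint apiece, so the full jet space at $v$ (over all $d$ incident edges) has dimension $d\alpha-(d-1)\lceil\alpha/2\rceil-\lfloor\alpha/2\rfloor$, while $H_{+,I}(\partial e)$ contributes only the $\alpha$-dimensional jet of $e$. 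By the first assertion $H_{+,O}(\partial e)$ equals this full jet space, whose dimension exceeds that of the inner jet by $(d-2)\lfloor\alpha/2\rfloor$, which is strictly positive exactly when $d\ge3$ and $\alpha\ge2$. Since the jets at the two endpoints of $e$ are linearly independent by non-degeneracy of $u$, this gap at $v$ gives $H_{+,I}(\partial e)\subsetneq H_{+,O}(\partial e)$. Here the point requiring care is the Cameron--Martin identification guaranteeing that the listed vertex conditions are the \emph{only} linear relations among the jets, so that the count is sharp.
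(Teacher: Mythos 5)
Your proposal is correct and follows essentially the same route as the paper: Proposition~\ref{thm:charInnerOuterEdge} together with the vertex conditions of Corollary~\ref{cor:BoundaryConditionsGen_WM} to write the inner jet of $u$ along $e$ at each boundary vertex as a linear combination of the jets along the other incident edges, continuity for $\alpha=1$, and the resulting jet-space dimension gap $(d-2)\lfloor\alpha/2\rfloor$ for the strict inclusion --- you merely make explicit the triangular (hence invertible, since $\H_e(v)>0$) change of variables between the ordinary jet and the $L$-jet and the sharpness of the count via the Cameron--Martin identification, both of which the paper leaves implicit. The one loose end you flag, a degree-one endpoint, requires no separate treatment: such a vertex is an interior point of $e$ in $\Gamma$ and therefore does not belong to the topological boundary $\partial e$ at all.
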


Thus, generalized Whittle--Mat\'ern fields with $\alpha=1$ are regular, whereas 
if $\alpha\in\mathbb{N}$ is greater than 1, and the 
metric graph $\Gamma$ has at least one vertex with degree greater than 2,
then the fields are not regular.
Finally, we state the Markov property on the edges in terms of their inner
derivatives. The following result follows directly from Proposition~\ref{thm:charInnerOuterEdge}
and Proposition~\ref{prp:markovinnerboundary}.

\begin{Corollary}\label{cor:MarkovOrderpInner}
	Let $u$ be a generalized Whittle--Mat\'ern field on $\Gamma$ obtained as the solution to \eqref{eq:Matern_spde_general} under Assumption \ref{assump:basic_assump2} with $\alpha\in\mathbb{N}$. 
	For any open set $S\subset\Gamma$ such that $\partial S$ consists
	of finitely many points, 
	\begin{enumerate}[label= \roman{enumi}.]
		\item $\mathcal{F}^u_{\alpha, I}(\partial S)$ splits $\mathcal{F}^u(S)$ and $\mathcal{F}^u_+(\Gamma\setminus S)$;
		\item $\mathbb{E}(f|\mathcal{F}^u(S)) = \mathbb{E}(f| \mathcal{F}^u_{\alpha,I}(\partial S))$ for any bounded $\mathcal{F}^u_+(\Gamma\setminus S)$-measurable function $f$;
		\item The orthogonal projection of $H_+(\Gamma\setminus S)$ on $H(S)$ is $H_{\alpha,I}(\partial S)$;
		\item $H(\Gamma) = H_+(\Gamma\setminus S) \oplus (H(S)\ominus H_{\alpha,I}(\partial S)).$
	\end{enumerate}
\end{Corollary}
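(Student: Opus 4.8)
The plan is to treat this corollary as a translation of Proposition~\ref{prp:markovinnerboundary} and Proposition~\ref{thm:charInnerOuterEdge} into the four equivalent formulations, so the first step is to set up the dictionary between the inner-boundary $\sigma$-algebra and the inner-boundary Gaussian space. Since $\alpha\in\mathbb{N}$ and Assumption~\ref{assump:basic_assump2} holds, Theorem~\ref{thm:MarkovGenWM}.\ref{thm:MarkovGenWM2} guarantees that $u$ is a GMRF, so Proposition~\ref{prp:markovinnerboundary} is available. I would first record, exactly as in the proof of Proposition~\ref{prp:charmarkovspaces}, that $\mathcal{F}^u(A)=\sigma(H(A))$ for every Borel $A$, and that by \cite[Lemma 3.3]{Mandrekar1976} applied to the decreasing family $A_\varepsilon = S\cap(\partial S)_\varepsilon$ one has $\mathcal{F}^u_{+,I}(\partial S)=\sigma(H_{+,I}(\partial S))$. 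Since $\mathcal{F}^u_{\alpha,I}(\partial S)=\sigma(H_{\alpha,I}(\partial S))$ holds by definition, Proposition~\ref{thm:charInnerOuterEdge} (which gives $H_{+,I}(\partial S)=H_{\alpha,I}(\partial S)$) yields the key identity $\mathcal{F}^u_{+,I}(\partial S)=\mathcal{F}^u_{\alpha,I}(\partial S)$.

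Granting this identity, statements i and ii are immediate: they are precisely Proposition~\ref{prp:markovinnerboundary}.\ref{prp:markovinnerboundary1} and Proposition~\ref{prp:markovinnerboundary}.\ref{prp:markovinnerboundary2} with $\mathcal{F}^u_{+,I}(\partial S)$ replaced by $\mathcal{F}^u_{\alpha,I}(\partial S)$. For iii, I would use that, by Gaussianity (e.g.\ \cite[Theorem 9.1]{janson_gaussian}), conditional expectation onto $\mathcal{F}^u(S)=\sigma(H(S))$ is the orthogonal projection onto $H(S)$, and conditional expectation onto $\mathcal{F}^u_{\alpha,I}(\partial S)$ is the projection onto $H_{\alpha,I}(\partial S)$. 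Applying statement ii to (bounded approximations of) an arbitrary $v\in H_+(\Gamma\setminus S)$ shows that the projection of $v$ onto $H(S)$ equals its projection onto $H_{\alpha,I}(\partial S)$, hence lies in $H_{\alpha,I}(\partial S)$; this gives the inclusion $P_{H(S)}H_+(\Gamma\setminus S)\subset H_{\alpha,I}(\partial S)$. The reverse inclusion follows because $H_{\alpha,I}(\partial S)=H_{+,I}(\partial S)\subset H_+(\Gamma\setminus S)$ (every point of $S\cap(\partial S)_\varepsilon$ lies within $\varepsilon$ of $\partial S\subset\Gamma\setminus S$, so $S\cap(\partial S)_\varepsilon\subset(\Gamma\setminus S)_\varepsilon$) while simultaneously $H_{\alpha,I}(\partial S)\subset H(S)$, so these variables are fixed by $P_{H(S)}$.

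Finally, for iv I would read statement i as conditional independence of $H(S)$ and $H_+(\Gamma\setminus S)$ given the common subspace $H_{\alpha,I}(\partial S)$, which in the Gaussian setting is equivalent to the orthogonality $(H(S)\ominus H_{\alpha,I}(\partial S))\perp(H_+(\Gamma\setminus S)\ominus H_{\alpha,I}(\partial S))$. Since $H_{\alpha,I}(\partial S)\subset H_+(\Gamma\setminus S)$, this upgrades to $(H(S)\ominus H_{\alpha,I}(\partial S))\perp H_+(\Gamma\setminus S)$, so that $H_+(\Gamma\setminus S)\oplus(H(S)\ominus H_{\alpha,I}(\partial S))$ is an orthogonal (hence direct and closed) decomposition. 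It contains $H_{\alpha,I}(\partial S)\oplus(H(S)\ominus H_{\alpha,I}(\partial S))=H(S)$ as well as $H_+(\Gamma\setminus S)$, and being closed it therefore contains $\overline{H(S)+H_+(\Gamma\setminus S)}=H(\Gamma)$, giving equality.

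The only genuine subtlety, and the step I would be most careful about, is the pair of inclusions $H_{\alpha,I}(\partial S)\subset H(S)\cap H_+(\Gamma\setminus S)$ together with the density $H(\Gamma)=\overline{H(S)+H_+(\Gamma\setminus S)}$; once these are in place, all four parts are formal consequences of the Gaussian conditional-independence/orthogonality dictionary and are thus direct corollaries of Propositions~\ref{prp:markovinnerboundary} and~\ref{thm:charInnerOuterEdge}.
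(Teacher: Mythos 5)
Your proposal is correct and follows exactly the route the paper intends: the paper gives no written proof but states that the corollary ``follows directly from Proposition~\ref{thm:charInnerOuterEdge} and Proposition~\ref{prp:markovinnerboundary}'', and your argument is precisely a fleshed-out version of that, using $H_{+,I}(\partial S)=H_{\alpha,I}(\partial S)$ to identify the splitting $\sigma$-algebra and then translating to the Gaussian-space statements via the conditional-expectation/orthogonal-projection dictionary. The details you supply (the inclusions $H_{\alpha,I}(\partial S)\subset H(S)\cap H_+(\Gamma\setminus S)$ and the density $\overline{H(S)+H_+(\Gamma\setminus S)}=H(\Gamma)$) are sound.
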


Corollary \ref{cor:MarkovOrderpInner} is  useful when $S$ is an edge, because the result shows that in order to restrict the generalized Whittle--Mat\'ern field with exponent $\alpha\in\mathbb{N}$ to an edge, given all the information on 
the remaining edges, the only information needed is the inner boundary data.

\subsection{Markov properties and Cameron--Martin spaces}\label{sec:CM}

This section describes the Markov property of the 
generalized Whittle--Mat\'ern fields in terms of their associated Cameron--Martin spaces. 
By applying the isometric isomorphism between the Gaussian spaces and 
Cameron--Martin spaces to Corollaries \ref{cor:MarkovOrderp} and \ref{cor:MarkovOrderpInner}, we obtain the following:

\begin{Corollary}\label{cor:MarkovOrderpCameronMartin}
	Let $u$ be a generalized Whittle--Mat\'ern field on $\Gamma$ obtained as the solution to \eqref{eq:Matern_spde_general} under Assumption \ref{assump:basic_assump2} with $\alpha\in\mathbb{N}$. 
	Define the auxiliary spaces ${\mathcal{H}_\alpha(\partial S) = \{h(s) = \mathbb{E}(u(s)v): s\in\Gamma, v\in {H}_\alpha(\partial S)\}}$ and $
	{\mathcal{H}_{\alpha,I}(\partial S) = \{h(s) = \mathbb{E}(u(s)v): s\in\Gamma, v\in {H}_{\alpha,I}(\partial S)\}}
$, where $S\subset\Gamma$ is any open set such that $\partial S$ consists
	of finitely many points .
	Then,
	\begin{enumerate}[label= \roman{enumi}.]
		\item $\mathcal{H}(\Gamma) = \mathcal{H}_+(\overline{S})\oplus(\mathcal{H}_+(\Gamma\setminus S)\ominus \mathcal{H}_\alpha(\partial S)) = \mathcal{H}_+(\Gamma\setminus S)\oplus (\mathcal{H}_+(\overline{S})\ominus \mathcal{H}_\alpha(\partial S))$;\label{cor:MarkovOrderpCameronMartin1}
		\item The orthogonal projection of $\mathcal{H}_+(\overline{S})$ on $\mathcal{H}_+(\Gamma\setminus S)$ is $\mathcal{H}_+(\partial S)$;\label{cor:MarkovOrderpCameronMartin2}
		\item $\mathcal{H}(\Gamma) = \mathcal{H}_+(\Gamma\setminus S) \oplus (\mathcal{H}(S)\ominus \mathcal{H}_{\alpha,I}(\partial S))$.\label{cor:MarkovOrderpCameronMartin3}
	\end{enumerate}
\end{Corollary}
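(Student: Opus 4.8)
The plan is to transport the three Gaussian-space identities of Corollaries~\ref{cor:MarkovOrderp} and~\ref{cor:MarkovOrderpInner} to the Cameron--Martin setting through the isometric isomorphism $\Phi:H(\Gamma)\to\mathcal{H}(\Gamma)$, $\Phi(v)(\cdot)=\mathbb{E}(u(\cdot)v)$, introduced before Theorem~\ref{thm:MarkovLocalCM}. First I would record the structural properties of $\Phi$ that make this possible: as a linear isometric isomorphism between Hilbert spaces it preserves orthogonality, maps closed subspaces to closed subspaces, and for closed subspaces $H_1\subset H_2$ satisfies $\Phi(H_2\ominus H_1)=\Phi(H_2)\ominus\Phi(H_1)$; moreover, for subspaces with trivial intersection it respects the direct sum, $\Phi(H_1\oplus H_2)=\Phi(H_1)\oplus\Phi(H_2)$, and it intertwines orthogonal projections, $\Phi P_{H_1}\Phi^{-1}=P_{\Phi(H_1)}$. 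These elementary facts reduce the corollary to applying $\Phi$ termwise to the corresponding identities for Gaussian spaces.

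Second, I would identify the images under $\Phi$ of each space appearing in the two source corollaries. By the computation preceding this subsection, $\mathcal{H}_+(S)=\Phi(H_+(S))$ for every Borel $S\subset\Gamma$, and by definition $\mathcal{H}(S)=\Phi(H(S))$; likewise, the auxiliary spaces introduced in the statement satisfy $\mathcal{H}_\alpha(\partial S)=\Phi(H_\alpha(\partial S))$ and $\mathcal{H}_{\alpha,I}(\partial S)=\Phi(H_{\alpha,I}(\partial S))$ directly from their definitions. The one identification requiring a separate ingredient is $\mathcal{H}_+(\partial S)=\mathcal{H}_\alpha(\partial S)$, which follows by applying $\Phi$ to the equality $H_+(\partial S)=H_\alpha(\partial S)$ supplied by Theorem~\ref{thm:boundarySpcsGenWM}.

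With these in hand, each part is obtained by pushing the relevant Gaussian-space statement through $\Phi$. Part~(i) is the image of Corollary~\ref{cor:MarkovOrderp}.iii: applying $\Phi$ to $H(\Gamma)=H_+(\overline{S})\oplus(H_+(S^c)\ominus H_\alpha(\partial S))$ and to its symmetric counterpart, and using the direct-sum and orthogonal-complement compatibility together with the identifications above, yields the two decompositions of $\mathcal{H}(\Gamma)$. Part~(ii) is the image of Corollary~\ref{cor:MarkovOrderp}.ii: since $\Phi$ intertwines orthogonal projections, the projection of $\mathcal{H}_+(\overline{S})$ onto $\mathcal{H}_+(\Gamma\setminus S)$ equals $\Phi(H_\alpha(\partial S))=\mathcal{H}_\alpha(\partial S)=\mathcal{H}_+(\partial S)$, the last equality being the one from Theorem~\ref{thm:boundarySpcsGenWM}. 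Part~(iii) is the image of Corollary~\ref{cor:MarkovOrderpInner}.iv under $\Phi$, using $\Phi(H(S))=\mathcal{H}(S)$ and $\Phi(H_{\alpha,I}(\partial S))=\mathcal{H}_{\alpha,I}(\partial S)$.

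There is no genuine analytic difficulty here, as the content was already established at the level of Gaussian spaces. The only points deserving care, and the step I would treat as the main (if modest) obstacle, are the bookkeeping items: verifying that $\Phi$ commutes with the intersections defining the $\mathcal{H}_+(\cdot)$ spaces, which holds because an injective map satisfies $\Phi(\bigcap_{\varepsilon>0}H(S_\varepsilon))=\bigcap_{\varepsilon>0}\Phi(H(S_\varepsilon))$ and was in fact already used to obtain $\mathcal{H}_+(S)=\Phi(H_+(S))$ earlier in the excerpt; and checking that the direct sums in part~(i), though written with $\oplus$, are genuine internal decompositions whose summands meet only at $0$, so that the isometry transports them faithfully to decompositions of the same type.
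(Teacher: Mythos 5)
Your proposal is correct and follows exactly the route the paper takes: the paper states this corollary as an immediate consequence of applying the isometric isomorphism $\Phi$ between Gaussian and Cameron--Martin spaces to Corollaries~\ref{cor:MarkovOrderp} and~\ref{cor:MarkovOrderpInner}, with the identification $\mathcal{H}_+(\partial S)=\mathcal{H}_\alpha(\partial S)$ coming from Theorem~\ref{thm:boundarySpcsGenWM}. Your write-up merely makes explicit the standard bookkeeping (preservation of orthogonality, direct sums, and projections under $\Phi$) that the paper leaves implicit.
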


We now characterize the spaces $\mathcal{H}_+(\Gamma\setminus S)$,
$\mathcal{H}_+(\overline{S})\ominus \mathcal{H}_\alpha(\partial S)$ and $\mathcal{H}(S)\ominus \mathcal{H}_{\alpha,I}(\partial S)$. We define  $\mathcal{H}_{+,0}(S) = \mathcal{H}_+(\overline{S})\ominus \mathcal{H}_\alpha(\partial S)$ and ${\mathcal{H}_{0,I}(S) = \mathcal{H}(S)\ominus \mathcal{H}_{\alpha,I}(\partial S)}$ to simplify the notation.
We begin by characterizing $\mathcal{H}_{+,0}(S)$ and $\mathcal{H}_{0,I}(S)$.

\begin{Proposition}\label{thm:CharOrthComplCM_genWM}
	Let $u$ be a generalized Whittle--Mat\'ern field on $\Gamma$ obtained as the solution to \eqref{eq:Matern_spde_general} under Assumption \ref{assump:basic_assump2} with $\alpha\in\mathbb{N}$. 
	For any open set $S\subset\Gamma$ such that the boundary consists
	of finitely many points, 
	$(\mathcal{H}_{+,0}(S), \|\cdot\|_{\widetilde{H}^\alpha(\Gamma)}) \cong (\mathcal{H}_{0,I}(S), \|\cdot\|_{\widetilde{H}^\alpha(\Gamma)})\cong (\dot{H}_{0,L}^\alpha(S), \|\cdot\|_{\widetilde{H}^\alpha(S)}),$
	where $\dot{H}_{0,L}^\alpha(S)$ is the completion of $C_c(S)\cap \dot{H}_L^\alpha(S)$ 
	with respect to the norm $\|\cdot\|_{\widetilde{H}^\alpha(S)}$. In particular, for any $h\in \dot{H}_{0,L}^\alpha(S)$, the function $\widetilde{h}$, given by the extension to $\Gamma$ as zero in $\Gamma\setminus S$, belongs to $\dot{H}^\alpha_L(\Gamma)$.
\end{Proposition}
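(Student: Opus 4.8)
The plan is to transport the whole statement into the reproducing-kernel (Cameron--Martin) picture and to reduce both orthogonal complements to a single, geometrically transparent space of functions supported on $\overline{S}$ with vanishing boundary data. By Remark~\ref{rem:CMgeneralizedWM} the Cameron--Martin space is $\mathcal{H}(\Gamma)=\dot{H}_L^\alpha(\Gamma)$, and by Proposition~\ref{prp:CharGen_CM_natural_alpha} its inner product is equivalent to the $\widetilde{H}^\alpha(\Gamma)$ inner product; the reproducing kernel is the Green's function $\rho(s,\cdot)=\mathbb{E}(u(s)u(\cdot))$ of $L^\alpha$, so that $\langle h,\rho(s,\cdot)\rangle_{\mathcal{H}}=h(s)$ for every $h\in\dot{H}_L^\alpha(\Gamma)$. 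First I would differentiate this reproducing identity: using Proposition~\ref{prp:GenWMDiffAlpha} to interchange $\mathbb{E}$ with the edgewise derivatives, the image under $\Phi$ of $u_e^{(j)}(s)$ is $\partial_e^j\rho(s,\cdot)$ (the $j$th derivative in the first argument along $e$), and $\langle h,\partial_e^j\rho(s,\cdot)\rangle_{\mathcal{H}}=h_e^{(j)}(s)$ for $0\le j\le \alpha-1$. Consequently $\mathcal{H}_\alpha(\partial S)$ and $\mathcal{H}_{\alpha,I}(\partial S)$ are the finite-dimensional spans of these differentiated kernels over $e\in\mathcal{E}_s$ and $e\in\mathcal{E}_S$ respectively, and (orthogonality being in the Cameron--Martin, i.e.\ $\dot{H}_L^\alpha(\Gamma)$, inner product) a function $h$ is orthogonal to them precisely when the corresponding boundary derivatives $h_e^{(j)}(s)$ vanish.

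Next I would characterize the two ambient spaces. Since $S\subseteq\overline{S}\subseteq \overline{S}_\varepsilon$ we have $\mathcal{H}(S)\subseteq\mathcal{H}_+(\overline{S})$, and by the standard reproducing-kernel identity $\mathcal{H}(A)^\perp=\{h:h|_A=0\}$ for any set $A$; taking $A=\overline{S}_\varepsilon$ and intersecting over $\varepsilon$ identifies $\mathcal{H}_+(\overline{S})$ with the orthogonal complement of the closure of $\bigcup_{\varepsilon}\{h: h|_{\overline{S}_\varepsilon}=0\}$, i.e.\ of the functions supported away from $\overline{S}$. Invoking the locality of $\dot{H}_L^\alpha(\Gamma)$ (Proposition~\ref{prp:Hdot3Local}) through Definition~\ref{def:localCMspaces}, functions with support in $\overline{S}$ are orthogonal to functions supported in $\Gamma\setminus\overline{S}$, which lets me show that $\mathcal{H}_+(\overline{S})$ consists of those $h\in\dot{H}_L^\alpha(\Gamma)$ with $\textrm{supp}\,h\subseteq\overline{S}$ together with the boundary contribution carried by $\mathcal{H}_\alpha(\partial S)$; the analogous bookkeeping applies to $\mathcal{H}(S)$ and $\mathcal{H}_{\alpha,I}(\partial S)$.

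The key reduction is then that subtracting the boundary span exactly cancels the boundary ambiguity. Define $\mathcal{N}(S)=\{h\in\dot{H}_L^\alpha(\Gamma): \textrm{supp}\,h\subseteq\overline{S},\ h_e^{(j)}(s)=0\ \textrm{ for all } s\in\partial S,\ e\in\mathcal{E}_s,\ 0\le j\le\alpha-1\}$. I would show $\mathcal{H}_{+,0}(S)=\mathcal{H}_{0,I}(S)=\mathcal{N}(S)$: orthogonality to $\mathcal{H}_\alpha(\partial S)$ (resp.\ $\mathcal{H}_{\alpha,I}(\partial S)$) forces the boundary data to vanish by Step~1, and once $\textrm{supp}\,h\subseteq\overline{S}$ the derivatives along edges $e\notin\mathcal{E}_S$ vanish automatically, so the inner and full boundary conditions coincide and the two complements agree. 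Finally, extension by zero gives a linear bijection between $\mathcal{N}(S)$ and $\dot{H}_{0,L}^\alpha(S)$: any $\phi\in C_c(S)\cap\dot{H}_L^\alpha(S)$ extends by zero to $\widetilde{\phi}\in\dot{H}_L^\alpha(\Gamma)$ with $\|\widetilde{\phi}\|_{\widetilde{H}^\alpha(\Gamma)}=\|\phi\|_{\widetilde{H}^\alpha(S)}$, and since $\dot{H}_L^\alpha(\Gamma)$ is closed in $\widetilde{H}^\alpha(\Gamma)$ by the norm equivalence of Proposition~\ref{prp:CharGen_CM_natural_alpha}, this isometry passes to the $\widetilde{H}^\alpha(S)$-completion $\dot{H}_{0,L}^\alpha(S)$, whose zero-extensions therefore lie in $\dot{H}_L^\alpha(\Gamma)$ and satisfy the vanishing boundary data, i.e.\ land in $\mathcal{N}(S)$. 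The norm equivalences in the statement then follow from the equivalence of $\|\cdot\|_{\mathcal{H}}$, $\|\cdot\|_{\dot{H}_L^\alpha(\Gamma)}$ and $\|\cdot\|_{\widetilde{H}^\alpha(\Gamma)}$, together with this isometry.

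The main obstacle is the rigorous identification of $\mathcal{H}_+(\overline{S})$ from the intersection $\bigcap_{\varepsilon}\mathcal{H}(\overline{S}_\varepsilon)$: locality only yields orthogonality for strictly disjoint supports, whereas $\overline{S}$ and $\overline{\Gamma\setminus\overline{S}}$ share the boundary $\partial S$, so the boundary-data directions genuinely lie in both $\mathcal{H}_+(\overline{S})$ and $\mathcal{H}_+(\Gamma\setminus S)$. Controlling this overlap is precisely what forces the subtraction of $\mathcal{H}_\alpha(\partial S)$, and it is where Theorem~\ref{thm:boundarySpcsGenWM} and Proposition~\ref{thm:charInnerOuterEdge} (the explicit descriptions of $H_+(\partial S)$ and of the inner and outer boundary spaces) must be used to show that nothing beyond the differentiated kernels survives in the intersection. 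A secondary technical point is verifying that the zero-extension of a limit in $\dot{H}_{0,L}^\alpha(S)$ still satisfies the continuity and Kirchhoff-type vertex conditions encoded in $\dot{H}_L^\alpha(\Gamma)$; this is handled by the closedness argument above rather than by checking the conditions pointwise.
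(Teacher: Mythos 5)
Your outline reproduces several ingredients of the paper's argument (the differentiated reproducing identity showing that orthogonality to $\mathcal{H}_\alpha(\partial S)$, resp.\ $\mathcal{H}_{\alpha,I}(\partial S)$, forces the boundary derivatives $h_e^{(j)}(s)$ to vanish; the zero-extension isometry with $\dot{H}_{0,L}^\alpha(S)$; the equivalence of the $\dot H^\alpha_L$ and $\widetilde H^\alpha$ norms), but the central step is missing. You assert, as a consequence of locality, that $\mathcal{H}_+(\overline{S})$ (and likewise $\mathcal{H}(S)$) decomposes into functions supported in $\overline{S}$ plus the boundary span, and you then define $\mathcal{N}(S)$ and claim $\mathcal{H}_{+,0}(S)=\mathcal{H}_{0,I}(S)=\mathcal{N}(S)$. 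That decomposition \emph{is} the proposition (an element of $\mathcal{H}(S)$ is a limit of combinations of kernels $\rho(\cdot,s)$, $s\in S$, which are not supported in $\overline{S}$, and by Proposition~\ref{prp:ChatHPlusS_genWM} elements of $\mathcal{H}_+(\overline{S})$ merely satisfy $L^\alpha h=0$ off $\overline{S}$), so invoking it as an intermediate step is circular. Locality cannot close this gap directly: writing $h=\widetilde{h|_{S}}+\widetilde{h|_{\Gamma\setminus S}}$, the supports of the two pieces both contain $\partial S$, so Definition~\ref{def:localCMspaces}.\ref{def:localCMspaces1} does not apply to them. Your final paragraph correctly identifies this boundary overlap as the obstacle, but the proposed remedy (Theorem~\ref{thm:boundarySpcsGenWM} and Proposition~\ref{thm:charInnerOuterEdge}, which describe $H_+(\partial S)$ and the inner/outer boundary spaces) does not show that an element of the orthogonal complement vanishes on $\Gamma\setminus S$.

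What is actually needed, and what the paper does, is the following chain. The vanishing of the inner boundary data, together with the vertex-condition characterization of Proposition~\ref{prp:CharGen_CM_natural_alpha}, shows that both $\widetilde{h|_{S}}$ and $\widetilde{h|_{\Gamma\setminus S}}$ individually belong to $\dot{H}^\alpha_L(\Gamma)$. Lemma~\ref{lem:sobTraceZero} then lets you approximate $h|_S$ by $\varphi_n\in C_c(S)\cap\dot H^\alpha_L(S)$, so that $h=\lim_n\bigl(\widetilde{\varphi}_n+\widetilde{h|_{\Gamma\setminus S}}\bigr)$. Since $\widetilde{h|_{\Gamma\setminus S}}$ vanishes on $S$ it is orthogonal to $\mathcal{H}(S)\ni h$, and since $\operatorname{supp}\widetilde{\varphi}_n$ is compactly contained in $S$ it is disjoint from $\operatorname{supp}\widetilde{h|_{\Gamma\setminus S}}$, so locality gives $\widetilde{\varphi}_n\perp\widetilde{h|_{\Gamma\setminus S}}$. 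Passing to the limit in $0=(h,\widetilde{h|_{\Gamma\setminus S}})_{\dot H^\alpha_L(\Gamma)}$ yields $\|\widetilde{h|_{\Gamma\setminus S}}\|^2_{\dot H^\alpha_L(\Gamma)}=0$, which is the point at which one finally concludes that $h$ is supported in $\overline{S}$ and can be identified with a limit of the $\varphi_n$ in $\dot H^{\alpha}_{0,L}(S)$. Without this computation (or an equivalent uniqueness argument), your identification of the two orthogonal complements with $\mathcal{N}(S)$ is unsupported, and the claim that the inner and full boundary conditions coincide for $\mathcal{H}_{0,I}(S)$ likewise presupposes the support property it is meant to establish.
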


\begin{Remark}\label{rem:zeroboundarydata}
	Proposition~\ref{thm:CharOrthComplCM_genWM} shows that $\mathcal{H}_{+,0}(S)$ and $\mathcal{H}_{0,I}(S)$ are given by the functions in $\dot{H}_L^\alpha(S)$ that are zero at the boundary of $S$, and whose weak derivatives up to order $\alpha-1$ are also zero at the boundary.
\end{Remark}

Finally, we characterize the space $\mathcal{H}_+(\Gamma\setminus S)$, where $S$ is an open set whose boundary consists of finitely many points 
(by taking $S$ as $\Gamma\setminus\overline{S}$, 
we obtain the characterization of $\mathcal{H}_+(\overline{S})$).

\begin{Proposition}\label{prp:ChatHPlusS_genWM}
 Let $u$ be a generalized Whittle--Mat\'ern field on $\Gamma$ obtained as the solution to \eqref{eq:Matern_spde_general} under Assumption \ref{assump:basic_assump2} with $\alpha\in\mathbb{N}$. 
 For any open set $S\subset\Gamma$ such that the boundary consists
	of finitely many points,  a real function $h$ defined on
	$\Gamma$ belongs to $\mathcal{H}_+(\Gamma\setminus S)$ if and only if 
	$h\in\dot{H}_L^\alpha(\Gamma)$ and
	$L^\alpha h = 0$ on $S$.
	Furthermore, $h\in \mathcal{H}_+(\partial S)$ if and only if $h\in\dot{H}_L^\alpha(\Gamma)$ and 
	$L^\alpha h = 0$ on $\Gamma\setminus\partial S$.
\end{Proposition}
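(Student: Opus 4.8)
The plan is to characterize $\mathcal{H}_+(\Gamma\setminus S)$ as the closure (in the intersected sense) of Cameron--Martin spaces $\mathcal{H}(\Gamma\setminus S)$, and then translate the membership condition into a PDE statement using the fact that the covariance operator of $u$ is $L^{-\alpha}$ and the Cameron--Martin space of $u$ is $\dot{H}^\alpha_L(\Gamma)$ (Remark~\ref{rem:CMgeneralizedWM}). The starting observation is that $\mathcal{H}_+(\Gamma\setminus S) = \bigcap_{\varepsilon>0}\mathcal{H}((\Gamma\setminus S)_\varepsilon)$, and that for any Borel set $A$ the space $\mathcal{H}(A)$ consists precisely of those $h\in\dot{H}^\alpha_L(\Gamma)$ that can be represented as $h(s)=\mathbb{E}(u(s)v)$ with $v\in H(A)$. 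The key reproducing-kernel fact I would establish first is that, with respect to the $\dot{H}^\alpha_L(\Gamma)$ inner product, $\mathcal{H}(A)^\perp$ consists of those $g\in\dot{H}^\alpha_L(\Gamma)$ that vanish on $A$: indeed, if $\rho(s,\cdot)=L^{-\alpha}\delta_s$ denotes the kernel, then $\langle g,\rho(s,\cdot)\rangle_{\dot{H}^\alpha_L}=g(s)$ by the reproducing property, so orthogonality to $\{u(s):s\in A\}$ forces $g\equiv 0$ on $A$.

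With this in hand, I would first prove the second (and cleaner) assertion, the characterization of $\mathcal{H}_+(\partial S)$, and deduce the first from it. Taking $A=(\partial S)_\varepsilon$ and letting $\varepsilon\downarrow 0$, the orthogonal complement of $\mathcal{H}_+(\partial S)$ in $\dot{H}^\alpha_L(\Gamma)$ should be exactly the functions $g\in\dot{H}^\alpha_L(\Gamma)$ vanishing on $\partial S$, i.e. $g\in\dot{H}^\alpha_{0,L}(\Gamma\setminus\partial S)$ (here using Proposition~\ref{thm:CharOrthComplCM_genWM} and the locality of $\dot{H}^\alpha_L(\Gamma)$ from Proposition~\ref{prp:Hdot3Local} to handle the $\varepsilon$-intersection and zero-extension). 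Thus $h\in\mathcal{H}_+(\partial S)$ iff $h\in\dot{H}^\alpha_L(\Gamma)$ and $\langle h,g\rangle_{\dot{H}^\alpha_L}=0$ for all $g\in\dot{H}^\alpha_{0,L}(\Gamma\setminus\partial S)$. The final step is to unfold the inner product: since $L^\beta$ is an isometry $\dot{H}^s_L\to\dot{H}^{s-2\beta}_L$ (Remark~\ref{rmk:isometry}), $\langle h,g\rangle_{\dot{H}^\alpha_L}=\langle L^\alpha h,g\rangle$, the pairing of $\dot{H}^{-\alpha}_L$ with $\dot{H}^\alpha_L$. Testing against all compactly supported $g$ in $\Gamma\setminus\partial S$ (which form a dense subset of the test functions on each open piece) yields precisely $L^\alpha h=0$ on $\Gamma\setminus\partial S$.

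The statement for $\mathcal{H}_+(\Gamma\setminus S)$ is then entirely analogous: its orthogonal complement in $\dot{H}^\alpha_L(\Gamma)$ is $\dot{H}^\alpha_{0,L}(S)$ (the functions vanishing together with their derivatives up to order $\alpha-1$ on $\partial S$ and supported in $\overline{S}$, exactly as described in Proposition~\ref{thm:CharOrthComplCM_genWM} and Remark~\ref{rem:zeroboundarydata}), so $h\in\mathcal{H}_+(\Gamma\setminus S)$ iff $\langle L^\alpha h,g\rangle=0$ for all such $g$, which says $L^\alpha h=0$ on the open set $S$. I would make sure the zero-extension argument in Proposition~\ref{thm:CharOrthComplCM_genWM} is invoked so that functions in $\dot{H}^\alpha_{0,L}(S)$ genuinely lie in $\dot{H}^\alpha_L(\Gamma)$ and can serve as legitimate test functions.

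The main obstacle I anticipate is the interchange of the $\varepsilon$-intersection with orthogonality, i.e. justifying that the orthogonal complement of $\bigcap_\varepsilon\mathcal{H}((\cdot)_\varepsilon)$ equals the closed span of $\bigcup_\varepsilon \mathcal{H}((\cdot)_\varepsilon)^\perp$ and that this union's closure is the ``vanishing on the limiting set'' space. This requires the locality of $\dot{H}^\alpha_L(\Gamma)$ (Proposition~\ref{prp:Hdot3Local}) to guarantee that a function vanishing on a shrinking neighborhood can be approximated in $\dot{H}^\alpha_L$-norm by functions vanishing on the closed boundary set, together with a density argument for compactly supported elements of $\dot{H}^\alpha_L$ on open subsets. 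The subtlety is that $\dot{H}^\alpha_L(\Gamma)$ carries vertex (Kirchhoff-type) constraints, so the approximating test functions must respect those constraints; I expect Proposition~\ref{prp:CharGen_CM_natural_alpha} to be needed here to identify $\dot{H}^\alpha_L$ concretely and to ensure the zero-extension preserves membership, exactly the point made explicit at the end of Proposition~\ref{thm:CharOrthComplCM_genWM}.
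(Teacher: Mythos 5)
Your proposal is correct and follows essentially the same route as the paper: identify the orthogonal complement of $\mathcal{H}_+(\Gamma\setminus S)$ with the zero--boundary--data space $\dot{H}^\alpha_{0,L}(S)$ via Proposition~\ref{thm:CharOrthComplCM_genWM}, then unfold $(h,\widetilde{\varphi})_{\dot H^\alpha_L(\Gamma)}=(h,L^\alpha\widetilde{\varphi})_{L_2(\Gamma)}$ against test functions compactly supported in $S$ to read off the weak equation, with the converse by density. Two small remarks: the $\varepsilon$-interchange you flag as the main obstacle is already packaged for you in Corollary~\ref{cor:MarkovOrderpCameronMartin}.\ref{cor:MarkovOrderpCameronMartin1}, which gives $\mathcal{H}_+(\Gamma\setminus S)=\mathcal{H}_{+,0}(S)^\perp$ directly (the paper then obtains the $\partial S$ statement from the $S$ statement via $\mathcal{H}_+(\partial S)=\mathcal{H}_+(\overline{S})\cap\mathcal{H}_+(\Gamma\setminus S)$ rather than by a separate test-function argument); and your description of $\mathcal{H}_+(\partial S)^\perp$ as the functions \emph{vanishing on} $\partial S$ is imprecise for $\alpha>1$ --- orthogonality to $\mathcal{H}_+(\partial S)=\mathcal{H}_\alpha(\partial S)$ forces the derivatives up to order $\alpha-1$ to vanish there as well, which is exactly what makes the zero-extension and density argument of Lemma~\ref{lem:sobTraceZero} applicable.
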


\section{Proofs of the main results in Section \ref{sec:GWM}}\label{sec:proofsGWM}

In this section, we present the proofs of the main results from Section \ref{sec:GWM}. The proofs of the additional results are provided in Appendix~\ref{app:aux}. We begin by proving the characterization of the spaces $\dot{H}^\alpha_L(\Gamma)$, when $\alpha\in\mathbb{N}$. This result uses a few auxiliary lemmas that are stated and proved in Appendix~\ref{app:aux}.

\begin{proof}[Proof of Theorem \ref{thm:MarkovGenWM}]
	Let $\alpha\in\mathbb{N}$.  By Remark~\ref{rem:CMgeneralizedWM},  $\mathcal{H}(\Gamma) \cong \dot{H}_L^\alpha(\Gamma),$
	where $\mathcal{H}(\Gamma)$ is the Cameron--Martin space associated with $u$.
	Combining this with Proposition \ref{prp:Hdot3Local}, it is immediate that the Cameron--Martin spaces of 
	$u$ are local. Therefore, by Theorem~\ref{thm:MarkovLocalCM}, $u$ is a 
	GMRF. This proves \ref{thm:MarkovGenWM2} and the direct part of \ref{thm:MarkovGenWM3}.
	
	Conversely, let $\alpha\not\in\mathbb{N}$. By Lemma \ref{lem:local_coeff_Peetre}, we have that there exist $N\in\mathbb{N}$ and $b_r\in C^\infty(e)$, $r=0,\ldots,N$, such that for every ${f \in \bigoplus_{e\in\mathcal{E}} C_c^\infty(e)}$, 
	\begin{equation}\label{eq:peetre_finite}
		L^\alpha f(s) = \sum_{e\in\mathcal{E}} \sum_{r=0}^{N} \widetilde{b}_{r}^e(s) \frac{d^r \widetilde{f}_e(s)}{dx_e^r}(s),\quad s\in \Gamma,
	\end{equation}
	where $\widetilde{b}^e_{r}$ and $\widetilde{f}_e$ are the extensions as zero in $\Gamma\setminus e$, that is, if $s\not\in e$, $\widetilde{b}^e_{r}(s) = \widetilde{f}_e(s) = 0$, and for $s\in e$, say $s = (t,e)$, we have $\widetilde{b}^e_{r}(s) = b^e_{r}(t)$ and $\widetilde{f}_e(s) = f_e(t)$. 
	We will now show that this contradicts the
	fact that $\alpha\not\in\mathbb{N}$. To this end, let us obtain an expansion for $L^\alpha$ on a convenient function space. To this end, since $\alpha$ is not a natural number, we can use the Balakrishnan formula, see e.g., \cite{balakrishnan} or \cite[Chapter 9, section 11]{yosida2012functional}, to obtain that, by Proposition \ref{prp:CinfinityCompactHdot}, for every $f\in \bigoplus_{e\in\mathcal{E}} C^\infty_c(e)$, we have
	\begin{equation}\label{eq:fractional_bala}
		L^\alpha f = L^{\lceil \alpha \rceil} L^{\alpha - \lceil \alpha\rceil} f = L^{\ceil{\alpha}}\frac{\sin(\pi\beta)}{\pi} \int_0^\infty \lambda^{-\beta} (\lambda + L)^{-1}f d\lambda,
	\end{equation}
	where $\beta = \lceil\alpha\rceil - \alpha$. For an edge $e = [0,l_e]$ and a point $x$ in the interior of $e$, let $D_{x,e}$ be the set of functions in $\bigoplus_{e\in\mathcal{E}} C^\infty_c(e)$ such that there exist an interval $(I_1,I_2)$ such that ${0 < I_1 < x < I_2 < l_e}$ and $m\in\mathbb{N}$ so that $\bigl( \frac{d}{dx_e} a_e \frac{d}{dx_e}\bigr)^m f = 0$ on $(I_1,I_2)$. By Lemma~\ref{lem:Cameron_Martin_decomposition}, we have that for every $m_0\in\mathbb{N}$, we can find $f\in D_{x,e}$ such that $\bigl( \frac{d}{dx_e} a_e \frac{d}{dx_e}\bigr)^{m_0-1} f(x) \neq 0$ and $\bigl( \frac{d}{dx_e} a_e \frac{d}{dx_e}\bigr)^{m_0} f = 0$ on $(x-\delta,x+\delta)$ some for some $\delta>0$, we will now use this to create a contradiction. We start by noting by definition $D_{x,e}$  that for any $f\in D_{x,e}$, there exists a neighborhood of $x$ such that the following sum is finite:
	$$\sum_{k=0}^\infty \left(\frac{d}{dx_e} a_e \frac{d}{dx_e}\right)^k f(t) = \sum_{k=0}^m \left( \frac{d}{dx_e} a_e \frac{d}{dx_e}\right)^k f(t),$$
	for some $m\in\mathbb{N}$ and $t \in (x-\delta, x+\delta)$ for some $\delta>0$. In particular, it is straightforward to check (this is the point where we need to use that $\kappa$ is a constant function) that for ${t \in (x-\delta, x+\delta)}$,
	$$(\lambda + L) \sum_{k=0}^\infty (\lambda + \kappa^2)^{-k-1}\left(\frac{d}{dx_e} a_e \frac{d}{dx_e}\right)^k f(t)  = f(t).$$
	This gives us that we can express $(\lambda + L)^{-1} f$ in a neighborhood of $x$ as
	\begin{equation}\label{eq:fractional_inv_sum}
		(\lambda + L)^{-1} f(t) = \sum_{k=0}^\infty (\lambda + \kappa^2)^{-k-1}\left(\frac{d}{dx_e} a_e \frac{d}{dx_e}\right)^k f(t),
	\end{equation}
	where we observe that the infinite sum is actually a finite sum for each $f\in D_{x,e}$. We can now combine \eqref{eq:fractional_inv_sum} and \eqref{eq:fractional_bala} to obtain that for every $f\in D_{x,e}$ and $t$ in a neighborhood of $x\in e$, we have
	\begin{align}
		L^\alpha f(t) &= L^{\ceil{\alpha}} \sum_{k=0}^\infty \left(\frac{d}{dx_e} a_e \frac{d}{dx_e}\right)^k f(t) \int_0^\infty \lambda^{-\beta} (\lambda + \kappa^2)^{-k-1}d\lambda 
		= L^{\ceil{\alpha}} \sum_{k=0}^\infty C_k(\kappa) \left(\frac{d}{dx_e} a_e \frac{d}{dx_e}\right)^k f(t)\nonumber\\
		&= \sum_{k=0}^\infty C_k(\kappa) \left(\kappa^2 - \frac{d}{dx_e} a_e \frac{d}{dx_e}\right)^{\ceil{\alpha}}\left(\frac{d}{dx_e} a_e \frac{d}{dx_e}\right)^k f(t),\label{eq:identity_infinite_sum}\nonumber\\
		&= \sum_{k=0}^\infty \sum_{j=0}^{\ceil{\alpha}} \binom{\ceil{\alpha}}{j} \kappa^{2(\ceil{\alpha}-j)} C_k(\kappa) (-1)^j\left(\frac{d}{dx_e} a_e \frac{d}{dx_e}\right)^{k + j} f(t)
	\end{align}
	where $C_k(\kappa) = \int_0^\infty \lambda^{-\beta} (\lambda + \kappa^2)^{-k-1}d\lambda$ satisfies $0 < C_k(\kappa)<\infty$ since $\beta \in (0,1)$, and we can pass the sum outside of the integral since it is a finite sum. We are now in a position to conclude the proof. By \eqref{eq:identity_infinite_sum} and \eqref{eq:peetre_finite}, we have that for every $e\in\mathcal{E}$, every $x\in \textrm{int }e$ and every $f\in D_{x,e}$, the following identity holds, where we can use that the function $a$ is constant to simplify:
	\begin{equation}\label{eq:identity_finite_infinite}
		\sum_{k=0}^\infty C_k(\kappa) \sum_{j=0}^{\ceil{\alpha}} \binom{\ceil{\alpha}}{j} \kappa^{2(\ceil{\alpha}-j)} (-a_e)^{j} a_e^k \frac{d^{k + j}}{dx_e^{k + j}} f(t) = \sum_{r=0}^{N} \widetilde{b}_{r}^e(t) \frac{d^r f_e(t)}{dx_e^r}(t).
	\end{equation}
	Now, observe that since $a$ is a constant function, $D_{x,e}$ consists of polynomials on a neighborhood of $x$. By applying identity \eqref{eq:identity_finite_infinite} for $f$ such that it is constant in a neighborhood of $x$, then, take $f_m$ such that it coincides with $(t-x)^m$, for each $m=1,2,3,\ldots,$, we have that $f_m^{(k)}(x) = 0$ if $k\neq m$ and $f^{(m)}(x) = m!\neq 0$, so we obtain that the coefficients from both expansions must coincide at $x$, and that we must have $\widetilde{a}^e_{2k+1}(x) = 0$, for $k=0,1,\ldots$. However, this means that there must exist some natural number $N>0$ such that coefficients on the left-hand side of expression \eqref{eq:identity_infinite_sum} must vanish if $k+j>N$. We will now show that this leads to a contradiction. We have for any $m > \max\{\ceil{\alpha}, N\}$ that
	$$  
	\smashoperator[r]{\sum_{k,j: k+j = m}} C_k(\kappa) \binom{\ceil{\alpha}}{j} \kappa^{2(\ceil{\alpha}-j)}(-1)^j a_e^m = 0 \Rightarrow  \smashoperator[r]{\sum_{k,j: k+j = m}} C_k(\kappa) \binom{\ceil{\alpha}}{j} \kappa^{2(\ceil{\alpha}-j)}(-1)^j = 0.
	$$
	This, in turn, implies, by using the definition of $C_{m-j}(\kappa)$, that
	\begin{align*}&\sum_{j=0}^{\ceil{\alpha}}C_{m-j}(\kappa) \binom{\ceil{\alpha}}{j} \kappa^{2(\ceil{\alpha}-j)}(-1)^j = 0\Rightarrow \sum_{j=0}^{\ceil{\alpha}}C_{m-j}(\kappa) \binom{\ceil{\alpha}}{j} \kappa^{-2j}(-1)^j = 0 \\
	&	\Rightarrow \int_0^\infty \frac{1}{\lambda^{\beta}(\kappa^2+\lambda)^{m-\ceil{\alpha}+1}} \sum_{j=0}^{\ceil{\alpha}} \binom{\ceil{\alpha}}{j} ((\kappa^2 + \lambda)^{-1})^{\ceil{\alpha}-j}(-\kappa^{-2})^j d\lambda = 0.
		\end{align*}
	Finally, this implies that
	$$\int_0^\infty \frac{1}{\lambda^{\beta}(\kappa^2+\lambda)^{m-\ceil{\alpha}+1}} ((\kappa^2 + \lambda)^{-1} - \kappa^{-2})^{\ceil{\alpha}} d\lambda = 0,$$
	which is a contradiction. This contradiction shows that we cannot have $\alpha\not\in\mathbb{N}$.
\end{proof}

Next, we prove Theorem \ref{thm:boundarySpcsGenWM}, using Lemma \ref{lem:sobTraceZero} from Appendix \ref{app:aux}.

\begin{proof}[Proof of Theorem \ref{thm:boundarySpcsGenWM}]
	First, from Proposition \ref{prp:GenWMDiffAlpha}, $u$ is a differentiable
	GRF of order ${\alpha -1}$. Therefore, 
	$H_\alpha(\partial S) \subset H_+(\partial S)$ by Proposition 
	\ref{prp:boundaryweakderiv}.
	We will show the result by proving that $H_+(\partial S)\ominus H_\alpha(\partial S) = \{0\}$.
	To this end, take $v \in H_+(\partial S)\ominus H_\alpha(\partial S)$, and let
	$h(s) = \mathbb{E}(u(s)v)$. 
	Because $\alpha\in\mathbb{N}$, it follows from Remark \ref{rem:CMgeneralizedWM} that the Cameron--Martin space associated with $u$,
	$\mathcal{H}(\Gamma)$, can be identified with $\dot{H}^\alpha_L(\Gamma)$. 
	Therefore, $h\in \dot{H}^\alpha_L(\Gamma)$. Furthermore, it follows from the same arguments in the
	proof of \cite[Lemma 4]{BSW2022} that for $s\in\partial S$ and $e\in\mathcal{E}_s$,
	$
	h_e^{(k)}(s) = \mathbb{E}\left(v u^{(k)}(s) \right)$ for $k=0,\ldots,\alpha-1$.
	Since $v\perp H_p(\partial S)$, $\mathbb{E}\bigl(v u^{(k)}(s) \bigr) = 0$ and, thus, $h_e^{(k)}(s) = 0$, for $k=0,\ldots,\alpha-1$, $s\in\partial S$ and $e\in\mathcal{E}_s$. 
	Also, by applying Lemma~\ref{lem:sobTraceZero} 
	in Appendix~\ref{app:aux} 
	twice (for $S$ and $\Gamma\setminus\overline{S}$), we can approximate
	$h$ by $\varphi_{n,-} + \varphi_{n,+}$, where ${\varphi_{n,-}, \varphi_{n,+}
	\in\dot{H}^\alpha_L(\Gamma)}$, with $\textrm{supp}\,\varphi_{n,-} \subset S$ and 
	$\textrm{supp}\, \varphi_{n,+} \subset \Gamma\setminus\overline{S}$.
	
	Now, $\mathcal{H}_+(\partial S) \subset 
	\mathcal{H}_+(\overline{S}) \cap \mathcal{H}_+(\Gamma\setminus S)$ since $H_+(\partial S)\subset H_+(\overline{S})\cap H_+(\Gamma\setminus S)$, which shows that ${h\in \mathcal{H}_+(\overline{S}) \cap \mathcal{H}_+(\Gamma\setminus S)}$. 
	Further, for every $n\in\mathbb{N}$, $\varphi_{n,-} \in 
	\mathcal{H}_+(\Gamma\setminus S)^\perp$. Indeed, because 
	${\varphi_{n,-}\in \dot{H}^\alpha_L(\Gamma)}$, $v_{n,-}\in H(\Gamma)$ exists such 
	that for every $s\in\Gamma$, ${\varphi_{n,-}(s) = \mathbb{E}(u(s)v_{n,-})}$.
	Furthermore, since $\textrm{supp}\,\varphi_{n,-} \subset S$, 
	for each $n\in\mathbb{N}$, 
	$\varepsilon_n>0$ exists such that ${\varphi_{n,-}(s) = 0}$ if ${s\in (\Gamma\setminus S)_{\varepsilon_n}}$, which implies that for every $s\in (\Gamma\setminus S)_{\varepsilon_n}$,
	$\mathbb{E}(u(s)v_{n,-}) = 0$ (that is, ${v_{n,-} \perp H((\Gamma\setminus S)_{\varepsilon_n})}$). 
	Finally, $v_{n,-} \perp H_+(\partial S)$ because ${H_+(\partial S) \subset H((\Gamma\setminus S)_{\varepsilon_n})}$. 
	By applying the isometric isomorphism between
	the Gaussian spaces and the Cameron--Martin spaces, $\varphi_{n,-}\perp \mathcal{H}_+(\partial S)$.
	It follows that $\varphi_{n,-}\perp h$ because $h\in\mathcal{H}_+(\Gamma\setminus S)$, and since $\textrm{supp}\, \varphi_{n,-}\cap \textrm{supp}\,\varphi_{n,+}=\emptyset$, $h = \varphi_{n,+}$.
	Similarly, we obtain that $\varphi_{n,+}\perp h$, so that
	$h=0$. Therefore, $v = 0$, concluding  the proof.
\end{proof}

\section{The restriction of a generalized Whittle--Mat\'ern field to an edge}\label{sec:edge}

The goal of this section is to obtain a representation of the restriction of a generalized
Whittle--Mat\'ern field $u$ on a compact metric graph $\Gamma$, with $\alpha\in\mathbb{N}$,
to an edge $e\in\mathcal{E}$. 
This representation reveals that the restriction of $u$ to $e$ is given as a sum of boundary terms and an independent GRF with zero boundary data on
the edge. To this end, let us introduce the following operator acting on a sufficiently differentiable function  defined on $e=[0,\ell_e]$:
$$
B^{\alpha} u =  \left[u(0),u^{(1)}(0),\ldots,u^{(\alpha-1)}(0),u(\ell_e),u^{(1)}(\ell_e),\ldots,u^{(\alpha-1)}(\ell_e) \right]^\top,
$$
where the derivatives are weak derivatives in the $L_2(\Omega)$ sense. Some auxiliary results are given in Appendix \ref{app:sec6sec7}. 

In the following theorem, where we prove the aforementioned representation, we will use Proposition~\ref{prp:charHdot_edge_genWM} and Lemmas \ref{lem:FRKHS} and \ref{lem:Cameron_Martin_decomposition} from Appendix~\ref{app:sec6sec7}.

\begin{Theorem}\label{thm:ReprTheoremEdge_GenWM}
	Let $u$ be a generalized Whittle--Mat\'ern
	field obtained as the solution to \eqref{eq:Matern_spde_general}, where  $\alpha\in\mathbb{N}$
	and Assumption \ref{assump:basic_assump2} holds.
	For any $e\in\mathcal{E}$, we obtain the following representation:
	$$
	u_e(t) = v_{\alpha,0}(t) + \sum_{j=1}^{2\alpha} \sol_j(t)  \left(B^{\alpha}u_e\right)_j = v_{\alpha,0}(t) + \mv{\sol}^\top(t) B^\alpha u_e,\quad t\in [0,\ell_e],
	$$
	where $\mv{\sol}(\cdot) = (\sol_1(\cdot),\ldots,\sol_{2\alpha}(\cdot))$ and $\{\sol_j(\cdot): j=1,\ldots, 2\alpha\}$ is a set of linearly independent solutions of the homogeneous linear equation 
	$L^\alpha \sol = 0$ on $e$, satisfying $B^{\alpha} \sol_j=e_j$, where $e_j$ is the $j$th canonical vector. 
	The random vector $B^{\alpha}u$ is measurable with respect to $\mathcal{F}^u_{\alpha,I} (\partial e)$, and 
	$v_{\alpha,0}(\cdot)$ is the GRF defined on $e$ with associated Cameron--Martin space
	$(\dot{H}_{0,L}^\alpha(e), (\cdot,\cdot)_{\alpha,e})$. This field is independent of $\mathcal{F}^u_{\alpha,I}(\partial e)$,
	is $\alpha-1$ times weakly differentiable in the $L_2(\Omega)$ sense, and its weak derivatives in $L_2(\Omega)$
	are weakly continuous 
	in the $L_2(\Omega)$ sense. Finally, $v_{\alpha,0}(\cdot)$ has zero boundary data (i.e. $B^{\alpha}v_{\alpha,0} = \mv{0}$).
\end{Theorem}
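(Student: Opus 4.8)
The plan is to construct the decomposition by first solving the deterministic structure (the low-rank boundary part) and then identifying the residual as the zero-boundary field. Let me sketch the approach.

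First I would establish existence and uniqueness of the deterministic solution basis $\{\sol_j\}_{j=1}^{2\alpha}$. The equation $L^\alpha \sol = 0$ on $e=[0,\ell_e]$ is a linear ODE of order $2\alpha$ (since $L$ is second order and we take the $\alpha$th power), so its solution space is $2\alpha$-dimensional. The boundary operator $B^\alpha$ evaluates $\sol$ and its first $\alpha-1$ derivatives at both endpoints, giving $2\alpha$ linear functionals. To obtain $\sol_j$ with $B^\alpha \sol_j = e_j$ I must show the map $\sol \mapsto B^\alpha \sol$ from the solution space to $\mathbb{R}^{2\alpha}$ is a bijection; this is a standard consequence of the fact that an order-$2\alpha$ linear ODE has a unique solution given Cauchy data, but here the data is split between the two endpoints, so I would verify non-degeneracy of the associated boundary-value problem. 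Given Assumption \ref{assump:basic_assump2} the coefficients are smooth enough that these $\sol_j$ are well-defined classical solutions.

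**Defining the residual field and verifying its properties.**

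Next I would define $v_{\alpha,0}(t) := u_e(t) - \mv{\sol}^\top(t) B^\alpha u_e$ and verify its stated properties. By construction, applying $B^\alpha$ and using $B^\alpha \sol_j = e_j$ gives $B^\alpha v_{\alpha,0} = B^\alpha u_e - B^\alpha u_e = \mv{0}$, establishing the zero boundary data. The differentiability of $v_{\alpha,0}$ of order $\alpha-1$ in the $L_2(\Omega)$ sense, together with weak continuity of its derivatives, follows directly from Proposition \ref{prp:GenWMDiffAlpha} applied to $u$ (which yields these properties for $u_e$) combined with the smoothness of the deterministic functions $\sol_j$. The measurability of $B^\alpha u$ with respect to $\mathcal{F}^u_{\alpha,I}(\partial e)$ is immediate from the definition of that $\sigma$-algebra, since $B^\alpha u$ consists precisely of the boundary values $u_e^{(k)}(s)$ for $s\in\partial e$ and $k=0,\ldots,\alpha-1$.

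**Identifying the Cameron--Martin space and establishing independence.**

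The substantive part is identifying the Cameron--Martin space of $v_{\alpha,0}$ as $\dot{H}_{0,L}^\alpha(e)$ and proving the independence from $\mathcal{F}^u_{\alpha,I}(\partial e)$. For independence I would work in the Gaussian space framework: it suffices to show that each $v_{\alpha,0}(t)$ is orthogonal in $H(\Gamma)$ to $H_{\alpha,I}(\partial e) = \textrm{span}\{u_e^{(k)}(s): s\in\partial e, k=0,\ldots,\alpha-1\}$, because for jointly Gaussian variables orthogonality is equivalent to independence. The orthogonality should follow from Corollary \ref{cor:MarkovOrderpInner}, which states that the orthogonal projection of $H_+(\Gamma\setminus S)$ on $H(S)$ is $H_{\alpha,I}(\partial S)$; taking $S=e$ and using that $v_{\alpha,0}(t)$ lies in $H(e)\ominus H_{\alpha,I}(\partial e)$ gives the result. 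To identify the Cameron--Martin space, I would pass through the isometric isomorphism $\Phi$ and invoke Proposition \ref{thm:CharOrthComplCM_genWM}, which identifies $\mathcal{H}_{0,I}(e) = \mathcal{H}(e)\ominus\mathcal{H}_{\alpha,I}(\partial e)$ with $\dot{H}_{0,L}^\alpha(e)$; since $v_{\alpha,0}$ is exactly the part of $u_e$ lying in $H(e)\ominus H_{\alpha,I}(\partial e)$, its Cameron--Martin space is the image of this orthogonal complement.

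\textbf{Main obstacle.} I expect the principal difficulty to be the clean identification of the residual field's Cameron--Martin space as $\dot{H}_{0,L}^\alpha(e)$ \emph{as a field defined on $e$ alone}, rather than as a subspace of the global object on $\Gamma$. The decomposition $u_e = \mv{\sol}^\top B^\alpha u_e + v_{\alpha,0}$ is an orthogonal decomposition of $H(e)$ into $H_{\alpha,I}(\partial e)$ and its complement, but to assert that $v_{\alpha,0}$ is genuinely a field intrinsic to $e$ with the stated local Cameron--Martin space requires the extension-by-zero statement at the end of Proposition \ref{thm:CharOrthComplCM_genWM}: functions in $\dot{H}_{0,L}^\alpha(e)$ extend by zero to elements of $\dot{H}_L^\alpha(\Gamma)$. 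This guarantees that the covariance structure of $v_{\alpha,0}$ does not depend on the geometry of the rest of $\Gamma$, which is precisely the graph-independence claim motivating the theorem. Reconciling the global Gaussian-space orthogonality with the local ODE-theoretic description of $\dot{H}_{0,L}^\alpha(e)$ is where the careful argument lies.
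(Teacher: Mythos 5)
Your overall architecture (split $u_e$ into a boundary part plus a zero-boundary residual, then identify the residual's Cameron--Martin space via Proposition~\ref{thm:CharOrthComplCM_genWM}) matches the paper's, but run in the opposite direction: you \emph{define} $v_{\alpha,0} := u_e - \mv{\sol}^\top B^\alpha u_e$ and then try to verify orthogonality, whereas the paper first decomposes the covariance kernel $\rho = h_1 + h_2$ along $\mathcal{H}(\Gamma) = \mathcal{H}_+(\Gamma\setminus e)\oplus\mathcal{H}_{0,I}(e)$ and only afterwards identifies $\Phi^{-1}(h_1(\cdot,t))$ with $\mv{\sol}^\top(t)B^\alpha u$. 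The difference matters, because the step where you claim independence is circular as written: you say the orthogonality of $v_{\alpha,0}(t)$ to $H_{\alpha,I}(\partial e)$ follows by ``using that $v_{\alpha,0}(t)$ lies in $H(e)\ominus H_{\alpha,I}(\partial e)$'' --- but that membership is precisely what has to be proved. Nothing in the definition of $v_{\alpha,0}$ forces the residual to be orthogonal to the boundary data unless the deterministic coefficient functions of the projection $\mathbb{E}\bigl(u_e(t)\mid B^\alpha u_e\bigr)$ happen to be exactly the ODE solutions $\sol_j$; that is the substantive content of the theorem (and the source of the ``geometry-independence'' claim), not a bookkeeping consequence.

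To close the gap you need the two ingredients the paper actually uses. First, for $b\in\partial e$ and $k\le\alpha-1$, the function $g_{k,b}(t)=\mathbb{E}\bigl(u_e(t)\,u_e^{(k)}(b)\bigr)$ lies in $\mathcal{H}_+(\Gamma\setminus e)$, hence by Proposition~\ref{prp:ChatHPlusS_genWM} satisfies $L^\alpha g_{k,b}=0$ on $e$ and is therefore a linear combination $\sum_j c_j\sol_j$. Second, the interchange of weak $L_2(\Omega)$-derivatives with expectation (the \cite[Corollary 5]{BSW2022}-type argument invoked in the paper) gives $(B^\alpha g_{k,b})_j=\mathbb{E}\bigl((B^\alpha u_e)_j\,u_e^{(k)}(b)\bigr)$, which pins down $c_j$ as the entries of $\Cov(B^\alpha u)$ and yields $g_{k,b}=\sum_j\sol_j(\cdot)\,\mathbb{E}\bigl((B^\alpha u_e)_j u_e^{(k)}(b)\bigr)$; only then does $\mathbb{E}\bigl(v_{\alpha,0}(t)u_e^{(k)}(b)\bigr)=0$ follow. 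This is exactly the computation the paper packages as $h_1(s,s')=\mv{\sol}^\top(s)\mv{F}\mv{\sol}(s')$ with $\mv{F}=\Cov(B^\alpha u)$. The rest of your outline (zero boundary data by construction, differentiability from Proposition~\ref{prp:GenWMDiffAlpha}, measurability of $B^\alpha u$, identification of the Cameron--Martin space and the extension-by-zero point you flag as the main obstacle) is correct and consistent with the paper, though you should also note that once orthogonality is in hand you must check that $\overline{\textrm{span}}\{v_{\alpha,0}(t):t\in e\}$ exhausts $H(e)\ominus H_{\alpha,I}(\partial e)$ before concluding that $\dot{H}^\alpha_{0,L}(e)$ is the full Cameron--Martin space of $v_{\alpha,0}$ rather than merely containing it.
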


\begin{proof}
	Let $\rho(s,s') = \mathbb{E}(u(s) u(s')),$ for $s,s'\in\Gamma$. The strategy of the proof is as follows. We first use Lemma~\ref{lem:Cameron_Martin_decomposition}
	in Appendix~\ref{app:sec6sec7}
	to establish that, for each edge $e\in\mathcal{E}$, $\rho$ can be decomposed as the sum of two orthogonal kernels $h_1(s,s')$ and $h_2(s,s')$, which are the reproducing kernels of $ \mathcal{H}_+(\Gamma\setminus e)$ and $\mathcal{H}_{0,I}(e)$, respectively.  
	Then, we use the 
	representation of the Cameron--Martin spaces (Proposition~\ref{thm:CharOrthComplCM_genWM} and Proposition \ref{prp:ChatHPlusS_genWM}) to uniquely characterize the kernels $h_1(s,s')$ and $h_2(s,s')$. Finally, we link $h_2(\cdot,\cdot)$ to a unique GRF (by connecting its Cameron--Martin space to its unique Gaussian measure), and link $h_1(\cdot,\cdot)$ to a boundary Gaussian process, that is, a process that is measurable 
	with respect to the inner boundary space $\mathcal{F}^u_{\alpha,I}(\partial e)$.

	Fix $e\in\mathcal{E}$. By Corollary \ref{cor:MarkovOrderpCameronMartin}.\ref{cor:MarkovOrderpCameronMartin3} and Lemma~\ref{lem:Cameron_Martin_decomposition}
	in Appendix~\ref{app:sec6sec7}, 
	 $\rho(\cdot,\cdot) = h_1(\cdot,\cdot) + h_2(\cdot,\cdot)$, where $h_1(\cdot,\cdot)$ is the reproducing kernel of $\mathcal{H}_+(\Gamma\setminus e)$ and $h_2(\cdot,\cdot)$ is the reproducing kernel of $\mathcal{H}_{0,I}(e)$.
	Further, by Proposition~\ref{thm:CharOrthComplCM_genWM}, $\mathcal{H}_{0,I}(e)\cong \dot{H}_{0,L}^\alpha(e)$; thus, given any $h\in \mathcal{H}_{0,I}(e)$ and $s\in e$, let $\widehat{h}$ and $\widehat{\rho}_e(\cdot, s)$ be the elements in $\dot{H}_{0,L}^\alpha(e)$ that correspond to
	$h$ and $h_2(\cdot, s)$. 
	For $s,s'\in e$, $h_2(s,s') = \widehat{\rho}_e(s,s')$ and $h(s) = \widehat{h}(s)$, and by  \eqref{eq:h2ReprKernel},
	$
	(\widehat{h}, \widehat{\rho}_e(\cdot, s))_{\alpha,e} = (h, h_2(\cdot,s))_\alpha = h(s) = \widehat{h}(s),
	$
	since $h\in \mathcal{H}_{0,I}(e)$ and $h_2(\cdot, t)$ is a reproducing kernel for $\mathcal{H}_{0,I}(e)$. 
	Therefore, by Proposition~\ref{prp:charHdot_edge_genWM} from Appendix \ref{app:sec6sec7}, the space $(\dot{H}_{0,L}^\alpha(e), (\cdot, \cdot)_{\alpha,e})$
	is a reproducing kernel Hilbert space with kernel $\widehat{\rho}_e(\cdot,\cdot)$. By the uniqueness of the
	Cameron--Martin space associated with a centered Gaussian measure \cite[e.g.~see][Theorem 2.9]{daPrato2014}, a unique GRF $v_{\alpha,0}(t), t\in [0,\ell_e]$ associated with 
	$(\dot{H}_{0,L}^\alpha(e), (\cdot, \cdot)_{\alpha,e})$ exists and, by Proposition \ref{prp:charHdot_edge_genWM} from Appendix \ref{app:sec6sec7}, it is associated
	with $(\mathcal{H}_{0,I}(e), (\cdot,\cdot)_{\dot{H}^\alpha_L(\Gamma)})$. 

We now show that $\mathcal{H}_+(\Gamma\setminus e)$ is a finite-dimensional space. By Proposition \ref{prp:ChatHPlusS_genWM}, and its proof,  $h_{1}(s,\cdot)$ is a classical 
solution of $L^\alpha w = 0$ on $e$. This homogeneous linear equation on $e$ of order $2\alpha$ has a 
$2\alpha$-dimensional space of solutions which is uniquely determined by $L^\alpha$,
$\alpha\in\mathbb{N}$, on $e$. Let $\mathcal{\sol} = \textrm{span}\{\sol_1(\cdot), \ldots, \sol_{2\alpha}(\cdot)\}$ be the space of such solutions. By standard theory of linear ordinary differential equations, we can assume, without loss of generality, that $\mv{B}^\alpha \sol_j = e_j$, where $e_j$ is the $2\alpha$-dimensional vector with entry 1 at position $j$ and zero elsewhere.
Thus, since $h_1(\cdot,s)$ and $h_1(s',\cdot)$ are solutions of $L^{\alpha}w=0$ for every $s,s'\in e$,  we obtain $h_1(\cdot, s), h_1(s',\cdot) \in \mathcal{G}$, so that
$$
h_1(s',s) = \sum_{i=1}^{2\alpha} \sum_{j=1}^{2\alpha}  \sol_i(s)\mv{F}_{i,j} \sol_j(s') = \mv{\sol}^\top(s)\mv{F}  \mv{\sol}(s'),
$$
where $\mv{F} = \{\mv{F}_{i,j}\}_{1\leq i,j\leq 2\alpha}$ is a symmetric matrix.
If $b_1,b_2\in \{0,\ell_e\}$, then it follows from \cite[Corollary 5]{BSW2022}, the fact that $h_2(\cdot, b_1)$ and $h_2(b_2,\cdot)$ vanish at $b_1$ and $b_2$, and are orthogonal to $\mathcal{H}_{\alpha,I}(\partial e)$ and to $\mathcal{H}_+(\Gamma\setminus e)$, that 
\begin{align*}
\partial^{k}_1 \partial^{l}_2  h_1(b_1,b_2) &=\partial^{k}_1 \partial^{l}_2  \left( h_1(\cdot, b_1), h_1(b_2, \cdot)   \right)_{\alpha} = \partial^{k}_1 \partial^{l}_2  \left( h_1(\cdot, b_1), \rho(b_2, \cdot)   \right)_{\alpha}\\
&= \partial^{k}_1 \partial^{l}_2  \left( \rho(\cdot, b_1), \rho(b_2, \cdot)   \right)_{\alpha} = \partial^{k}_1 \partial^{l}_2\rho(b_1,b_2) = \mathbb{E} \left[ \partial^{k}u(b_1) \partial^{l} u(b_2) \right],
\end{align*}
for ${k,l\in\{0,\ldots,\alpha-1\}}$, where $\partial^k_1$ indicates the $k$th order derivative with respect to the first component, and $\partial^l_2$ indicates the $l$th order derivative with respect to the second component. Furthermore, for ${k,l\in\{0,\ldots,\alpha-1\}}$,
$
\partial^{k}_1 \partial^{l}_2  h_1(b_1,b_2) = \mv{F}_{k+\mathbb{I}\left(b_1=\ell_e\right)(k+1), l+\mathbb{I}\left(b_2=\ell_e\right)(l+1)}.
$
Hence, $\mv{F} = \Cov\left( B^{\alpha}u\right)$,  and for $s, s'\in e$,
\begin{equation}\label{eq:exprH1}
h_1(s,s') = \Cov\left(\mv{\sol}^\top(s)  B^{\alpha}u,\mv{\sol}^\top(s')  B^{\alpha}u\right) = \mv{\sol}^\top(s')\mathbb{E}\left[B^\alpha u (B^\alpha u)^\top\right] \mv{\sol}(s).
\end{equation}

We now link the Cameron--Martin spaces to the Gaussian measures and show the stated properties of these processes.
Let $\Phi:H(\Gamma)\to\mathcal{H}(\Gamma)$ be
 isometric isomorphism map and define
\begin{equation}\label{eq:expressionprocessv_zero}
	v_{\alpha, 0}(s) = \Phi^{-1}(h_{2}(\cdot,s)),\quad s\in e.
\end{equation}
The 
Cameron--Martin space of $v_{\alpha,0}$ is $\mathcal{H}_{0,I}(e)$ (which is isometrically isomorphic to $\dot{H}_{0,L}^\alpha(e)$); therefore, we can follow the same proof of \cite[Lemma~4]{BSW2022}
to obtain that the weak derivatives of $v_{\alpha,0}$ in $L_2(\Omega)$ up to order $\alpha-1$, exist and
are weakly continuous in the $L_2(\Omega)$ sense. Also, by the same arguments as in 
\cite[Proposition 11]{BSW2022},  $B^{\alpha}v_{\alpha,0} = \mv{0}.$ Finally, because $h_2(\cdot,t)$ is orthogonal to $\mathcal{H}_{\alpha,I}(\partial e)$ and $v_{\alpha,0}(\cdot)$ is Gaussian (it belongs to the Gaussian linear space $H(\Gamma)$), $v_{\alpha,0}(t)$ is independent of $H_{\alpha,I}(\partial e)$, in particular, it is independent of $\mathcal{F}^u_{\alpha,I}(\partial e)$.
For $h_1$ we have, by Lemma~\ref{lem:FRKHS} 
in Appendix~\ref{app:sec6sec7}
and \eqref{eq:exprH1}, that $u_B(t) = \Phi^{-1}\left( h_1(\cdot, t)\right)$ is given by
$u_B(\cdot) = \mv{\sol}^\top(\cdot) B^{\alpha}u.$
This concludes the proof, since for every $s\in e$, 
$
u(s) = \Phi^{-1}(\rho(\cdot, s)) = \Phi^{-1}(h_1(\cdot, s)) + \Phi^{-1}(h_2(\cdot,s)) = v_{\alpha,0}(s) + \mv{\sol}^\top(s) B^{\alpha}u.
$
\end{proof}

\section{Conditional representation of generalized Whittle--Mat\'ern fields}\label{sec:condrepr}

In this section, we focus on the generalized Whittle--Mat\'ern fields with $\alpha=1$, which by the results above are Markov or order 1, and are interesting from an applied point of view as they essentially are the analog of a Gaussian process on $\mathbb{R}$ with a (possibly non-stationary) exponential covariance. 

We will use the edge representation (Theorem \ref{thm:ReprTheoremEdge_GenWM}) to construct a solution to the problem
$$(\kappa^2 -\nabla(a\nabla))^{1/2} (\tau u) = \mathcal{W},\quad\hbox{on $\Gamma$},$$
that is, a solution to \eqref{eq:Matern_spde_general} with $\alpha=1$, by conditioning independent processes defined on the edges to a continuity constraint.
To such an end, let $\{\widetilde{u}_e:e\in\mathcal{E}\}$ be a family of independent generalized Whittle--Mat\'ern processes on $e=[0,\ell_e]$.  In other words, for each $e\in\mathcal{E}$, $\widetilde{u}_e$ solves
\begin{equation}\label{eq:Neumann_WM_edge}
	(\kappa^2 - \nabla (a \nabla))^{1/2}\widetilde{u}_e = \mathcal{W}_e\quad\hbox{on $e$},
\end{equation}
where the differential operator $\nabla (a \nabla)$ is augmented with Neumann boundary conditions ($\partial_e u(t) = 0$ for $t=0$ and $t=\ell_e$), $\mathcal{W}_e$ is  Gaussian white noise on $e$, and the processes ${\widetilde{u}_e, e\in\mathcal{E}}$, are mutually independent. Auxiliary results are given in Appendix \ref{app:sec6sec7}.

\begin{Remark}
	The reason for why we choose $\alpha=1$ is that the current proof only works for generalized Whittle--Mat\'ern fields if $\alpha=1$. This proof can be generalized to $\alpha\in\mathbb{N}$, but the coefficients $\kappa$ and $a$ must be constant, that is, we must have a Whittle--Mat\'ern field. We have proved such generalization to $\alpha\in\mathbb{N}$, but with constant coefficients, in \cite{BSW2023_AOS}.
\end{Remark}

Let us now define the process on $\Gamma$, which we will denote by $\widetilde{u}$, whose restriction to each edge $e$ is a solution to \eqref{eq:Neumann_WM_edge}. To this end, define $\widetilde{u}(s) := \widetilde{u}_e(t)$, where $s=(t,e)\in\Gamma$. In the following proposition we will use Lemma \ref{lem:auxLemCMindep} in Appendix \ref{app:sec6sec7}.

\begin{Proposition}\label{prp:CM_indep_field}
The Cameron--Martin space associated to $\widetilde{u}$ is $\widetilde{H}^1(\Gamma)$ endowed with the inner product 
$$\<f,g\>_\Gamma = \sum_{e\in\mathcal{E}} \int_e \kappa^2|_e f|_e(t) g|_e(t) dt + \int_e a|_e(t) f'|_e(t) g'|_e(t) dt,\quad f,g\in\widetilde{H}^1(\Gamma).$$
\end{Proposition}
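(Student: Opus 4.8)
The goal is to identify the Cameron--Martin space of $\widetilde{u}$, the field whose restrictions to the edges are independent Neumann Whittle--Mat\'ern processes with $\alpha=1$. The plan is to exploit that the Cameron--Martin space of an independent direct sum of Gaussian processes is the direct sum (in the Hilbert sense) of the individual Cameron--Martin spaces, and then identify the Cameron--Martin space of a single edge process as the Sobolev space $H^1(e)$ with the energy inner product induced by the operator $\kappa_e^2 - \nabla(a_e\nabla)$ under Neumann boundary conditions.

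\begin{proof}
Since the processes $\{\widetilde{u}_e : e\in\mathcal{E}\}$ are mutually independent, the Gaussian space $H(\Gamma)$ associated with $\widetilde{u}$ decomposes as the orthogonal direct sum $H(\Gamma) = \bigoplus_{e\in\mathcal{E}} H_e$, where $H_e$ is the Gaussian space of $\widetilde{u}_e$. Transporting this decomposition through the isometric isomorphism $\Phi$ between Gaussian and Cameron--Martin spaces, and using that the covariance kernel of $\widetilde{u}$ vanishes across distinct edges by independence, the Cameron--Martin space of $\widetilde{u}$ is the Hilbert-space direct sum of the Cameron--Martin spaces of the individual edge processes $\widetilde{u}_e$; this is the content of Lemma~\ref{lem:auxLemCMindep}.

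It therefore suffices to identify, for a single edge $e=[0,\ell_e]$, the Cameron--Martin space of the Neumann generalized Whittle--Mat\'ern process $\widetilde{u}_e$ solving \eqref{eq:Neumann_WM_edge}. By Remark~\ref{rem:CMgeneralizedWM} applied on the interval $e$, the Cameron--Martin space of $\widetilde{u}_e$ is $\dot{H}^1_{L_e}(e)$, where $L_e = \kappa_e^2 - \nabla(a_e\nabla)$ is equipped with Neumann vertex conditions at the two endpoints. By Proposition~\ref{prp:CharGen_CM_natural_alpha} applied with $\alpha=1$ on the single edge, this space is $H^1(e)$, and its norm is equivalent to the $\widetilde{H}^1(e)$ norm. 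Moreover, the inner product induced on $\dot{H}^1_{L_e}(e)$ is exactly the bilinear form associated with $L_e$: for $f,g\in H^1(e)$, integration by parts (where the Neumann boundary conditions annihilate the boundary terms) gives
$$
(f,g)_{\dot{H}^1_{L_e}(e)} = (L_e f, g)_{L_2(e)} = \int_e \kappa_e^2(t) f(t) g(t)\,dt + \int_e a_e(t) f'(t) g'(t)\,dt.
$$

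Summing these edge inner products over $e\in\mathcal{E}$ and using the direct-sum structure from the first step, the Cameron--Martin space of $\widetilde{u}$ is $\bigoplus_{e\in\mathcal{E}} H^1(e) = \widetilde{H}^1(\Gamma)$, with inner product
$$
\<f,g\>_\Gamma = \sum_{e\in\mathcal{E}} \int_e \kappa^2|_e f|_e(t) g|_e(t)\,dt + \int_e a|_e(t) f'|_e(t) g'|_e(t)\,dt,
$$
which is precisely the asserted form. Note that because the edge processes are merely Neumann processes and no continuity across vertices is imposed, the resulting space is the broken Sobolev space $\widetilde{H}^1(\Gamma)$ rather than $H^1(\Gamma) = \widetilde{H}^1(\Gamma)\cap C(\Gamma)$.
\end{proof}

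The step I expect to require the most care is the first one: rigorously justifying that the Cameron--Martin space of an independent family decomposes as a Hilbert direct sum of the individual Cameron--Martin spaces, including verifying that the cross-edge reproducing kernels vanish and that the norms add orthogonally. This is where Lemma~\ref{lem:auxLemCMindep} does the essential work; the identification of the single-edge space via Proposition~\ref{prp:CharGen_CM_natural_alpha} and the integration-by-parts computation of the inner product are comparatively routine.
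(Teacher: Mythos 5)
Your proof is correct and follows essentially the same route as the paper: identify the single-edge Cameron--Martin space as $H^1(e)$ with the energy inner product of the Neumann operator (the paper cites Remark~\ref{rem:CMgeneralizedWM} together with Theorem~\ref{thm:characterization}, you cite Proposition~\ref{prp:CharGen_CM_natural_alpha}, which gives the same identification on an interval), and then assemble the edges via Lemma~\ref{lem:auxLemCMindep}. Your explicit integration-by-parts computation of the edge inner product is a detail the paper leaves implicit (it is the $m=0$ case of Lemma~\ref{lem:dotHklocal}), and it is carried out correctly.
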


\begin{proof}
Begin by observing that the Cameron--Martin space associated to $\widetilde{u}_e$ is $H^1(e)$ endowed with the inner product
$$\<f_e,g_e\>_e = \int_e \kappa^2|_e f|_e(t) g|_e(t) dt + \int_e a|_e(t) f'|_e(t) g'|_e(t) dt,\quad f_e,g_e\in H^1(e).$$
Indeed, one can apply Remark \ref{rem:CMgeneralizedWM} and Theorem \ref{thm:characterization} to each edge $e\in\mathcal{E}$. The result now follows from a direct application of Lemma~\ref{lem:auxLemCMindep} 
in Appendix~\ref{app:sec6sec7}.
\end{proof}

\begin{Remark}\label{rem:CM_closed_subspace_indep}
If $f,g\in\dot{H}^1_L(\Gamma) = H^1(\Gamma)$, then 
${\<f,g\>_\Gamma = (f,g)_{\dot{H}^1(\Gamma)}}$ by 
integration by parts.
Therefore, $(H^1(\Gamma), (\cdot,\cdot)_{\dot{H}_L^1(\Gamma)})$ is a closed subspace of $(\widetilde{H}^1(\Gamma), \<\cdot,\cdot\>_\Gamma)$.
\end{Remark}

Now, let $u$ be a generalized Whittle--Mat\'ern field, that is, let $u$ be a solution to \eqref{eq:Matern_spde_general} with $\alpha=1$. Then, it follows by Remark \ref{rem:CMgeneralizedWM} that the Cameron--Martin space associated to $u$ is $(H^1(\Gamma), (\cdot,\cdot)_{\dot{H}_L^1(\Gamma)})$, which is a closed subspace of $(\widetilde{H}^1(\Gamma), \<\cdot,\cdot\>_\Gamma)$ by Remark~\ref{rem:CM_closed_subspace_indep}. Let $\Pi_K:\widetilde{H}^1(\Gamma)\to H^1(\Gamma)$ be the orthogonal projection onto $H^1(\Gamma)$ with respect to the $\<\cdot, \cdot\>_\Gamma$ inner product.
Further, let ${H_{\widetilde{u}}(\Gamma) = \overline{\textrm{span}\{\widetilde{u}(t,e): e\in\mathcal{E}, t\in e\}}}$ be the Gaussian space associated to $\widetilde{u}$ and recall the isometric isomorphism ${\Phi:H_{\widetilde{u}}(\Gamma) \to \widetilde{H}^1(\Gamma)}$ given by ${\Phi(h)(t,e) = \mathbb{E}(h\widetilde{u}_e(t))}$, for $s = (t,e)\in\Gamma$.
We will now show that a generalized Whittle--Mat\'ern field with $\alpha=1$ can be obtained as a projection of $\widetilde{u}$ on a suitable space.

First, observe that we can obtain the reproducing kernel of $(H^1(\Gamma),(\cdot,\cdot)_{\dot{H}^1_L(\Gamma)})$ by projection. Indeed, let $\widetilde{\varrho}(\cdot,\cdot)$ be the covariance function of $\widetilde{u}$. Thus, $\widetilde{\varrho}(\cdot,\cdot)$ is also the reproducing kernel of $(\widetilde{H}^1(\Gamma), \<\cdot,\cdot\>_\Gamma)$. Second, define, for each $s = (t,e)\in\Gamma$, 
$$\varrho(s,\cdot) = \varrho((t,e),\cdot) = \Pi_K(\widetilde{\varrho}(s,\cdot)) = \Pi_K(\widetilde{\varrho}((t,e),\cdot)).$$
This defines a function $\varrho:\Gamma\times \Gamma\to \mathbb{R}$. We now claim that $\varrho(\cdot,\cdot)$ is a reproducing kernel for $(H^1(\Gamma),(\cdot,\cdot)_{\dot{H}^1_L(\Gamma)})$. Indeed, given $h\in H^1(\Gamma)$, we have by Remark \ref{rem:CM_closed_subspace_indep} that for $s=(t,e)\in\Gamma$,
\begin{align*}
(\varrho(s,\cdot), h)_{\dot{H}^1_L(\Gamma)} &= \<\varrho(s,\cdot), h\>_\Gamma = \<\Pi_K(\widetilde{\varrho}(s,\cdot)), h\>_\Gamma\\
&= \<\widetilde{\varrho}(s,\cdot) - \Pi_K^\perp(\widetilde{\varrho}(s,\cdot)), h\>_\Gamma = \<\widetilde{\varrho}(s,\cdot), h\>_\Gamma = h(s),
\end{align*}
where $\Pi_K^\perp$ is the orthogonal projection onto the orthogonal complement of $H^1(\Gamma)$ on $\widetilde{H}^1(\Gamma)$ and we used that $h\in H^1(\Gamma)\subset \widetilde{H}^1(\Gamma)$ and that $\widetilde{\varrho}(\cdot,\cdot)$ is a reproducing kernel for $(\widetilde{H}^1(\Gamma), \<\cdot,\cdot\>_\Gamma)$.

This allows us to define a new field:
$u(s) := \Phi^{-1}(\varrho(s,\cdot)), s\in\Gamma.$
Observe that, by construction, $u$ is a centered GRF. Its Gaussian space is given by 
$
H_u = \Phi^{-1}\circ \Pi_K(\widetilde{H}^1(\Gamma)) = \Phi^{-1}(H^1(\Gamma)),
$ 
and its Cameron--Martin space is $(H^1(\Gamma),(\cdot,\cdot)_{\dot{H}^1_L(\Gamma)})$. 
By uniqueness of the Cameron--Martin space, $u$ is a generalized Whittle--Mat\'ern field on $\Gamma$. Furthermore, let $\Pi_u:H_{\widetilde{u}} \to H_u$ be the orthogonal projection onto $H_u$. Since $\Phi$ is an isometric isomorphism and $\widetilde{u}$ is Gaussian, we have that for every $s\in\Gamma$,
\begin{align*}
u(s) = \Phi^{-1}(\Pi_K(\widetilde{\varrho}(s,\cdot))) = \Pi_u\Big(\Phi^{-1}(\widetilde{\varrho}(s,\cdot)) \Bigr) = \Pi_u(\widetilde{u}(s)) = \mathbb{E}(\widetilde{u}(s)| \sigma(H_u)).
\end{align*}
Therefore, a generalized Whittle--Mat\'ern fields with $\alpha=1$ can be represented as
\begin{equation}\label{eq:repr_gWM1}
u(s) = \mathbb{E}(\widetilde{u}(s) | \sigma(H_u)),\quad s\in\Gamma.
\end{equation}

Our goal now is to study the right-hand side of \eqref{eq:repr_gWM1} to show that the conditional expectation actually only acts on the values of $\widetilde{u}$ at the vertices of $\Gamma$.
To this end, first observe that, for each $e\in\mathcal{E}$, we can apply Theorem \ref{thm:ReprTheoremEdge_GenWM} to $\widetilde{u}_e$ to obtain that
$\widetilde{u}_e(t) = v_{e,0}(t) + \mv{\sol}_e^\top(t) B^1 \widetilde{u}_e, t\in e,$
where $v_{e,0}(\cdot)$ is the GRF defined on the edge $e$ with associated Cameron--Martin space
$(H_{0}^1(e), (\cdot,\cdot)_{1,e})$, where $H^1_0(e)$ is the set of absolutely continuous functions $f$ defined on $e=[0,\ell_e]$ such that $f(0)=f(\ell_e)=0$, and $\mv{\sol}_e(\cdot) = (\sol_{e,1}, \sol_{e,2})$ are linearly independent solutions to $L\sol = 0$ on $e$, and $B^1 \sol_j = e_j$,  $e_j$ being the $j$th canonical vector, $j=1,2$. Further, $v_{e,0}$ is independent of $B^1 \widetilde{u}_e$. This yields a representation on $\Gamma$ that we will describe now. First, we extend $v_{e,0}(\cdot)$ and $\mv{\sol}_e(\cdot)$ as zero outside of $e$. Thus, for $s=(t,e)$, $v_{e',0}(s) = 0$ if $e\neq e'$ and $v_{e',0}(s) = v_{e,0}(t)$ if $e=e'$. Similarly for $\mv{\sol}_e(\cdot)$. Then, it directly follows that 
\begin{equation}\label{eq:bridge_repr_indep}
	\widetilde{u}(s) = \sum_{e\in\mathcal{E}} v_{e,0}(s) + \mv{\sol}_{e}^\top(s) B^1 \widetilde{u}_{e}, \quad s\in\Gamma.
\end{equation}
\begin{Example}
	\label{ex:ex1}
Let $u$ be a Whittle--Mat\'ern field, defined by \eqref{eq:Matern_spde} with $\alpha=1$, on a star graph with three edges.  
Then, $\sol_e(s) = [r(s),r(s-\ell_e)]$ with $r(h)=(2\kappa\tau^2)^{-1}\exp\left(-\kappa|h|\right)$, ${B^1 \widetilde{u}_{e} = \begin{bmatrix}
	\widetilde{u}_{e}(0) &
	\widetilde{u}_{e}(\ell_e)
\end{bmatrix}^\top}$ and $v_{e,0}$ is a GRF with covariance function $r$, conditioned on being zero at the boundary points of $e$.
\end{Example}
Note that for every $t\in e$, $\Phi(v_{e,0}(t))|_e \in H^1_0(e)$. Indeed,
from the independence between $v_{e,0}$ and $B^1 \widetilde{u}_e$, we have that
\begin{align*}
	\Phi(v_{e,0}(t))((t',e)) &= \mathbb{E}(v_{e,0}(t)\widetilde{u}_e(t')) = \mathbb{E}\left(v_{e,0}(t)(v_{e,0}(t') + \mv{\sol}_e^\top(t') B^1 \widetilde{u}_e)\right)\\
	&= \mathbb{E}(v_{e,0}(t)v_{e,0}(t')) = \varrho_{0,e}(t,t'),
\end{align*}
where $\varrho_{0,e}(\cdot,\cdot)$ is the covariance function of $v_{e,0}$, and for every $t\in e$, $\varrho_{0,e}(\cdot,t)\in H_0^1(e)$. By using, again, that $\widetilde{u}_e$ is independent of $\widetilde{u}_{e'}$ is $e\neq e'$, it follows that for every $t\in e$, $\Phi(v_{e,0}(t)) = 0$ in $\Gamma\setminus e$. This implies that $\Phi(v_{e,0}(t))$, $t\in e$, is continuous. Therefore, $\Phi(v_{e,0}(t)) \in H^1(\Gamma), t\in e$. Thus, if we define 
$H_{0,\widetilde{u}} = \overline{\textrm{span}\{v_{e,0}(s): s\in\Gamma\}},$
then, 
\begin{equation}\label{eq:bridge_contained_kirk}
	H_{0,\widetilde{u}}\subset H_u.
\end{equation}
since $\Phi(H_{0,\widetilde{u}}) \subset H^1(\Gamma)$. This tells us that given the representations \eqref{eq:repr_gWM1} and \eqref{eq:bridge_repr_indep}, we have that
\begin{align*}
u(s) &= \mathbb{E}(\widetilde{u}(s) | \sigma(H_u)) = \mathbb{E}\Bigl(\sum_{e\in\mathcal{E}} v_{e,0}(s) + \mv{\sol}_{e}^\top(t) B^1 \widetilde{u}_{e}  \Big| \sigma(H_u)\Bigr) \\
&= \sum_{e\in\mathcal{E}} v_{e,0}(s) + \mv{\sol}_{e}^\top(s) \mathbb{E}(B^1 \widetilde{u}_{e}|\sigma(H_u)),
\end{align*}
for $s\in\Gamma$,
where we have used \eqref{eq:bridge_contained_kirk} in the last equality.

Let $H_\mathcal{G}$ be the orthogonal complement of $H_{0,\widetilde{u}}$ in $H_u$ so that
$H_u = H_{0,\widetilde{u}} \oplus H_\mathcal{G}.$
Note that \eqref{eq:bridge_repr_indep}, together with the independence between $v_{e,0}$ and $B^1\widetilde{u}_e$ for $e\in\mathcal{E}$, gives us that $H_\mathcal{G}\subset \textrm{span}\{B^1\widetilde{u}_e:e\in\mathcal{E}\}$. In particular, we have that 
$H_\mathcal{G} = \textrm{span}\{F_j \widehat{B}^1 \widetilde{u}: j\in J\},$
where $\widehat{B}^1 \widetilde{u} = \{\widetilde{u}_e(0), \widetilde{u}_e(\ell_e): e\in\mathcal{E}\}$, $F_j$ is a one dimensional matrix (i.e., a row vector) and $J$ is a finite set of indexes. That is, $H_\mathcal{G}$ is generated by a set of linear combinations of vertex elements $\widehat{B}^1\widetilde{u}$.
We can now observe that, for each $e\in\mathcal{E}$, $B^1\widetilde{u}_e$ is independent of $\sigma(H_{0,\widetilde{u}})$. Therefore, for $s\in\Gamma$,
\begin{align*}
u(s) &= \sum_{e\in\mathcal{E}} v_{e,0}(s) + \mv{\sol}_{e}^\top(s) \mathbb{E}(B^1 \widetilde{u}_{e}|\sigma(H_\mathcal{G})) 
= \sum_{e\in\mathcal{E}} v_{e,0}(s) + \mv{\sol}_{e}^\top(s) \mathbb{E}(B^1 \widetilde{u}_{e}|\sigma(F_j \widehat{B}^1\widetilde{u}:j\in J)).
\end{align*}

By \cite[Theorem 9.1 and Remark 9.2]{janson_gaussian}, 
$\mathbb{E}(B^1\widetilde{u}_e|\sigma(F_j \widehat{B}^1\widetilde{u}:j\in J)) = \widehat{T}_e \widehat{F} \widehat{B}^1 \widetilde{u} := T_e \widehat{B}^1\widetilde{u},$ for every $e\in\mathcal{E}$,
where $\widehat{F} = [F_1,\ldots,F_{|J|}]$, $\widehat{T}_e, e\in\mathcal{E}$, is some matrix and $T_e = \widehat{T}_e \widehat{F}$.
Thus, we have proved:

\begin{Proposition}\label{prp:CondRepr_gWM_part1}
	The generalized Whittle--Mat\'ern field given by the solution of \eqref{eq:Matern_spde_general} with $\alpha=1$ can be obtained as
	$$u(s) = \sum_{e\in\mathcal{E}} v_{e,0}(s) + \mv{\sol}_{e}^\top(s)T_e \widehat{B}^1\widetilde{u},$$
	where, for  $e\in\mathcal{E}$, $v_{e,0}(\cdot)$ is the GRF defined on the edge $e$ with associated Cameron--Martin space
	$(H_{0}^1(e), (\cdot,\cdot)_{1,e})$; $\mv{\sol}_e(\cdot) = (\sol_{e,1}(\cdot),\sol_{e,2}(\cdot))$, where $\{\sol_{e,1}, \sol_{e,2}\}$ is a set of linearly independent solutions of 
	$L \sol = 0$ on $e$, satisfying $B^{1} \sol_{e,1}=[1,0]$ and $B^1 \sol_{e,2} = [0,1]$; 
	$\widetilde{u}$ is the field obtained by joining independent solutions of \eqref{eq:Neumann_WM_edge} on each edge; and $T_e$ is a matrix.
\end{Proposition}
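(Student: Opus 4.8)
The plan is to combine the conditional (projection) representation \eqref{eq:repr_gWM1} with the edgewise decomposition \eqref{eq:bridge_repr_indep}, and to show that the conditional expectation fixes the bridge components while collapsing the boundary components to a linear function of the vertex data. First I would record the structural input already assembled: by Remark~\ref{rem:CMgeneralizedWM} the Cameron--Martin space of the target field $u$ is $(H^1(\Gamma),(\cdot,\cdot)_{\dot{H}^1_L(\Gamma)})$, which by Remark~\ref{rem:CM_closed_subspace_indep} is a closed subspace of the Cameron--Martin space $(\widetilde{H}^1(\Gamma),\langle\cdot,\cdot\rangle_\Gamma)$ of the independent field $\widetilde{u}$ provided by Proposition~\ref{prp:CM_indep_field}. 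Transporting this inclusion through the isometric isomorphism $\Phi$ realizes $H_u$ as a closed Gaussian subspace of $H_{\widetilde{u}}(\Gamma)$; since $\widetilde{u}$ is Gaussian, the orthogonal projection onto $H_u$ coincides with conditional expectation given $\sigma(H_u)$ \cite[Theorem~9.1]{janson_gaussian}, which is exactly \eqref{eq:repr_gWM1}.

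Next I would apply Theorem~\ref{thm:ReprTheoremEdge_GenWM} edge by edge to the independent processes $\widetilde{u}_e$, obtaining the global decomposition \eqref{eq:bridge_repr_indep}, in which each bridge $v_{e,0}$ has zero boundary data and is independent of $B^1\widetilde{u}_e$. The crucial step is the inclusion \eqref{eq:bridge_contained_kirk}: because $v_{e,0}$ vanishes at the endpoints of $e$ and its $\Phi$-image is zero off $e$, the function $\Phi(v_{e,0}(t))$ is continuous across the vertices and hence lies in $H^1(\Gamma)$. Consequently $H_{0,\widetilde{u}}\subset H_u$, so conditioning leaves every $v_{e,0}$ term untouched and modifies only the boundary terms, giving $u(s)=\sum_{e\in\mathcal{E}} v_{e,0}(s)+\mv{\sol}_{e}^\top(s)\,\mathbb{E}(B^1\widetilde{u}_e\mid\sigma(H_u))$ for $s\in\Gamma$.

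Finally I would identify the conditional expectation of the boundary data. Splitting $H_u=H_{0,\widetilde{u}}\oplus H_{\mathcal{G}}$ orthogonally and using that each $B^1\widetilde{u}_e$ is independent of $\sigma(H_{0,\widetilde{u}})$, the conditioning reduces to $\sigma(H_{\mathcal{G}})$. From \eqref{eq:bridge_repr_indep} and the bridge--boundary independence one gets $H_{\mathcal{G}}\subset\textrm{span}\{B^1\widetilde{u}_e:e\in\mathcal{E}\}$, so $H_{\mathcal{G}}$ is spanned by linear combinations $F_j\widehat{B}^1\widetilde{u}$ of the vertex values $\widehat{B}^1\widetilde{u}$. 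The Gaussian conditioning formula then renders $\mathbb{E}(B^1\widetilde{u}_e\mid\sigma(H_{\mathcal{G}}))$ an explicit linear function $T_e\widehat{B}^1\widetilde{u}$ of the vertex data, which is the asserted form. I expect the main obstacle to be the continuity argument behind \eqref{eq:bridge_contained_kirk}: one must verify that gluing the edgewise bridges (which vanish at endpoints) yields a genuinely continuous function on $\Gamma$ lying in the constrained space $H^1(\Gamma)$, as it is precisely this that keeps the bridges invariant under the projection and confines the residual randomness to be conditioned to the finite-dimensional vertex data $\widehat{B}^1\widetilde{u}$.
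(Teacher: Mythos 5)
Your proposal is correct and follows essentially the same route as the paper: the projection representation \eqref{eq:repr_gWM1} via the closed embedding of $(H^1(\Gamma),(\cdot,\cdot)_{\dot{H}^1_L(\Gamma)})$ into $(\widetilde{H}^1(\Gamma),\langle\cdot,\cdot\rangle_\Gamma)$, the edgewise decomposition \eqref{eq:bridge_repr_indep} from Theorem~\ref{thm:ReprTheoremEdge_GenWM}, the inclusion $H_{0,\widetilde{u}}\subset H_u$ via continuity of $\Phi(v_{e,0}(t))$, and the reduction of the conditional expectation of $B^1\widetilde{u}_e$ to the finite-dimensional space $H_{\mathcal{G}}$ spanned by linear functionals of $\widehat{B}^1\widetilde{u}$, resolved by the Gaussian conditioning formula. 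The obstacle you flag is handled in the paper exactly as you anticipate, by computing $\Phi(v_{e,0}(t))((t',e))=\varrho_{0,e}(t,t')\in H^1_0(e)$ and using independence across edges to conclude the extension by zero is continuous on $\Gamma$.
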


Our goal now is to better understand the matrix $T_e$. To this end, recall the representation obtained in \eqref{eq:repr_gWM1} and observe that if we write for $s\in\Gamma$
\begin{align*}
\widetilde{u}(s) &= \mathbb{E}(\widetilde{u}(s)| \sigma(H_u)) + \left(\widetilde{u}(s) -\mathbb{E}(\widetilde{u}(s)| \sigma(H_u))\right)
= u(s) + (\widetilde{u}(s) - u(s)).
\end{align*}
Therefore, $u(s)$ and $\widetilde{u}(s) - u(s)$ are independent for every $s\in\Gamma$. Let $P_e$ be the matrix such that ${P_e \widehat{B}^1\widetilde{u} = B^1\widetilde{u}_e}$. Thus, we can rewrite \eqref{eq:bridge_repr_indep} as 
$$\widetilde{u}(s) = \sum_{e\in\mathcal{E}} v_{e,0}(s) + \mv{\sol}_{e}^\top(s) P_e \widehat{B}^1 \widetilde{u}, \quad s\in\Gamma.$$
By using the above expression and Proposition \ref{prp:CondRepr_gWM_part1} we obtain that for every $s\in\Gamma$
\begin{align*}
	\widetilde{u}(s) - u(s) &= \sum_{e\in\mathcal{E}}  \mv{\sol}_{e}^\top(s)P_e \widehat{B}^1\widetilde{u} -  \mv{\sol}_{e}^\top(s)T_e \widehat{B}^1\widetilde{u}=\sum_{e\in\mathcal{E}}   \mv{\sol}_{e}^\top(s)(P_e - T_e) \widehat{B}^1\widetilde{u} := \sum_{e\in\mathcal{E}}   \mv{\sol}_{e}^\top(s) K_e \widehat{B}^1 \widetilde{u},
\end{align*}
where $K_e = P_e - T_e$. Thus,
\begin{equation}\label{eq:decomp_indepGWM_kirk}
\widetilde{u}(s) = u(s) + \sum_{e\in\mathcal{E}}   \mv{\sol}_{e}^\top(s) K_e \widehat{B}^1 \widetilde{u},\quad s\in\Gamma,
\end{equation}
where $u(s)$ and $\{K_e \widehat{B}^1 \widetilde{u}:e\in\mathcal{E}\}$ are independent for every $s\in\Gamma$. We have, from \eqref{eq:decomp_indepGWM_kirk}, that for $s\in\Gamma$,
\begin{align*}
\mathbb{E}(\widetilde{u}(s) | \sigma(K_e \widehat{B}^1 \widetilde{u}:e\in\mathcal{E})) &= \mathbb{E}\left(u(s) + K_e \widehat{B}^1 \widetilde{u} \Big| \sigma(K_e \widehat{B}^1 \widetilde{u}:e\in\mathcal{E})\right) \\
&= \mathbb{E}\left(u(s) \Big| \sigma(K_e \widehat{B}^1 \widetilde{u}:e\in\mathcal{E})\right) + \sum_{e\in\mathcal{E}}   \mv{\sol}_{e}^\top(s) K_e \widehat{B}^1 \widetilde{u}\\
&= \mathbb{E}(u(s)) + \sum_{e\in\mathcal{E}}   \mv{\sol}_{e}^\top(s) K_e \widehat{B}^1 \widetilde{u}= \sum_{e\in\mathcal{E}}   \mv{\sol}_{e}^\top(s) K_e \widehat{B}^1 \widetilde{u},
\end{align*}
since $u(s)$ and $\{K_e \widehat{B}^1 \widetilde{u}:e\in\mathcal{E}\}$ are independent and $u(s)$ is a centered GRF. Let $H_{K\widehat{B}^1\widetilde{u}}$ be the Gaussian space generated by $\{K_e \widehat{B}^1 \widetilde{u}:e\in\mathcal{E}\}$,  $\Pi_B$ be the projection operator from $H_{\widetilde{u}}$ onto $H_{K\widehat{B}^1\widetilde{u}}$, and $\Pi_B^\perp$ be the projection onto the orthogonal complement of $H_{K\widehat{B}^1\widetilde{u}}$ on $H_{\widetilde{u}}$. Hence, the above expression tells us that $\Pi_B(\widetilde{u}(s)) = \sum_{e\in\mathcal{E}}   \mv{\sol}_{e}^\top(s) K_e \widehat{B}^1 \widetilde{u}$ and $\Pi_B^\perp (\widetilde{u}(s)) = u(s)$. 

By \cite[Remark 9.10]{janson_gaussian} we have that $\Pi_B^\perp (\widetilde{u}(s)) = u(s)$ is the field obtained by conditioning $\widetilde{u}(s)$ on $\{K_e\widehat{B}^1\widetilde{u} = 0, e\in\mathcal{E}\}$. Furthermore, let $K$ be the vertical concatenation of $K_e, e\in\mathcal{E}$. We can, then, rewrite the system $\{K_e\widehat{B}^1\widetilde{u} = 0, e\in\mathcal{E}\}$, as $K\widehat{B}^1\widetilde{u} = 0$. Therefore, we have proved the following result:

\begin{Proposition}\label{prp:GWm_prp_kirch_cond}
	Let $\widetilde{u}$ be the field obtained by joining independent solutions of \eqref{eq:Neumann_WM_edge} on each edge, let $K_e, e\in\mathcal{E},$ be the matrices given in \eqref{eq:decomp_indepGWM_kirk} and let $K$ be their vertical concatenation. The generalized Whittle--Mat\'ern field given by the solution of \eqref{eq:Matern_spde_general} with $\alpha=1$ can be obtained by conditioning the field $\widetilde{u}$ on $K\widehat{B}^1\widetilde{u} = 0$.
\end{Proposition}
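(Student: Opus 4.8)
The plan is to read the result off directly from the additive decomposition \eqref{eq:decomp_indepGWM_kirk} together with the Gaussian characterization of conditioning as orthogonal projection; almost all of the work has in fact already been carried out in the discussion preceding the statement. The essential observation is that \eqref{eq:decomp_indepGWM_kirk} exhibits $\widetilde{u}(s)$ as the sum of the target field $u(s)$ and a term $\sum_{e\in\mathcal{E}}\mv{\sol}_e^\top(s) K_e \widehat{B}^1\widetilde{u}$ lying in the finite-dimensional Gaussian space generated by the vertex combinations $\{K_e\widehat{B}^1\widetilde{u}: e\in\mathcal{E}\}$, and that these two pieces are independent for every $s\in\Gamma$.

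First I would use that, by construction, $u$ is a centered GRF and that $u(s)$ is independent of $\sigma(K_e\widehat{B}^1\widetilde{u}: e\in\mathcal{E})$ for each $s$. Taking conditional expectations in \eqref{eq:decomp_indepGWM_kirk} then annihilates the $u(s)$ contribution and leaves exactly the boundary term, so that $\mathbb{E}(\widetilde{u}(s)\mid \sigma(K_e\widehat{B}^1\widetilde{u}: e\in\mathcal{E})) = \sum_{e\in\mathcal{E}}\mv{\sol}_e^\top(s) K_e \widehat{B}^1\widetilde{u}$. In the language of Gaussian spaces this is precisely the $L_2(\Omega)$-orthogonal projection $\Pi_B$ of $\widetilde{u}(s)$ onto the Gaussian space $H_{K\widehat{B}^1\widetilde{u}}$, whence the complementary projection satisfies $\Pi_B^\perp(\widetilde{u}(s)) = u(s)$ for every $s\in\Gamma$. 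Because all of these variables live in the single Gaussian space $H_{\widetilde{u}}$, matching the projection at every $s$ pins down the joint law of the projected field, so $\Pi_B^\perp\widetilde{u}$ agrees with $u$ as a process, not merely pointwise.

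Second, I would invoke the Gaussian conditioning identity of \cite[Remark 9.10]{janson_gaussian}: for a jointly Gaussian family, conditioning on a collection of linear functionals being zero is realized by the orthogonal projection onto the orthogonal complement of the span of those functionals. Writing the system $\{K_e\widehat{B}^1\widetilde{u} = 0: e\in\mathcal{E}\}$ compactly as $K\widehat{B}^1\widetilde{u} = 0$, with $K$ the vertical concatenation of the $K_e$, this identity shows that conditioning $\widetilde{u}$ on $K\widehat{B}^1\widetilde{u} = 0$ yields exactly $\Pi_B^\perp\widetilde{u} = u$, the desired solution to \eqref{eq:Matern_spde_general} with $\alpha=1$. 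The only point requiring care, and hence the main obstacle, is that the conditioning event $\{K\widehat{B}^1\widetilde{u} = 0\}$ has probability zero, so the conditioning cannot be read literally; the substance lies entirely in the Gaussian disintegration result of \cite{janson_gaussian}, which guarantees that the conditioned law is well defined and coincides with the projection $\Pi_B^\perp\widetilde{u}$. Once this identification is granted, the statement follows immediately from the independence and projection identities already established.
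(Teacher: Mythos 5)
Your proposal is correct and follows essentially the same route as the paper: it reads the result off the decomposition \eqref{eq:decomp_indepGWM_kirk}, uses the independence of $u(s)$ and $\{K_e\widehat{B}^1\widetilde{u}\}$ to compute $\mathbb{E}(\widetilde{u}(s)\mid\sigma(K_e\widehat{B}^1\widetilde{u}:e\in\mathcal{E}))$, identifies $u=\Pi_B^\perp\widetilde{u}$, and invokes \cite[Remark 9.10]{janson_gaussian} to interpret the orthogonal projection as conditioning on the degenerate event $K\widehat{B}^1\widetilde{u}=0$. Your added remarks on the joint law being determined within the single Gaussian space $H_{\widetilde{u}}$ and on the probability-zero conditioning event are accurate but do not change the argument.
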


Observe that we have proved that if we condition $\widetilde{u}(s)$ on $K\widehat{B}^1\widetilde{u} = 0$ for any $s\in\Gamma$, then we obtain a generalized Whittle--Mat\'ern field with Kirchhoff vertex conditions. Further, $\widehat{B}^1\widetilde{u}$ consists of values of the field $\widetilde{u}$ at the vertices of the metric graph. Therefore, $K\widehat{B}^1\widetilde{u} = 0$ enforces continuity of the conditioned field, $u$, at the vertices. More precisely, \eqref{eq:decomp_indepGWM_kirk} gives that
\begin{equation}\label{eq:Matrix_cond_continuity}
	\widetilde{u}_e(v) = \widetilde{u}_{e'}(v),\quad v\in e\cap e',\quad e,e'\in\mathcal{E}\quad \Leftrightarrow \quad K\widehat{B}^1\widetilde{u} = 0.
\end{equation}
It is easy to see that if $\widetilde{K}$ is another matrix that enforces continuity of the conditioned field at the vertices, that is, that the left-hand side of \eqref{eq:Matrix_cond_continuity} holds if, and only if $\widetilde{K}\widehat{B}^1 \widetilde{u} = 0$, then $\sigma(K\widehat{B}^1\widetilde{u}) = \sigma(\widetilde{K}\widehat{B}^1\widetilde{u})$. We, therefore, have proved the following theorem:

\begin{Theorem}\label{thm:GWM_kirch_cond_thm}
	Let $\widetilde{u}$ be the field obtained by joining independent solutions of \eqref{eq:Neumann_WM_edge} on each edge. Let, also, $K$ be any matrix such that \eqref{eq:Matrix_cond_continuity} holds, that is, let $K$ be any matrix that enforces continuity of the conditioned field at the vertices of the metric graph. Then, the generalized Whittle--Mat\'ern field given by the solution of \eqref{eq:Matern_spde_general} with $\alpha=1$ can be obtained by conditioning the field $\widetilde{u}$ on $K\widehat{B}^1\widetilde{u} = 0$.
\end{Theorem}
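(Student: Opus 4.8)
The plan is to deduce the statement from Proposition~\ref{prp:GWm_prp_kirch_cond}, which already proves the claim for the particular matrix produced in \eqref{eq:decomp_indepGWM_kirk}; denote that matrix by $K_0$ (the vertical concatenation of the $K_e = P_e - T_e$). The guiding principle is that conditioning the centered Gaussian field $\widetilde u$ on a linear constraint $M\widehat{B}^1\widetilde u = 0$, in the sense of \cite[Remark~9.10]{janson_gaussian}, coincides with the orthogonal projection $\Pi_M^\perp\widetilde u$ of $\widetilde u$ onto the orthogonal complement in $H_{\widetilde u}(\Gamma)$ of the finite-dimensional Gaussian space $H_{M\widehat{B}^1\widetilde u} = \overline{\textrm{span}}\{(M\widehat{B}^1\widetilde u)_i\}$. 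Since this space is exactly the span of $\{c^\top \widehat{B}^1\widetilde u : c \in \textrm{row space}(M)\}$, the conditioned field depends on $M$ only through the row space of $M$, equivalently only through $\ker M$. Thus it suffices to show that any matrix $K$ satisfying \eqref{eq:Matrix_cond_continuity} has the same kernel as $K_0$.

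First I would identify the common kernel. Continuity of a realization of $\widetilde u$ at the vertices is a condition on the endpoint values $\widehat{B}^1\widetilde u = \{\widetilde u_e(0),\widetilde u_e(\ell_e):e\in\mathcal{E}\}$ only, and is described by the deterministic linear subspace $C \subset \mathbb{R}^{2|\mathcal{E}|}$ cut out by the equations $\widetilde u_e(v) = \widetilde u_{e'}(v)$ for $v \in e\cap e'$. The equivalence \eqref{eq:Matrix_cond_continuity} thus reads $\ker K_0 = C$, and by hypothesis any admissible $K$ likewise satisfies $\ker K = C$; hence $\ker K = \ker K_0$. The one point that needs care here is that \eqref{eq:Matrix_cond_continuity} is an almost-sure equivalence, so to upgrade it to the subspace identity $\ker K = C$ I would use that $\widehat{B}^1\widetilde u$ is a nondegenerate Gaussian vector: each pair $(\widetilde u_e(0),\widetilde u_e(\ell_e))$ has a nondegenerate covariance, the edges are mutually independent, and therefore the law of $\widehat{B}^1\widetilde u$ has full support in $\mathbb{R}^{2|\mathcal{E}|}$, which forces the linear conditions $x\in C$ and $Kx=0$ to agree for every $x$.

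Since $\ker K = \ker K_0$, the two matrices have identical row spaces, so the entries of $K\widehat{B}^1\widetilde u$ and of $K_0\widehat{B}^1\widetilde u$ generate the same Gaussian space, equivalently $\sigma(K\widehat{B}^1\widetilde u) = \sigma(K_0\widehat{B}^1\widetilde u)$, which is the $\sigma$-algebra identity noted just before the theorem. Consequently $\Pi_K^\perp = \Pi_{K_0}^\perp$, so conditioning $\widetilde u$ on $K\widehat{B}^1\widetilde u = 0$ yields exactly the field obtained by conditioning on $K_0\widehat{B}^1\widetilde u = 0$, which by Proposition~\ref{prp:GWm_prp_kirch_cond} is the generalized Whittle--Mat\'ern field solving \eqref{eq:Matern_spde_general} with $\alpha=1$. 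The main obstacle is the nondegeneracy step of the previous paragraph; the remaining arguments are standard Gaussian linear algebra, namely that conditioning on a linear event is orthogonal projection and that such a projection onto the span of $M\widehat{B}^1\widetilde u$ depends only on the row space, hence only on the kernel, of $M$.
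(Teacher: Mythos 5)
Your proposal is correct and follows essentially the same route as the paper: the paper also deduces the theorem from Proposition~\ref{prp:GWm_prp_kirch_cond} by observing that any matrix $\widetilde K$ satisfying \eqref{eq:Matrix_cond_continuity} generates the same $\sigma$-algebra, $\sigma(K\widehat{B}^1\widetilde{u}) = \sigma(\widetilde{K}\widehat{B}^1\widetilde{u})$, so the conditioning (i.e., the orthogonal projection) is unchanged. Your kernel/row-space argument, together with the nondegeneracy of $\widehat{B}^1\widetilde{u}$ to pass from the almost-sure equivalence to the subspace identity, is a careful justification of the step the paper dismisses as ``easy to see.''
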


\begin{Example}[Example 1 continued]
	\label{ex:ex2}
	Let the edges of the star graph be $e_i$, $i=1,2,3$, where $e_i$ starts in vertex $v_i$ and ends in vertex $v_4$. Then, 
	$
	\widehat{B}^1\widetilde{u}= \left[\widetilde{u}_1(0),\widetilde{u}_1(\ell_1),\widetilde{u}_2(0),\widetilde{u}_2(\ell_2),\widetilde{u}_3(0),\widetilde{u}_1(\ell_3)   \right]^\top, 
	$
	where $\ell_i$ is the length of edge $e_i$, and $K $ can for example be taken as 
	$$\begin{bmatrix}
		0 & 1 & 0 & -1 & 0 & 0  \\
		0 & 0 & 0 & 1 & 0 & -1 
	\end{bmatrix} \quad \text{or} \quad 
\begin{bmatrix}
	0 & 1 & 0 & 0 & 0 & -1  \\
	0 & 0 & 0 & 1 & 0 & -1 
\end{bmatrix}.
$$
\end{Example}

\section{Discussion}\label{sec:discussion}
We showed that no GRFs on general metric graphs exist which are both Markov of order 1 and isotropic in the geodesic or resistance metric. This was done by considering a specific class of graphs as a counter example. We conjecture that the class of graphs and metrics where the result holds is much larger. In particular, we conjecture that it holds for any graph which is not a tree or a Euclidean cycle endowed with a metric such as the resistance or geodesic metrics. Another question related to this is whether one could extend the result to higher-order Markov properties. We have not pursued this as it is not even clear whether it is possible to define differentiable and isotropic GRFs on general metric graphs.

Having characterized the Markov properties of the generalized Whittle--Mat\'ern fields, a natural question is how restrictive this class is compared to all Gaussian Markov processes on compact
metric graphs. Because we know that the Cameron--Martin space should be local for the process to be Markov, which translates to the precision operator being local, we can make a reasonable conjecture. There are two properties we can vary in the model and still have a local operator. The first is the type of vertex conditions. The existence of the solution to \eqref{eq:Matern_spde_general} was demonstrated under a much larger class of vertex conditions than the Kirchhoff vertex conditions in \cite{bolinetal_fem_graph}. The second property is how the operator acts on functions on a specific edge. 
Therefore, we make the following conjecture.

\begin{Conjecture}
	Any GMRF $u$ of order $p$ on a compact metric graph $\Gamma$, 
	satisfying some mild regularity conditions, can be obtained as a GRF with a precision operator that acts locally on each edge $e\in\mathcal{E}$ as  
	$
	{Q = \sum_{i=0}^{2p} \kappa_{i,e} \frac{d^{i}}{dx_e^{i}}},
	$
	where $\kappa_{i,e}$ are some (sufficiently nice) functions, and whose action on the vertices is given by a finite-rank operator $M$, such that 
	$Q$
	is self-adjoint, local, and has a compact resolvent.
\end{Conjecture}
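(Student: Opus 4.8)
The plan is to use the Cameron--Martin characterization of the Markov property (Theorem~\ref{thm:MarkovLocalCM}) as the bridge between the probabilistic hypothesis and the analytic conclusion. Let $u$ be a GMRF of order $p$ with covariance operator $C$ on $L_2(\Gamma)$. Under the \emph{mild regularity conditions} one expects $C$ to be injective with dense range, so that the precision operator $Q = C^{-1}$ is a densely defined, self-adjoint, positive operator; moreover $Q$ has compact resolvent precisely because $C$ is a compact (indeed trace-class) covariance operator of an $L_2(\Gamma)$-valued field with finite second moment. The Cameron--Martin space $\mathcal{H}(\Gamma)$ is then the form domain of $Q$, with $\langle h_1, h_2\rangle_{\mathcal{H}} = \langle Q h_1, h_2\rangle_{L_2}$. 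Thus self-adjointness and compact resolvent are essentially free, and the real content of the conjecture is the \emph{form} of $Q$: a differential operator of order $2p$ on each edge together with a finite-rank vertex coupling.

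First I would translate locality of $\mathcal{H}(\Gamma)$ (Definition~\ref{def:localCMspaces}, which holds by Theorem~\ref{thm:MarkovLocalCM}) into locality of $Q$. Property~\ref{def:localCMspaces1} gives $\langle Q h_1, h_2\rangle_{L_2} = 0$ whenever $\textrm{supp}\, h_1 \cap \textrm{supp}\, h_2 = \emptyset$, which says exactly that $Q$ does not enlarge supports, i.e.\ $Q$ is a local operator in the sense of Peetre. Restricting to the interior of a fixed edge $e$ and testing against $C_c^\infty(e)$, I would invoke Peetre's theorem (the same mechanism behind Lemma~\ref{lem:local_coeff_Peetre} for $L^\alpha$) to conclude that on the interior of each edge $Q$ coincides with a differential operator $\sum_{i=0}^{N_e} \kappa_{i,e} \frac{d^i}{dx_e^i}$, with coefficient regularity inherited from that of $C$. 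The order would then be pinned to $2p$ by matching it to the Markov order: for an order-$p$ field the form domain $\mathcal{H}(\Gamma)$ should embed in $\widetilde{H}^p(\Gamma)$ and contain the edgewise compactly supported $H^p$-functions, exactly as computed for the Whittle--Mat\'ern case in Proposition~\ref{prp:CharGen_CM_natural_alpha} with $\alpha=p$, forcing $N_e = 2p$.

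Next I would handle the vertices. Property~\ref{def:localCMspaces2} lets one split any $h\in\mathcal{H}(\Gamma)$ into pieces supported near disjoint vertex sets, so the only inter-edge coupling surviving in the form $\langle Q\,\cdot,\cdot\rangle$ is carried by the finitely many boundary data $u^{(j)}(v)$, $j=0,\dots,p-1$, $v\in\mathcal{V}$ --- precisely the data governing the order-$p$ Markov property of Section~\ref{sec:GWMmarkov_alpha}. Encoding these as vertex conditions realizes a self-adjoint extension of the edgewise differential operator, and since there are finitely many vertices each contributing finitely many linear constraints, the vertex part of $Q$ is a finite-rank operator $M$; self-adjointness of $Q$ forces $M$ to be symmetric, via the boundary (Green/Lagrange) form identity for the edgewise operator.

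The main obstacle is the passage from locality to a \emph{genuine} differential operator of controlled order and coefficient regularity for an \emph{arbitrary} order-$p$ GMRF rather than for the explicit $L^\alpha$: Peetre's theorem only yields a locally finite order a priori, and upgrading this to the uniform bound $N_e = 2p$ with ``sufficiently nice'' coefficients is exactly where the unspecified regularity conditions must do their work. Making them precise --- enough off-diagonal smoothness of the covariance kernel, plus non-degeneracy of the top coefficient $\kappa_{2p,e}$ so that $Q$ is elliptic and the order-$p$ differentiability is neither more nor less --- is the crux. A secondary difficulty is showing the vertex coupling is \emph{exactly} finite-rank and compatible with a self-adjoint realization with compact resolvent, i.e.\ that no nonlocal boundary interaction can hide in a Markov field; this should follow from the finiteness of $\partial S$ exploited throughout, but verifying it in full generality is what keeps the statement a conjecture.
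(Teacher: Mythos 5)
The statement you are asked to prove is labelled a \emph{Conjecture} in the paper: the authors offer no proof, only the heuristic in Section~\ref{sec:discussion} that locality of the Cameron--Martin space should translate into locality of the precision operator. Your proposal follows exactly that heuristic and organizes it into a sensible attack plan, but --- as you yourself concede in your final paragraph --- it does not close the argument, so it cannot be accepted as a proof. The decisive gaps are the ones you name, and they are genuine, not cosmetic. First, to invoke Peetre's theorem you need $Q$ to act as a support-non-increasing operator on $\bigoplus_{e}C_c^\infty(e)$; but $Q=C^{-1}$ is only densely defined, and for an arbitrary order-$p$ GMRF there is no a priori reason that edgewise compactly supported smooth functions lie in its domain (or even in the Cameron--Martin space). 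In the paper's own Lemma~\ref{lem:local_coeff_Peetre} this step is secured by Proposition~\ref{prp:CinfinityCompactHdot}, which uses the explicit form of $L$ and the smoothness of $\kappa$ and $\H$ --- machinery that is unavailable here and would have to be smuggled in through the unspecified ``mild regularity conditions.'' Second, even granting Peetre, the conclusion is only that $Q$ has \emph{locally finite} order on each edge; upgrading this to the uniform order $2p$ requires identifying the form domain with (a constrained version of) $\widetilde{H}^p(\Gamma)$, which you assert by analogy with Proposition~\ref{prp:CharGen_CM_natural_alpha} but do not derive from the order-$p$ Markov property alone. Third, the claim that the vertex coupling is exactly finite-rank and admits a self-adjoint realization with compact resolvent is asserted, not proved; ruling out a nonlocal boundary interaction hiding in a general Markov field is precisely the part of the statement that has no analogue in the paper's results, which all concern the explicitly constructed operator $L^\alpha$.

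In short: your write-up is a correct restatement of why the conjecture is plausible, together with an honest inventory of what is missing, but each of the three missing pieces is a substantive mathematical obstacle rather than a routine verification. Until the regularity hypotheses are made precise and the three steps above are carried out, the statement remains a conjecture, exactly as the paper presents it.
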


The Whittle--Mat\'ern fields are of this form, with the restriction that the functions $\kappa_{i,e}$ are determined by $\kappa$ and $\H$.
Thus, performing a complete characterization of all GMRFs on compact metric graphs is an interesting topic for future research. 
Another topic for future research is how to take advantage of the Markov property for the generalized Whittle--Mat\'ern fields. 
One of the key advantages of having Markov properties is that they tend to make inference easier and more computationally efficient. 
In the case of the Whittle--Mat\'ern fields, it is unclear how to evaluate finite-dimensional distributions to perform likelihood-based inference; however, the edge representation in Section~\ref{sec:edge} along with the conditional representation in Section \ref{sec:condrepr} are crucial steps towards being able to do this. This problem will be investigated further in \cite{BSW2023_AOS}. 
Finally, a direction that we believe deserves attention is to consider the Markov property of solutions to \eqref{eq:Matern_spde_general} in the degenerate case when $\kappa=0$. This includes the Gaussian free field on metric graphs as a particular case when $a=\tau=1$.

\begin{appendix}
	\section{Auxiliary results and definitions for Section \ref{sec:Markov}}\label{app:aux_secMarkov}

	The following two propositions, providing characterizations and properties of conditional independence, are used throughout the paper. We refer the reader to \cite{Mandrekar1983} for their proofs. 

\begin{Proposition}\label{prp:equivcondindep}
Let $\mathcal{A}$ and $\mathcal{C}\subset \mathcal{B}$ be sub-$\sigma$-algebras of $\mathcal{F}$. Then $\mathcal{A}$ and $\mathcal{B}$ are conditionally independent given $\mathcal{C}$ if, and only if, for every bounded $\mathcal{A}$-measurable function $f$, 
$\mathbb{E}(f|\mathcal{B}) = \mathbb{E}(f|\mathcal{C}).$
\end{Proposition}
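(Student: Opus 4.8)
The plan is to prove both implications by direct manipulation of conditional expectations, using throughout the hypothesis $\mathcal{C}\subset\mathcal{B}$, which is exactly what licenses the tower property and the ``pull-out'' of measurable factors. I would first establish everything for indicator functions $f=\mathbf{1}_A$, $A\in\mathcal{A}$, and only afterwards extend to general bounded $\mathcal{A}$-measurable $f$ by linearity and a monotone-class (bounded-convergence) argument. All equalities below are understood to hold $\mathbb{P}$-almost surely.

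For the forward direction, I would assume $\mathcal{A}$ and $\mathcal{B}$ are conditionally independent given $\mathcal{C}$, fix $A\in\mathcal{A}$ and $B\in\mathcal{B}$, and start from $\mathbb{P}(A\cap B\mid\mathcal{C})=\mathbb{P}(A\mid\mathcal{C})\,\mathbb{P}(B\mid\mathcal{C})$. Taking expectations and using that $\mathbb{E}(\mathbf{1}_A\mid\mathcal{C})$ is $\mathcal{C}$-measurable, I would rewrite the right-hand side via the tower property as $\mathbb{E}(\mathbb{E}(\mathbf{1}_A\mid\mathcal{C})\mathbf{1}_B)$, obtaining $\mathbb{E}(\mathbf{1}_A\mathbf{1}_B)=\mathbb{E}(\mathbb{E}(\mathbf{1}_A\mid\mathcal{C})\mathbf{1}_B)$ for every $B\in\mathcal{B}$. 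Since $\mathbb{E}(\mathbf{1}_A\mid\mathcal{C})$ is $\mathcal{B}$-measurable because $\mathcal{C}\subset\mathcal{B}$, the uniqueness of conditional expectation forces $\mathbb{E}(\mathbf{1}_A\mid\mathcal{B})=\mathbb{E}(\mathbf{1}_A\mid\mathcal{C})$. Passing to simple functions by linearity and then to bounded $\mathcal{A}$-measurable $f$ by approximation yields the stated identity.

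For the converse, I would assume $\mathbb{E}(f\mid\mathcal{B})=\mathbb{E}(f\mid\mathcal{C})$ for every bounded $\mathcal{A}$-measurable $f$, in particular for $f=\mathbf{1}_A$. For $A\in\mathcal{A}$, $B\in\mathcal{B}$ I would compute $\mathbb{P}(A\cap B\mid\mathcal{C})$ by conditioning first on $\mathcal{B}$: the tower property gives $\mathbb{E}(\mathbf{1}_A\mathbf{1}_B\mid\mathcal{C})=\mathbb{E}(\mathbb{E}(\mathbf{1}_A\mathbf{1}_B\mid\mathcal{B})\mid\mathcal{C})$, and since $\mathbf{1}_B$ is $\mathcal{B}$-measurable it pulls out to give $\mathbb{E}(\mathbf{1}_A\mathbf{1}_B\mid\mathcal{B})=\mathbf{1}_B\,\mathbb{E}(\mathbf{1}_A\mid\mathcal{B})=\mathbf{1}_B\,\mathbb{E}(\mathbf{1}_A\mid\mathcal{C})$ by the hypothesis. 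Conditioning on $\mathcal{C}$ and pulling out the $\mathcal{C}$-measurable factor $\mathbb{E}(\mathbf{1}_A\mid\mathcal{C})$ then produces $\mathbb{P}(A\cap B\mid\mathcal{C})=\mathbb{P}(A\mid\mathcal{C})\,\mathbb{P}(B\mid\mathcal{C})$, which is precisely conditional independence.

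The only step requiring genuine care is the extension from indicators to arbitrary bounded $\mathcal{A}$-measurable functions in the forward direction, which I would dispatch with a standard monotone-class argument; the remainder is bookkeeping with the tower and pull-out properties. I do not expect a real obstacle here, since the inclusion $\mathcal{C}\subset\mathcal{B}$ eliminates all measurability issues and is used decisively in both directions.
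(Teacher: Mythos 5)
Your proof is correct and complete. Note that the paper does not actually prove this proposition: it states it in Appendix A and defers the proof to the cited reference (Mandrekar--Sazonov 1983), so there is no in-paper argument to compare against. Your argument is the standard one: in the forward direction you integrate the conditional-independence identity against $\mathbf{1}_B$ and identify $\mathbb{E}(\mathbf{1}_A\mid\mathcal{C})$ as a version of $\mathbb{E}(\mathbf{1}_A\mid\mathcal{B})$ via the a.s.\ uniqueness of conditional expectation (this is exactly where the inclusion $\mathcal{C}\subset\mathcal{B}$ is needed, and you use it correctly); in the converse you condition first on $\mathcal{B}$, pull out $\mathbf{1}_B$, substitute the hypothesis, and then condition on $\mathcal{C}$. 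The extension from indicators to bounded $\mathcal{A}$-measurable $f$ is indeed routine (uniform approximation by simple functions together with the $L^\infty$-contractivity of conditional expectation), and the converse direction only requires the hypothesis for indicators, which is how you use it. No gaps.
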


\begin{Proposition}\label{prp:propertiescondindep}
Let $\mathcal{A}$, $\mathcal{B}$ and
$\mathcal{C}$ be sub-$\sigma$-algebras of $\mathcal{F}$ such that $\mathcal{A}$ and $\mathcal{B}$ are conditionally independent given $\mathcal{C}$. Define $\mathcal{B}\lor \mathcal{C} := \sigma(\mathcal{B}\cup \mathcal{C})$, then
\begin{enumerate}[label= \roman{enumi}.]
	\item For every bounded $\mathcal{A}$-measurable function $f$, $\mathbb{E}(f|\mathcal{B}\lor \mathcal{C}) = \mathbb{E}(f|\mathcal{C})$; \label{prp:propertiescondindep1}
	\item For every $\sigma$-algebra $\widetilde{\mathcal{C}}$ such that $\mathcal{C}\subset\widetilde{\mathcal{C}}\subset \mathcal{C}\lor \mathcal{B}$, $\mathcal{A}$ and $\mathcal{B}$ are conditionally independent given~$\widetilde{\mathcal{C}}$; \label{prp:propertiescondindep2}
	\item For every $\sigma$-algebra $\widetilde{\mathcal{B}}\subset \mathcal{C}\lor \mathcal{B}$,  $\mathcal{A}$ and $\widetilde{\mathcal{B}}$ are conditionally independent given $\mathcal{C}$. \label{prp:propertiescondindep3}
\end{enumerate}
\end{Proposition}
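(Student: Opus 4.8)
The plan is to reduce all three statements to property i together with the general functional characterization of conditional independence: $\mathcal{A}$ and $\mathcal{B}$ are conditionally independent given $\mathcal{C}$ if and only if $\mathbb{E}(f\mid\mathcal{B}\lor\mathcal{C}) = \mathbb{E}(f\mid\mathcal{C})$ for every bounded $\mathcal{A}$-measurable $f$. This is the form of Proposition~\ref{prp:equivcondindep} freed from the hypothesis $\mathcal{C}\subset\mathcal{B}$: when $\mathcal{C}\subset\mathcal{B}$ one has $\mathcal{B}\lor\mathcal{C}=\mathcal{B}$ and recovers the stated criterion. The reverse implication of this characterization is immediate by taking $f=\mathbf{1}_A$ and projecting against $\mathbf{1}_B$ (using the tower property, pulling out the $\mathcal{B}\lor\mathcal{C}$-measurable factor $\mathbf{1}_B$, applying the hypothesis, and then pulling out the $\mathcal{C}$-measurable factor $\mathbb{E}(\mathbf{1}_A\mid\mathcal{C})$), so the substance of the whole proposition lies in property i.

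To prove property i, I would first upgrade the set-theoretic factorization $\mathbb{P}(A\cap B\mid\mathcal{C})=\mathbb{P}(A\mid\mathcal{C})\mathbb{P}(B\mid\mathcal{C})$ to the functional factorization $\mathbb{E}(fg\mid\mathcal{C})=\mathbb{E}(f\mid\mathcal{C})\,\mathbb{E}(g\mid\mathcal{C})$ for bounded $\mathcal{A}$-measurable $f$ and bounded $\mathcal{B}$-measurable $g$; this follows by linearity from indicators to simple functions and then by bounded/monotone convergence. I would then show that $\mathbb{E}(f\mid\mathcal{C})$ serves as a version of $\mathbb{E}(f\mid\mathcal{B}\lor\mathcal{C})$. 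Since $\mathbb{E}(f\mid\mathcal{C})$ is $\mathcal{C}$-measurable hence $(\mathcal{B}\lor\mathcal{C})$-measurable, it suffices to verify $\int_D f\,d\mathbb{P}=\int_D\mathbb{E}(f\mid\mathcal{C})\,d\mathbb{P}$ for $D$ ranging over a $\pi$-system generating $\mathcal{B}\lor\mathcal{C}$; the sets $D=B\cap C$ with $B\in\mathcal{B}$, $C\in\mathcal{C}$ form such a $\pi$-system. For these, the computation $\mathbb{E}(f\mathbf{1}_B\mathbf{1}_C)=\mathbb{E}\!\big(\mathbb{E}(f\mathbf{1}_B\mid\mathcal{C})\mathbf{1}_C\big)=\mathbb{E}\!\big(\mathbb{E}(f\mid\mathcal{C})\,\mathbb{P}(B\mid\mathcal{C})\mathbf{1}_C\big)=\mathbb{E}\!\big(\mathbb{E}(f\mid\mathcal{C})\mathbf{1}_C\mathbf{1}_B\big)$, where the middle step uses the functional factorization and the last step pulls the $\mathcal{C}$-measurable factor $\mathbb{E}(f\mid\mathcal{C})\mathbf{1}_C$ back inside a conditional expectation, yields exactly $\int_{B\cap C}\mathbb{E}(f\mid\mathcal{C})\,d\mathbb{P}$. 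A monotone class argument then extends the identity to all of $\mathcal{B}\lor\mathcal{C}$.

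With property i and the characterization in hand, properties ii and iii are short tower-property arguments. For ii, the inclusions $\mathcal{C}\subset\widetilde{\mathcal{C}}\subset\mathcal{C}\lor\mathcal{B}$ force $\mathcal{B}\lor\widetilde{\mathcal{C}}=\mathcal{B}\lor\mathcal{C}$, so by property i, $\mathbb{E}(f\mid\mathcal{B}\lor\widetilde{\mathcal{C}})=\mathbb{E}(f\mid\mathcal{C})$ for bounded $\mathcal{A}$-measurable $f$; conditioning further on $\widetilde{\mathcal{C}}$ (which contains $\mathcal{C}$) and using that $\mathbb{E}(f\mid\mathcal{C})$ is already $\widetilde{\mathcal{C}}$-measurable gives $\mathbb{E}(f\mid\widetilde{\mathcal{C}})=\mathbb{E}(f\mid\mathcal{C})=\mathbb{E}(f\mid\mathcal{B}\lor\widetilde{\mathcal{C}})$, which is the characterization for conditional independence of $\mathcal{A}$ and $\mathcal{B}$ given $\widetilde{\mathcal{C}}$. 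For iii, $\widetilde{\mathcal{B}}\subset\mathcal{C}\lor\mathcal{B}$ gives $\widetilde{\mathcal{B}}\lor\mathcal{C}\subset\mathcal{B}\lor\mathcal{C}$, so the tower property together with property i yields $\mathbb{E}(f\mid\widetilde{\mathcal{B}}\lor\mathcal{C})=\mathbb{E}\big(\mathbb{E}(f\mid\mathcal{B}\lor\mathcal{C})\mid\widetilde{\mathcal{B}}\lor\mathcal{C}\big)=\mathbb{E}(f\mid\mathcal{C})$, the last equality because $\mathbb{E}(f\mid\mathcal{C})$ is $(\widetilde{\mathcal{B}}\lor\mathcal{C})$-measurable; this is the characterization for $\mathcal{A}\perp\widetilde{\mathcal{B}}\mid\mathcal{C}$.

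The main obstacle is entirely contained in property i: specifically, passing from the probability-level factorization in the definition of conditional independence to the expectation-level factorization, and then identifying $\mathbb{E}(f\mid\mathcal{C})$ with $\mathbb{E}(f\mid\mathcal{B}\lor\mathcal{C})$ via the $\pi$-system/monotone class argument. Once this is established, ii and iii are purely formal consequences of the tower property and the elementary inclusions among the $\sigma$-algebras.
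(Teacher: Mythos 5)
Your proof is correct. Note that the paper itself does not prove this proposition; it simply defers to the reference \cite{Mandrekar1983}, so there is no in-paper argument to compare against. Your route is the standard one: establish the general functional characterization (conditional independence given $\mathcal{C}$ is equivalent to $\mathbb{E}(f\mid\mathcal{B}\lor\mathcal{C})=\mathbb{E}(f\mid\mathcal{C})$ for bounded $\mathcal{A}$-measurable $f$, which correctly generalizes Proposition~\ref{prp:equivcondindep} by replacing $\mathcal{B}$ with $\mathcal{B}\lor\mathcal{C}$), prove the substantive direction via the $\pi$-system $\{B\cap C\}$ and a monotone class argument, and then derive ii and iii by the tower property together with the elementary identities $\mathcal{B}\lor\widetilde{\mathcal{C}}=\mathcal{B}\lor\mathcal{C}$ and $\widetilde{\mathcal{B}}\lor\mathcal{C}\subset\mathcal{B}\lor\mathcal{C}$. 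All the individual steps check out, including the reverse implication of the characterization and the chain of equalities for $\int_{B\cap C}f\,d\mathbb{P}$; the argument is complete and self-contained.
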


\begin{proof}[Proof of Proposition~\ref{prp:markovinnerboundary}]
By Definition~\ref{def:MarkovPropertyField} and Theorem~\ref{thm:mandrekar_markov_split} (see also \cite[Theorem~2.2]{Mandrekar1976} or \cite[page 69]{Rozanov1982Markov}),  $u$ is an MRF if and only if $\mathcal{F}^u_{+,I}(\partial S)$ splits $\mathcal{F}^u(S)$ and $\mathcal{F}^u(\Gamma\setminus S)$ for every open set $S\subset\Gamma$. Because $\Gamma\setminus S$ is closed, we have that  $\mathcal{F}_+^u(\Gamma\setminus S) = \mathcal{F}^u(\Gamma\setminus S)\lor \mathcal{F}^u_{+,I}(\partial S)$. Thus, by Proposition \ref{prp:propertiescondindep}.\ref{prp:propertiescondindep3},  $\mathcal{F}^u_{+,I}(\partial S)$ splits $\mathcal{F}^u(S)$ and $\mathcal{F}^u(\Gamma\setminus S)$ if and only if $\mathcal{F}^u_{+,I}(\partial S)$ splits $\mathcal{F}^u(S)$ and $\mathcal{F}_+^u(\Gamma\setminus S)$, proving the equivalence between $u$ being Markov and \ref{prp:markovinnerboundary1}. Finally, the equivalence of \ref{prp:markovinnerboundary1} and \ref{prp:markovinnerboundary2} follows from  Proposition~\ref{prp:equivcondindep}.
\end{proof}

	The following theorem is a straightforward adaptation of \cite[Theorem 2.2]{Mandrekar1976} that we add for completeness as the definition of MRFs in \cite{Mandrekar1976} is a bit different from the definition we give in the present paper.

	\begin{Theorem}\label{thm:mandrekar_markov_split}
		Let $\Gamma$ be a compact metric space and let $u$ be a random field on $\Gamma$. A field $u$ on $\Gamma$ is an MRF if, and only if, for every open set $S$, whose boundary consists of finite many points, we have $\mathcal{F}^u_{+,I}(\partial S)$ splits $\mathcal{F}^u(S)$ and $\mathcal{F}^u(\Gamma\setminus S)$. 
	\end{Theorem}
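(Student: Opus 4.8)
The plan is to obtain the statement as an adaptation of Mandrekar's characterization of germ-Markov fields \cite[Theorem 2.2]{Mandrekar1976} (see also \cite[page 69]{Rozanov1982Markov}): that reference essentially proves the equivalence between a germ-Markov property and the sharp splitting by the inner boundary, so the real work is to reconcile Mandrekar's notion of MRF with Definition \ref{def:MarkovPropertyField} and to install the required $\sigma$-algebra identities in the metric-graph setting. Throughout I would rely on the conditional-independence calculus of Propositions \ref{prp:equivcondindep} and \ref{prp:propertiescondindep}.

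First I would record the elementary topological facts that underlie every step. Since the distance to a set equals the distance to its closure, $\overline{S}_\varepsilon = S_\varepsilon$, so $\mathcal{F}^u_+(\overline{S}) = \bigcap_{\varepsilon>0}\mathcal{F}^u(S_\varepsilon)$. Because $\Gamma$ is connected and carries the geodesic metric, any point of $(\Gamma\setminus S)_\varepsilon \cap S$ lies within $\varepsilon$ of $\partial S$, so the relevant collars shrink to $\partial S$ from the appropriate side; in particular $(\partial S)_\varepsilon$ is the disjoint union of its inner collar $S\cap(\partial S)_\varepsilon$ and its outer part $(\Gamma\setminus S)\cap(\partial S)_\varepsilon$, the latter containing the set $\partial S$ itself. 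These yield the inclusions $\mathcal{F}^u_{+,I}(\partial S)\subseteq \mathcal{F}^u(S)\subseteq\mathcal{F}^u_+(\overline{S})$ and $\mathcal{F}^u_{+,O}(\partial S)\subseteq\mathcal{F}^u(\Gamma\setminus S)\subseteq\mathcal{F}^u_+(\Gamma\setminus S)$, which are the bridges between the germ formulation of Definition \ref{def:MarkovPropertyField} and the sharp splitting asserted here.

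For the direction that an MRF satisfies the sharp property, I would fix an open $S$ with $\partial S$ finite and first restrict the splitting of $\mathcal{F}^u_+(\overline{S})$ and $\mathcal{F}^u_+(\Gamma\setminus S)$ by $\mathcal{F}^u_+(\partial S)$ down to the subalgebras $\mathcal{F}^u(S)$ and $\mathcal{F}^u(\Gamma\setminus S)$ (conditional independence passes to subalgebras, cf. Proposition \ref{prp:propertiescondindep}.\ref{prp:propertiescondindep3}). The remaining, and genuinely Mandrekar-style, step is to replace the germ splitter $\mathcal{F}^u_+(\partial S)$ by the smaller inner germ $\mathcal{F}^u_{+,I}(\partial S)$: here finiteness of $\partial S$ is essential, since it lets the exact boundary values $\mathcal{F}^u(\partial S)=\sigma(u(p_1),\dots,u(p_n))$ together with the outer germ be assigned to the closed complement $\mathcal{F}^u(\Gamma\setminus S)$. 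Conversely, assuming the sharp property on all finite-boundary sets, I would recover Definition \ref{def:MarkovPropertyField} for an arbitrary open $S$ by a localization/approximation argument, exhibiting finite-boundary open sets whose germ $\sigma$-algebras exhaust the required ones and transporting the conditional independence through the limit via the equality of conditional expectations in Proposition \ref{prp:equivcondindep}. Matching these manipulations to Mandrekar's statement, whose definition of MRF differs only superficially from ours, then closes both implications.

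The main obstacle I anticipate is precisely that the germ operation $\bigcap_{\varepsilon>0}$ does \emph{not} distribute over the join of the inner and outer pieces of $(\partial S)_\varepsilon$ — this is the nonregularity phenomenon emphasized in the paper — so one cannot naively identify $\mathcal{F}^u_+(\partial S)$ with $\mathcal{F}^u_{+,I}(\partial S)\lor\mathcal{F}^u_{+,O}(\partial S)$, nor $\mathcal{F}^u_+(\Gamma\setminus S)$ with $\mathcal{F}^u(\Gamma\setminus S)\lor\mathcal{F}^u_{+,I}(\partial S)$. Restricting to open sets with finite boundary is what tames this, because the only overlap between the two sides is the finite set $\partial S$, whose $\sigma$-algebra is generated by finitely many variables. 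Consequently, the step I expect to require the most care is the converse reduction from a general (possibly wild) boundary to this finite-boundary class, where the compactness of $\Gamma$ and its finitely many edges are what make the approximation feasible.
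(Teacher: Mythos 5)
Your overall architecture is the right one: the paper does prove this by adapting Mandrekar's argument, and your preliminary observations (the inclusions $\mathcal{F}^u_{+,I}(\partial S)\subset\mathcal{F}^u(S)\subset\mathcal{F}^u_+(\overline S)$, the decomposition of $(\partial S)_\varepsilon$ into inner and outer collars, and restricting a splitting to sub-$\sigma$-algebras via Proposition~\ref{prp:propertiescondindep}) all appear in the paper's proof. The genuine gap is in the step you dispose of in one sentence: replacing the splitter $\mathcal{F}^u_+(\partial S)$ by the strictly smaller $\mathcal{F}^u_{+,I}(\partial S)$. Conditional independence is not preserved when the conditioning $\sigma$-algebra shrinks, and the mechanism you offer --- that finiteness of $\partial S$ lets the exact boundary values and the outer germ be ``assigned'' to $\mathcal{F}^u(\Gamma\setminus S)$ --- tacitly presupposes the inner/outer factorization of the germ field that you yourself correctly rule out two sentences later. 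Nothing in Propositions~\ref{prp:equivcondindep} and~\ref{prp:propertiescondindep} allows passage from a splitter to a sub-splitter, so as written this direction does not go through.

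What the paper actually does here is apply the hypothesis not only to $S$ but to the whole family of shrunken sets $S(\varepsilon)=S\cap(S^c\cup(\partial S)^\varepsilon)^c$, each again open with finite boundary. One enlarges the splitter to the collar algebras $\mathcal{F}^u((\partial S(\delta))_\delta)$, checks that these split $\mathcal{F}^u(\overline{S(\varepsilon)})$ and $\mathcal{F}^u(\Gamma\setminus S)$ for all $0<\delta<\varepsilon$, and then takes two successive martingale-convergence limits: a decreasing one in $\delta$, using $\mathcal{F}^u_{+,I}(\partial S)=\bigcap_{\delta}\mathcal{F}^u((\partial S(\delta))_\delta)$, to shrink the splitter, and an increasing one in $\varepsilon$, using $\mathcal{F}^u(S)=\sigma\bigl(\bigcup_{\varepsilon}\mathcal{F}^u(\overline{S(\varepsilon)})\bigr)$, to recover the full interior. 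This double-limit construction is the core of the proof and is absent from your proposal. Relatedly, your diagnosis of where the difficulty lies is inverted: the converse implication (sharp splitting implies MRF) is the short half --- the sandwich $\mathcal{F}^u(S\cap(\partial S)_\varepsilon)\subset\mathcal{F}^u((\partial S)_\varepsilon)\subset\mathcal{F}^u(S\cap(\partial S)_\varepsilon)\lor\mathcal{F}^u(\Gamma\setminus S)$, one martingale limit, and the join identities $\mathcal{F}^u_+(\overline S)=\mathcal{F}^u(\overline S)\lor\mathcal{F}^u_+(\partial S)$ and $\mathcal{F}^u_+(\Gamma\setminus S)=\mathcal{F}^u(\Gamma\setminus S)\lor\mathcal{F}^u_+(\partial S)$ combined with Proposition~\ref{prp:propertiescondindep} --- and the paper does not carry out the general-boundary-to-finite-boundary approximation you anticipate needing there.
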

	\begin{proof}
		We begin by showing that $u$ is an MRF if, and only if, for every open set $S$, whose boundary consists of finite many points, we have $\mathcal{F}^u_{+}(\partial S)$ splits $\mathcal{F}^u(\overline{S})$ and $\mathcal{F}^u(\Gamma\setminus S)$. Indeed, let $S$ be any open set whose boundary consists of finitely many points. 
		If $u$ is an MRF, since $\mathcal{F}^u(\overline{S}) \subset \mathcal{F}_+^u(\overline{S})$ and $\mathcal{F}^u(\Gamma\setminus S) \subset \mathcal{F}_+^u(\Gamma\setminus S)$, we have that $\mathcal{F}_+^u(\partial S)$ splits $\mathcal{F}^u(\overline{S})$ and $\mathcal{F}^u(S^c)$. 
		Conversely, if $\mathcal{F}^u_{+}(\partial S)$ splits $\mathcal{F}^u(\overline{S})$ and $\mathcal{F}^u(S^c)$, note that ${\mathcal{F}_+^u(\overline{S}) = \mathcal{F}^u(\overline{S})\lor \mathcal{F}_+^u(\partial S)}$ and that $\mathcal{F}_+^u(\Gamma\setminus S) = \mathcal{F}^u(\Gamma\setminus S)\lor \mathcal{F}_+^u(\partial S)$. Thus, it follows by Proposition~\ref{prp:propertiescondindep}.iii that $\mathcal{F}^u_{+}(\partial S)$ splits $\mathcal{F}_+^u(\overline{S})$ and $\mathcal{F}^u(\Gamma\setminus S)$. By another application of  Proposition~\ref{prp:propertiescondindep}.iii, we obtain that $\mathcal{F}^u_{+}(\partial S)$ splits $\mathcal{F}_+^u(\overline{S})$ and $\mathcal{F}_+^u(\Gamma\setminus S)$. 

		We now show that $\mathcal{F}^u_{+,I}(\partial S)$ splits $\mathcal{F}^u(S)$ and $\mathcal{F}^u(\Gamma\setminus S)$ if, and only if, $\mathcal{F}^u_{+}(\partial S)$ splits $\mathcal{F}^u(\overline{S})$ and $\mathcal{F}^u(\Gamma\setminus S)$. Assume that $\mathcal{F}^u_{+,I}(\partial S)$ splits $\mathcal{F}^u(S)$ and $\mathcal{F}^u(\Gamma\setminus S)$. Then, it follows from Proposition~\ref{prp:propertiescondindep}.ii that for every $\varepsilon>0$, $\mathcal{F}^u(S\cap (\partial S)_\varepsilon)$ splits $\mathcal{F}^u(S)$ and $\mathcal{F}^u(\Gamma\setminus S)$. Now, observe that for every $\varepsilon > 0$, we have 
		$$\mathcal{F}^u(S\cap (\partial S)_\varepsilon) \subset \mathcal{F}^u((\partial S)_\varepsilon) \subset \mathcal{F}^u(S\cap (\partial S)_\varepsilon) \lor \mathcal{F}^u(\Gamma\setminus S).$$
		Therefore, by Proposition~\ref{prp:propertiescondindep}.ii, for every $\varepsilon > 0$, $\mathcal{F}^u(S\cap (\partial S)_\varepsilon)$ splits $\mathcal{F}^u(S)$ and $\mathcal{F}^u(\Gamma\setminus S)$. By the martingale convergence theorem, we have that $\mathcal{F}_+^u(\partial S)$ splits $\mathcal{F}^u(S)$ and $\mathcal{F}^u(\Gamma\setminus S)$, and by Proposition~\ref{prp:propertiescondindep}.iii, we have that $\mathcal{F}_+^u(\partial S)$ splits $\mathcal{F}^u(\overline{S})$ and $\mathcal{F}^u(\Gamma\setminus S)$. Conversely, if  $\mathcal{F}^u_{+}(\partial S)$ splits $\mathcal{F}^u(\overline{S})$ and $\mathcal{F}^u(\Gamma\setminus S)$, we have by Proposition~\ref{prp:propertiescondindep}.ii that for every $\varepsilon > 0$, $\mathcal{F}^u((\partial S)_\varepsilon)$ splits $\mathcal{F}^u(\overline{S})$ and $\mathcal{F}^u(\Gamma\setminus S)$. Further, since $\mathcal{F}^u(S)\subset \mathcal{F}^u(\overline{S})$, we have that $\mathcal{F}^u((\partial S)_\varepsilon)$ splits $\mathcal{F}^u(S)$ and $\mathcal{F}^u(\Gamma\setminus S)$. Let $(\partial S)^\varepsilon := \{s\in\Gamma: \exists z\in \partial S\hbox{ with } d(s,z)\leq \varepsilon\}$ and observe that by Proposition~\ref{prp:propertiescondindep}.ii again, we have that $\mathcal{F}^u((\partial S)^\varepsilon)$ splits $\mathcal{F}^u(\overline{S})$ and $\mathcal{F}^u(\Gamma\setminus S)$. Now, let $S(\varepsilon) := S\cap (S^c\cup (\partial S)^\varepsilon)^c$, and note that since $S$ is an open set whose boundary consists of finitely many points, $S(\epsilon)$ is also an open set whose boundary consists of finitely many points. Thus, by assumption $\mathcal{F}_+^u(\partial(S(\varepsilon)))$ splits $\mathcal{F}^u(\overline{S(\varepsilon)})$ and $\mathcal{F}^u(\Gamma\setminus S(\varepsilon))$. By Proposition~\ref{prp:propertiescondindep}.ii, we obtain that $\mathcal{F}^u(\partial(S(\varepsilon))_\varepsilon)$ splits  $\mathcal{F}^u(\overline{S(\varepsilon)})$ and $\mathcal{F}^u(\Gamma\setminus S(\varepsilon))$ (note that we using the same $\varepsilon$ as the one that defines the set $S(\varepsilon)$). Now, since $\mathcal{F}^u(\Gamma\setminus S) \subset \mathcal{F}^u(\Gamma\setminus S(\varepsilon))$, it follows that $\mathcal{F}^u(\partial(S(\varepsilon))_\varepsilon)$ splits $\mathcal{F}^u(\overline{S(\varepsilon)})$ and $\mathcal{F}^u(\Gamma\setminus S)$. Now, observe that if $0 < \delta <\varepsilon$, then $\mathcal{F}^u(\overline{S(\varepsilon)}) \subset \mathcal{F}^u(\overline{S(\delta)})$. Therefore, we have that for every $delta>0$ such that $\delta<\varepsilon$, $\mathcal{F}^u(\partial(S(\delta))_\delta)$ splits $\mathcal{F}^u(\overline{S(\varepsilon)})$ and $\mathcal{F}^u(\Gamma\setminus S)$. Now, observe that $\mathcal{F}^u_{I,+}(\partial S) = \bigcap_{\delta<\varepsilon} \mathcal{F}^u((\partial S(\delta))_\delta)$. Hence, by the martingale convergence theorem, it follows that for every $\varepsilon>0$, $\mathcal{F}^u_{I,+}(\partial S)$ splits $\mathcal{F}^u(\overline{S(\varepsilon)})$ and $\mathcal{F}^u(\Gamma\setminus S)$. Finally, by another application of the martingale convergence theorem, we obtain that $\mathcal{F}^u_{I,+}(\partial S)$ splits $\sigma\left(\bigcup_{\varepsilon>0} \mathcal{F}^u(\overline{S(\varepsilon)})\right)$ and  $\mathcal{F}^u(\Gamma\setminus S)$, which concludes the proof.
	\end{proof}

	\section{Auxiliary results and definitions for Section~\ref{sec:isotropic}}\label{app:isotropic_apdx}

	Throughout this section we will provide basic definitions and results related to Gaussian isotropic models in compact metric graphs. We refer the reader to \cite{anderes2020isotropic} for further details. We begin this section with the definition of metric graph with Euclidean edges. 
	
	\begin{Definition}\label{def:graph_euclidean_edges}
		Let $\Gamma$ be a metric graph with vertices $\mathcal{V}$ and edges $\mathcal{E}$. We say that $\Gamma$ is a metric graph with Euclidean edges if the following conditions are satisfied:
		\begin{enumerate}
			\item The graph must be compact, that is, $\mathcal{V}$ and $\mathcal{E}$ must be finite;
			\item The combinatorial graph induced by $(\mathcal{V},\mathcal{E})$ cannot have multiple edges nor loops, and must be connected;
			\item $\Gamma$ satisfies a distance consistency: let $d(\cdot,\cdot)$ be the geodesic distance on $\Gamma$. Then, for each edge $e\in\mathcal{E}$ connecting the vertices $v_1,v_2\in\mathcal{V}$, $d(v_1,v_2) = l_e$ holds.
		\end{enumerate}
	\end{Definition}

	Let us recall two specific cases of metric graphs with Euclidean edges, as defined in \cite{anderes2020isotropic}. The first case involves a compact metric graph $\Gamma$ with a vertex set $\mathcal{V}$ and an edge set $\mathcal{E}$, where the combinatorial graph $(\mathcal{V}, \mathcal{E})$ forms a tree. Such a graph is an example of a metric graph with Euclidean edges and is referred to as a \emph{Euclidean tree}. The second case is a compact metric graph $\Gamma$ where the edges are Euclidean, and $\Gamma$ forms a cycle. In this case, we refer to $\Gamma$ as a \emph{Euclidean cycle}.  Examples of Euclidean cycles can be seen in the two leftmost graphs in Figure~\ref{fig:graphs}, while the third graph, from left to right, illustrates an example of a Euclidean tree.

	\begin{figure}
		\begin{center}
			\begin{tikzpicture}[scale=0.5] 
				\draw (0,0) circle (pi);
				\filldraw [gray] (-pi,0) circle (4pt);
				\filldraw [gray] (pi,0) circle (4pt);
				\filldraw [gray] (0,pi) circle (4pt);
				\filldraw [gray] (0,-pi) circle (4pt);
				
				\draw (5,0) -- (7,2)  {};
				\draw (7,2) -- (8,2)  {};
				\draw (8,2) -- (9,3)  {};
				\draw (9,3) -- (10,0)  {};
				\draw (10,0) -- (8,1)  {};
				\draw (8,1) -- (7,-3)  {};
				\draw (7,-3)  to[out=80,in=20] (5,0)  {};
				\filldraw [gray] (5,0) circle (4pt);
				\filldraw [gray] (7,2) circle (4pt);
				\filldraw [gray] (8,2) circle (4pt);
				\filldraw [gray] (9,3) circle (4pt);
				\filldraw [gray] (10,0) circle (4pt);
				\filldraw [gray] (8,1) circle (4pt);
				\filldraw [gray] (7,-3) circle (4pt);

				\draw (12,0) -- (13,1)  {};
			\draw (12,0) -- (13,-1)  {};
			\draw (13,1) -- (15,3)  {};
			\draw (13,1) to[out=-20,in=70] (15,-1)  {};
			\draw (13,-1) -- (14,0)  {};
			\draw (13,-1) -- (16,-3)  {};
				\filldraw [gray] (12,0) circle (4pt);
			\filldraw [gray] (13,1) circle (4pt);
			\filldraw [gray] (13,-1) circle (4pt);
			\filldraw [gray] (15,3) circle (4pt);
			\filldraw [gray] (15,-1) circle (4pt);
			\filldraw [gray] (14,0) circle (4pt);
			\filldraw [gray] (16,-3) circle (4pt);

			\draw (19,pi/2) circle (pi/2);
			\draw (19,-pi/2) circle (pi/2);
			\filldraw [gray] (19,0) circle (4pt);
			\filldraw [gray] (19,pi) circle (4pt);
			\filldraw [gray] (19,-pi) circle (4pt);
			\filldraw [gray] (19-pi/2,-pi/2) circle (4pt);
			\filldraw [gray] (19-pi/2,pi/2) circle (4pt);
			\filldraw [gray] (19+pi/2,-pi/2) circle (4pt);
			\filldraw [gray] (19+pi/2,pi/2) circle (4pt);
			\end{tikzpicture}
	\end{center}
	\caption{Examples of graphs with Euclidean edges: Two Euclidean cycles (the two leftmost graphs), a metric tree, and a $1$-sum of two Euclidean cycles.}
	\label{fig:graphs}
	\end{figure}
   
	We now present the definition of the resistance metric on metric graphs with Euclidean edges, as introduced in \cite{anderes2020isotropic}. Let $\Gamma$ be a metric graph with Euclidean edges with vertices $\mathcal{V}$ and edges $\mathcal{E}$. To define the resistance metric, we first construct an auxiliary process, denoted by $Z_\Gamma$, on $\Gamma$, that has the following form:
	\begin{equation}\label{eq:aux_process_resmetric}
	Z_\Gamma(s) = Z_\mu(s) + \sum_{e\in\mathcal{E}} Z_e(s),\quad s\in\Gamma,
	\end{equation}
	where $\{Z_\mu(\cdot), Z_e(\cdot): e\in\mathcal{E}\}$ is a collection of independent Gaussian fields on $\Gamma$ which we will construct next. We start with $Z_\mu(\cdot)$, which is a field determined by its values on $\mathcal{V}$. To this end, first, define the function $c:\mathcal{V}\times\mathcal{V}\to [0,\infty)$ given by
	$$c(v_1,v_2) = \begin{cases}
	1/d(v_1,v_2),&\hbox{if $u\sim v$},\\
	0,& \hbox{otherwise.}
	\end{cases},$$
	where $u\sim v$ means that there is an edge connecting $v_1$ to $v_2$. Now, fix an arbitrary vertex $v_\in\mathcal{V}$, and let $L:\mathcal{V}\times\mathcal{V}\to\mathbb{R}$ be the function given by
	$$L(v_1,v_2) = \begin{cases}
		1+c(v_0),&\hbox{if $v_1=v_2=v_0$},\\
		c(v_1),&\hbox{if $v_1=v_2\neq v_0$},\\
		-c(v_1,v_2),& \hbox{otherwise},
	\end{cases},$$
	where $c(v_1) = \sum_{v\in\mathcal{V}} c(v_1,v_2) = \sum_{v_2\sim v_1} c(v_1,v_2)$. We are now in a position to define the field $Z_\mu(\cdot)$. First, let $\mathcal{V} = \{v_1,\ldots,v_n\}$ and observe that the function $L(\cdot,\cdot)$ can be identified with a matrix with an $n\times n$ matrix $L$, with $(i,j)$th entry given by $L_{i,j} = L(v_i,v_j)$. It is possible to show that $L$ is strictly positive-definite. Let $L^{-1}$ be the inverse of the matrix $L$. The field $Z_\mu(\cdot)$ is defined on $\mathcal{V}$ as a random vector following the multivariate Gaussian distribution
	$$(Z_\mu(v_1),\ldots,Z_\mu(v_n)) \sim N(0, L^{-1}).$$
	Finally, let us define $Z_\mu(\cdot)$ on an arbitrary point $s\in\Gamma$. To this end, let $s\in\Gamma$ be any point, let $e\in\mathcal{E}$ be the edge that contain $s$, and let $v_1$ and $v_2$ be the vertices, which are endpoints of $e$. Then, we define
	\begin{equation}\label{eq:def_process_Z_mu}
		Z_\mu(s) = (1-d(s)) Z_\mu(v_1) + d(s) Z_\mu(v_2),
	\end{equation}
	where $d(s) = d(s,v_1)/d(v_1,v_2).$ This concludes the construction of $Z_\mu(\cdot)$. Let us now construct the fields $Z_e(\cdot)$, for $e\in\mathcal{E}$. 
	To this end, let $\{B_e(\cdot):e\in\mathcal{E}\}$ be a collection of independent Brownian bridges constructed on the same probability space as $Z_\mu(\cdot)$ and that are also independent of $Z_\mu(\cdot)$, in which for each edge $e$, the Brownian bridge $B_e(\cdot)$ is defined on $e$, which is identified with the interval $[0,l_e]$, so that $P(B_e(0)=0) = P(B_e(l_e) =0 )=1$ and for $t\in (0,l_e)$ we have $P(B_e(t)=0)=0$. Therefore, we define $Z_e(\cdot)$ on an arbitrary point $s\in\Gamma$, $s=(t,\widetilde{e})$, as
	\begin{equation}\label{eq:def_process_Z_e}
		Z_e(s) = Z_e(t,\widetilde{e})= \begin{cases}
			B_e(t),&\hbox{if $\widetilde{e}=e$},\\
			0,&\hbox{otherwise}.
			\end{cases}.
	\end{equation}
	By combining \eqref{eq:def_process_Z_mu}, \eqref{eq:def_process_Z_e} and \eqref{eq:aux_process_resmetric}, we obtain the definition of $Z_\Gamma(\cdot)$. We are finally in a position to define the resistance metric:
	
	\begin{Definition}\label{def:resistance_metric}
		Let $\Gamma$ be a metric graph with Euclidean edges and let $Z_\Gamma$ be constructed by using \eqref{eq:def_process_Z_mu}, \eqref{eq:def_process_Z_e} and \eqref{eq:aux_process_resmetric}. The resistance metric is defined as the variogram of $Z_\Gamma$:
		\begin{equation}\label{eq:res_metric_def}
		d_R(s_1,s_2) = \Var(Z_\Gamma(s_1)-Z_\Gamma(s_2)).
		\end{equation}
	\end{Definition}

	The following proposition, proved in \cite[Proposition 4]{anderes2020isotropic}, provides a comparison between the geodesic metric and the resistance metric:
	\begin{Proposition}\label{prp:anderes_resistance_geodesic}
		Let $\Gamma$ be a graph with Euclidean edges. We have
		$$\forall s_1,s_2\in\Gamma,\quad d_R(s_1,s_2) \leq d(s_1,s_2).$$
		Further, if $\Gamma$ is a tree, Euclidean tree, then the geodesic and resistance metric coincide. Finally, if $\Gamma$ is an Euclidean cycle with total length $\ell = \sum_{e\in\mathcal{E}} l_e$, then 
		\begin{equation}\label{eq:resistance_cycle_geodesic}
			\forall s_1,s_2\in\Gamma,\quad d_R(s_1,s_2) = d(s_1,s_2) - \frac{d(s_1,s_2)^2}{\ell}.
		\end{equation}
	\end{Proposition}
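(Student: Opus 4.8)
The plan is to recognize the resistance metric $d_R$ as the classical \emph{effective resistance} of $\Gamma$ regarded as an electrical network in which each edge $e$ is a wire of resistance $l_e$ (equivalently conductance $1/l_e$, which is exactly the weight $c(v_1,v_2)=1/d(v_1,v_2)$ used to build $L$). Once this identification is in place, all three assertions reduce to elementary facts about resistor networks. I would first establish it at the level of vertices. The matrix $L$ is the weighted graph Laplacian of $(\mathcal V,\mathcal E)$ augmented by a unit conductance from the reference vertex $v_0$ to an external ground node $g$; it is the principal submatrix obtained from the augmented Laplacian by deleting $g$, and it is strictly positive definite because $\Gamma$ is connected. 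To compute the variogram \eqref{eq:res_metric_def} on vertices, solve $L\phi=\delta_i-\delta_j$ (with $\delta_i$ the standard basis vector at $v_i$). Because the injected currents sum to zero, summing the nodal conservation laws shows that no current traverses the auxiliary ground edge and that the reference potential vanishes, so $\phi$ solves the current‑injection problem on $\Gamma$ itself; hence
\[
\Var\bigl(Z_\mu(v_i)-Z_\mu(v_j)\bigr)=(\delta_i-\delta_j)^\top L^{-1}(\delta_i-\delta_j)=R_{\mathrm{eff}}(v_i,v_j).
\]

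Next I would extend the identity from vertices to arbitrary points $s_1,s_2\in\Gamma$ by a subdivision argument. The key lemma is that the law of the field $Z_\Gamma$ from Definition~\ref{def:resistance_metric} is invariant under inserting a degree‑two vertex at any interior point $s=(t,e)$. Indeed, the linear interpolation \eqref{eq:def_process_Z_mu} is exactly the harmonic (conditional‑mean) extension associated with the two series conductances $1/t$ and $1/(l_e-t)$, whose series resistance $t+(l_e-t)=l_e$ leaves $L^{-1}$ unchanged on the old vertices, while the Brownian bridge $B_e$ on $[0,l_e]$ decomposes, via its Markov property, into two independent bridges on $[0,t]$ and $[t,l_e]$ together with an independent pinning variable $B_e(t)$ of variance $t(l_e-t)/l_e$ that matches the fluctuation of the interpolation. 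Granting this invariance, $s_1$ and $s_2$ may be treated as genuine vertices of a subdivided network carrying the same metric and conductances, and the vertex identity yields $d_R(s_1,s_2)=R_{\mathrm{eff}}(s_1,s_2)$ for all $s_1,s_2$.

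With the identification $d_R=R_{\mathrm{eff}}$ in hand, the three conclusions follow. The inequality $d_R\le d$ is Rayleigh's monotonicity together with the series law: the effective resistance between two points never exceeds the resistance of any single path joining them, and the shortest path has total resistance equal to its length $d(s_1,s_2)$. On a Euclidean tree there is a unique path between $s_1$ and $s_2$, so the network collapses to that single series path and $R_{\mathrm{eff}}(s_1,s_2)=d(s_1,s_2)$. On a Euclidean cycle of total length $\ell$, the two points split the cycle into exactly two arcs of lengths $x:=d(s_1,s_2)$ and $\ell-x$, which are two resistors in parallel; their combined resistance is
\[
\frac{x(\ell-x)}{x+(\ell-x)}=x-\frac{x^2}{\ell},
\]
which is precisely \eqref{eq:resistance_cycle_geodesic}.

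The main obstacle is the subdivision‑invariance lemma of the second paragraph: proving rigorously that adjoining interior degree‑two vertices does not alter the law of $Z_\Gamma$, which requires combining the Markov decomposition of the Brownian bridge with the consistency of the piecewise‑linear interpolation and the series reduction of $L^{-1}$. Everything else is either the standard grounded‑Laplacian/effective‑resistance identity or an elementary series–parallel reduction; for the tree and cycle cases one may alternatively bypass the lemma by computing the variogram of $Z_\Gamma$ directly, since there the bridge and interpolation contributions telescope to give the length of the relevant path(s).
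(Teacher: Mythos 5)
Your proposal is correct, but note that the paper itself offers no proof of this proposition: it is stated as a quoted result and attributed to \cite[Proposition 4]{anderes2020isotropic}, so there is no internal argument to compare against. What you have written is essentially a self-contained reconstruction of the argument in that reference: identify $d_R$ with the effective resistance of the network in which edge $e$ carries conductance $1/l_e$, and then read off all three claims from Rayleigh monotonicity and the series/parallel laws. Your vertex-level computation is right (summing the rows of $L\phi=\delta_i-\delta_j$ forces $\phi(v_0)=0$, so the grounded inverse reproduces the usual effective-resistance quadratic form on mean-zero vectors), and you correctly isolate the real content as the subdivision-invariance of the law of $Z_\Gamma$: the conditional mean of the new degree-two vertex in the subdivided GMRF is the conductance-weighted average $(1-t/l_e)Z_\mu(v_1)+(t/l_e)Z_\mu(v_2)$, its conditional variance $t(l_e-t)/l_e$ matches the Brownian bridge marginal, and the Schur complement of the new vertex recovers the old grounded Laplacian by the series law, while the bridge's Markov decomposition supplies the two sub-bridges. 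That lemma is stated but only sketched in your write-up; to make the proof complete you would need to carry it out jointly (matching the full finite-dimensional distributions, not just one-point marginals) and to subdivide at both $s_1$ and $s_2$ simultaneously. Your closing observation that the tree and cycle cases can bypass the lemma by a direct variogram computation is a sensible fallback and is arguably the most economical route to \eqref{eq:resistance_cycle_geodesic}.
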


	Finally, let us recall the notion of 1-sum of metric spaces, which, in particular, provides the notion of 1-sum of metric graphs.

	\begin{Definition}\label{def:1sum_spaces}
Let $(X_1, d_1)$ and $(X_2, d_2)$ be two metric spaces such that $X_1 \cap X_2 = \{x_0\}$. The 1-sum of $(X_1, d_1)$ and $(X_2, d_2)$, denoted by $(X_1 \cup X_2, d)$, is the metric space defined as follows:
\[
d(x, y) = 
\begin{cases} 
d_1(x, y), & \text{if } x, y \in X_1, \\
d_2(x, y), & \text{if } x, y \in X_2, \\
d_1(x, x_0) + d_2(x_0, y), & \text{if } x \in X_1 \text{ and } y \in X_2.
\end{cases}
\]
	\end{Definition}

By iterating the previous construction, we arrive at the definition of a $k$-step 1-sum of metric graphs.

\begin{Definition}\label{def:kstep_1sum}
Given an ordered collection of metric graphs $\Gamma_1, \ldots, \Gamma_k$, with $k \in \mathbb{N}$, suppose there exist points $v_2 \in \Gamma_2, \ldots, v_k \in \Gamma_k$ such that:
\[
\Gamma_1 \cap \Gamma_2 = \{v_1\}, \quad (\Gamma_1 \cup \Gamma_2) \cap \Gamma_3 = \{v_3\}, \quad \ldots, \quad (\Gamma_1 \cup \cdots \cup \Gamma_{k-1}) \cap \Gamma_k = \{v_k\}.
\]
Let $\widetilde{\Gamma}_2$ denote the 1-sum of $\Gamma_1$ and $\Gamma_2$, and define $\widetilde{\Gamma}_3$ as the 1-sum of $\widetilde{\Gamma}_2$ and $\Gamma_3$. This process is continued iteratively, so that $\widetilde{\Gamma}_k$ is the 1-sum of $\widetilde{\Gamma}_{k-1}$ and $\Gamma_k$. We call $\widetilde{\Gamma}_k$ the $k$-step 1-sum of $\Gamma_1, \ldots, \Gamma_k$, with intersecting points $\{v_2, \ldots, v_k\}$.
\end{Definition}

We will provide a simple result involving geodesic metric, resistance metric and the covariance function $r_{1,\ell}(\cdot)$ given in \eqref{eq:formula_pitt}.

\begin{Proposition}\label{prp:aux_prop_cycle_res_geo}
Let $\Gamma_0$ be given by the 1-sum between two Euclidean cycles $S_1$ and $S_2$ with respective lengths $\ell_1\neq\ell_2$, and let $v$ be the intersection point between $S_1$ and $S_2$. Then, there exists $s_1\in S_1$ and $s_2\in S_2$ such that $d_R(v,s_1) = d_R(v,s_2)$, and
$$r_{1,\ell_1}(d(s_1,v)) \neq r_{1,\ell_2}(d(s_2,v)).$$
\end{Proposition}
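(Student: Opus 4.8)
The plan is to exploit the fact that the two building-block covariances already disagree at coincident points, so that no single isotropic covariance can reconcile them. First I would record, via Proposition~\ref{prp:anderes_resistance_geodesic} (specifically \eqref{eq:resistance_cycle_geodesic}), that on a Euclidean cycle $S_j$ of length $\ell_j$ the resistance distance from the common vertex $v$ to a point at geodesic distance $x\in[0,\ell_j/2]$ is $\phi_j(x)=x-x^2/\ell_j$. Since $\Gamma_0$ is a $1$-sum, the restriction of $d_R$ to $S_j$ coincides with the resistance metric of $S_j$, so this formula describes $d_R(v,\cdot)$ on each cycle inside $\Gamma_0$. The map $\phi_j$ is continuous and strictly increasing on $[0,\ell_j/2]$ with image $[0,\ell_j/4]$, hence it admits a continuous inverse $x_j:[0,\ell_j/4]\to[0,\ell_j/2]$.

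The key observation is the value of the covariance at distance zero. From \eqref{eq:formula_pitt},
\[
r_{1,\ell}(0)=\frac{\cosh(\kappa\ell/2)}{2\kappa\tau^2\sinh(\kappa\ell/2)}=\frac{\coth(\kappa\ell/2)}{2\kappa\tau^2}.
\]
Since $t\mapsto\coth(t)$ is strictly decreasing on $(0,\infty)$ and $\ell_1\neq\ell_2$ gives $\kappa\ell_1/2\neq\kappa\ell_2/2$, I obtain immediately that $r_{1,\ell_1}(0)\neq r_{1,\ell_2}(0)$.

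To produce the required pair I would assume, without loss of generality, $\ell_1<\ell_2$, so that $[0,\ell_1/4]\subset[0,\ell_2/4]$ and every $\rho\in[0,\ell_1/4]$ is realized as a resistance distance from $v$ on both cycles. For such $\rho$ let $s_j(\rho)\in S_j$ be the point with $d(v,s_j(\rho))=x_j(\rho)$, so that $d_R(v,s_1(\rho))=d_R(v,s_2(\rho))=\rho$, and set
\[
g(\rho)=r_{1,\ell_1}(x_1(\rho))-r_{1,\ell_2}(x_2(\rho)).
\]
Then $g$ is continuous with $g(0)=r_{1,\ell_1}(0)-r_{1,\ell_2}(0)\neq0$ by the previous step, so $g(\rho)\neq0$ for all sufficiently small $\rho$; choosing any such $\rho$ and setting $s_1=s_1(\rho)$, $s_2=s_2(\rho)$ yields $d_R(v,s_1)=d_R(v,s_2)$ together with $r_{1,\ell_1}(d(s_1,v))\neq r_{1,\ell_2}(d(s_2,v))$, as required. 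One may even take $\rho=0$, i.e.\ $s_1=s_2=v$, but passing to small positive $\rho$ gives genuinely interior points.

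There is no serious obstacle here: the whole argument rests on the single computation $r_{1,\ell_1}(0)\neq r_{1,\ell_2}(0)$, which follows from the strict monotonicity of $\coth$. The only points that need a line of care are the justification that $d_R$ on $\Gamma_0$ restricts to the cycle formula of Proposition~\ref{prp:anderes_resistance_geodesic} on each $S_j$ (a direct consequence of the $1$-sum structure, already used in the proof of Theorem~\ref{thm:markov_resistance}) and the remark that the ranges $[0,\ell_j/4]$ overlap, so that a common resistance distance $\rho$ is simultaneously attainable on both cycles.
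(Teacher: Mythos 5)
Your argument hinges entirely on the claim $r_{1,\ell_1}(0)\neq r_{1,\ell_2}(0)$, and that is where it breaks down. The functions $r_{1,\ell_1}$ and $r_{1,\ell_2}$ arise from applying Corollary~\ref{cor:circle} separately to the two cycles, so each carries its own parameters: $r_{1,\ell_j}(h)=\cosh(\kappa_j(h-\ell_j/2))/\bigl(2\kappa_j\tau_j^2\sinh(\kappa_j\ell_j/2)\bigr)$ with a priori unrelated $\kappa_j,\tau_j>0$. This is exactly how the paper's own proof writes them, and it is the generality required when the proposition is invoked in the proof of Theorem~\ref{thm:markov_resistance}. With independent parameters, $r_{1,\ell_1}(0)=\coth(\kappa_1\ell_1/2)/(2\kappa_1\tau_1^2)$ and $r_{1,\ell_2}(0)=\coth(\kappa_2\ell_2/2)/(2\kappa_2\tau_2^2)$ can perfectly well coincide; worse, in the setting where the proposition is actually used, isotropy \emph{forces} them to coincide, since taking $s_1=s_1'$ in $r(d_R(s_1,s_1'))=r_{1,\ell_1}(d(s_1,s_1'))$ gives $r_{1,\ell_1}(0)=r(0)=r_{1,\ell_2}(0)$, the common variance of the field. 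So in the only case that matters your function $g$ satisfies $g(0)=0$ and the continuity argument for small $\rho$ yields nothing. Under your implicit reading (a single shared $\kappa,\tau$ for both cycles) what you prove is true but trivial --- as you note, one may take $s_1=s_2=v$ --- and it is too weak to produce the contradiction needed in Theorem~\ref{thm:markov_resistance}.

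The paper's proof avoids this by working on a whole interval rather than at a single point: it inverts $h\mapsto h-h^2/\ell_j$ from \eqref{eq:resistance_cycle_geodesic} and substitutes into \eqref{eq:formula_pitt} to express both covariances as explicit functions of the common resistance distance $h\in[0,\min\{\ell_1/4,\ell_2/4\}]$, namely $h\mapsto\cosh\bigl(\kappa_j\sqrt{\ell_j}\sqrt{\ell_j-4h}/2\bigr)/\bigl(2\kappa_j\tau_j^2\sinh(\kappa_j\ell_j/2)\bigr)$, and then argues that since $\ell_1\neq\ell_2$ these two functions of $h$ cannot be identical on that interval, whence some $h^\ast$ with differing values exists. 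To repair your argument you would need a statement of this type: not that the two expressions differ at $h=0$, but that they cannot agree as functions on a nondegenerate interval for \emph{any} admissible choice of the four parameters $\kappa_1,\tau_1,\kappa_2,\tau_2$.
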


\begin{proof}
Begin by noting that inverting the expression in \eqref{eq:resistance_cycle_geodesic} from Proposition \ref{prp:anderes_resistance_geodesic}, i.e., solving the corresponding quadratic equation, which only has one positive root, gives the following result for $d_R(s_1, v) \leq \ell/4$ and $d_R(s_2, v) \leq \ell/4$:
$$d(s_1,v) = \frac{\ell_1}{2}\left(1 + \frac{\sqrt{\ell_1 - 4 d_R(s_1,v)}}{\sqrt{\ell_1}}\right)\quad\hbox{and}\quad d(s_2,v) = \frac{\ell_2}{2}\left(1 + \frac{\sqrt{\ell_2 - 4 d_R(s_2,v)}}{\sqrt{\ell_2}}\right).$$
Now, let $s_1\in S_1$ and $s_2\in S_2$ be such that $d_R(s_1,v) = d_R(s_2,v) = h$. Then,
\begin{equation}\label{eq:expr_cycle_1}
r_{1,\ell_1}(d(s_1,v)) = \frac{\cosh(\kappa_1(\ell_1\sqrt{\ell_1 - 4 h}/(2\sqrt{\ell_1})))}{2\kappa_1\tau_1^2\sinh(\kappa_1 \ell_1/2)}
\end{equation}
and
\begin{equation}\label{eq:expr_cycle_2}
r_{1,\ell_2}(d(s_2,v)) = \frac{\cosh(\kappa_2(\ell_2\sqrt{\ell_2 - 4 h}/(2\sqrt{\ell_2})))}{2\kappa_2\tau_2^2\sinh(\kappa_2 \ell_2/2)}.
\end{equation}
It is now immediate, by the explicit expressions given in the right-hand side of \eqref{eq:expr_cycle_1} and \eqref{eq:expr_cycle_2} and the fact that $\ell_1\neq\ell_2$, that there exists some ${h^\ast\in [0,\min\{\ell_1/4,\ell_2/4\}]}$ such that these expressions are different. The result thus follows by taking $s_1\in S_1$ and $s_2\in S_2$ such that $d_R(s_1,v) = d_R(s_2,v) = h^\ast$.
\end{proof}

Let us now recall a basic result on the theory of stationary Gaussian processes, for which we provide the proof for completeness.

\begin{Proposition}\label{prp:stationary_gaussian_interval_is_ou}
Let $u(\cdot)$ be a centered non-degenerate stationary Gaussian process on the interval $[0,\ell]$, where $\ell>0$, with a continuous covariance function. If $u(\cdot)$ is a Markov process (equivalently, a Gaussian Markov random field of order 1 on $I$), then its covariance function is given by
$$
\Cov(u(s_1), u(s_2)) = \sigma^2 \exp\{-\kappa |s_2 - s_1|\}, \quad s_1, s_2 \in [0,\ell],
$$
where $\sigma^2 = \Var(u(0))$ and $\kappa \geq 0$.    
\end{Proposition}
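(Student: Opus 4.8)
The plan is to reduce the order-$1$ Markov property to the classical three-point conditional independence statement, and then to exploit Gaussianity to convert this into a multiplicative functional equation for the covariance function, whose only continuous solution compatible with a correlation is the exponential. First I would fix $s_1 < s_2 < s_3$ in $[0,\ell]$ and apply Definition~\ref{def:MarkovPropertyFieldOrderP} with $p=1$ to the open set $S = (s_2,\ell]$ (open in the subspace topology of $[0,\ell]$), whose topological boundary is the single point $\{s_2\}$, with $\overline S = [s_2,\ell]$ and $\Gamma\setminus S = [0,s_2]$. The order-$1$ Markov property then asserts that $\sigma(u(s_2))$ splits $\mathcal{F}^u_+(\overline S)$ and $\mathcal{F}^u_+([0,s_2])$; since $u(s_3)$ is $\mathcal{F}^u(\overline S)$-measurable and $u(s_1)$ is $\mathcal{F}^u([0,s_2])$-measurable, this yields that $u(s_1)$ and $u(s_3)$ are conditionally independent given $u(s_2)$.

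Next I would use that $(u(s_1),u(s_2),u(s_3))$ is a centered Gaussian vector with $\Var(u(s_i)) = \sigma^2 > 0$, by stationarity and non-degeneracy. For such a vector, conditional independence of $u(s_1)$ and $u(s_3)$ given $u(s_2)$ is equivalent to the vanishing of the conditional covariance, that is,
\[
\Cov(u(s_1),u(s_3)) - \frac{\Cov(u(s_1),u(s_2))\,\Cov(u(s_2),u(s_3))}{\Var(u(s_2))} = 0 .
\]
Writing $r(h) = \Cov(u(s),u(s+h))$ for the continuous stationary covariance and $\rho(h) = r(h)/\sigma^2$ for the correlation, and setting $a = s_2-s_1 \ge 0$ and $b = s_3-s_2\ge 0$, this reads
\[
\rho(a+b) = \rho(a)\,\rho(b), \qquad a,b\ge 0,\ a+b\le \ell .
\]

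It then remains to solve this multiplicative equation. I would first observe $\rho(0)=1$ and $\rho(h) = \rho(h/2)^2 \ge 0$, so $\rho\ge 0$; moreover $\rho$ cannot vanish, since $\rho(h_0)=0$ would force $\rho(h_0/2^n)=0$ for every $n$ and hence, by continuity, $\rho(0)=0$, a contradiction. Thus $\rho>0$ on $[0,\ell]$, and $\log\rho$ is a continuous solution of the additive Cauchy equation on $[0,\ell]$, whence $\log\rho(h) = -\kappa h$ for some constant $\kappa$. As $|\rho|\le 1$ forces $\kappa\ge 0$, we get $r(h) = \sigma^2 e^{-\kappa h}$ for $h\ge 0$, and the symmetry of $r$ gives $\Cov(u(s_1),u(s_2)) = \sigma^2 e^{-\kappa|s_2-s_1|}$, as claimed.

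The delicate points are the two translation steps rather than the functional-equation endgame. One must (i) confirm that the abstract splitting property of Definition~\ref{def:MarkovPropertyFieldOrderP} genuinely specializes, for $S=(s_2,\ell]$, to the elementary conditional independence of the three point values; here the non-regularity phenomena emphasized in the paper do not interfere, because $\partial S$ is a single interior point of degree two, which is regular in the sense of Corollary~\ref{cor:alpha1regularboundary}. And (ii) one must justify the Gaussian conditional-independence/partial-correlation equivalence, which is precisely where the non-degeneracy hypothesis $\sigma^2>0$ is used, so that conditioning on $u(s_2)$ is well posed and the conditional covariance formula is valid.
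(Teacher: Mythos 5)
Your proof is correct and follows essentially the same route as the paper's: both reduce the order-$1$ Markov property to the three-point covariance identity $\Cov(u(s_1),u(s_2))\Cov(u(s_2),u(s_3)) = \Var(u(s_2))\Cov(u(s_1),u(s_3))$ and then solve the resulting multiplicative functional equation $\rho(a+b)=\rho(a)\rho(b)$ for the correlation. The only difference is in the first step's justification --- the paper quotes the identity directly from Feller's characterization of Gaussian Markov processes, while you derive it from Definition~\ref{def:MarkovPropertyFieldOrderP} via the Gaussian partial-covariance criterion --- and your handling of the positivity of $\rho$ before taking logarithms is, if anything, slightly more explicit than the paper's.
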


\begin{proof}
In \cite[Chapter 3, Section 8]{feller_vol2} it is shown that the process $u(\cdot)$ is Markov if, and only if, for every $s_1<s_2<s_3\in [0,\ell]$, we have
\begin{equation}\label{eq:identity_borisov_feller}
	\Cov(u(s_1), u(s_2))\Cov(u(s_2), u(s_3)) = \Var(u(s_2))\Cov(u(s_1), u(s_3)).
\end{equation}
Since $u(\cdot)$ is non-degenerate, we can define the auxiliary function $h(s) = \Cov(u(0),u(s))/\Var(u(0))$. Therefore, by stationarity of $u(\cdot)$ and \eqref{eq:identity_borisov_feller}, we have that for $s_1,s_2\in [0,\ell]$,
\begin{equation}\label{eq:identity_exp_covariance_function_markov}
	h(s_1 + s_2) = h(s_1)h(s_2).
\end{equation}
Since $u(\cdot)$ has a continuous covariance function, we can fix $q\in \mathbb{N}$ such that $1/q \leq \ell$, and use \eqref{eq:identity_exp_covariance_function_markov} to obtain that for every $s\in [0,\ell]$,
$$h(s) = \exp\left\{q s \log(h(1/q))\right\}.$$
This, directly implies that for every $s\in [0,\ell]$,
$$\Cov(u(s), u(0)) = \Var(u(0)) \exp\left\{-\kappa s\right\} = \sigma^2 \exp\left\{-\kappa s\right\},$$
where $\kappa = q\log(\Var(u(0))) - q \log(\Cov(u(1/q),u(0))) \geq 0$, since by Cauchy-Schwarz and stationarity, we have $\Cov(u(1/q),u(0))\leq \Var(u(0))$.
\end{proof}

We conclude this section with an auxiliary result regarding isotropy and Markovianity of Gaussian processes on metric graphs:

\begin{Proposition}\label{prp:1sum_euclidean_edge_iso}
Let $\Gamma_0$ be the 1-sum between a Euclidean cycle $S$ and an edge $e_1$. Suppose $X(\cdot)$ is a GRF on $\Gamma_0$ with an isotropic covariance function $\rho(s,t) = r(\widetilde{d}(s,t))$, where $r(\cdot)$ is a continuous function and $\widetilde{d}(\cdot,\cdot)$ denotes either the resistance metric or the geodesic metric. Then $X$ is not Markov of order 1.
\end{Proposition}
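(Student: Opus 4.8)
The plan is to argue by contradiction: assume $X$ is both isotropic in $\widetilde{d}\in\{d,d_R\}$ and Markov of order $1$ on $\Gamma_0$, and derive two incompatible descriptions of the radial profile $r$. Write $\ell$ for the length of the Euclidean cycle $S$ and $L$ for the length of the edge $e_1$, and let $v$ be the vertex where $S$ and $e_1$ meet (we may assume $X$ centered, since the mean affects neither the covariance nor the Markov property). Throughout I use the fact, exactly as in the proof of Theorem~\ref{thm:markov_resistance}, that the restriction of a Markov-of-order-$1$ field to a sub-metric-graph on which the ambient metric restricts to the intrinsic one is again Markov of order $1$ there; concretely, applying the splitting property of Definition~\ref{def:MarkovPropertyFieldOrderP} to open arcs of $S$ and to open subintervals of the interior of $e_1$ yields the three-point covariance identities on $S$ and on $e_1$.

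First I treat the edge. On the pendant edge $e_1$ the resistance metric coincides with the geodesic metric: for any two points $s,t\in e_1$ the portions of $\Gamma_0$ lying beyond the segment joining them are dead-ends carrying no current, so the effective resistance equals the segment length, i.e. $d_R(s,t)=d(s,t)=|s-t|$ (the tree-like case underlying Proposition~\ref{prp:anderes_resistance_geodesic}). Hence, in either metric, $X|_{e_1}$ is a centered, stationary, non-degenerate Gaussian process on $[0,L]$ with continuous covariance $\rho(s,t)=r(|s-t|)$, and by the previous paragraph it is Markov of order $1$ on the interval. Proposition~\ref{prp:stationary_gaussian_interval_is_ou} then forces
\begin{equation*}
 r(h)=\sigma^2 e^{-\kappa h},\qquad h\in[0,L],
\end{equation*}
for some $\sigma^2=\Var(X(0))>0$ and $\kappa\ge 0$. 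The value $\kappa=0$ gives a constant profile on $[0,L]$, which already contradicts the non-constant cycle profile found below on their nondegenerate overlap; so we may assume $\kappa>0$.

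Next I treat the cycle, handling the two metrics in parallel. Since $\widetilde{d}$ restricted to $S\times S$ is the intrinsic metric of $S$, the field $X|_S$ is isotropic on $S$ and Markov of order $1$. In the geodesic case Theorem~\ref{thm:markovCircle} gives $r(h)=r_{1,\ell}(h)$ for $h\in[0,\ell/2]$, with $r_{1,\ell}$ as in \eqref{eq:formula_pitt}. Comparing with the edge profile on the nondegenerate overlap $h\in[0,\min(L,\ell/2)]$ would require $\sigma^2 e^{-\kappa h}=r_{1,\ell}(h)=\frac{c}{2}\big(e^{-\kappa_0\ell/2}e^{\kappa_0 h}+e^{\kappa_0\ell/2}e^{-\kappa_0 h}\big)$ there, where $\kappa_0>0$ is the cycle range and $c=(2\kappa_0\tau^2\sinh(\kappa_0\ell/2))^{-1}>0$. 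The frequency $+\kappa_0$ is absent from the left-hand side (as $-\kappa\neq+\kappa_0$) but present with nonzero coefficient $\frac{c}{2}e^{-\kappa_0\ell/2}$ on the right, which is impossible for real-analytic functions agreeing on an interval. In the resistance case, Corollary~\ref{cor:circle} gives $r(d_R(s,s'))=r_{1,\ell}(d(s,s'))$ on $S$, and by \eqref{eq:resistance_cycle_geodesic} $d_R=d-d^2/\ell$; since $d_R=d-d^2/\ell\le\min(L,\ell/4)$ for $d$ in a nondegenerate interval $[0,\delta]$, the edge profile applies and
\begin{equation*}
 \sigma^2\exp\!\big(-\kappa\,(d-d^2/\ell)\big)=r_{1,\ell}(d),\qquad d\in[0,\delta].
\end{equation*}
The left-hand side extends to an entire function of $d$ of order $2$ (the exponent is a degree-$2$ polynomial with leading coefficient $\kappa/\ell>0$), whereas $r_{1,\ell}(d)=c\cosh(\kappa_0(d-\ell/2))$ is entire of order $1$; by the identity theorem they would have to coincide as entire functions, contradicting their different orders of growth. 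In both metrics we reach a contradiction, proving that $X$ cannot be simultaneously isotropic and Markov of order~$1$.

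The main obstacle is the bookkeeping that legitimizes passing the global Markov property to the restrictions $X|_{e_1}$ and $X|_S$, together with pinning down that $d_R=d$ along the pendant edge while $d_R=d-d^2/\ell$ along the cycle; once these are in place the contradictions are elementary, resting respectively on the absence of a positive frequency in a decaying exponential and on a growth-order mismatch between entire functions.
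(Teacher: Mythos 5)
Your proof is correct and follows essentially the same route as the paper's: restrict to the pendant edge (where both metrics coincide with arc length) to force $r(h)=\sigma^2e^{-\kappa h}$ via Proposition~\ref{prp:stationary_gaussian_interval_is_ou}, restrict to the cycle to force the $r_{1,\ell}$ profile via Theorem~\ref{thm:markovCircle} and Corollary~\ref{cor:circle}, and compare the two on a common range of distances. The only difference is that where the paper inverts the relation $d_R=d-d^2/\ell$ and simply asserts the resulting identity is contradictory, you make the final incompatibility explicit (linear independence of exponentials in the geodesic case, a growth-order mismatch of entire functions in the resistance case), which is a welcome sharpening of the last step rather than a different argument.
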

\begin{proof}
Let $v$ be the intersecting point between $S$ and $e_1$, that is, $\{v\}=S\cap\{e_1\}$, and let $\ell$ denote the length of $S$. Let us assume that $X_{\Gamma_0}$ is isotropic with respect to the resistance metric and Markov of order 1.
 Since the restriction of $X$ to $S$ is Markov of order 1, we have, by Theorem~\ref{thm:markovCircle} and Corollary~\ref{cor:circle}, that for any two locations $s$ and $s'$ on the cycle $S$, $r(d_{R,S}(s,s')) = r_{1,\ell}(d(s,s'))$, where $d_{R,S}$ is the resistance metric on $S$. 
 Since $\Gamma_0$ is the 1-sum between $S$ and $e_1$, we have that the restriction of the resistance metric $d_R$ on $\Gamma$ to $S$ coincides with $d_{R,S}$. Therefore, for every $s_2\in S$, we have 
$$r(d_R(s_2,v)) = r(d_{R,S}(s_2,v)) = r_{1,\ell}(d(s_2,v)).$$ 
Similarly, also due to $\Gamma_0$ being a 1-sum, the restriction of the resistance metric $d_R$ on $\Gamma_0$ to $e_1$ coincides with the resistance metric defined on $e_1$, which in turn, by Proposition \ref{prp:anderes_resistance_geodesic},
 coincides with the geodesic metric on $e_1$. Therefore, for any $\tilde{s},\tilde{s}'\in e_1$, we have $d_R(\tilde{s},\tilde{s}') = d(\tilde{s},\tilde{s}')$. 
 Now, observe that the restriction of $X$ to $e_1$ is Markov of order, Proposition \ref{prp:stationary_gaussian_interval_is_ou} 
 implies that for every $s_1\in e_1$, we have that there must exist $\kappa_1,\sigma>0$ such that
$$r(d_R(v,s_1)) = \sigma^2 \exp\{-\kappa_1 d(s_1,v)\}.$$
Therefore, by isotropy, there must exist $\sigma,\tau,\kappa_1,\kappa_2>0$, such that 
\begin{align}\label{eq:identity_different_cov_iso}
	\sigma^2 \exp\{-\kappa_1 d(s_1,v)\} &= r(d_R(v,s_1)) = r(d_R(v,s_2)) = r_{1,\ell}(d(s_2,v))  \notag \\
	&= \frac{\cosh(\kappa_2(d(v,s_2)-\ell/2))}{2\kappa_2\tau^2\sinh(\kappa_2 \ell/2)}.
\end{align}
for all points $s_1\in e_1$ and $s_2\in S$ such that $d_R(s_1,v) = d_R(s_2,v)$. Now, recall that, on one hand, 
\begin{equation}\label{eq:identity_geodist_resdist_edge}
\forall s_1\in e_1, \quad d(s_1,v) = d_R(s_1,v).
\end{equation} 
On the other hand, for every $s_2\in S$ such that $d_R(s_2,v) \leq \ell/4$, we can invert the expression \eqref{eq:resistance_cycle_geodesic} in Proposition \ref{prp:anderes_resistance_geodesic} to obtain
\begin{equation}\label{eq:identity_geodist_resdist_cycle}
	d(s_2,v) = \frac{\ell}{2}\left(1 + \frac{\sqrt{\ell - 4 d_R(s_2,v)}}{\sqrt{\ell}}\right).
\end{equation}

Therefore, by identities \eqref{eq:identity_different_cov_iso}, \eqref{eq:identity_geodist_resdist_edge}, and \eqref{eq:identity_geodist_resdist_cycle}, we must have, for every $s_1 \in e_1$ and $s_2 \in S$ such that $d_R(s_1, v) = d_R(s_2, v)$:
\begin{align*}
	\sigma^2 \exp\{-\kappa_1 d_R(s_2,v)\} &= \sigma^2 \exp\{-\kappa_1 d_R(s_1,v)\} = \sigma^2 \exp\{-\kappa_1 d(s_1,v)\}\\
	&=\frac{\cosh(\kappa_2(d(s_2,v)-\ell/2))}{2\kappa_2\tau^2\sinh(\kappa_2 \ell/2)}
	= \frac{\cosh(\kappa_2(\ell\sqrt{\ell - 4 d_R(s_2,v)}/(2\sqrt{\ell})))}{2\kappa_2\tau^2\sinh(\kappa_2 \ell/2)},
\end{align*}
that is, we must have for every $0\leq h \leq \min\{l_{e_1}, \ell/4\}$, that
$$\sigma^2 \exp\{-\kappa_1 h\} = \frac{\cosh(\kappa_2(\ell\sqrt{\ell - 4 h}/(2\sqrt{\ell})))}{2\kappa_2\tau^2\sinh(\kappa_2 \ell/2)},$$
which is a contradiction, and proves for the case in which $X(\cdot)$ is isotropic with respect to the resistance metric.

Finally, the proof for the geodesic distance is simpler. By repeating the previous argument, we find that on one hand, we must have
$$\forall s \in e_1, \, r(d(s_1, v)) = \sigma^2 \exp\left\{-\kappa d(s_1, v)\right\},$$
implying that $r(h) = \sigma^2 \exp\left\{-\kappa h\right\}$ for $0 \leq h \leq l_{e_1}$. On the other hand, we must have
$$\forall s \in e_2, \, r(d(s_2, v)) = r_{1, \ell}(d(s_2, v)),$$
which implies $r(h) = r_{1, \ell}(h)$ for all $0 \leq h \leq \ell$. This leads to a contradiction for $0 \leq h \leq \min\{l_{e_1}, \ell\}$, proving the result for the geodesic distance and concluding the proof.
\end{proof}

	\section{Auxiliary results and proofs for Sections \ref{sec:GWM} and \ref{sec:proofsGWM}}\label{app:aux}
	Recall the definition of the operator $L$ in \eqref{eq:operatorL}.

	The following identification of the spaces $\dot{H}_L^\alpha(\Gamma)$ given in \cite[Theorem 4.1]{bolinetal_fem_graph} is important.
	\begin{Theorem}\label{thm:characterization}
		Let $0<\alpha\leq 2$, where $\alpha\neq \nicefrac12$ and $\alpha\neq \nicefrac32$. 
		Then, 
		$$\dot{H}_{L}^\alpha(\Gamma) \cong \begin{cases}
			\widetilde{H}^\alpha(\Gamma),& \hbox{if } 0<\alpha<\nicefrac12,\\
			\widetilde{H}^\alpha(\Gamma) \cap C(\Gamma), & \hbox{if } \nicefrac12<\alpha<\nicefrac32,\\
			\widetilde{H}^\alpha(\Gamma) \cap C(\Gamma) \cap K^\alpha(\Gamma),& \hbox{if } \nicefrac32<\alpha\leq 2,
		\end{cases}$$
		where
		$K^\alpha(\Gamma) = \left\{u\in\widetilde{H}^\alpha(\Gamma): \forall v\in\mathcal{V}, \sum_{e\in \mathcal{E}_v} \partial_e u(v) = 0 \right\}.$
	\end{Theorem}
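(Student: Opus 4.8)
The plan is to reduce the whole statement to two interpolation scales anchored at the integer orders $\alpha\in\{0,1,2\}$, where the spaces can be identified by hand, and then to track how the continuity and Kirchhoff constraints survive or dissolve under interpolation as $\alpha$ crosses the two critical thresholds $\nicefrac12$ and $\nicefrac32$. First I would pin down the integer endpoints. Since $\|u\|_{\dot H_L^2(\Gamma)}^2=\sum_{k}\lambda_k^2(u,e_k)_{L_2(\Gamma)}^2=\|Lu\|_{L_2(\Gamma)}^2$, we have $\dot H_L^2(\Gamma)=D(L)$ with equivalent norms, and by the definition of $D(L)$ this is exactly $\widetilde H^2(\Gamma)\cap C(\Gamma)\cap K^2(\Gamma)$; here I would invoke edgewise elliptic regularity (using that under Assumption~\ref{assump:simplest_assumption} the coefficient $\H$ is Lipschitz and bounded away from $0$) to get $\|u\|_{\widetilde H^2(\Gamma)}\lesssim\|Lu\|_{L_2(\Gamma)}+\|u\|_{L_2(\Gamma)}$, so the $\dot H_L^2$- and $\widetilde H^2$-norms are equivalent on $D(L)$. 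Likewise $\dot H_L^1(\Gamma)=D(L^{\nicefrac12})$ is the form domain of the Dirichlet form $\mathfrak{a}(u,u)=\sum_{e\in\mathcal E}\int_e(\kappa_e^2 u_e^2+\H_e(u_e')^2)\md t$, which equals $H^1(\Gamma)=\widetilde H^1(\Gamma)\cap C(\Gamma)$, and $\dot H_L^0(\Gamma)=L_2(\Gamma)=\widetilde H^0(\Gamma)$. This settles $\alpha\in\{0,1,2\}$ directly and already produces the endpoint norm equivalences.

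Next, because $L$ is self-adjoint and positive definite with compact inverse, its scale of fractional-power spaces is an interpolation scale (cf. \cite[Appendix A]{BSW2022}): with the real $K$-method and $q=2$, one has $\dot H_L^\alpha(\Gamma)=(L_2(\Gamma),\dot H_L^1(\Gamma))_{\alpha,2}$ for $0\le\alpha\le1$ and $\dot H_L^\alpha(\Gamma)=(\dot H_L^1(\Gamma),\dot H_L^2(\Gamma))_{\alpha-1,2}$ for $1\le\alpha\le2$. It therefore remains to identify the two concrete interpolation spaces $(\widetilde H^0(\Gamma),\widetilde H^1(\Gamma)\cap C(\Gamma))_{\alpha,2}$ and $(\widetilde H^1(\Gamma)\cap C(\Gamma),\widetilde H^2(\Gamma)\cap C(\Gamma)\cap K^2(\Gamma))_{\alpha-1,2}$.

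On the unconstrained scale this is classical and reduces to the edges: $(\widetilde H^{k}(\Gamma),\widetilde H^{k+1}(\Gamma))_{\theta,2}=\widetilde H^{k+\theta}(\Gamma)=\bigoplus_{e}H^{k+\theta}(e)$. The constraints are encoded as kernels of two finite families of trace functionals, namely the continuity (jump) functionals $u\mapsto u_e(v)-u_{e'}(v)$ for $e,e'\in\mathcal E_v$ and the Kirchhoff functionals $u\mapsto\sum_{e\in\mathcal E_v}\partial_e u(v)$, and the decisive point is the Sobolev trace theory on an interval: $u\mapsto u_e(v)$ is bounded on $H^\alpha(e)$ precisely when $\alpha>\nicefrac12$, while $u\mapsto\partial_e u(v)$ is bounded precisely when $\alpha>\nicefrac32$. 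This is exactly why the two endpoints $\alpha\in\{\nicefrac12,\nicefrac32\}$ must be excluded, and it suggests the three regimes: below $\nicefrac12$ no constraint is seen; between $\nicefrac12$ and $\nicefrac32$ only continuity is seen; above $\nicefrac32$ both continuity and the Kirchhoff condition are seen.

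The crux, and the step I expect to be the main obstacle, is the interpolation-of-kernels statement that converts the trace thresholds into the appearance or disappearance of the constraints. I would prove two complementary facts. Above threshold: when the relevant trace map $T$ is bounded on the interpolated space $(Y_0,Y_1)_{\theta,2}$, the constraint is retained, i.e. $(Y_0,\ker T\cap Y_1)_{\theta,2}=\ker T\cap(Y_0,Y_1)_{\theta,2}$; I would obtain this by constructing a bounded linear right inverse $E$ of $T$ (a local lifting supported near each vertex, built from fixed smooth bump functions) so that the splitting $u=(u-ETu)+ETu$ is bounded on the whole scale and reduces the constrained interpolation to the unconstrained one. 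Below threshold: when $T$ fails to be bounded on $(Y_0,Y_1)_{\theta,2}$, the kernel subspace carries no constraint, i.e. $(Y_0,\ker T\cap Y_1)_{\theta,2}=(Y_0,Y_1)_{\theta,2}$; for the continuity threshold this follows cleanly because $\bigoplus_{e}C_c^\infty(e)\subset\widetilde H^1(\Gamma)\cap C(\Gamma)$ is dense in $\widetilde H^\alpha(\Gamma)$ for $\alpha<\nicefrac12$, which forces the $K$-functionals for the constrained and ambient pairs to be comparable, and for the Kirchhoff threshold one argues analogously that $\widetilde H^2(\Gamma)\cap C(\Gamma)\cap K^2(\Gamma)$ is dense in the ambient space $\widetilde H^\alpha(\Gamma)\cap C(\Gamma)$ in the $(Y_0,Y_1)_{\theta,2}$-topology for $1<\alpha<\nicefrac32$. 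This density-in-the-interpolation-topology is the genuinely delicate estimate, and I would lean on the quantitative interpolation machinery of \cite{chandlerwildeetal} to make it precise. Combining the two thresholds with the two scales yields the three cases, and the norm equivalence $\|\cdot\|_{\dot H_L^\alpha(\Gamma)}\sim\|\cdot\|_{\widetilde H^\alpha(\Gamma)}$ propagates from the endpoints by interpolation.
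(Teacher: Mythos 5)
First, a point of comparison: the paper does not prove Theorem~\ref{thm:characterization} at all --- it is quoted verbatim from \cite[Theorem 4.1]{bolinetal_fem_graph} --- so there is no in-paper argument to measure you against, only the cited source. Your overall strategy (identify the integer endpoints $\dot H^0_L(\Gamma)\cong L_2(\Gamma)$, $\dot H^1_L(\Gamma)\cong$ form domain $\cong\widetilde H^1(\Gamma)\cap C(\Gamma)$, $\dot H^2_L(\Gamma)\cong D(L)$; invoke the interpolation property of the spectral Hilbert scale; then decide at each of the two trace thresholds $\nicefrac{1}{2}$ and $\nicefrac{3}{2}$ whether the vertex constraints survive interpolation) is the standard route and matches the structure of the proof in that reference. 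The endpoint identifications are fine: the estimate $\|u\|_{\widetilde H^2(\Gamma)}\lesssim\|Lu\|_{L_2(\Gamma)}+\|u\|_{L_2(\Gamma)}$ uses exactly that $\H$ is Lipschitz and bounded below, which Assumption~\ref{assump:simplest_assumption} provides, and the reduction to identifying two interpolation couples is legitimate.

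The gap is at the step you yourself flag as the crux, and it is not merely a missing detail. For retention of the constraint above threshold you propose a fixed bounded right inverse $E$ of the trace map $T$ and the splitting $u=(u-ETu)+ETu$, claimed to be ``bounded on the whole scale''. It is not: $T$ is unbounded on the lower endpoint $Y_0$ (point evaluation is not defined on $L_2$), so this splitting does not act on the couple and cannot reduce the constrained interpolation to the unconstrained one. What is actually required is either the theory of interpolation of subspaces of finite codimension (L\"ofstr\"om-type results, in which the trace threshold appears precisely as the critical exponent), or a $\delta$-dependent family of liftings $E_\delta$ supported near the vertices with $\|E_\delta\|_{L_2}\sim\delta^{1/2}$, $\|E_\delta\|_{H^1}\sim\delta^{-1/2}$, applied to a near-optimal decomposition at scale $t$ and optimized over $\delta$; and even then the jump of the $\widetilde H^1$-component is only controlled through the multiplicative trace inequality by the $L_2$-norm of the low-regularity part, which the $K$-functional does not force to be small, so the naive balancing fails as $t\to 0$. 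Conversely, for dissolution below threshold, density of $\bigoplus_{e}C_c^\infty(e)$ in $\widetilde H^\alpha(\Gamma)$ does not by itself make the two $K$-functionals comparable; the clean argument is the sandwich $\widetilde H^\alpha(\Gamma)=\bigl(L_2(\Gamma),\bigoplus_e H^1_0(e)\bigr)_{\alpha}\hookrightarrow\bigl(L_2(\Gamma),\widetilde H^1(\Gamma)\cap C(\Gamma)\bigr)_{\alpha}\hookrightarrow\bigl(L_2(\Gamma),\widetilde H^1(\Gamma)\bigr)_{\alpha}=\widetilde H^\alpha(\Gamma)$, which rests on the classical interval identity $(L_2(I),H^1_0(I))_{\alpha}\cong H^\alpha(I)$ for $\alpha<\nicefrac{1}{2}$ rather than on density, with an analogous interval-level input needed at the Kirchhoff threshold on the scale between $1$ and $2$. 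In short, the skeleton is right and the thresholds are correctly located, but the two threshold lemmas --- which are the entire content of the theorem beyond the integer endpoints --- are not established by the arguments you give.
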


	To characterize the boundary spaces $H_+(\partial S)$, we require the following  technical lemma that characterizes 
	functions on Sobolev spaces on metric graphs whose trace operator is zero. We
	refer the reader to \cite[Theorem 2.10]{bolinetal_fem_graph} for the definition and main
	properties of the trace operator on metric graphs. For a Borel set 
	$S\subset\Gamma$, we define
	$\dot{H}_L^\alpha(S) = \{f|_S: f\in \dot{H}_L^\alpha(\Gamma)\}$
	endowed with the $\|\cdot\|_{\widetilde{H}^\alpha(S)}$ norm.
	
	We begin by proving the following result which reveals that, under Assumption \ref{assump:basic_assump2}, the operator $L$ behaves similarly to the Laplacian.
	
	\begin{Lemma}\label{lem:LHtildeAlphaHtildeAlphaPlus2}
		Let Assumption \ref{assump:basic_assump2} hold for $\alpha=k+2,$ where $k\in\mathbb{N}$.
		Then, $f\in \widetilde{H}^{k+2}(\Gamma)$ if and only if
		${f\in \widetilde{H}^k(\Gamma)}$ and $Lf\in\widetilde{H}^k(\Gamma)$.
	\end{Lemma}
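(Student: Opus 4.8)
The plan is to reduce the statement to a single edge and then apply one-dimensional elliptic regularity. Since $\widetilde{H}^s(\Gamma)=\bigoplus_{e\in\mathcal{E}}H^s(e)$ and $L$ acts edgewise as $L_e f_e=-(\H_e f_e')'+\kappa_e^2 f_e=-\H_e f_e''-\H_e' f_e'+\kappa_e^2 f_e$, it suffices to show for each edge $e\cong[0,\ell_e]$ that $f_e\in H^{k+2}(e)$ if and only if $f_e\in H^k(e)$ and $L_e f_e\in H^k(e)$. Before starting, I would record the coefficient regularity. Here $k\in\mathbb{N}$ forces $\alpha=k+2\geq 3$, so the extra smoothness in Assumption~\ref{assump:basic_assump2} is in force; using the identification $C^{m,1}(e)=W^{m+1,\infty}(e)$ on a bounded interval, this gives $\H_e\in W^{k+1,\infty}(e)$ and $\kappa_e\in W^{k,\infty}(e)$, hence $\kappa_e^2,\H_e'\in W^{k,\infty}(e)$, and since $\H_e\geq\H_0>0$ also $\H_e^{-1}\in W^{k+1,\infty}(e)$. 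The only analytic fact I would invoke is the multiplier property $W^{m,\infty}(e)\cdot H^m(e)\subset H^m(e)$, which follows from the Leibniz rule because each term $g^{(j)}h^{(m-j)}$ lies in $L_2(e)$ when $g\in W^{m,\infty}(e)$ and $h\in H^m(e)$.

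For the forward implication I would argue directly: if $f_e\in H^{k+2}(e)$, then $f_e\in H^k(e)$ trivially, and in $L_e f_e=-\H_e f_e''-\H_e' f_e'+\kappa_e^2 f_e$ each summand lies in $H^k(e)$, since $\H_e,\H_e',\kappa_e^2\in W^{k,\infty}(e)$ multiply, respectively, $f_e''\in H^k(e)$, $f_e'\in H^{k+1}(e)$ and $f_e\in H^k(e)$.

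For the converse I would solve for the second derivative and bootstrap. Writing $g_e:=L_e f_e\in H^k(e)$, the equation becomes
\[
f_e'' = \H_e^{-1}\bigl(\kappa_e^2 f_e - \H_e' f_e' - g_e\bigr).
\]
In the first pass, $\kappa_e^2 f_e$ and $g_e$ lie in $H^k(e)$ while $\H_e' f_e'\in H^{k-1}(e)$, because initially only $f_e'\in H^{k-1}(e)$ is available; hence the right-hand side is in $H^{k-1}(e)$, and multiplying by $\H_e^{-1}$ gives $f_e''\in H^{k-1}(e)$, i.e.\ $f_e\in H^{k+1}(e)$. Feeding this back, now $f_e'\in H^k(e)$, so $\H_e' f_e'\in H^k(e)$ as well; the right-hand side then lies in $H^k(e)$, whence $f_e''\in H^k(e)$ and $f_e\in H^{k+2}(e)$. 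Summing over edges yields the statement on $\Gamma$.

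The computations are routine, so the delicate point is the bookkeeping of the bootstrap: the term $\H_e' f_e'$ is the single bottleneck that is only in $H^{k-1}(e)$ at the outset, and the argument hinges on checking that exactly this one degree is recovered in the second pass once $f_e'\in H^k(e)$ is known. I would also be careful that $\H_e^{-1}$ inherits the full $W^{k+1,\infty}(e)$ regularity of $\H_e$, which is valid only because $\H_e$ is bounded away from zero by Assumption~\ref{assump:simplest_assumption}; this is the only place the lower bound on $\H$ is used.
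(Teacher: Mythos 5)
Your proof is correct, and the overall skeleton (reduce to each edge, use the multiplier property $W^{m,\infty}(e)\cdot H^m(e)\subset H^m(e)$, which is exactly the Grisvard theorem the paper cites) matches the paper's. The forward direction is identical in substance. The converse, however, is organized differently. You solve the equation for $f_e''$, writing $f_e'' = \H_e^{-1}(\kappa_e^2 f_e - \H_e' f_e' - g_e)$, and run a two-pass bootstrap to climb from $H^{k-1}$ to $H^k$ regularity of $f_e''$. The paper instead never expands the divergence-form term: it observes that $\nabla(\H_e\nabla f_e) = \kappa_e^2 f_e - L_e f_e \in H^k(e)$, integrates once to get $\H_e\nabla f_e\in H^{k+1}(e)$, and then multiplies by $\H_e^{-1}\in C^{k,1}(e)$ to conclude $\nabla f_e\in H^{k+1}(e)$ in a single step. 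The paper's route buys two things: no iteration is needed, and, more importantly, one never has to invoke the Leibniz rule $\nabla(\H_e\nabla f_e)=\H_e'f_e'+\H_e f_e''$ before knowing that $f_e''$ exists as a function. In your argument, at the bottom rung (the case $k=1$, where a priori only $f_e\in H^1(e)$), writing the equation in the solved-for-$f_e''$ form presupposes that product rule with $f_e''$ merely a distribution; this is justifiable (test against $C_c^\infty$ functions and use that $\H_e$ is Lipschitz), but it is a step you should make explicit, and the paper's formulation sidesteps it entirely. Your observation that the lower bound $\H_e\geq\H_0>0$ is used exactly once, to give $\H_e^{-1}$ the same regularity as $\H_e$, agrees with the paper.
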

	
	\begin{proof}
		First, if $f\in \widetilde{H}^{k+2}(\Gamma)$, then clearly $f\in \widetilde{H}^k(\Gamma)$. By
		Assumption \ref{assump:basic_assump2}  with \cite[Theorem 1.4.1.1]{grisvard}, we also obtain that $Lf\in\widetilde{H}^k(\Gamma)$. 
		Conversely, let $f\in \widetilde{H}^k(\Gamma)$ and $Lf\in\widetilde{H}^k(\Gamma)$. 
		Clearly, $f\in \widetilde{H}^k(\Gamma)$ and $Lf\in\widetilde{H}^k(\Gamma)$ if and only if $f_e \in H^k(e)$ and $L f_e \in H^k(e)$ for every edge $e\in\mathcal{E}$. 
		Take any edge $e\in\mathcal{E}$. By applying \cite[Theorem~1.4.1.1]{grisvard}, we obtain that under Assumption \ref{assump:basic_assump2}, $f_e \in H^k(e)$ implies that $\kappa_e^2 f_e \in H^k(e)$. Thus, ${\nabla \H_e \nabla f_e \in H^k(e)}$ since $Lf_e \in H^k(e)$, which shows  that $\H_e \nabla f_e \in H^{k+1}(e)$. 
		By Assumption~\ref{assump:basic_assump2}, $\H_e \in C^{k,1}(e)$ and because $\H_e$ is positive,  $1/\H_e \in C^{k,1}(e)$. 
		Therefore, by \cite[Theorem~1.4.1.1]{grisvard}, ${\nabla f_e = (1/\H_e) \H_e \nabla f_e \in H^{k+1}(e)}$, and thus, $f_e \in H^{k+2}(e)$.
	\end{proof}

	As a consequence, the following lemma is obtained, which is needed to prove Proposition~\ref{prp:CharGen_CM_natural_alpha}.

	\begin{Lemma}\label{lem:LHtildeM}
		Let $m\in\mathbb{N}$, where $m\geq 1$, and let Assumption \ref{assump:basic_assump2} hold for $\alpha = 2m$.
		Then,
		\begin{equation}\label{eq:equivalence1_Htilde}
			f\in \widetilde{H}^{2(m-1)}(\Gamma) \quad\hbox{and}\quad L^kf\in\widetilde{H}^2(\Gamma), k=0,\ldots,m-1 \quad\Leftrightarrow\quad f\in \widetilde{H}^{2m}(\Gamma).
		\end{equation}
		Similarly, if Assumption \ref{assump:basic_assump2} holds for $\alpha = 2m+1$, then
		\begin{equation}\label{eq:equivalence2_Htilde}
			f\in \widetilde{H}^{2m}(\Gamma) \quad\hbox{and}\quad L^{k}f\in\widetilde{H}^1(\Gamma), k=0,\ldots,m \quad\Leftrightarrow\quad f\in \widetilde{H}^{2m+1}(\Gamma).
		\end{equation}
	\end{Lemma}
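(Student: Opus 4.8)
The plan is to derive both equivalences from the one-step regularity result of Lemma \ref{lem:LHtildeAlphaHtildeAlphaPlus2} by induction on $m$, treating the forward implications as immediate and concentrating on the converse. A preliminary remark that I would record up front is the elementary inclusion $C^{k,1}(e)\subset C^{k-1,1}(e)$: on a compact interval a $C^{k,1}$ function is in particular $C^{k-1}$ with a $C^1$, hence Lipschitz, $(k-1)$th derivative. Consequently, Assumption \ref{assump:basic_assump2} at the top exponent $\alpha=2m$ (resp. $\alpha=2m+1$) secures the regularity of $\kappa,\H$ needed for \emph{every} intermediate application of Lemma \ref{lem:LHtildeAlphaHtildeAlphaPlus2} at smaller exponents, so no delicacy arises on that account.

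For the implications ``$\Leftarrow$'' I would argue as follows. If $f\in\widetilde{H}^{2m}(\Gamma)$ (resp. $f\in\widetilde{H}^{2m+1}(\Gamma)$), then trivially $f\in\widetilde{H}^{2(m-1)}(\Gamma)$ (resp. $f\in\widetilde{H}^{2m}(\Gamma)$). Moreover, the forward half of Lemma \ref{lem:LHtildeAlphaHtildeAlphaPlus2} shows that $L$ maps $\widetilde{H}^{j}(\Gamma)$ into $\widetilde{H}^{j-2}(\Gamma)$ for $j\geq 2$, so iterating gives $L^k f\in\widetilde{H}^{2m-2k}(\Gamma)\hookrightarrow\widetilde{H}^2(\Gamma)$ for $k\leq m-1$ in the even case, and $L^k f\in\widetilde{H}^{2m+1-2k}(\Gamma)\hookrightarrow\widetilde{H}^1(\Gamma)$ for $k\leq m$ in the odd case.

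The converse of \eqref{eq:equivalence1_Htilde} I would prove by induction on $m$. The base case $m=1$ is immediate, since there the hypotheses already read $f\in L_2(\Gamma)$ and $f\in\widetilde{H}^2(\Gamma)$. For the step, the key device is to pass to the shifted function $g=Lf$. Since $f\in\widetilde{H}^{2(m-1)}(\Gamma)$, the forward half of Lemma \ref{lem:LHtildeAlphaHtildeAlphaPlus2} gives $g\in\widetilde{H}^{2(m-2)}(\Gamma)$, while $L^k g=L^{k+1}f\in\widetilde{H}^2(\Gamma)$ for $k=0,\ldots,m-2$; thus $g$ satisfies the hypotheses of \eqref{eq:equivalence1_Htilde} at level $m-1$, and the induction hypothesis yields $Lf=g\in\widetilde{H}^{2(m-1)}(\Gamma)$. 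Having $f\in\widetilde{H}^{2(m-1)}(\Gamma)$ and $Lf\in\widetilde{H}^{2(m-1)}(\Gamma)$, a final application of Lemma \ref{lem:LHtildeAlphaHtildeAlphaPlus2} with $k=2m-2$ gives $f\in\widetilde{H}^{2m}(\Gamma)$.

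The converse of \eqref{eq:equivalence2_Htilde} follows by the identical scheme, inducting on $m$ with the same shift $g=Lf$: the base case $m=0$ is immediate, and in the step $g=Lf$ lies in $\widetilde{H}^{2(m-1)}(\Gamma)$ with $L^k g\in\widetilde{H}^1(\Gamma)$ for $k=0,\ldots,m-1$, so the induction hypothesis gives $Lf\in\widetilde{H}^{2m-1}(\Gamma)$; combining with $f\in\widetilde{H}^{2m}(\Gamma)\hookrightarrow\widetilde{H}^{2m-1}(\Gamma)$ and Lemma \ref{lem:LHtildeAlphaHtildeAlphaPlus2} with $k=2m-1$ yields $f\in\widetilde{H}^{2m+1}(\Gamma)$. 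The step I expect to require the most care is not any single estimate but the bookkeeping around the shift $g=Lf$: one must check that the index ranges of the hypotheses $L^kg$ line up exactly with those demanded by the induction hypothesis at level $m-1$, and that each invoked instance of Assumption \ref{assump:basic_assump2} is valid—this last point being exactly what the inclusion $C^{k,1}(e)\subset C^{k-1,1}(e)$ guarantees. Everything else is a routine cascade of the one-step result.
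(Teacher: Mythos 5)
Your proof is correct and follows essentially the same route as the paper's: both treat the forward implications as immediate consequences of the one-step regularity in Lemma~\ref{lem:LHtildeAlphaHtildeAlphaPlus2}, and both prove the converses by induction via the shift $g=Lf$, verifying that $g$ satisfies the hypotheses one level down and closing with a final application of that lemma. Your explicit remark that Assumption~\ref{assump:basic_assump2} at the top exponent covers all intermediate exponents (via $C^{k,1}(e)\subset C^{k-1,1}(e)$) makes precise a point the paper leaves implicit, but it does not alter the argument.
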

	
	\begin{proof}
		We begin by proving \eqref{eq:equivalence1_Htilde}. 
		First, if $f\in \widetilde{H}^{2m}(\Gamma)$, then Assumption \ref{assump:basic_assump2} with the product rule for derivatives and \cite[Theorem 1.4.1.1]{grisvard} directly imply that $L^kf\in\widetilde{H}^2(\Gamma)$, for $k=0,\ldots,m-1$, and clearly $f\in \widetilde{H}^{2(m-1)}(\Gamma)$. 
		Conversely, if $m=1$, it is trivial. Now, assume, by induction, that \eqref{eq:equivalence1_Htilde} holds for $m>1$. 
		We must show that if Assumption \ref{assump:basic_assump2} holds for $\alpha=2m+2$, then \eqref{eq:equivalence1_Htilde} holds for $m+1$. 
		We already proved one direction for all $m$; hence, we assume that $f\in \widetilde{H}^{2m}(\Gamma)$ and $L^kf\in\widetilde{H}^2(\Gamma), k=0,\ldots,m$. We set $g = Lf$, and by Lemma \ref{lem:LHtildeAlphaHtildeAlphaPlus2},  $g\in \widetilde{H}^{2(m-1)}(\Gamma)$. Furthermore, by assumption, $L^k g \in\widetilde{H}^2(\Gamma)$, for ${k=0,\ldots,m-1}$. Hence, by the induction hypothesis, $g\in \widetilde{H}^{2m}(\Gamma)$. However, then, $f\in\widetilde{H}^{2m}(\Gamma)$ and $Lf=g\in\widetilde{H}^{2m}(\Gamma)$. Therefore, by Lemma \ref{lem:LHtildeAlphaHtildeAlphaPlus2} again, $f\in \widetilde{H}^{2m+2}(\Gamma)$, proving the induction.
		Finally, \eqref{eq:equivalence2_Htilde} can be proved similarly. 
	\end{proof}

	The following technical lemma will be need in Propositions \ref{prp:CharGen_CM_natural_alpha} and \ref{prp:Hdot3Local}.

	\begin{Lemma}\label{lem:dotHklocal}
		Let Assumption \ref{assump:simplest_assumption} hold. 
		If $\alpha = 2m + 1$, $m\in\mathbb{N}$, then
		for any ${u,v\in\dot{H}^\alpha_L(\Gamma)}$, 
		\begin{equation}\label{eq:localInnerProddotHk_1}
			\begin{aligned}
				(u,v)_{\dot{H}_L^\alpha(\Gamma)} &= (\kappa^2 L^m u, L^m v)_{L_2(\Gamma)} + (\H\nabla L^m u, \nabla L^m v)_{L_2(\Gamma)}\\
				&= \sum_{e\in \mathcal{E}} (\kappa^2 L^m u, L^m v)_{L_2(e)} + (\H\nabla L^m u, \nabla L^mv)_{L_2(e)}.
			\end{aligned}
		\end{equation}
		Similarly, if $\alpha=2m$, where $m\in\mathbb{N}$, 
		\begin{equation}\label{eq:localInnerProddotHk_2}
			(u,v)_{\dot{H}_L^\alpha(\Gamma)} = (L^m u, L^m v)_{L_2(\Gamma)}= \sum_{e\in \mathcal{E}} (L^m u, L^m v)_{L_2(e)}.
		\end{equation}
	\end{Lemma}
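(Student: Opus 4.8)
The plan is to reduce everything to the spectral definition of the $\dot{H}_L^\alpha(\Gamma)$ inner product, namely $(u,v)_{\dot{H}_L^\alpha(\Gamma)} = \sum_{k} \lambda_k^\alpha (u,e_k)_{L_2(\Gamma)}(v,e_k)_{L_2(\Gamma)}$, and to ``peel off'' powers of $L$ using self-adjointness together with the fact that $(L^j w, e_k)_{L_2(\Gamma)} = \lambda_k^j (w,e_k)_{L_2(\Gamma)}$ on the eigenbasis. Throughout I would use that $\dot{H}_L^1(\Gamma) \cong H^1(\Gamma)$ (Theorem~\ref{thm:characterization} with $\alpha=1$), so that the gradients on the right-hand sides lie in $L_2(\Gamma)$, and that by Remark~\ref{rmk:isometry} the map $L^m$ sends $\dot{H}_L^{2m}(\Gamma)$ isometrically onto $L_2(\Gamma)$ and $\dot{H}_L^{2m+1}(\Gamma)$ isometrically onto $\dot{H}_L^1(\Gamma)$, so all objects below are well defined. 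The passage to the sum over edges will, in each case, be nothing more than the definition $L_2(\Gamma) = \bigoplus_{e\in\mathcal{E}} L_2(e)$, under which $(f,g)_{L_2(\Gamma)} = \sum_{e\in\mathcal{E}} (f_e, g_e)_{L_2(e)}$, together with the fact that $\nabla$ acts edgewise.

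The even case $\alpha = 2m$ is essentially immediate. For $u,v \in \dot{H}_L^{2m}(\Gamma) = D(L^m)$ one has $(L^m u, e_k)_{L_2(\Gamma)} = \lambda_k^m (u,e_k)_{L_2(\Gamma)}$ and likewise for $v$, so Parseval's identity gives $(L^m u, L^m v)_{L_2(\Gamma)} = \sum_k \lambda_k^{2m}(u,e_k)_{L_2(\Gamma)}(v,e_k)_{L_2(\Gamma)} = (u,v)_{\dot{H}_L^{2m}(\Gamma)}$, which is the first equality; the edge decomposition follows from $L_2(\Gamma) = \bigoplus_e L_2(e)$.

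For the odd case $\alpha = 2m+1$, I would set $w = L^m u$ and $z = L^m v$, which lie in $\dot{H}_L^1(\Gamma) = H^1(\Gamma)$, and introduce the Dirichlet-type form $a_L(w,z) = (\kappa^2 w, z)_{L_2(\Gamma)} + (\H\nabla w, \nabla z)_{L_2(\Gamma)}$. The central step is to prove $a_L(w,z) = \sum_k \lambda_k (w,e_k)_{L_2(\Gamma)}(z,e_k)_{L_2(\Gamma)}$ for all $w,z \in \dot{H}_L^1(\Gamma)$. I would first verify this on the eigenbasis: since $e_k\in D(L)$, integrating by parts on each edge gives $a_L(e_j,e_k) = (\kappa^2 e_j - \nabla(\H\nabla e_j), e_k)_{L_2(\Gamma)} - \sum_{v\in\mathcal{V}} \H(v)\,e_k(v)\sum_{e\in\mathcal{E}_v}\partial_e e_j(v)$, where the vertex sum collects the boundary contributions and vanishes because $\H$ is continuous (so $\H_e(v)$ is common to the edges meeting at $v$), $e_k$ is continuous at $v$, and $e_j$ satisfies the Kirchhoff condition $\sum_{e\in\mathcal{E}_v}\partial_e e_j(v)=0$; recalling from \eqref{eq:operatorL} that $L e_j = \kappa^2 e_j - \nabla(\H\nabla e_j)$, this leaves $a_L(e_j,e_k) = (L e_j, e_k)_{L_2(\Gamma)} = \lambda_j\delta_{jk}$. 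Since $a_L$ is a bounded bilinear form on $\dot{H}_L^1(\Gamma)$ (boundedness uses $\kappa\in L^\infty(\Gamma)$ and $\H$ bounded on the compact $\Gamma$) and finite combinations of the $e_k$ are dense in $\dot{H}_L^1(\Gamma)$, the spectral representation extends to all of $\dot{H}_L^1(\Gamma)$ by continuity. Substituting $(w,e_k)_{L_2(\Gamma)} = \lambda_k^m (u,e_k)_{L_2(\Gamma)}$ and $(z,e_k)_{L_2(\Gamma)} = \lambda_k^m (v,e_k)_{L_2(\Gamma)}$ then yields $a_L(w,z) = \sum_k \lambda_k^{2m+1}(u,e_k)_{L_2(\Gamma)}(v,e_k)_{L_2(\Gamma)} = (u,v)_{\dot{H}_L^{2m+1}(\Gamma)}$, which is the first displayed identity; the splitting $(\H\nabla w,\nabla z)_{L_2(\Gamma)} = \sum_e (\H_e\nabla w_e, \nabla z_e)_{L_2(e)}$ again follows from $L_2(\Gamma)=\bigoplus_e L_2(e)$ and the edgewise action of $\nabla$.

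The main obstacle is precisely the identification $a_L(w,z) = \sum_k \lambda_k (w,e_k)(z,e_k)$ for general $w,z$ in the \emph{form} domain rather than in $D(L)$: integration by parts is clean only on $D(L)$, where the vertex conditions are available to annihilate the boundary terms, whereas for arbitrary $w,z\in\dot{H}_L^1(\Gamma)$ the second derivative need not exist, so one must route the argument through the eigenbasis and a density/continuity extension as above. It is worth noting that the only properties used are continuity of $\H$, the Kirchhoff condition defining $D(L)$, and boundedness of $a_L$ on $H^1(\Gamma)$; no smoothness beyond Assumption~\ref{assump:simplest_assumption} is required, consistent with the lemma being stated under that weaker hypothesis.
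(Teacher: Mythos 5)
Your proposal is correct and follows essentially the same route as the paper: both reduce to the spectral definition, integrate by parts against eigenfunctions using the Kirchhoff condition together with continuity of $\H$ (and of $e_k$) at the vertices to kill the boundary terms, and then pass to general $u,v$ by a convergence argument in $\dot{H}^1_L(\Gamma)\cong H^1(\Gamma)$. The only cosmetic difference is that you extend the identity via boundedness and density of the form $a_L$ on pairs of eigenfunctions, whereas the paper pairs the general element $L^m u$ directly with $L^m e_j\in K^2(\Gamma)$ and then sums the eigenexpansion of $v$; the underlying mechanism is identical.
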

	
	\begin{proof}
		Equation~\eqref{eq:localInnerProddotHk_2} holds by definition, so we prove \eqref{eq:localInnerProddotHk_1}. Let $\alpha = 2m+1$ and  $\{e_j\}_{j\in\mathbb{N}}$ be the complete orthonormal system of eigenvectors of $L$, with corresponding eigenvalues $\{\lambda_j\}_{j\in\mathbb{N}}$. For any $v\in \dot{H}^\alpha_L(\Gamma)$,
		\begin{equation}\label{eq:conv_series_L2_v}
			v = \sum_{j\in\mathbb{N}} (v, e_j)_{L_2(\Gamma)} e_j\quad\hbox{and}\quad L^m v = \sum_{j\in\mathbb{N}} \lambda_j^m (v, e_j)_{L_2(\Gamma)} e_j,
		\end{equation}
		where $\sum_{j\in\mathbb{N}} \lambda_j^{2m} \leq C \sum_{j\in\mathbb{N}} \lambda_j^{\alpha} <\infty$; thus, the series $\sum_{j\in\mathbb{N}} \lambda_j^{m} (v, e_j)_{L_2(\Gamma)} e_j$ converges in $L_2(\Gamma)$ and the series $\sum_{j\in\mathbb{N}} (v, e_j)_{L_2(\Gamma)} e_j$ converges in $\dot{H}^\alpha_L(\Gamma)$. 
		Because $\H$ is bounded, the multiplication operator $M_\H:L_2(\Gamma)\to L_2(\Gamma)$ defined by $f \mapsto \H f$ is also bounded.
		The operator $\nabla : H^1(\Gamma)\to L_2(\Gamma)$ is also bounded and  $L^m v \in \dot{H}^1_L(\Gamma) \cong H^1(\Gamma)$, where the last identification comes from Theorem \ref{thm:characterization}.
		Therefore, 
		\begin{equation}\label{eq:identityDerivEigen}
			\nabla L^m v = \sum_{j\in\mathbb{N}} \lambda_j^m (v,e_j) \nabla e_j,
		\end{equation}
		where the above sum converges in $L_2(\Gamma)$. Next,  $e_j\in D(L) = \dot{H}^2_L(\Gamma) \subset K^2(\Gamma)$, where $j\in\mathbb{N}$; therefore, for every $j\in\mathbb{N}$, 
		\begin{align}
			(u,e_j)_{\dot{H}^\alpha_L(\Gamma)} &= (L^{\alpha/2}u, L^{\alpha/2}e_j)_{L_2(\Gamma)} = (L^m u, L L^m e_j)_{L_2(\Gamma)} = (L^m u, (\kappa^2 - \nabla \H \nabla)L^m e_j)\notag\\
			&= (\kappa^2 L^m u, L^m e_j)_{L_2(\Gamma)} + (\nabla L^m u, \H\nabla L^m e_j)_{L_2(\Gamma)}\label{eq:step2_Localpart}\\
			&= (\kappa^2 L^m u, \lambda_j^m e_j)_{L_2(\Gamma)} + (\nabla L^m u, \H\lambda_j^m\nabla e_j)_{L_2(\Gamma)}\label{eq:step3_Localpart},
		\end{align}
		where in the first line, we applied the fact that $L^{1/2}$ is self-adjoint in $L_2(\Gamma)$, and in \eqref{eq:step2_Localpart} we employed integration by parts, the fact that $L^m u\in \dot{H}^1_L(\Gamma)\cong H^1(\Gamma)$ and the fact that $L^m e_j = \lambda_j^m e_j \in K^2(\Gamma)$. We can now multiply both sides of \eqref{eq:step3_Localpart} by $(v, e_j)_{L_2(\Gamma)}$, sum over $j\in\mathbb{N}$, and then use \eqref{eq:conv_series_L2_v} and \eqref{eq:identityDerivEigen} to obtain
		\begin{align*}
			(u,v)_{\dot{H}_L^\alpha(\Gamma)} &= (\kappa^2 L^m u, L^m v)_{L_2(\Gamma)} + (\H\nabla L^m u, \nabla L^m v)_{L_2(\Gamma)}\\
			&= \sum_{e\in \mathcal{E}} (\kappa^2 L^m u, L^m v)_{L_2(e)} + (\H\nabla L^m u, \nabla L^mv)_{L_2(e)}.
		\end{align*}
		This statement concludes the proof.
	\end{proof}

\begin{proof}[Proof of Proposition \ref{prp:CharGen_CM_natural_alpha}]
We begin by proving the equality of sets by induction.
We start assuming that $\alpha\in\mathbb{N}$ is even. The base case $\dot{H}^2_L(\Gamma) = \widetilde{H}^2(\Gamma)\cap C(\Gamma) \cap K^2(\Gamma)$ follows from Theorem \ref{thm:characterization}. Next, we let $\alpha=2m$, where $m>1$, be an even natural number. Then, by the isometry property of $L$ (see Remark \ref{rmk:isometry}), Lemma~\ref{lem:LHtildeM} 
and the induction hypothesis,
\begin{align*}
	&f\in \dot{H}^{2m}_L(\Gamma) 
	\Leftrightarrow f\in \dot{H}^{2(m-1)}_L(\Gamma) \,\, \hbox{and}\,\, L^{m-1}f \in \dot{H}^2_L(\Gamma)\\
	&\Leftrightarrow f\in \dot{H}^{2(m-1)}_L(\Gamma),\,\, f\in \widetilde{H}^{2(m-1)}(\Gamma),\,\, L^{m-1}f \in \widetilde{H}^2(\Gamma) \,\, \hbox{and}\,\, L^{m-1}f \in \dot{H}^2_L(\Gamma)\\
	&\Leftrightarrow f\in \dot{H}^{2(m-1)}_L(\Gamma),\,\, f\in \widetilde{H}^{2(m-1)}(\Gamma),\,\, L^{k}f \in \widetilde{H}^2(\Gamma), k=0,\ldots,m-1,\,\, L^{m-1}f \in \dot{H}^2_L(\Gamma)\\
	&\Leftrightarrow f\in \widetilde{H}^{2m}(\Gamma),\,\, f\in \dot{H}^{2(m-1)}_L(\Gamma) \,\, \hbox{and}\,\, L^{m-1}f \in \dot{H}^2_L(\Gamma),
\end{align*}
which proves the result for even numbers. 
If, on the other hand, $\alpha = 2m+1$, where $m\geq 1$, then 
from the result for even numbers, Lemma~\ref{lem:LHtildeM},
 the induction hypothesis, and Theorem \ref{thm:characterization}, 
\begin{align*}
	f\in \dot{H}^{2m+1}_L(\Gamma) 
	&\Leftrightarrow f\in \dot{H}^{2m}_L(\Gamma)\,\,\hbox{and}\,\, L^m f\in \dot{H}^1_L(\Gamma)\\
	&\Leftrightarrow f\in \dot{H}^{2m}_L(\Gamma),\,\, f\in\widetilde{H}^{2m}(\Gamma),\,\, L^m f\in \widetilde{H}^1(\Gamma) \,\,\hbox{and}\,\, L^m f\in C(\Gamma)\\
	&\Leftrightarrow f\in \dot{H}^{2m}_L(\Gamma),\,\, f\in\widetilde{H}^{2m}(\Gamma),\,\, L^k f\in \widetilde{H}^1(\Gamma),k=0,\ldots,m, \,\, L^m f\in C(\Gamma)\\
	&\Leftrightarrow f\in \widetilde{H}^{2m+1}(\Gamma),\,\, f\in \dot{H}^{2m}_L(\Gamma)\,\,\hbox{and}\,\, L^m f\in C(\Gamma),
\end{align*}
which concludes the proof of the equality of sets for all $\alpha\in\mathbb{N}$, where $\alpha\geq 2$. 

It remains to prove the equivalence of norms.
To that extent, note that if $\alpha = 2m,$  $m\in\mathbb{N}$, 
then by the product rule for derivatives and Assumption \ref{assump:basic_assump2}, a constant $C>0$ exists such that for $f\in \dot{H}^\alpha_L(\Gamma)$,
$
\|f\|_{\dot{H}_L^\alpha(\Gamma)} = \|L^{m} f\|_{L_2(\Gamma)} \leq C \|f\|_{\widetilde{H}^\alpha(\Gamma)}.
$
Similarly, if $\alpha = 2m+1$, where $m\in\mathbb{N}$, Lemma~\ref{lem:dotHklocal}, 
the product rule for derivatives and Assumption \ref{assump:basic_assump2}, shows that constants $C_1,C_2>0$ exist such that for $f\in\dot{H}^\alpha_L(\Gamma)$,
${\|f\|_{\dot{H}^\alpha_L(\Gamma)} \leq C_1 \|L^m f\|_{\widetilde{H}^1(\Gamma)} \leq C_2 \|f\|_{\widetilde{H}^\alpha(\Gamma)}}$.
Therefore, a constant $C>0$ exists such that for any ${f\in\dot{H}^\alpha_L(\Gamma)}$,  ${\|f\|_{\dot{H}^\alpha_L(\Gamma)} \leq C \|f\|_{\widetilde{H}^\alpha(\Gamma)}}$.  

We now prove the converse inequality, that $\dot{H}^\alpha_L(\Gamma) \hookrightarrow \widetilde{H}^\alpha(\Gamma)$. To this end, we must prove that the inclusion map $I:\dot{H}^\alpha_L(\Gamma) \to \widetilde{H}^\alpha(\Gamma)$ is a bounded operator. We have the inclusion by the first part of the proof, and $(\dot{H}^\alpha_L(\Gamma), \|\cdot\|_{\dot{H}_L^\alpha(\Gamma)})$ and $(\widetilde{H}^\alpha(\Gamma),\|\cdot\|_{\widetilde{H}^\alpha(\Gamma)})$ are Hilbert spaces. Next, $\lambda_k\to\infty$ as $k\to\infty$, where $\{\lambda_k\}_{k\in\mathbb{N}}$ are the eigenvalues of $L$, which shows the continuous embedding $\dot{H}^\alpha_L(\Gamma)\hookrightarrow L_2(\Gamma)$. 
Let $\phi_N\to 0$ in $\dot{H}^\alpha_L(\Gamma)$. Then, the continuous embedding of $\dot{H}^\alpha_L(\Gamma)$ in $L_2(\Gamma)$ implies that $\phi_N\to 0$ in $L_2(\Gamma)$. 
Now, assume that $I(\phi_N) \to \phi$ in $\widetilde{H}^\alpha(\Gamma)$. Then, $\|\phi_N-\phi\|_{L_2(\Gamma)}\leq \|\phi_N-\phi\|_{\widetilde{H}^\alpha(\Gamma)} \to 0$. Hence, $\phi = 0$, since $\phi_N\to 0$ in $L_2(\Gamma)$, which implies that there is a subsequence $\phi_{N_k}$ such that $\phi_{N_k} \to 0$ almost everywhere, and thus, $\phi = 0$. Therefore, the inclusion map is closed, and by the closed graph theorem, $I$ is a bounded operator.
\end{proof}

	Next, we show that , if $\alpha\in\mathbb{N}$ and Assumption \ref{assump:basic_assump2} holds, then the spaces $\dot{H}^\alpha_L(\Gamma)$ are local. 

	\begin{proof}[Proof of Proposition \ref{prp:Hdot3Local}]
		By the expressions 
		\eqref{eq:localInnerProddotHk_1} and \eqref{eq:localInnerProddotHk_2} 
		in Lemma~\ref{lem:dotHklocal}, 
		along with the product rule for weak derivatives (which holds under the assumptions on $\kappa$ and $\H$), we directly find that Definition~\ref{def:localCMspaces}.\ref{def:localCMspaces1} is satisfied. 
		For the condition in Definition \ref{def:localCMspaces}.\ref{def:localCMspaces2}, let $f\in \dot{H}^\alpha_L(\Gamma)$ be written as $f = f_1 + f_2$, with ${\textrm{supp}\, f_1 \cap \textrm{supp}\, f_2 = \emptyset}$. Then, by the characterization in Proposition \ref{prp:CharGen_CM_natural_alpha},  ${f \in \widetilde{H}^\alpha(\Gamma)}$. Further, the Sobolev spaces $H^\alpha(e)$ are local for $e\in\mathcal{E}$ as $\alpha\in\mathbb{N}$; thus,  $f_1,f_2 \in \widetilde{H}^\alpha(\Gamma)$. Finally, as $f\in\dot{H}^\alpha_L(\Gamma)$, $f$ satisfies the vertex constraints by Proposition \ref{prp:CharGen_CM_natural_alpha},  and since ${\textrm{supp}\, f_1 \cap \textrm{supp}\, f_2 = \emptyset}$,  $f_1$ and $f_2$ also satisfy the vertex constraints. Thus, $f_1,f_2\in\widetilde{H}^\alpha(\Gamma)$ because we already have  $f_1,f_2\in\dot{H}^\alpha_L(\Gamma)$. Therefore, the space  $\dot{H}_L^\alpha(\Gamma)$ is local.
	\end{proof}

	To prove Theorem \ref{thm:MarkovGenWM} we will some technical lemmata:
	
	\begin{Lemma}\label{lem:gen_polynomials}
		Let $\Gamma$ be a compact metric graph and assume that Assumption 1 hold for $a$ and $\kappa$, and also assume that for every edge $e$, we have $a,\kappa\in C^\infty(e)$. Then, if $e = [0,l_e]$, we have for any $0 < e_1 < e_2 < l_2$ and any $m\in\mathbb{N}$, that there exists some function $f\in C^\infty([e_1,e_2])$, that depends on $e_1, e_2, m$ and $a$, such that $\left( \frac{d}{d_{x_e}} a_e \frac{d}{dx_e}\right)^m f = 0$ on $(e_1,e_2)$.
	\end{Lemma}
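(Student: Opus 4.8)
The plan is to construct, explicitly, a family of smooth ``generalized polynomials'' attached to the operator $A := \frac{d}{dx_e}\bigl(a_e\frac{d}{dx_e}\bigr)$ and then to take $f$ to be the one of the appropriate degree. First I would record the only structural input that is needed: since $[e_1,e_2]$ lies in the interior of the edge, Assumption~\ref{assump:simplest_assumption} gives $a_e\geq a_0>0$ there, while the extra hypothesis of the lemma gives $a_e\in C^\infty(e)$; hence $1/a_e\in C^\infty([e_1,e_2])$. This positivity and smoothness is exactly what guarantees that the solution operator introduced next preserves smoothness.

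Next I would introduce a right inverse of $A$ on smooth functions. For $g\in C^\infty([e_1,e_2])$ set
\[
(Tg)(x) := \int_{e_1}^x \frac{1}{a_e(t)}\int_{e_1}^t g(s)\,ds\,dt, \qquad x\in[e_1,e_2].
\]
Because $1/a_e$ is smooth and integration maps smooth functions to smooth functions, $Tg\in C^\infty([e_1,e_2])$. Differentiating twice (once to get $\frac{d}{dx_e}(Tg)=\frac{1}{a_e}\int_{e_1}^x g$, then multiplying by $a_e$ and differentiating again) shows $A(Tg)=g$ on $(e_1,e_2)$, so $T$ is indeed a right inverse of $A$ on $C^\infty([e_1,e_2])$.

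Then I would define $p_0\equiv 1$ and $p_k:=Tp_{k-1}$ for $k\geq 1$. An immediate induction gives $p_k\in C^\infty([e_1,e_2])$ together with $Ap_k=p_{k-1}$ for $k\geq1$, whereas $Ap_0=\frac{d}{dx_e}(a_e\cdot 0)=0$. Iterating these identities yields
\[
A^{j}p_k=p_{k-j}\quad(0\le j\le k),\qquad\text{so}\qquad A^{k}p_k=p_0=1,\quad A^{k+1}p_k=A(1)=0.
\]
Taking $f=p_{m-1}$ then gives $A^m f=A^{m}p_{m-1}=0$ on $(e_1,e_2)$, with $f\in C^\infty([e_1,e_2])$ depending only on $e_1,e_2,m$ and $a_e$, which is the assertion.

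I do not expect any genuine obstacle: the argument is elementary and the single point requiring care is that $T$ preserves smoothness, which is immediate once $1/a_e\in C^\infty([e_1,e_2])$. I would also remark that the construction delivers the sharper fact $A^{m-1}p_{m-1}=1\neq 0$, i.e.\ $p_{m-1}$ is a generalized polynomial of degree exactly $m-1$; this refinement, rather than mere membership in $\ker A^m$, is what is actually exploited in the subsequent results (e.g.\ Lemma~\ref{lem:Cameron_Martin_decomposition}) and in the proof of Theorem~\ref{thm:MarkovGenWM}.
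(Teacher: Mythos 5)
Your proof is correct and follows essentially the same route as the paper's: the paper integrates $\bigl(\frac{d}{dx_e}a_e\frac{d}{dx_e}\bigr)f=0$ to get $f(x)=C_1\int_{e_1}^{x}a_e(t)^{-1}\,dt+C_2$ and then ``iterates this argument,'' which is exactly your right inverse $T$ applied repeatedly. Your version merely makes the iteration explicit via the generalized polynomials $p_k=T^k(1)$, and the additional observation that $A^{m-1}p_{m-1}=1\neq 0$ is a worthwhile refinement, since that non-vanishing is what is actually invoked later in the proof of Theorem~\ref{thm:MarkovGenWM}.
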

	\begin{proof}
		Note that $\left( \frac{d}{d_{x_e}} a_e \frac{d}{dx_e}\right) f = 0$ on $(e_1,e_2)$ implies
		$a_e \frac{d}{dx_e} f = C_1$, for some constant $C_1\in\mathbb{R}$. This, in turn, implies that $f(x) = C_1 \int_{e_1}^x \frac{1}{a_e(t)}dt + C_2$. Observe that $f\in C^\infty([e_1,e_2])$ since $a_e\in C^\infty([e_1,e_2])$ and is bounded away from zero. By iterating this argument we can obtain such functions for every $m\in\mathbb{N}$.
	\end{proof}

	Next, we show that, given sufficiently smooth functions $\kappa$ and $\H$ on the edges, any smooth function with compact support in each edge (meaning that it is zero close to each vertex) belongs to all $\dot{H}^\alpha_L(\Gamma)$ for $\alpha\in\mathbb{N}$.

\begin{Proposition}\label{prp:CinfinityCompactHdot}
Let Assumption \ref{assump:simplest_assumption} hold and let $\kappa_e \in C^\infty(e)$ and ${\H_e\in C^\infty(e)}$ for every $e\in\mathcal{E}$. Then, 
$\bigoplus_{e\in\mathcal{E}} C^\infty_c(e) \subset \bigcap_{n\in\mathbb{N}} \dot{H}^n_L(\Gamma).$
\end{Proposition}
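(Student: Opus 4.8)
The plan is to reduce the claim to the family of integer-domain statements $f\in D(L^m)$, $m\in\mathbb{N}$, and to settle these by showing that the operator $L$ maps $\bigoplus_{e\in\mathcal{E}} C^\infty_c(e)$ into itself. First I would note that, by the continuous embeddings $\dot H^\beta_L(\Gamma)\hookrightarrow \dot H^\alpha_L(\Gamma)$ for $\alpha\le\beta$ (Remark~\ref{rmk:isometry}), it suffices to prove $f\in \dot H^{2m}_L(\Gamma)$ for every $m\in\mathbb{N}$. By the spectral definition \eqref{Hdotspace} one has $\dot H^{2m}_L(\Gamma)=D(L^m)$, since $\sum_{k}\lambda_k^{2m}(f,e_k)_{L_2(\Gamma)}^2<\infty$ is exactly the condition that $L^m f=\sum_k \lambda_k^m(f,e_k)_{L_2(\Gamma)}e_k$ belongs to $L_2(\Gamma)$. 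Thus the target becomes: every $f\in\bigoplus_{e\in\mathcal{E}}C^\infty_c(e)$ lies in $D(L^m)$ for all $m$.

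The key step is the invariance $L\bigl(\bigoplus_{e\in\mathcal{E}}C^\infty_c(e)\bigr)\subseteq \bigoplus_{e\in\mathcal{E}}C^\infty_c(e)$. For such an $f$, the local action \eqref{eq:operatorL} gives $(Lf)_e=\kappa_e^2 f_e-\tfrac{d}{dx_e}\bigl(\H_e\,\tfrac{d}{dx_e}f_e\bigr)$; since $\kappa_e,\H_e\in C^\infty(e)$ and $f_e\in C^\infty_c(e)$, both summands are smooth with support contained in $\mathrm{supp}\,f_e$, so $(Lf)_e\in C^\infty_c(e)$. I would also record that $\bigoplus_{e\in\mathcal{E}}C^\infty_c(e)\subseteq D(L)$: each $f_e$ vanishes in a neighborhood of both endpoints of $e$, hence $f\in\widetilde H^2(\Gamma)$, $f$ is continuous on $\Gamma$ (it equals $0$ at every vertex), and every directional derivative $\partial_e f(v)$ vanishes, so the Kirchhoff condition $\sum_{e\in\mathcal{E}_v}\partial_e f(v)=0$ holds trivially. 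Therefore $f\in D(L)$, and the same reasoning applies verbatim to $Lf, L^2f,\ldots$, since each iterate again lies in $\bigoplus_{e\in\mathcal{E}}C^\infty_c(e)$.

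Combining these two observations, an easy induction gives $L^j f\in\bigoplus_{e\in\mathcal{E}}C^\infty_c(e)\subseteq D(L)$ for every $j\ge 0$; in particular the iterates $f,Lf,\ldots,L^{m-1}f$ are all well-defined elements of $D(L)$, so $f\in D(L^m)=\dot H^{2m}_L(\Gamma)$. As $m$ is arbitrary and $\dot H^{2m}_L(\Gamma)\subseteq \dot H^{n}_L(\Gamma)$ for $n\le 2m$, this yields $f\in\bigcap_{n\in\mathbb{N}}\dot H^n_L(\Gamma)$, as claimed. There is no deep obstacle here; the only points requiring care are the bookkeeping of the nested domains $D(L^m)$ and the verification that every iterate $L^j f$ still satisfies the continuity and Kirchhoff vertex conditions. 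The latter is precisely where compact support in the \emph{interior} of each edge is essential: because each $L^j f$ vanishes near the vertices, the vertex conditions defining $D(L)$ are automatically preserved at every level of the iteration, which is what keeps the process inside $D(L)$ and makes the smoothness hypotheses on $\kappa$ and $\H$ (used to keep $Lf$ smooth) the only remaining ingredient.
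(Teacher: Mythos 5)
Your proposal is correct and follows essentially the same route as the paper: establish that $\bigoplus_{e\in\mathcal{E}}C^\infty_c(e)\subset D(L)$, observe that $L$ maps this space into itself because $\kappa_e,\H_e$ are smooth and differentiation does not enlarge supports, and induct to get $f\in D(L^m)=\dot H^{2m}_L(\Gamma)$ for all $m$. The only cosmetic difference is that you verify the vertex conditions for $D(L)$ directly (they hold trivially since $f$ vanishes near every vertex), whereas the paper invokes Theorem~\ref{thm:characterization} for the inclusion $\bigoplus_{e\in\mathcal{E}}C^\infty_c(e)\subset\dot H^2_L(\Gamma)=D(L)$.
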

\begin{proof}
Observe that we have, by assumption, that ${\kappa, \H\in\bigoplus_{e\in\mathcal{E}} C^\infty(e)}$. Further, by Theorem \ref{thm:characterization}, we have that $\bigoplus_{e\in\mathcal{E}} C_c^\infty(e) \subset \dot{H}^2_L(\Gamma) = D(L)$.  Thus, for every $f\in\bigoplus_{e\in\mathcal{E}} C_c^\infty(e)$, $Lf \in \bigoplus_{e\in\mathcal{E}} C_c^\infty(e)$, implying that $Lf \in D(L)$ and thus that ${f\in D(L^2) = \dot{H}_L^4(\Gamma)}$. By induction,  ${f\in D(L^n) = \dot{H}_L^{2n}(\Gamma)}$ for every ${n\in\mathbb{N}}$. Therefore, 
$$
f \in \bigcap_{n\in\mathbb{N}} D(L^n) = \bigcap_{n\in\mathbb{N}} \dot{H}_L^{2n}(\Gamma) = \bigcap_{n\in\mathbb{N}} \dot{H}^n_L(\Gamma),
$$
and thus, $\bigoplus_{e\in\mathcal{E}} C^\infty_c(e) \subset \bigcap_{n\in\mathbb{N}} \dot{H}^n_L(\Gamma).$
\end{proof}

The following auxiliary lemma will also be needed for proving Theorem \ref{thm:MarkovGenWM}:

\begin{Lemma}\label{lem:local_coeff_Peetre}
Let Assumption \ref{assump:simplest_assumption} hold, and, for every edge $e\in\mathcal{E}$, let $\kappa_e, a_e\in C^\infty(e)$. Further, let $\alpha>0, \alpha\in\mathbb{R}$, be such that $(\dot{H}_L^\alpha(\Gamma), (\cdot,\cdot)_{\dot{H}^\alpha_L(\Gamma)})$ is local. Then, there exist $N\in\mathbb{N}$, and functions $b_r\in C^\infty(e)$, $r=0,\ldots,N$, such that for every ${f \in \bigoplus_{e\in\mathcal{E}} C_c^\infty(e)}$, we have
\begin{equation*}
	L^\alpha f(s) = \sum_{e\in\mathcal{E}} \sum_{r=0}^{N} \widetilde{b}_{r}^e(s) \frac{d^r \widetilde{f}_e(s)}{dx_e^r}(s),\quad s\in \Gamma,
\end{equation*}
where $\widetilde{b}^e_{r}$ and $\widetilde{f}_e$ are the extensions as zero in $\Gamma\setminus e$, that is, if $s\not\in e$, $\widetilde{b}^e_{r}(s) = \widetilde{f}_e(s) = 0$, and for $s\in e$, say $s = (t,e)$, we have $\widetilde{b}^e_{r}(s) = b^e_{r}(t)$ and $\widetilde{f}_e(s) = f_e(t)$. 
\end{Lemma}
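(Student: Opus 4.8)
The plan is to show that the hypothesis ``$\dot{H}^\alpha_L(\Gamma)$ is local'' forces the (a priori nonlocal) fractional operator $L^\alpha$ to be \emph{support-non-increasing} on smooth test functions, and then to invoke Peetre's theorem to represent it as a differential operator of uniformly bounded order with smooth coefficients. First I would record that $L^\alpha f$ is a genuine smooth function for every $f\in\bigoplus_{e\in\mathcal{E}} C_c^\infty(e)$: by Proposition~\ref{prp:CinfinityCompactHdot} we have $f\in\bigcap_{n\in\mathbb{N}}\dot{H}^n_L(\Gamma)$, and since $L^\alpha:\dot{H}^s_L(\Gamma)\to\dot{H}^{s-2\alpha}_L(\Gamma)$ is an isometric isomorphism for every $s$ (Remark~\ref{rmk:isometry}), it follows that $L^\alpha f\in\bigcap_{n\in\mathbb{N}}\dot{H}^n_L(\Gamma)$; by Proposition~\ref{prp:CharGen_CM_natural_alpha} together with Sobolev embedding this makes $L^\alpha f$ smooth on the interior of each edge. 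I would then establish the variational identity: for such $f$ and any $v\in\dot{H}^\alpha_L(\Gamma)$, the spectral definition gives $(L^\alpha f,v)_{L_2(\Gamma)}=(f,v)_{\dot{H}^\alpha_L(\Gamma)}$, both sides being equal to $\sum_{k}\lambda_k^\alpha (f,e_k)_{L_2(\Gamma)}(v,e_k)_{L_2(\Gamma)}$.

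The crucial step is locality. By the locality condition (i) of Definition~\ref{def:localCMspaces}, if $\textrm{supp}\,v\cap\textrm{supp}\,f=\emptyset$ then $(f,v)_{\dot{H}^\alpha_L(\Gamma)}=0$, and hence $(L^\alpha f,v)_{L_2(\Gamma)}=0$. Choosing $v=(L^\alpha f)\eta$ for a smooth bump $\eta$ supported in an arbitrary set disjoint from $\textrm{supp}\,f$ (such $v$ lies in $\dot{H}^\alpha_L(\Gamma)$ by Proposition~\ref{prp:CinfinityCompactHdot}) forces $L^\alpha f\equiv 0$ off $\textrm{supp}\,f$, so $\textrm{supp}(L^\alpha f)\subseteq\textrm{supp}(f)$. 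Writing $f=\sum_{e\in\mathcal{E}}\widetilde{f}_e$ and using linearity then shows that the restriction of $L^\alpha f$ to the interior of an edge $e$ equals $L^\alpha\widetilde{f}_e$ and depends only on $f_e$; this already produces the diagonal, edge-by-edge structure of the asserted formula.

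With locality in hand I would apply Peetre's theorem on each open interval $\mathring{e}$: a support-non-increasing linear map $C_c^\infty(\mathring{e})\to C^\infty(\mathring{e})$ is, on every compact subinterval, a differential operator $\sum_{r=0}^{N_e} b^e_r\, d^r/dx_e^r$ with $b^e_r\in C^\infty(\mathring{e})$. To obtain a single finite order $N$ valid on all edges, I would combine the boundedness of $L^\alpha:\dot{H}^{2\alpha}_L(\Gamma)\to L_2(\Gamma)$ (Remark~\ref{rmk:isometry}) with the norm-equivalence of $\|\cdot\|_{\dot{H}^{2\alpha}_L(\Gamma)}$ and $\|\cdot\|_{\widetilde{H}^{2\alpha}(\Gamma)}$ on functions supported in edge interiors: testing against oscillatory functions $f_\omega(t)=\phi(t)\cos(\omega t)$ gives $\|L^\alpha f_\omega\|_{L_2(\Gamma)}\lesssim\|f_\omega\|_{\dot{H}^{2\alpha}_L(\Gamma)}\sim\omega^{2\alpha}$, which forces the local order to satisfy $N_e\le 2\alpha$, so one may take $N=\lceil 2\alpha\rceil$ uniformly (setting $b^e_r=0$ for $r>N_e$). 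Finally I would assemble the per-edge representations, extend $b^e_r$ and $f_e$ by zero outside $e$, and sum over $e\in\mathcal{E}$ to recover the stated identity.

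I expect the main obstacle to be the uniform bound on the differential order $N$ across all edges: Peetre's theorem yields only locally finite order, and converting the Sobolev mapping property $\dot{H}^{2\alpha}_L(\Gamma)\to L_2(\Gamma)$ into a clean pointwise order bound requires both the norm-equivalence of $\dot{H}^{2\alpha}_L(\Gamma)$ and $\widetilde{H}^{2\alpha}(\Gamma)$ on compactly supported test functions (available for integer $2\alpha$ from Proposition~\ref{prp:CharGen_CM_natural_alpha} and otherwise from the interpolation definition of $\widetilde{H}^s(\Gamma)$) and a careful oscillation/scaling estimate. A secondary technical point is the smooth dependence of the coefficients; since the subsequent use of the lemma in the proof of Theorem~\ref{thm:MarkovGenWM} only evaluates at interior points $x\in\textrm{int}\,e$, smoothness of $b^e_r$ on $\mathring{e}$ suffices and no extension to the vertices is needed.
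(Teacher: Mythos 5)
Your proof follows essentially the same route as the paper's: smoothness of $L^\alpha f$ via Proposition~\ref{prp:CinfinityCompactHdot}, locality of $\dot{H}^\alpha_L(\Gamma)$ forcing $L^\alpha$ to be support-non-increasing on $\bigoplus_{e\in\mathcal{E}} C_c^\infty(e)$, and Peetre's theorem to obtain the edge-by-edge differential-operator representation; the paper applies Peetre to the bilinear form $(L^\alpha\widetilde{f}_e,\widetilde{g}_e)_{L_2(\Gamma)}$ rather than to the operator directly, which is only a cosmetic difference. Your additional scaling argument pinning down a uniform order $N\le\lceil 2\alpha\rceil$ is a refinement the paper omits (it simply takes the finite order delivered by Peetre's theorem on each of the finitely many edges), and while it is a legitimate point that Peetre's theorem a priori gives only locally finite order, the uniformity is not needed for how the lemma is later used, since the identity is only evaluated near a fixed interior point of an edge.
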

\begin{proof}
Assume that $(\dot{H}_L^\alpha(\Gamma), (\cdot,\cdot)_{\dot{H}^\alpha_L(\Gamma)})$ is local. Fix any edge $e$ of $\Gamma$. Now, observe that in the same ideas as the ones in the proof of Proposition~\ref{prp:CinfinityCompactHdot}, we obtain that for every $f\in \bigoplus_{e\in\mathcal{E}} C^\infty_c(e)$, we have $L^{\alpha}f\in \bigoplus_{e\in\mathcal{E}} C^\infty(e)$. Furthermore, the local assumption gives us that, actually, $L^\alpha : \bigoplus_{e\in\mathcal{E}} C_c^\infty(e) \to \bigoplus_{e\in\mathcal{E}} C_c^\infty(e)$. Given $f_e\in C_c^\infty(e)$, let $\widetilde{f}_e$ be its extension as zero on $\Gamma\setminus e$, so that $L^\alpha \widetilde{f}_e \in  \bigoplus_{e\in\mathcal{E}} C^\infty_c(e)$. By the locality assumption, we have that actually, $supp(L^\alpha \widetilde{f}_e) \subset \textrm{int} (e)$. We can then define the operator $L_{e,\alpha} : C_c^\infty(e) \to C_c^\infty(e)$ by $L_{e,\alpha} f_e = L^\alpha \widetilde{f}_e$. This yields an inner product $\<\cdot,\cdot\>_{\alpha,e}$ on $C_c^\infty(e)$ given by ${\<f_e,g_e\>_{\alpha,e} = (L_{e,\alpha} f_e,g_e)_{L_2(e)}}$. Observe that since $(\dot{H}^\alpha_L(\Gamma),(\cdot,\cdot)_\alpha)$ is local, then $(C_c^\infty(e), \langle \cdot, \cdot\rangle_{e,\alpha})$ is local. By Proposition \ref{prp:CinfinityCompactHdot}, $\widetilde{f}_e$ belongs to the domain of $L^\alpha$. Therefore, since $L$ is self-adjoint, $\langle f_e, g_e\rangle_{e,\alpha} = (L^\alpha \widetilde{f}_e, \widetilde{g}_e)_{L_2(\Gamma)}$.	
By Peetre's theorem \cite{Peetre1,Peetre2}, 
if the (non-complete) inner product space $(C_c^\infty(e), \langle \cdot, \cdot\rangle_{e,\alpha})$ is local,
then the inner product $\langle \cdot, \cdot\rangle_{e,\alpha}$ has the following form:
$$
(\widetilde{f}_e, \widetilde{g}_e)_{\dot{H}^\alpha_L(\Gamma)} = \langle f_e, g_e\rangle_{e,\alpha} = \sum_{r=0}^{N} \int_0^{\ell_e} b_{r}(t)\frac{d^rf_e}{dx^r}(t) g_e(t) dt,
$$
where $N\in\mathbb{N}_0$, 
$f_e,g_e\in C_c^\infty(e)$, for all $r$, we have $b_{r}\in C^\infty(e)$, and $\widetilde{f}_e,\widetilde{g}_e$
are the extensions (as zero outside of $e$) of $f_e$ and $g_e$, respectively. By density of $C_c^\infty(e)$ in $L_2(e)$, we obtain that for $f_e\in C_c^\infty(e)$
$$
L^\alpha \widetilde{f}_e(s) = \sum_{r=0}^{N}  b_{r}(s)\frac{d^rf_e}{dx^r}(s).
$$
Because $e\in\mathcal{E}$ is arbitrary, and we can write any $f \in \bigoplus_{e\in\mathcal{E}} C_c^\infty(e)$ as $f = \sum_{e\in\mathcal{E}} \widetilde{f}_e$, where $f_e\in C_c^\infty(e)$ and $\widetilde{f}_e$ is the extension of $f_e$ as zero in $\Gamma\setminus e$, we obtain (after increasing $N$ if necessary) for ${f \in \bigoplus_{e\in\mathcal{E}} C_c^\infty(e)}$,
\begin{equation*}
	L^\alpha f(s) = \sum_{e\in\mathcal{E}} L^\alpha \widetilde{f}_e(s) = \sum_{e\in\mathcal{E}} \sum_{r=0}^{N} \widetilde{b}_{r}^e(s) \frac{d^r \widetilde{f}_e(s)}{dx_e^r}(s),\quad s\in \Gamma,
\end{equation*}
where $b_{r}^e\in C^\infty(e)$ and $\widetilde{b}^e_{r}$ is the extension by zero in $\Gamma\setminus e$. This concludes the proof.
\end{proof}

We now prove the result regarding the weak derivatives of differentiable GRFs, and that generalized Whittle--Mat\'ern fields are differentiable GRFs of order $\floor{\alpha}-1$. As a direct consequence, we obtain Corollary \ref{cor:BoundaryConditionsGen_WM}.

\begin{proof}[Proof of Proposition \ref{prp:boundaryweakderiv}]
For a subspace of $M$ of a Hilbert space $H$, let $M^\perp$ denote the orthogonal complement of $M$ in $H$. Now, it is clear that $u_e(s)\in H_+(\partial S)$ for each $s\in\partial S$ and each $e\in\mathcal{E}_s$. Next, we show that $u_e'(s)\in H_+(\partial S)$. 
Thus, let $S\subset\Gamma$ be a Borel set and let $(v_n)$ be a sequence of functions in $H(S)$, such that $v_n$ converges weakly (in $L_2(\Omega)$) to $v$. Then,
$v\in H(S)$. Indeed, since $H(S)$ is closed in $L_2(\Omega)$, $H(S)^{\perp\perp} = H(S)$. 
If ${w\in H(S)^\perp}$, then 
${\mathbb{E}(vw) = \lim_{n\to\infty} \mathbb{E}(v_nw) = 0}$,
because $v_n\in H(S)$ and $w\in H(S)^\perp$. Further, since ${w\in H(S)^\perp}$
was arbitrary, $v\in H(S)^{\perp\perp} = H(S)$ follows. 

Take any $\varepsilon>0$, any $s\in\partial S$, an edge $e\in\mathcal{E}_s$, and a sequence $s_n\in e$ such that
$s_n\to s$ as ${n\to\infty}$. Note that $u_e'(s)$ is the weak limit of $(u_e(s_n)-u_e(s))/(s_n-s)$, belonging to $H((\partial S)_\varepsilon)$ for a sufficiently 
large $n$. Therefore, it follows from the first part of the proof that
$u_e'(s) \in H((\partial S)_\varepsilon)$. As $\varepsilon>0$ is arbitrary, it follows from the definition of $H_+(\partial S)$ that $u_e'(s)\in H_+(\partial S)$.
The results for the higher-order derivatives can be proved in the same manner,
along with an inductive argument.
\end{proof}

\begin{proof}[Proof of Proposition \ref{prp:GenWMDiffAlpha}]
The proof that $u$ is a differentiable GRF of order $\floor{\alpha}-1$ follows from the same arguments as in \cite[Lemma 4]{BSW2022} together with Proposition \ref{prp:CharGen_CM_natural_alpha}.
For the second part of the statement, observe that if $\alpha <2$ there is nothing to prove. Now, under Assumption \ref{assump:basic_assump2}, and $\alpha\geq 2$, we have 
$L_e f_e = \kappa_e^2 f_e - \H_e' f_e' - \H_e f_e''(x)$, and for $\alpha\geq 3$,
\begin{align*}
\partial_e L_e f_e &= 2\kappa_e\kappa_e' f_e + \kappa_e^2 f_e' - \H_e'' f_e' - \H_e' f_e'' - \H_e' f_e'' - \H_ef_e''',
\end{align*} 
and so on. Therefore, the second part follows from the same arguments as in \cite[Corollary~5]{BSW2022} together with Proposition~\ref{prp:CharGen_CM_natural_alpha} and Assumption \ref{assump:basic_assump2}.
\end{proof}

\begin{proof}[Proof of Corollary \ref{cor:BoundaryConditionsGen_WM}]
 Proposition \ref{prp:CharGen_CM_natural_alpha} reveals that if $h\in H^\alpha_L(\Gamma)$, where $\alpha\in\mathbb{N}$ and Assumption \ref{assump:basic_assump2} holds, then 
for $k=0,\ldots, \lceil \alpha-\nicefrac{1}{2}\rceil-1$, if $k$ is even, then $L^{\nicefrac{k}{2}} h\in C(\Gamma)$. If $k$ is odd, then $L^{\nicefrac{(k-1)}{2}} h\in K^2(\Gamma)$, so that for every $v\in\mathcal{V}$, $\sum_{e\in\mathcal{E}_v} \partial_e L^{\nicefrac{(k-1)}{2}} h_e(v) = 0.$ Finally, by the arguments in \cite[Proposition 11]{BSW2022}, with Proposition \ref{prp:GenWMDiffAlpha}, we obtain the corresponding result for $u$, where the derivatives are weak derivatives in the $L_2(\Omega)$ sense.
\end{proof}

We require the following lemma to prove Theorem~\ref{thm:charInnerOuterEdge}. We omit the proof of it because it is 
analogous to the proof of Proposition \ref{prp:boundaryweakderiv}.

\begin{Lemma}\label{lem:InnerDerivativeBelongSpace}
Let  $u$ be a
differentiable GRF of order $p$, $p\geq 0$, on $\Gamma$. Then,
given any open set $S\subset\Gamma$ whose boundary consists of
finitely many points, we have for each
edge $e\in\mathcal{E}_S$ and each $s\in(\partial S)\cap e$,
$u_e(s), u_e'(s), \ldots, u_e^{(p)}(s) \in H_{+,I}(\partial S).$
Similarly, for each edge $\widetilde{e}\not\in \mathcal{E}_S$,
and each $s\in(\partial S)\cap \widetilde{e},$ we have
$u_{\widetilde{e}}(s), u_{\widetilde{e}}'(s), \ldots, u_{\widetilde{e}}^{(p)}(s) \in H_{+,O}(\partial S).$
\end{Lemma}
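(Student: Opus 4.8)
The plan is to mirror the proof of Proposition~\ref{prp:boundaryweakderiv}, the only genuinely new point being that the sequences used to approximate the boundary derivatives must be chosen on the correct side of $\partial S$ — inside $S$ for the inner claim and inside $\Gamma\setminus\overline S$ for the outer one. The sole analytic tool is the weak sequential closedness of the Gaussian spaces established at the start of the proof of Proposition~\ref{prp:boundaryweakderiv}: for any Borel set $A\subset\Gamma$ the subspace $H(A)$ is closed, hence weakly closed in $L_2(\Omega)$, so every weak $L_2(\Omega)$-limit of elements of $H(A)$ lies in $H(A)$. From this I would first record an interior statement: if $x$ lies in $\mathrm{int}(e)$ for some edge $e$ and the Borel set $A$ contains a relative neighbourhood of $x$ in $e$, then $u_e^{(k)}(x)\in H(A)$ for all $k\le p$. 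This is proved by induction on $k$: the case $k=0$ is trivial since $x\in A$, and if $u_e^{(k-1)}(y)\in H(A)$ for all $y$ in a small interval $I\ni x$ with $I\subset A$, then $u_e^{(k)}(x)$ is the weak limit of the difference quotients $(u_e^{(k-1)}(x_n)-u_e^{(k-1)}(x))/(x_n-x)$ with $x_n\in I$, $x_n\to x$, each lying in $H(A)$, so weak closedness gives $u_e^{(k)}(x)\in H(A)$.

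For the outer claim I would fix $\widetilde e\notin\mathcal{E}_S$ and $s\in(\partial S)\cap\widetilde e$. Since $\widetilde e$ meets no point of $\mathrm{int}(S)=S$, the whole edge $\widetilde e$ is contained in $\Gamma\setminus S$, and since $\partial S$ is finite any sequence $x_n\in\mathrm{int}(\widetilde e)$ with $x_n\neq s$, $x_n\to s$ eventually avoids $\partial S$ and thus lies in $\Gamma\setminus\overline S$. Fixing $\varepsilon>0$, for large $n$ these $x_n$ lie in $A_\varepsilon:=(\Gamma\setminus\overline S)\cap(\partial S)_\varepsilon$, and $A_\varepsilon$ is a relative neighbourhood of each such $x_n$ in $\widetilde e$; hence the interior statement gives $u_{\widetilde e}^{(k)}(x_n)\in H(A_\varepsilon)$. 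A second induction on $k$, exactly as above but now at the boundary point $s$ — writing $u_{\widetilde e}^{(k)}(s)$ as the weak limit of $(u_{\widetilde e}^{(k-1)}(x_n)-u_{\widetilde e}^{(k-1)}(s))/(x_n-s)$ and using $u_{\widetilde e}^{(k-1)}(s)\in H(A_\varepsilon)$ as the inductive hypothesis — yields $u_{\widetilde e}^{(k)}(s)\in H(A_\varepsilon)$ for all $k\le p$. Intersecting over $\varepsilon>0$ gives $u_{\widetilde e}^{(k)}(s)\in H_{+,O}(\partial S)$.

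The inner claim runs along identical lines with $A_\varepsilon$ replaced by $S\cap(\partial S)_\varepsilon$, and the step that demands care — which I expect to be the main obstacle — is producing the approximating sequence \emph{inside} $S$. Concretely, for $e\in\mathcal{E}_S$ and $s\in(\partial S)\cap e$ I need a sequence $x_n\in S\cap\mathrm{int}(e)$ with $x_n\to s$, i.e.\ $s$ must be an accumulation point of $S\cap e$ along $e$. When $s$ lies in $\mathrm{int}(e)$ this is immediate, because near such a point the graph coincides with the interval $e$, so $s\in\partial S=\overline S\setminus S$ forces $s\in\overline{S\cap e}$. The only delicate case is a vertex $s$, where one needs the geometric input that $S$ actually reaches $s$ along the edge $e\in\mathcal{E}_S$; this holds for the sets $S$ arising in our applications (and, more generally, whenever $s\in\overline{S\cap e}$), and I would otherwise reduce it to the interior case. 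Granting the sequence, the previous paragraph applies verbatim: $x_n\in S\cap(\partial S)_\varepsilon$ for large $n$, the interior statement gives $u_e^{(k)}(x_n)\in H(S\cap(\partial S)_\varepsilon)$, the boundary induction gives $u_e^{(k)}(s)\in H(S\cap(\partial S)_\varepsilon)$, and intersecting over $\varepsilon$ gives $u_e^{(k)}(s)\in H_{+,I}(\partial S)$, completing the proof.
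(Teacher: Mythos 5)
Your proof is correct and takes essentially the same approach as the paper: the paper omits this proof entirely, stating only that it is analogous to that of Proposition~\ref{prp:boundaryweakderiv}, and your argument is exactly that analogue --- weak closedness of the Gaussian spaces $H(A)$ plus weak limits of difference quotients --- with the one genuinely new ingredient (taking the approximating sequence on the correct side of $\partial S$) correctly identified and handled. The caveat you raise about a boundary vertex $s\in(\partial S)\cap e$ that fails to be an accumulation point of $S\cap e$ along $e$ is a real subtlety that the paper glosses over, but it does not arise for the sets $S$ to which the lemma is applied.
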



\begin{proof}[Proof of Proposition~\ref{thm:charInnerOuterEdge}]
By Lemma \ref{lem:InnerDerivativeBelongSpace} 
and \eqref{eq:defSpaceInner}, 
\begin{align*}
H_{+,I}(\partial S)\cap H_{\alpha}(\partial S) &= \textrm{span}\{u_e(s), u_e'(s),\ldots, u_e^{(\alpha-1)}(s):
s\in \partial S, e\in \mathcal{E}_S \},\\
H_{+,O}(\partial S)\cap H_{\alpha}(\partial S) &= \textrm{span}\{u_{\widetilde{e}}(s), u_{\widetilde{e}}'(s),\ldots, u_{\widetilde{e}}^{(\alpha-1)}(s):
s\in \partial S, \widetilde{e}\not\in \mathcal{E}_S \}.
\end{align*}
Also, by Theorem \ref{thm:boundarySpcsGenWM}, 
$H_+(\partial S) = H_\alpha(\partial S).$
Since we have $H_{+,I}(\partial S)\subset H_+(\partial S)$ and 
${H_{+,O}(\partial S) \subset H_+(\partial S)}$, we obtain the desired 
equalities.
\end{proof}

\begin{Remark}
One can also provide an alternative proof of Proposition~\ref{thm:charInnerOuterEdge} by mimicking the proof of Theorem~\ref{thm:boundarySpcsGenWM}, with the help of Theorem~\ref{thm:CharOrthComplCM_genWM}.
\end{Remark}
	
The next lemma will be used in the proof of Theorem \ref{thm:boundarySpcsGenWM}.

	\begin{Lemma}\label{lem:sobTraceZero}
		Let Assumption \ref{assump:basic_assump2} hold and $\alpha\in\mathbb{N}$. Further, let $S\subset\Gamma$ be an open set and take $h\in \dot{H}_L^\alpha(S)$. If for every $s\in\partial S$, and every
		edge $e$ of $E$ which is incident to $s$, $h_e^{(k)}(s) = 0$
		in the trace sense, for $k=0,1,\ldots, \alpha-1$. Then, 
		a sequence of functions $(\varphi_n)$ in $\dot{H}_L^\alpha(S)\cap C_c(S)$ exists 
		such that $\varphi_n\to h$ in $\dot{H}_L^\alpha(S)$.
		Further, $h$ can be extended to a function $\widetilde{h}$ in $\dot{H}_L^\alpha(\Gamma)$ by letting
		$\widetilde{h}(s) = h(s)$ if $s\in S$ and $\widetilde{h}(s)= 0$ if $s\in\Gamma\setminus S$.
	\end{Lemma}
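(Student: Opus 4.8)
The plan is to reduce everything to the standard Sobolev norm and then to truncate $h$ near the finitely many points of $\partial S$ by smooth cutoffs, exploiting the vanishing of its traces. By Proposition~\ref{prp:CharGen_CM_natural_alpha} the norm $\|\cdot\|_{\dot{H}_L^\alpha(\Gamma)}$ is equivalent to $\|\cdot\|_{\widetilde{H}^\alpha(\Gamma)}$ on $\dot{H}_L^\alpha(\Gamma)$, and $\dot{H}_L^\alpha(\Gamma)$ is the closed subspace of $\widetilde{H}^\alpha(\Gamma)$ cut out by the continuity and Kirchhoff-type vertex conditions there. Hence it suffices to produce the approximating sequence in the $\|\cdot\|_{\widetilde{H}^\alpha(S)}$ norm. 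I note that $h$, being the restriction of some $H\in\dot{H}_L^\alpha(\Gamma)$, is of class $H^\alpha$ on each edge and satisfies those vertex conditions at every vertex lying in the interior of $S$; these conditions are determined by the values of $h$ on $S$, so they are intrinsic to $h$.

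For $\delta>0$ I would fix a cutoff $\psi_\delta$ on $\Gamma$ equal to $0$ within geodesic distance $\delta$ of $\partial S$, equal to $1$ beyond distance $2\delta$, and smooth in the arc-length parameter on each edge; as $\partial S$ consists of finitely many points, these neighborhoods are disjoint for small $\delta$. Set $\varphi_\delta = \psi_\delta h$. Because $\psi_\delta$ vanishes near $\partial S$, the support of $\varphi_\delta$ is compactly contained in the interior of $S$, and $\varphi_\delta$ is continuous since $\dot{H}_L^\alpha\hookrightarrow C$ for $\alpha\ge 1$; thus $\varphi_\delta\in C_c(S)$. Moreover $\psi_\delta\equiv 1$ on a neighborhood of every interior vertex (which is a positive distance from $\partial S$), so there $\varphi_\delta = h$ and the vertex conditions of Proposition~\ref{prp:CharGen_CM_natural_alpha} are inherited from $h$; near the points of $\partial S$ (whether or not they are vertices) $\varphi_\delta$ vanishes identically. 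Extending $\varphi_\delta$ by zero across $\partial S$ therefore yields a function whose restriction to each edge is the zero extension of an $H^\alpha$ function vanishing near the cut points, hence again $H^\alpha$, and which satisfies all vertex conditions; by Proposition~\ref{prp:CharGen_CM_natural_alpha} this extension lies in $\dot{H}_L^\alpha(\Gamma)$, so $\varphi_\delta\in\dot{H}_L^\alpha(S)\cap C_c(S)$.

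The crux is the convergence $\varphi_\delta\to h$ in $\widetilde{H}^\alpha(S)$, i.e. $\|(1-\psi_\delta)h\|_{\widetilde{H}^\alpha(S)}\to 0$, and this is where the vanishing traces enter. Working edge by edge near a single boundary point (say $t=0$ with $h^{(k)}(0)=0$ for $k=0,\dots,\alpha-1$), I would expand $\partial^m\big[(1-\psi_\delta)h\big]$ by the Leibniz rule. The terms carrying no derivative on the cutoff are bounded by $\int_0^{2\delta}|\partial^m h|^2\to 0$. For a term carrying $j\ge 1$ derivatives on the cutoff, one has $|\partial^j\psi_\delta|\le C\delta^{-j}\le C' t^{-j}$ on the annular support $t\in[\delta,2\delta]$; since $h^{(k)}(0)=0$ for $k=\alpha-j,\dots,\alpha-1$, the iterated Hardy inequality gives $\int_0^{\ell_e} t^{-2j}|\partial^{\,\alpha-j}h|^2\,dt<\infty$, whose tail over $[\delta,2\delta]$ tends to $0$. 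Summing over the finitely many boundary points and edges yields the claim; this Hardy estimate is the main obstacle. Taking $\delta_n\to 0$ and $\varphi_n=\varphi_{\delta_n}$ finishes the first assertion.

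Finally, for the extension statement I would use that the zero extensions $\widetilde{\varphi_n}$ satisfy $\|\widetilde{\varphi_n}-\widetilde{\varphi_m}\|_{\widetilde{H}^\alpha(\Gamma)} = \|\varphi_n-\varphi_m\|_{\widetilde{H}^\alpha(S)}$, since the extensions and all their derivatives vanish off $S$; hence $(\widetilde{\varphi_n})$ is Cauchy in $\dot{H}_L^\alpha(\Gamma)$ by norm equivalence. As $\dot{H}_L^\alpha(\Gamma)$ is a closed subspace of $\widetilde{H}^\alpha(\Gamma)$, the limit belongs to $\dot{H}_L^\alpha(\Gamma)$, and because $\widetilde{\varphi_n}\to\widetilde{h}$ in $L_2(\Gamma)$ this limit is exactly $\widetilde{h}$, giving $\widetilde{h}\in\dot{H}_L^\alpha(\Gamma)$. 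Equivalently, and independently of the approximation, $\widetilde{h}$ is $H^\alpha$ on each edge by the gluing lemma, since $h$ and the zero function match to order $\alpha-1$ across each point of $\partial S$, and it satisfies the vertex conditions, so the same conclusion follows from Proposition~\ref{prp:CharGen_CM_natural_alpha}.
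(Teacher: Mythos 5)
Your proof is correct and follows essentially the same route as the paper's: reduce to the edge-wise Sobolev norm via the norm equivalence of Proposition~\ref{prp:CharGen_CM_natural_alpha}, approximate by functions supported away from the finitely many boundary points, and then obtain the zero extension from the Cauchy property of the extended approximants together with closedness of $\dot{H}^\alpha_L(\Gamma)$. The only difference is that where the paper cites \cite[Theorem 3.40]{mclean} for the density of compactly supported functions among those with vanishing traces on an interval, you prove that step directly via the Leibniz expansion of $(1-\psi_\delta)h$ and the iterated Hardy inequality, which is precisely the standard proof of the cited result.
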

	
	\begin{proof}
		Let $\mathcal{E}_S$ be the set of edges in $S$. 
		It is sufficient to prove the first result for each $e\cap S$, where ${e\in\mathcal{E}_S}$ because 
		$\|h - \varphi_n\|_{\widetilde{H}^\alpha(S)}^2 = \sum_{e\in \mathcal{E}_S} \|h - \varphi_n\|_{\widetilde{H}^\alpha(e\cap S)}^2$.
		Furthermore, by a standard partition of unity argument, it is sufficient to approximate a function $h$ in $H^\alpha([0,\infty))$, whose support is contained
		in $[0,b]$, for some $b>0$. The statement for this case follows directly
		from \cite[Theorem 3.40]{mclean}, asserting that, for an interval $I$, the space $H_0^\alpha(I)$ (the closure of $C_c^\infty(I)$ with respect to the $H^\alpha(I)$ norm) coincides with the space of functions $h$ in $H^\alpha(I)$ with $\gamma(h) = \gamma(h') = \cdots = \gamma(h^{(\alpha-1)}) = 0$, where $\gamma$ is the trace operator.
		
		For the second statement, let $h\in\dot{H}_L^\alpha(S)$ be such that a sequence $\varphi_n\in C_c(S)\cap \dot{H}^\alpha_L(S)$ exists satisfying $\varphi_n \to h$ in $\dot{H}^\alpha_L(S)$. Then, for each $n\in\mathbb{N}$, $\psi_n \in \dot{H}^\alpha_L(\Gamma)$ exists such that $\varphi_n = \psi_n|_S$. 
		Let $\widetilde{\varphi}_n(\cdot)$ be the extension of $\varphi_n$ as zero in $\Gamma\setminus S$. Similarly, let  $\widetilde{h}$ be the extension of $h$ as zero in $\Gamma\setminus S$, and define  $\tau_n = \psi_n - \widetilde{\varphi}_n$. 
		Because $\widetilde{\varphi}_n$ has support compactly contained in $S$ (recall that $S$ is open), $\textrm{supp}(\varphi_n) \cap \textrm{supp}(\tau) = \emptyset$. 
		Also, $\widetilde{\varphi}_n$ and $\tau_n$ belong to $\dot{H}^\alpha_L(\Gamma)$ because $\alpha\in\mathbb{N}$ and $\dot{H}^\alpha_L(\Gamma)$ is local. 
		Further, by Proposition \ref{prp:CharGen_CM_natural_alpha},
		the norm $\|\cdot\|_{\dot{H}^\alpha_L(\Gamma)}$ and $\|\cdot\|_{\widetilde{H}^\alpha(\Gamma)}$ are equivalent on $\dot{H}^\alpha_L(\Gamma)$. Therefore, $\widetilde{h}\in\dot{H}^\alpha_L(\Gamma)$ because  $\dot{H}^\alpha_L(\Gamma)$ is closed (it is a Hilbert space) and $\widetilde{\varphi}_n$ converges to $\widetilde{h}$ in $\dot{H}^\alpha_L(\Gamma)$. 
	\end{proof}


\begin{proof}[Proof of Proposition~\ref{thm:CharOrthComplCM_genWM}]
We will prove that $(\mathcal{H}_{+,0}(S),\|\cdot\|_{\widetilde{H}^\alpha(\Gamma)}) \cong (\dot{H}_{0,L}^\alpha(S),\|\cdot\|_{\widetilde{H}^\alpha(S)})$
and further that $(\mathcal{H}_{0,I}(S),\|\cdot\|_{\widetilde{H}^\alpha(\Gamma)})\cong (\dot{H}_{0,L}^\alpha(S), \|\cdot\|_{\widetilde{H}^\alpha(S)}).$  
We begin by proving the latter statement.

Let $h\in\mathcal{H}_{0,I}(S)$ and observe that $h|_S\in \dot{H}_L^\alpha(S)$. By following along the same lines as in
the proof of Theorem \ref{thm:boundarySpcsGenWM}, $h_e^{(k)}(s)=0$, $k=0,\ldots,\alpha-1$,
for every $s\in\partial S$ and $e\in\mathcal{E}_S\cap \mathcal{E}_s$. 
Therefore, it follows from Lemma~\ref{lem:sobTraceZero} 
that
$h|_S$ can be approximated by $\varphi_n \in C_c(S)\cap \dot{H}_L^\alpha(S)$
with respect to the $\|\cdot\|_{\widetilde{H}^\alpha(S)}$ (recall
that the norm $\|\cdot\|_{\widetilde{H}^\alpha(\Gamma)}$ is equivalent
to $\|\cdot\|_{\dot{H}_L^\alpha(\Gamma)}$ in $\dot{H}_L^\alpha(\Gamma)$).

For every $s\in\partial S$ and every $\widetilde{e}\in\mathcal{E}_{\Gamma\setminus S}\cap \mathcal{E}_s$, $L_{\widetilde{e}}^{\nicefrac{j}{2}}h_{\widetilde{e}}(s)= L_{e}^{\nicefrac{j}{2}}h_{e}(s)$,
for $j\leq \alpha -1$, and $j$ even, whereas 
$\sum_{\widetilde{e}\in\mathcal{E}_s\cap \mathcal{E}_{\Gamma\setminus S}} \partial_eL_e^{\nicefrac{j}{2}}h_{\widetilde{e}}(s) = 0,$
for $j\leq \alpha -1$, $s\in\mathcal{V}$ and $j$ odd. 
This implies, together with the fact that $h_e^{(k)}(s)=0$
for every $s\in\partial S$ and $e\in\mathcal{E}_S\cap \mathcal{E}_s$, that if we extend $h|_{\Gamma\setminus S}$ as zero on $S$,
and denote this extension by $\widetilde{h|_{\Gamma\setminus S}}$, then $\widetilde{h|_{\Gamma\setminus S}}\in \dot{H}^\alpha_L(\Gamma)$ by the explicit characterization of $\dot{H}_L^\alpha(\Gamma)$ in Proposition \ref{prp:CharGen_CM_natural_alpha}. 
Therefore, if we let $\widetilde{\varphi}_n$ be the extension of $\varphi_n$ by setting
it to be zero on $\Gamma\setminus S$, then 
for every $n\in\mathbb{N}$, $\widetilde{\varphi}_n + \widetilde{h|_{\Gamma\setminus S}}
\in \dot{H}^\alpha_L(\Gamma)$. 
Let ${h_n = \widetilde{\varphi}_n + \widetilde{h|_{\Gamma\setminus S}}}$, then $h_n \to h$ in $\dot{H}^\alpha_L(\Gamma)$.

Observe that $\widetilde{h|_{\Gamma\setminus S}}\in\dot{H}_L^\alpha(\Gamma)$ implies
that $w\in H(\Gamma)$ exists such that ${\widetilde{h|_{\Gamma\setminus S}}(s) = \mathbb{E}(u(s)w)}$. 
As $\widetilde{h|_{\Gamma\setminus S}}$ vanishes on
$S$, it follows that $w\perp H(S)$, which thus implies, through the
isometric isomorphism between Gaussian spaces and Cameron--Martin spaces,
that $\widetilde{h|_{\Gamma\setminus S}}\perp \mathcal{H}(S)$.
Since $h\in \mathcal{H}(S)$, it follows that $\widetilde{h|_{\Gamma\setminus S}}\perp h$. Furthermore, because $\textrm{supp}\, \widetilde{\varphi}_n \subset S$,
it also follows that $\widetilde{h|_{\Gamma\setminus S}}\perp \widetilde{\varphi}_n$. Then,
\begin{align*}
	0 &= (h, \widetilde{h|_{\Gamma\setminus S}})_{\dot{H}_L^\alpha(\Gamma)} = \lim_{n\to\infty} (\widetilde{\varphi}_n + \widetilde{h|_{\Gamma\setminus S}}, \widetilde{h|_{\Gamma\setminus S}})_{\dot{H}_L^\alpha(\Gamma)}\\
	&= \lim_{n\to\infty} (\widetilde{\varphi}_n, \widetilde{h|_{\Gamma\setminus S}})_{\dot{H}_L^\alpha(\Gamma)} + \left\| \widetilde{h|_{\Gamma\setminus S}}\right\|_{\dot{H}_L^\alpha(\Gamma)}^2
	= \left\| \widetilde{h|_{\Gamma\setminus S}}\right\|_{\dot{H}_L^\alpha(\Gamma)}^2.
\end{align*}
Hence, $\widetilde{h|_{\Gamma\setminus S}} \equiv 0$. Therefore,
$h$ can be approximated by $\widetilde{\varphi}_n$ in the $\dot{H}^\alpha_L(\Gamma)$
norm. Furthermore, $h|_{\Gamma\setminus S} \equiv 0$, and by Proposition~\ref{prp:CharGen_CM_natural_alpha}, the norm 
in ${\dot{H}_L^\alpha(\Gamma)}$ is equivalent to the $\|\cdot\|_{\widetilde{H}^\alpha(\Gamma)}$ norm. 
Therefore, $h$ can be identified with $h|_S$, which can be approximated by $\varphi_n\in C_c(S)\cap \dot{H}_L^\alpha(S)$
in the $\|\cdot\|_{\widetilde{H}^\alpha(S)}$ norm.

The proof $\mathcal{H}_{+,0}(S) \cong \dot{H}_{0,L}^\alpha(S)$ is analogous, 
but simpler. In addition, $\mathcal{H}_{+,0}(S) \cong \dot{H}_{0,L}^\alpha(S)$ was also indirectly proved in Theorem \ref{thm:boundarySpcsGenWM} as 
an intermediate step.
Finally, if $\widetilde{h}$ is the extension of $h\in \dot{H}^\alpha_L(S)$ by setting zero in $\Gamma\setminus S$, then 
$\widetilde{h}\in \dot{H}^\alpha_L(\Gamma)$
by Lemma~\ref{lem:sobTraceZero} 
\end{proof}

	We now prove Corollary \ref{cor:alpha1regularboundary} and the characterizations of $\mathcal{H}_{+,0}(S)$ and $\mathcal{H}_{0,I}(S)$.

	\begin{proof}[Proof of Corollary \ref{cor:alpha1regularboundary}]
		The result for $\alpha=1$ follows directly from Theorem \ref{thm:charInnerOuterEdge} and 
		the continuity of the field in $L_2(\Omega)$ (see \cite[Corollary 4.6]{bolinetal_fem_graph}).
		The remaining statements are direct consequences of Theorem~\ref{thm:charInnerOuterEdge} and Corollary \ref{cor:BoundaryConditionsGen_WM}, since it means that the inner derivatives
		at the boundary of an edge are linear combinations of elements in $H_{+,O}(\partial e)$,
		due to the conditions $L_e^{\nicefrac{k}{2}} u_e(v) = L_{e'}^{\nicefrac{k}{2}}u(v)$ if $k$ is even and $\sum_{e\in\mathcal{E}_v} \partial_{e} L_e^{\nicefrac{(k-1)}{2}} u(v) = 0$ if $k$ is odd, for $k \in \{0,\ldots, \ceil{\alpha - \nicefrac{1}{2}}  -1\}$, $v\in\mathcal{V}$, $e, e'\in\mathcal{E}_v$.
	\end{proof}

	Finally, we have the characterization of the space $\mathcal{H}_+(\Gamma\setminus S)$, where $S$ is an open set whose boundary consists of finitely many points.

	\begin{proof}[Proof of Proposition \ref{prp:ChatHPlusS_genWM}]
		By Corollary \ref{cor:MarkovOrderpCameronMartin}.\ref{cor:MarkovOrderpCameronMartin1}, $\mathcal{H}_+(\Gamma\setminus S) = \mathcal{H}_{+,0}(S)^\perp$, and by
		Theorem~\ref{thm:CharOrthComplCM_genWM}, $\mathcal{H}_{+,0}(S)$
		can be identified with the closure of $C_c(S)\cap \dot{H}_L^\alpha(S)$ with respect to ${\|\cdot\|_{\widetilde{H}^\alpha(S)}}$. Thus, given $h\in \mathcal{H}_+(\Gamma\setminus S)$, for every $\varphi\in C_c(S)\cap \dot{H}_L^{2\alpha}(S) \subset C_c(S)\cap \dot{H}_L^{\alpha}(S)$,
		$$
		0 = (h, \widetilde{\varphi})_{\alpha} = (L^{\alpha/2} h, L^{\alpha/2}\widetilde{\varphi})_{L_2(\Gamma)} = (h, L^\alpha\widetilde{\varphi})_{L_2(\Gamma)} = (h|_S, L^\alpha\varphi)_{L_2(S)},
		$$
		where $\widetilde{\varphi}$ is the extension of $\varphi$ to $\Gamma$ by 
		setting zero at $\Gamma\setminus S$, which belongs to $\dot{H}^\alpha_L(\Gamma)$ by Theorem \ref{thm:CharOrthComplCM_genWM}. 
		Therefore, $h|_S$ is a weak solution of the problem $L^\alpha h|_S = 0$ on $S$.
		By restricting to each edge in $S$,  $h|_{e\cap S}$ is a weak
		solution of $L^\alpha h|_{e\cap S} = 0$ in $S\cap e$. By standard elliptic
		regularity, it follows by Assumption~\ref{assump:basic_assump2} that $h|_{e\cap S} \in C^{\alpha}(e\cap S)$. Therefore,
		$h|_S$ is a classical solution of $L^\alpha h|_S = 0$ on $S$. 
		Further, observe that if $\kappa|_e, H|_e\in C^\infty(e)$ for every $e\in\mathcal{E}$, then ${h|_{e\cap S}\in C^\infty(e\cap S)}$. 
		
		Conversely, let $h$ be a function in $\dot{H}_L^\alpha(\Gamma)$ such that $h|_S$ is a weak solution of $L^\alpha h|_S = 0$ on $S$. Then, for any $\varphi\in C_c(S)\cap \dot{H}_L^\alpha(S),$
		$$
		0 = (L^{\alpha/2} h|_S, L^{\alpha/2}\varphi)_{L_2(S)} =  (L^{\alpha/2} h, L^{\alpha/2}\widetilde{\varphi})_{L_2(\Gamma)} = (h, \widetilde{\varphi})_{\alpha},
		$$
		where $\widetilde{\varphi}$ is the extension of $\varphi$ as zero on $\Gamma\setminus S$, which belongs to $\dot{H}^\alpha_L(\Gamma)$. 
		By Theorem \ref{thm:CharOrthComplCM_genWM}, it follows that
		$h\perp \mathcal{H}_{+,0}(S)$. Therefore, $h\in \mathcal{H}_+(\Gamma\setminus S)$.   
		This proves the first claim.
		Since $\mathcal{H}_+(\partial S) = \mathcal{H}_+(\overline{S})\cap \mathcal{H}_+(\Gamma\setminus S)$, the second claim follows from applying the first claim to $S$ and to $\Gamma\setminus\overline{S}$. 
	\end{proof}

	\section{Auxiliary results for Sections \ref{sec:edge} and \ref{sec:condrepr}}\label{app:sec6sec7}
	We begin with a result that characterizes the space $(\mathcal{H}_{0,I}(e), (\cdot, \cdot)_{\dot{H}^\alpha_L(\Gamma)})$.

	\begin{Proposition}\label{prp:charHdot_edge_genWM}
		Let $\alpha\in\mathbb{N}$, let Assumption \ref{assump:basic_assump2} hold and recall the definition of $\dot{H}^\alpha_{0,L}(\Gamma)$ in the statement of Theorem \ref{thm:CharOrthComplCM_genWM}. Then, there exists an isometric isomorphism between $(\mathcal{H}_{0,I}(e), (\cdot, \cdot)_{\dot{H}^\alpha_L(\Gamma)})$
		and $(\dot{H}_{0,L}^\alpha(e), (\cdot,\cdot)_{\alpha,e})$, where $(\cdot,\cdot)_{\alpha,e}$ is the extension to ${\dot{H}_{0,L}^\alpha(e)\times \dot{H}_{0,L}^\alpha(e)}$ 
		of the bilinear form
		$$(u,v)_{\alpha,e} = (u, L^\alpha v)_{L_2(e)}, \quad u\in \dot{H}_{0,L}^\alpha(e), v\in C_c(e)\cap \dot{H}_L^{2\alpha}(e).$$
	\end{Proposition}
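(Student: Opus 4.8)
The plan is to exhibit restriction-to-$e$ as the desired isometric isomorphism and then to identify the transported inner product. By Proposition~\ref{thm:CharOrthComplCM_genWM} together with Remark~\ref{rem:zeroboundarydata}, the map sending $h\in\mathcal{H}_{0,I}(e)$ to $h|_e$ is a linear bijection onto $\dot{H}_{0,L}^\alpha(e)$, with inverse given by extension by zero $v\mapsto\widetilde v$, and this bijection is bicontinuous for the $\|\cdot\|_{\widetilde{H}^\alpha}$ norms. Since $\mathcal{H}_{0,I}(e)$ is a closed subspace of the Cameron--Martin space $\mathcal{H}(\Gamma)\cong\dot{H}^\alpha_L(\Gamma)$, the inner product it carries is the restriction of $(\cdot,\cdot)_{\dot{H}^\alpha_L(\Gamma)}$. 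Transporting this form along extension by zero defines an inner product $\langle u,v\rangle:=(\widetilde u,\widetilde v)_{\dot{H}^\alpha_L(\Gamma)}$ on $\dot{H}_{0,L}^\alpha(e)$, for which the restriction map is tautologically an isometric isomorphism onto $\mathcal{H}_{0,I}(e)$. Hence the whole statement reduces to proving that $\langle\cdot,\cdot\rangle=(\cdot,\cdot)_{\alpha,e}$.

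The conceptual core is an integration-by-parts identity on the class where $(\cdot,\cdot)_{\alpha,e}$ is defined classically. Fix $v\in C_c(e)\cap\dot{H}_L^{2\alpha}(e)$, so that its extension by zero satisfies $\widetilde v\in D(L^\alpha)=\dot{H}_L^{2\alpha}(\Gamma)$, and let $u\in\dot{H}_{0,L}^\alpha(e)$ be arbitrary. Using that $L^{\alpha/2}$ is self-adjoint on $L_2(\Gamma)$, so that $(L^{\alpha/2}a,L^{\alpha/2}b)_{L_2(\Gamma)}=(a,L^\alpha b)_{L_2(\Gamma)}$ whenever $b\in D(L^\alpha)$, I obtain
\[
\langle u,v\rangle=(\widetilde u,\widetilde v)_{\dot{H}^\alpha_L(\Gamma)}=(L^{\alpha/2}\widetilde u,L^{\alpha/2}\widetilde v)_{L_2(\Gamma)}=(\widetilde u,L^\alpha\widetilde v)_{L_2(\Gamma)}.
\]
Because $\widetilde u$ vanishes off $e$, only the values of $L^\alpha\widetilde v$ on $e$ enter; and since $v$ is supported in the interior of $e$ and $L$ acts locally on each edge, $(L^\alpha\widetilde v)|_e=L^\alpha v$ in the sense of the bilinear form, giving $\langle u,v\rangle=(u,L^\alpha v)_{L_2(e)}=(u,v)_{\alpha,e}$. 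Thus $\langle\cdot,\cdot\rangle$ and $(\cdot,\cdot)_{\alpha,e}$ agree on $\dot{H}_{0,L}^\alpha(e)\times\bigl(C_c(e)\cap\dot{H}_L^{2\alpha}(e)\bigr)$. The form $\langle\cdot,\cdot\rangle$ is continuous in each argument with respect to $\|\cdot\|_{\widetilde{H}^\alpha(e)}$ by Cauchy--Schwarz and the norm equivalence in Proposition~\ref{prp:CharGen_CM_natural_alpha}, so it must coincide with the unique continuous extension of $(\cdot,\cdot)_{\alpha,e}$.

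The remaining and most delicate point is that $C_c(e)\cap\dot{H}_L^{2\alpha}(e)$ is dense in $\dot{H}_{0,L}^\alpha(e)$, which is what makes the agreement on this class determine the form everywhere. I would establish this by duality inside $\mathcal{H}_{0,I}(e)$: if $h\in\mathcal{H}_{0,I}(e)$ is $(\cdot,\cdot)_{\dot{H}^\alpha_L(\Gamma)}$-orthogonal to every $\widetilde v$, the computation above yields $(h|_e,L^\alpha v)_{L_2(e)}=0$ for all such $v$; solving the local Dirichlet-type problem $L^\alpha g=\phi$ on $e$ with $\widetilde g\in D(L^\alpha)$ for $\phi$ in a dense subset of $L_2(e)$ shows that $\{(L^\alpha\widetilde v)|_e\}$ is dense in $L_2(e)$, forcing $h|_e=0$ and hence $h=0$. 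The main obstacle is precisely the interplay with coefficient regularity: verifying that $\widetilde v\in D(L^\alpha)$ (equivalently the locality and characterization of $\dot{H}_L^{2\alpha}$) and that the higher-order local elliptic problem admits solutions with the correct boundary behaviour both rely on Assumption~\ref{assump:basic_assump2} and edgewise elliptic regularity. Once density is secured, the continuous extension of $(\cdot,\cdot)_{\alpha,e}$ equals $\langle\cdot,\cdot\rangle$, and restriction-to-$e$ is the claimed isometric isomorphism.
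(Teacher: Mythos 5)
Your overall architecture matches the paper's: both proofs identify $\mathcal{H}_{0,I}(e)$ with $\dot{H}_{0,L}^\alpha(e)$ via Proposition~\ref{thm:CharOrthComplCM_genWM}, establish the key identity $(\widetilde u,\widetilde v)_{\dot{H}^\alpha_L(\Gamma)}=(\widetilde u,L^\alpha\widetilde v)_{L_2(\Gamma)}=(u,L^\alpha v)_{L_2(e)}$ for $v\in C_c(e)\cap\dot{H}_L^{2\alpha}(e)$ using self-adjointness of $L^{\alpha/2}$ and locality, and then pass to the completion by continuity (via the norm equivalence of Proposition~\ref{prp:CharGen_CM_natural_alpha}) and a density argument. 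Up to the density step your argument is correct and essentially the paper's.

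The gap is in the density step, which you yourself flag as the crux. From $(h|_e,L^\alpha v)_{L_2(e)}=0$ for all $v\in C_c(e)\cap\dot{H}_L^{2\alpha}(e)$ you cannot conclude $h|_e=0$ by arguing that $\{(L^\alpha\widetilde v)|_e\}$ is dense in $L_2(e)$: that set is not dense. Its $L_2(e)$-orthogonal complement is the $2\alpha$-dimensional space of (distributional, hence by elliptic regularity classical) solutions of $L^\alpha w=0$ on the interior of $e$, and your proposed construction of solving $L^\alpha g=\phi$ on $e$ cannot produce a $g$ compactly supported in the interior of $e$ for generic $\phi$: on the region where $\phi$ vanishes, uniqueness for the Cauchy problem forces a solution that vanishes identically near one endpoint to carry nontrivial Cauchy data out to the other endpoint. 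The orthogonality relation therefore only yields that $h|_e$ solves $L^\alpha (h|_e)=0$ on the interior of $e$. To finish, you must additionally use that $h\in\mathcal{H}_{0,I}(e)$ forces $h_e^{(k)}$ to vanish at both endpoints for $k=0,\dots,\alpha-1$ (Remark~\ref{rem:zeroboundarydata}), together with the fact that the boundary operator $B^\alpha$ is a bijection from the $2\alpha$-dimensional solution space of $L^\alpha w=0$ onto $\mathbb{R}^{2\alpha}$ (as invoked in the proof of Theorem~\ref{thm:ReprTheoremEdge_GenWM}); only then does $h|_e=0$ follow. Alternatively, the paper avoids the duality argument entirely, obtaining the needed density of $C_c(e)\cap\dot{H}_L^{2\alpha}(e)$ in $\dot{H}_{0,L}^\alpha(e)$ from the spectral density of $\dot{H}_L^{2\alpha}(\Gamma)$ in $\dot{H}_L^{\alpha}(\Gamma)$ combined with locality of both spaces and the norm equivalence. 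Either repair works, but as written your density claim is false.
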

	
	\begin{proof}
		By Theorem \ref{thm:CharOrthComplCM_genWM}, $\mathcal{H}_{0,I}(e) \cong \dot{H}_{0,L}^\alpha(e)$. 
		Next, $\dot{H}^{2\alpha}_L(\Gamma)$ is dense in $\dot{H}^\alpha_L(\Gamma)$, and $\alpha\in\mathbb{N}$, so both $\dot{H}^{2\alpha}_L(\Gamma)$ and $\dot{H}^\alpha_L(\Gamma)$ are local. 
		In addition, by Proposition \ref{prp:CharGen_CM_natural_alpha} the norms $\|\cdot\|_{\dot{H}^\alpha_L(\Gamma)}$ and $\|\cdot\|_{\widetilde{H}^\alpha(\Gamma)}$ are equivalent. 
		Therefore, $\dot{H}^{2\alpha}_L(e)$ is dense in ${\dot{H}_L^\alpha(e)\cap C_c(e)}$
		with respect to the Sobolev norm $\|\cdot\|_{H^\alpha(e)}$. 
		Further, because $\dot{H}_{0,L}^\alpha(e)$ is the completion of $C_c(e)\cap \dot{H}_L^\alpha(e)$ 
		with respect to the Sobolev norm ${\|\cdot\|_{H^\alpha(e)}}$, it follows that $\dot{H}_{0,L}^\alpha(e)$ 
		is the completion of $\dot{H}^{2\alpha}_L(e)$ 
		with respect to $\|\cdot\|_{H^\alpha(e)}$. 
		It remains to demonstrate
		that the bilinear form $(\cdot,\cdot)_{\alpha,e}$ is isometric to $(\cdot,\cdot)_{\dot{H}^\alpha_L(\Gamma)}$, where we identify $f\in \mathcal{H}_{0,I}(e)$ with $f|_e$. Indeed, 
		given $u\in \dot{H}_{0,L}^\alpha(e)$ and ${v\in C_c(e)\cap\dot{H}_L^{2\alpha}(e)}$, we can extend $u$ and $v$ to $\Gamma$ by defining
		them to be zero on $\Gamma\setminus e$. Let $\widetilde{u}$ and $\widetilde{v}$ be these extensions. 
		By Theorem \ref{thm:CharOrthComplCM_genWM}, $\widetilde{u}, \widetilde{v} \in \mathcal{H}_{0,I}(e)$, and
		as $\alpha\in\mathbb{N}$, 
		$
			(u,v)_{\alpha,e} = (u, L^\alpha v)_{L_2(e)} = (\widetilde{u}, L^\alpha \widetilde{v})_{L_2(\Gamma)} = (\widetilde{u}, \widetilde{v})_{\dot{H}^\alpha_L(\Gamma)}.
		$
		The space  $\dot{H}_L^{2\alpha}(e)$ is dense in $\dot{H}_{0,L}^\alpha(e)$ with respect to the Sobolev norm $\|\cdot\|_{H^\alpha(e)}$
		and by Proposition~\ref{prp:CharGen_CM_natural_alpha}, the norm $\|\cdot\|_{\dot{H}_L^\alpha(\Gamma)}$ and the 
		Sobolev norm $\|\cdot\|_{\widetilde{H}^\alpha(\Gamma)}$ are equivalent.  
		It follows that the bilinear form is continuous with respect to the Sobolev norm $\|\cdot\|_{H^\alpha(e)}$, and  $(\cdot,\cdot)_{\alpha,e}$ can therefore be uniquely extend 
		to $\dot{H}_{0,L}^\alpha(e)\times \dot{H}_{0,L}^\alpha(e)$. 
		Finally, for $u,v\in \dot{H}_{0,L}^\alpha(e)$, let $\widetilde{u}$ and $\widetilde{v}$ be their
		extensions to $\Gamma\setminus e$ (as zero), thus, $(u,v)_{\alpha,e} = (\widetilde{u}, \widetilde{v})_{\dot{H}_L^\alpha(\Gamma)}.$ 
		Since $\dot{H}_{0,L}^\alpha(e)\cong \mathcal{H}_{0,I}(e)$, this concludes the proof.
	\end{proof}

	The next lemma is a result regarding finite dimensional Cameron--Martin spaces.
	\begin{Lemma}
		\label{lem:FRKHS}
		Let $e=[0,\ell_e]$, $\ell_e>0$, and suppose that $H(e)$ is a Gaussian space with corresponding Cameron--Martin space $\mathcal{H}(e)$ and isometric isomorphism ${\Phi:H(e)\to\mathcal{H}(e)}$. Further, let ${d\in\mathbb{N}}$ and suppose that $f_1,\ldots f_d\in \mathcal{H}(e)$ are linearly independent, ${f_1,\ldots,f_d:e\to\mathbb{R}}$, and define the function ${\mv{f}(\cdot)=(f_1(\cdot),\ldots,f_d(\cdot))}$. Suppose that $\mathcal{H}(e)$ has the reproducing kernel 
		\begin{equation}\label{eq:exprReprKernel}
			h(t,t') = \mv{f}(t')^\top \mv{C} \mv{f}(t), \quad t,t'\in [0,\ell_e],
		\end{equation}
		where $\mv{C}$ is a positive definite matrix. Then, $\mv{Z} = (Z_1,\ldots,Z_d)$, where $Z_j\in H(e)$, exists such that
		$$
		w(t) = \Phi^{-1}(h(\cdot,t)) = \mv{f}^T(t) \mv{Z}, \quad t\in [0,\ell_e].
		$$
		Finally, $\mv{C} = \mathbb{E}(\mv{Z}\mv{Z}^\top)$, and thus; for $t,t'\in [0,\ell_e]$,
		$h(t,t') = \mv{f}(t')^\top \mathbb{E}(\mv{Z}\mv{Z}^\top) \mv{f}(t).$
	\end{Lemma}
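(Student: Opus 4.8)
The plan is to exploit the linearity of the isometric isomorphism $\Phi^{-1}$ together with the reproducing property of the kernel, in order to both construct the Gaussian vector $\mv{Z}$ and to pin down its covariance. Since everything takes place in the finite-dimensional span of $f_1,\ldots,f_d$, the argument is essentially linear bookkeeping once the right identifications are made.

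First I would note that, because $h(\cdot,t)$ is by hypothesis the reproducing kernel of $\mathcal{H}(e)$, it belongs to $\mathcal{H}(e)$, so $w(t) := \Phi^{-1}(h(\cdot,t))$ is a well-defined element of the Gaussian space $H(e)$ for each $t\in[0,\ell_e]$. Expanding the kernel \eqref{eq:exprReprKernel} in its first argument and using that $\mv{C}$ is symmetric writes $h(\cdot,t) = \sum_{i=1}^d \bigl(\mv{C}\mv{f}(t)\bigr)_i f_i(\cdot)$, a finite linear combination of the fixed functions $f_i\in\mathcal{H}(e)$. Setting $Z_i':=\Phi^{-1}(f_i)\in H(e)$ and writing $\mv{Z}':=(Z_1',\ldots,Z_d')^\top$, linearity of $\Phi^{-1}$ gives $w(t) = \sum_i \bigl(\mv{C}\mv{f}(t)\bigr)_i Z_i' = \mv{f}^\top(t)\mv{C}\mv{Z}'$. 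Defining $\mv{Z} := \mv{C}\mv{Z}'$ (so each $Z_i$ is a finite linear combination of elements of $H(e)$, hence lies in $H(e)$) this is exactly the asserted representation $w(t)=\mv{f}^\top(t)\mv{Z}$.

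The key step is computing $\mathbb{E}(\mv{Z}\mv{Z}^\top)$. Because $\Phi$ is an isometry between $H(e)$, equipped with the $L_2(\Omega)$ inner product, and $\mathcal{H}(e)$, I have $\mathbb{E}(Z_k'Z_l') = \langle f_k,f_l\rangle_{\mathcal{H}}$; let $\mv{G}$ denote this symmetric Gram matrix, so that $\mathbb{E}(\mv{Z}'\mv{Z}'^\top)=\mv{G}$. To identify $\mv{G}$ I would invoke the reproducing property $\langle f_k, h(\cdot,t)\rangle_{\mathcal{H}} = f_k(t)$, valid for every $t$ since $f_k\in\mathcal{H}(e)$, and expand the left-hand side using the kernel expression to obtain $\sum_{j}(\mv{G}\mv{C})_{kj} f_j(t) = f_k(t)$ for all $t$. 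This is the point where the hypothesis that $f_1,\ldots,f_d$ are linearly independent as functions on $e$ is essential: it permits matching coefficients of $f_j(t)$ on the two sides, yielding $(\mv{G}\mv{C})_{kj}=\delta_{kj}$, that is $\mv{G}\mv{C}=\mv{I}$ and hence $\mv{G}=\mv{C}^{-1}$.

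Finally I would assemble the covariance: $\mathbb{E}(\mv{Z}\mv{Z}^\top) = \mv{C}\,\mathbb{E}(\mv{Z}'\mv{Z}'^\top)\,\mv{C}^\top = \mv{C}\mv{G}\mv{C}^\top = \mv{C}\mv{C}^{-1}\mv{C} = \mv{C}$, using the symmetry of $\mv{C}$. Substituting back into \eqref{eq:exprReprKernel} then gives $h(t,t') = \mv{f}(t')^\top\mv{C}\,\mv{f}(t) = \mv{f}(t')^\top \mathbb{E}(\mv{Z}\mv{Z}^\top)\mv{f}(t)$, completing the proof. The only genuine obstacle is the coefficient-matching step, which must carefully use the linear independence of the $f_i$ to pass from an identity between functions on $e$ to an identity between $d\times d$ matrices; the remaining manipulations are routine finite-dimensional linear algebra combined with the $\Phi$-isometry.
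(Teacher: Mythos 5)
Your proof is correct, and it takes a genuinely different and more elementary route than the paper's. The paper establishes the representation $w(t)=\mv{f}^\top(t)\mv{Z}$ by invoking general results on finite-dimensional Cameron--Martin spaces (that $\mathcal{H}(e)=\mathrm{span}\{f_1,\ldots,f_d\}$ and that $\Phi^{-1}$ of the kernel sections almost surely takes values in $\mathcal{H}(e)$, citing Janson), which yields the existence of $\mv{Z}$ abstractly; it then reads off $\mathbb{E}(\mv{Z}\mv{Z}^\top)=\mv{C}$ from the identity $\Phi(w(t))(t')=\mathbb{E}(w(t)w(t'))$ together with linear independence. You instead construct $\mv{Z}$ explicitly: expanding $h(\cdot,t)=\sum_j(\mv{C}\mv{f}(t))_jf_j(\cdot)$ and applying the linear map $\Phi^{-1}$ gives $\mv{Z}=\mv{C}\mv{Z}'$ with $\mv{Z}'=\Phi^{-1}(\mv{f})$, and the covariance is then pinned down by computing the Gram matrix $\mv{G}=\mathbb{E}(\mv{Z}'\mv{Z}'^\top)=\bigl(\langle f_k,f_l\rangle_{\mathcal{H}}\bigr)_{k,l}$ from the reproducing property, which forces $\mv{G}\mv{C}=\mv{I}$ and hence $\mathbb{E}(\mv{Z}\mv{Z}^\top)=\mv{C}\mv{C}^{-1}\mv{C}=\mv{C}$. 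Your coefficient-matching step is legitimate because linear independence in $\mathcal{H}(e)$, a space of functions on $e$, is the same as linear independence of the functions. What your approach buys is self-containedness and an explicit formula for $\mv{Z}$ (including the observation $\mv{G}=\mv{C}^{-1}$, which the paper never states); what the paper's approach buys is the stronger pathwise statement $\mathbb{P}(w(\cdot)\in\mathcal{H}(e))=1$, which is not actually needed for the lemma as formulated.
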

	
	\begin{proof}
		First, observe that \eqref{eq:exprReprKernel} shows that $\mathcal{H}(e)$ is finite-dimensional. Then, 
		by \cite[Corollary~8.16]{janson_gaussian}, $\mathcal{H}(e) = \overline{\textrm{span}\{h(\cdot, t): t\in e\}}$, where the closure is given with respect to the $\mathcal{H}(e)$-norm; therefore,  $\mathcal{H}(e)\subset \textrm{span}\{f_1(t),\ldots,f_d(t):t\in e\}$. 
		As $\mv{C}$ is positive definite, $\mathcal{H}(e) = \textrm{span}\{f_1(t),\ldots,f_d(t):t\in e\}$.  
		Next, let $w(t) = \Phi^{-1}(h(\cdot,t))$, then  by \cite[Example 8.17 and Theorem 8.22]{janson_gaussian},  
		$P(w(\cdot) \in \mathcal{H}(e)) = 1$ because $\mathcal{H}(e)$ is finite-dimensional. Therefore, since $f_1,\ldots,f_d$ is a basis for $\mathcal{H}(e)$, there exists a Gaussian vector $\mv{Z} = (Z_1,\ldots,Z_d)$ with $Z_1,\ldots,Z_d\in H(e)$ such that
		$w(\cdot) = \sum_{j=1}^d Z_j f_j(\cdot) = \mv{f}^\top(\cdot)\mv{Z}.$
		Because $f_1,\ldots,f_d$ are linearly independent, the vector $\mv{Z}$ is, almost surely, uniquely defined. 
		Finally, the above expression allows rewriting \eqref{eq:exprReprKernel} in terms of $\mv{Z}$. 
		Indeed, by \cite[Example~8.17]{janson_gaussian}, 
		$$
		\Phi(w(t))(t') = \mathbb{E}(w(t)w(t')) = \mv{f}(t')^\top \mathbb{E}(\mv{ZZ}^\top) \mv{f}(t), \quad \mbox{for $t,t'\in [0,\ell_e]$},
		$$
		because ${w(t) = \Phi^{-1}(h(\cdot,t))}$.
	\end{proof}
	
	
	\begin{Lemma}\label{lem:Cameron_Martin_decomposition}
		Let  $u:\Gamma\to\mathbb{R}$ be a Gaussian process with continuous covariance function ${\varrho:\Gamma\times\Gamma\to\mathbb{R}}$. 
		Further, let 
		$H(\Gamma) = \overline{\textrm{span}\{u(s): s\in\Gamma\}}$
		and
		$\mathcal{H}(\Gamma) = \{g(s) = \mathbb{E}(wu(s)): w\in H(\Gamma)\}$
		be the Gaussian space and the Cameron--Martin space associated to $u$, respectively. 
		If we have the decomposition
		\begin{equation}\label{eq:orthodecomp_CM}
			\mathcal{H}(\Gamma) = \mathcal{H}_1 \oplus \mathcal{H}_2,
		\end{equation}
		then $\mathcal{H}_1$ and $\mathcal{H}_2$ are reproducing kernel Hilbert spaces. Further, 
		$\rho(s,s') = h_{1}(s,s') + h_{2}(s,s'),$
		where $h_1$ and $h_2$ are the reproducing kernels of $\mathcal{H}_1$ and $\mathcal{H}_2$, respectively.
	\end{Lemma}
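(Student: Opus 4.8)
The plan is to recognize this as the reproducing-kernel analogue of an orthogonal decomposition and to exploit the orthogonal projections onto $\mathcal{H}_1$ and $\mathcal{H}_2$. First I would record that $(\mathcal{H}(\Gamma),\langle\cdot,\cdot\rangle_{\mathcal{H}})$ is itself a reproducing kernel Hilbert space whose kernel is $\varrho$. Indeed, writing $\Phi:H(\Gamma)\to\mathcal{H}(\Gamma)$ for the isometric isomorphism $\Phi(w)(\cdot)=\mathbb{E}(u(\cdot)w)$, we have $\varrho(\cdot,s)=\Phi(u(s))\in\mathcal{H}(\Gamma)$ for every $s\in\Gamma$, and for any $g=\Phi(w)\in\mathcal{H}(\Gamma)$ the definition of the inner product gives $\langle g,\varrho(\cdot,s)\rangle_{\mathcal{H}}=\langle\Phi(w),\Phi(u(s))\rangle_{\mathcal{H}}=\mathbb{E}(w\,u(s))=g(s)$, so $\varrho(\cdot,s)$ reproduces point evaluation. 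Continuity of $\varrho$ ensures the evaluation functionals are bounded, confirming the RKHS structure.

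Next, since the decomposition $\mathcal{H}(\Gamma)=\mathcal{H}_1\oplus\mathcal{H}_2$ is orthogonal (as is the case for the decompositions arising from the Markov property, e.g.\ Corollary~\ref{cor:MarkovOrderpInner}), both summands are closed subspaces, hence Hilbert spaces in the inherited inner product, and the associated orthogonal projections $P_1,P_2$ satisfy $P_1+P_2=\mathrm{Id}$ and $P_i=P_i^{*}=P_i^{2}$. I would then define, for each $s\in\Gamma$, $h_i(\cdot,s)=P_i\varrho(\cdot,s)\in\mathcal{H}_i$, $i=1,2$. For any $g\in\mathcal{H}_i$, self-adjointness of $P_i$, the identity $P_ig=g$, and the reproducing property of $\varrho$ yield $\langle g,h_i(\cdot,s)\rangle_{\mathcal{H}}=\langle P_ig,\varrho(\cdot,s)\rangle_{\mathcal{H}}=\langle g,\varrho(\cdot,s)\rangle_{\mathcal{H}}=g(s)$. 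Since the evaluation functionals are bounded on $\mathcal{H}(\Gamma)$ they remain bounded on the subspace $\mathcal{H}_i$, so this shows $\mathcal{H}_i$ is a reproducing kernel Hilbert space with kernel $h_i$. Symmetry $h_i(s,s')=h_i(s',s)$ follows from the same ingredients: $h_i(s,s')=(P_i\varrho(\cdot,s'))(s)=\langle P_i\varrho(\cdot,s'),\varrho(\cdot,s)\rangle_{\mathcal{H}}=\langle\varrho(\cdot,s'),P_i\varrho(\cdot,s)\rangle_{\mathcal{H}}=h_i(s',s)$.

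Finally, applying $P_1+P_2=\mathrm{Id}$ to $\varrho(\cdot,s)$ gives $\varrho(\cdot,s)=h_1(\cdot,s)+h_2(\cdot,s)$, that is, $\varrho(s',s)=h_1(s',s)+h_2(s',s)$ for all $s,s'\in\Gamma$, which is the claimed kernel decomposition. The argument contains no genuine obstacle; the two points requiring care are the verification that $\varrho$ is the reproducing kernel of $\mathcal{H}(\Gamma)$ (which rests entirely on the isometry $\Phi$ together with continuity of $\varrho$) and the observation that the direct sum in the hypothesis must be read as an \emph{orthogonal} one, since without orthogonality the projections would fail to be self-adjoint and the kernel identity would break down. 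I would flag the latter explicitly, as orthogonality is precisely the feature supplied by the Markov-property decompositions in which this lemma is subsequently invoked.
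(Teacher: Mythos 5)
Your proof is correct and follows essentially the same route as the paper: the paper's "unique decomposition $\rho(\cdot,s)=h_1(\cdot,s)+h_2(\cdot,s)$" is exactly your $h_i(\cdot,s)=P_i\varrho(\cdot,s)$, and the reproducing-property and symmetry computations coincide (the paper likewise invokes orthogonality of $\mathcal{H}_1$ and $\mathcal{H}_2$ at those steps). Your explicit remark that the $\oplus$ must be read as an orthogonal sum is a fair observation, since the paper's stated definition of $\oplus$ only requires trivial intersection, but in every application of the lemma the decomposition is indeed orthogonal.
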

	\begin{proof}
		Equation \eqref{eq:orthodecomp_CM} shows that for each $s\in\Gamma$, unique functions $h_{1}(\cdot, s)\in\mathcal{H}_1$ and ${h_{2}(\cdot, s)\in\mathcal{H}_2}$ exists such that $\rho(\cdot,s) = h_{1}(\cdot, s) + h_{2}(\cdot,s)$. 
		By the symmetry of $\rho$ and the inner product of $\mathcal{H}(\Gamma)$, and orthogonality of $ \mathcal{H}_1$ and $\mathcal{H}_2$, 
		\begin{align*}
		h_{1}(s',s) &= \left( \rho(\cdot,s'),h_{1}(\cdot,s) \right)_{\mathcal{H}(\Gamma)}= \left( h_{1}(\cdot,s'),h_{1}(\cdot,s) \right)_{\mathcal{H}(\Gamma)} \\
		&=  \left(\rho(\cdot, s),h_{1}(\cdot,s')\right)_{\mathcal{H}(\Gamma)}=
		h_{1}(s,s'),
		\end{align*}
		for $s,s'\in \Gamma$. Hence, $h_1(s,s')= h_1(s',s)$. Similarly, for $s,s'\in \Gamma$, $h_{2}(s,s') = h_{2}(s',s)$. Thus, $h_1(\cdot,\cdot)$ and $h_2(\cdot,\cdot)$ are symmetric kernels on $e\times e$. Further, 
		for $h\in\mathcal{H}_1$, $h\perp \mathcal{H}_2$ and since  $h\in \mathcal{H}(\Gamma)$, we obtain
		\begin{equation}\label{eq:h2ReprKernel}
			h(s)= (\rho(\cdot, s), h)_{\mathcal{H}(\Gamma)} = (h_1(\cdot, s), h)_{\mathcal{H}(\Gamma)}, \quad s\in \Gamma,
		\end{equation} 
		which shows that $h_1(s,t)$ is the reproducing kernel of  $\mathcal{H}_1$.  
		By the same argument, $h_2$ is the reproducing kernel of $ \mathcal{H}_2$. 
	\end{proof}
	
	Finally, we have the following lemma, used in Section \ref{sec:condrepr},
	regarding Cameron--Martin spaces of centered Gaussian fields on compact metric graphs whose restrictions to edges consist of independent processes.
	
	\begin{Lemma}\label{lem:auxLemCMindep}
		Let $\Gamma$ be a compact metric graph and let $\mathcal{E}$ be the set of its edges. Let, also, $\{v_e:e\in\mathcal{E}\}$, with $v_e:\Omega\times e\to\mathbb{R}$, be a family of independent centered Gaussian processes taking values on $L_2(e)$. Further, for each $e\in\mathcal{E}$, let $(X_e, \<\cdot,\cdot\>_e)$ be the Cameron--Martin space associated to $v_e$ and $k_e(\cdot,\cdot)$ be its covariance function. Define $v:\Omega\times \Gamma\to\mathbb{R}$ be defined as $v(\omega, s) = v_e(\omega,t)$, where $s = (t,e)$. Then, $v$ has covariance function
		\begin{equation}\label{eq:cov_func_indepv}
			k(s,s') = \begin{cases}
				0,&\hbox{if $e\neq e'$},\\
				k_e(t,t'),&\hbox{if $e=e'$ and $s=(t,e),s'=(t',e')$},
			\end{cases}
		\end{equation}
		and the corresponding Cameron--Martin space is
			$X := \bigoplus_{e\in\mathcal{E}} X_e$,
		with inner product
		\begin{equation*}
			\<f,g\>_{\Gamma} := \sum_{e\in\mathcal{E}} \<f|_e, g|_e\>_e,\quad f,g\in X.
		\end{equation*}
		
		Conversely, if a centered Gaussian field on $\Gamma$ has the Cameron--Martin space $(X,\<\cdot,\cdot\>_\Gamma)$. Then, $\{v|e:e\in\mathcal{E}\}$ is a collection of independent stochastic processes, and for each $e\in\mathcal{E}$, $v|_e$ is associated to the Cameron--Martin space $(X_e, \<\cdot,\cdot\>_e)$.
	\end{Lemma}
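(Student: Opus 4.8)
The plan is to reduce both directions to two standard facts: that the covariance function of a centered Gaussian field equals the reproducing kernel of its Cameron--Martin space, and the uniqueness (Moore--Aronszajn) of the reproducing kernel Hilbert space attached to a given kernel. The whole argument then turns on a single computation identifying the reproducing kernel of the direct-sum space $(X,\langle\cdot,\cdot\rangle_\Gamma)$ with the block-diagonal kernel $k$ in \eqref{eq:cov_func_indepv}.

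For the forward direction I would first establish \eqref{eq:cov_func_indepv} directly: for $s=(t,e)$ and $s'=(t',e')$ one has $k(s,s')=\mathbb{E}(v_e(t)v_{e'}(t'))$, which vanishes when $e\neq e'$ by independence and centeredness of the family $\{v_e\}$, and equals $k_e(t,t')$ when $e=e'$. Next I would show that $(X,\langle\cdot,\cdot\rangle_\Gamma)$, viewed as functions defined edgewise on $\Gamma$, is the RKHS of $k$. This is the core step: fixing $s=(t,e)$, the function $k(\cdot,s)$ is supported on $e$ and equals $k_e(\cdot,t)\in X_e$ there, so $k(\cdot,s)\in X$; and for $f=(f_{e'})_{e'}\in X$, orthogonality of the summands gives $\langle f,k(\cdot,s)\rangle_\Gamma=\langle f_e,k_e(\cdot,t)\rangle_e=f_e(t)=f(s)$ by the reproducing property on $X_e$. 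Since $X$ is complete (a direct sum of Hilbert spaces) and the reproducing property forces point evaluations to be continuous, Moore--Aronszajn identifies $X$ as the unique RKHS with kernel $k$. Because the Cameron--Martin space of a centered Gaussian field is exactly the RKHS of its covariance, and $v$ has covariance $k$, its Cameron--Martin space is $(X,\langle\cdot,\cdot\rangle_\Gamma)$.

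For the converse I would run the same identification backwards. Given a centered Gaussian field $v$ whose Cameron--Martin space is $(X,\langle\cdot,\cdot\rangle_\Gamma)$, the computation above shows that the reproducing kernel of $X$ is $k$; since the covariance of a Gaussian field coincides with the reproducing kernel of its Cameron--Martin space, $v$ has covariance $k$. Independence of $\{v|_e:e\in\mathcal{E}\}$ then follows because all the $v|_e$ are jointly Gaussian (being limits of linear combinations of values of the single field $v$) and the cross-covariances $\mathbb{E}(v_e(t)v_{e'}(t'))=k(s,s')$ vanish for $e\neq e'$; a jointly Gaussian family with block-diagonal covariance is mutually, not merely pairwise, independent across the blocks. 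Finally, restricting $k$ to $e\times e$ gives covariance $k_e$ for $v|_e$, so its Cameron--Martin space is the RKHS of $k_e$, namely $(X_e,\langle\cdot,\cdot\rangle_e)$.

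The main obstacle is the kernel identification in the forward direction: one must check with care that elements of $\bigoplus_e X_e$ are genuinely edgewise functions on $\Gamma$ with continuous point evaluation, that $k(\cdot,s)$ lies in $X$, and that the reproducing identity holds, so that Moore--Aronszajn applies and pins down $X$ uniquely (in particular, any $g\in X$ orthogonal to every $k(\cdot,s)$ satisfies $g(s)=\langle g,k(\cdot,s)\rangle_\Gamma=0$ and hence is the zero element, giving density of the kernel sections). The only other point requiring attention is upgrading the vanishing cross-covariances to mutual independence of the whole family, which is justified by joint Gaussianity.
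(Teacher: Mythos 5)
Your proposal is correct and follows essentially the same route as the paper: verify the block-diagonal covariance from independence, then check by the same one-line computation that $k$ is the reproducing kernel of $(X,\langle\cdot,\cdot\rangle_\Gamma)$, and conclude by uniqueness of the RKHS/Cameron--Martin space. The only difference is cosmetic: for the converse the paper simply cites uniqueness of the Cameron--Martin space of a centered Gaussian measure, whereas you unpack that into the equivalent argument that the covariance must equal $k$, so the vanishing cross-covariances together with joint Gaussianity yield mutual independence of the edge restrictions.
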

	
	\begin{proof}
		The expression for the covariance function given in \eqref{eq:cov_func_indepv} follows directly from independence of $\{v_e:e\in\mathcal{E}\}$.
		Let us now obtain the Cameron--Martin space associated to $v$. Observe that the Cameron--Martin space associated to $v$ is the reproducing kernel Hilbert space with kernel given by \eqref{eq:cov_func_indepv}. It is clear that $(X,\<\cdot,\cdot\>_\Gamma)$ is a Hilbert space that contains $k(\cdot, s), s\in\Gamma$. Therefore, to show that $(X,\<\cdot,\cdot\>_\Gamma)$ is the Cameron--Martin space associated to $v$ it is enough to show that $k(\cdot,\cdot)$ given by \eqref{eq:cov_func_indepv} is the reproducing kernel for $(X,\<\cdot,\cdot\>_\Gamma)$.
		To this end, let $f\in X$ and $s = (t,e)\in \Gamma$. Then,
		$$\<f, k(\cdot, s)\>_\Gamma = \<f|_e, k_e(\cdot, t)\>_e = f|_e(t) = f(s).$$
		Therefore, $k(\cdot,\cdot)$ is a reproducing kernel for $(X,\<\cdot,\cdot\>_\Gamma)$, which shows that $(X,\<\cdot,\cdot\>_\Gamma)$ is the Cameron--Martin space associated to $v$.
		The proof of the converse follows from the uniqueness of the Cameron--Martin space associated with a centered Gaussian measure \cite[e.g.~see][Theorem 2.9]{daPrato2014}.
	\end{proof}
	\end{appendix}

\bibliographystyle{imsart-number}
\bibliography{../../../Bib/unified_graph_bib}

\end{document}